\numberwithin{equation}{section}
\providecommand\ga{\gamma}
\newcommand\de{\delta}
\newcommand\lam{\lambda}
\renewcommand\phi{\varphi}
\def\d{\mathbb{D}}
\def\c{\mathbb{C}}
\def\t{\mathbb{T}}
\def\r{\mathbb{R}}
\newcommand\RHP{\mathbb{H}}
\newcommand\ov{\overline}
\newcommand\half{\tfrac 12}
\newcommand\inv{^{-1}}
\newcommand\bbm{\begin{bmatrix}}
\newcommand\ebm{\end{bmatrix}}
\newcommand\bpm{\begin{pmatrix}}
\newcommand\epm{\end{pmatrix}}
\def\sphere{\c \cup \{\infty\}}
\def\h{\mathcal{H}}
\def\l{\mathcal{L}}
\def\k{\mathcal{K}}
\def\P{\mathcal{P}}
\def\I{\mathcal{I}}
\def\b{\mathcal{B}}
\def\m{\mathcal{M}}
\def\n{\mathcal{N}}
\def\w{\mathcal{W}}
\def\ph{\phi}
\def\f{\mathcal{F}}
\newcommand\calh{\mathcal{H}}
\newcommand\df{\stackrel{\rm def}{=}}
\newcommand\eps{\varepsilon}
\def\be{\begin{equation}}
\def\ee{\end{equation}}
\def\hol{{\rm Hol}}
\def\hinf{{\rm H}^\infty}
\def\set#1#2{\{ #1 \, : \, #2\}}
\def\norm#1{\| #1 \|}
\def\bignorm#1{\big\| #1  \big\|}
\def\ip#1#2{\langle #1,#2\rangle}
\def\s0{s_0}
\def\p0{p_0}
\DeclareMathOperator{\ran}{ran}
\DeclareMathOperator{\re}{\rm Re}
\newcommand{\twopartdef}[4]
{
	\left\{
		\begin{array}{ll}
			#1 & \mbox{if } #2 \\
			#3 & \mbox{if } #4
		\end{array}
	\right.
}
\newcommand\forkde{for $K_\de $ }
\theoremstyle{definition}
\newtheorem{defin}[equation]{Definition}
\newtheorem{lem}[equation]{Lemma}
\newtheorem{prop}[equation]{Proposition}
\newtheorem{cor}[equation]{Corollary}
\newtheorem{thm}[equation]{Theorem}
\newtheorem{fact}[equation]{Fact}
\newtheorem{rem}[equation]{Remark}
\newtheorem{problem}[equation]{Problem}
\newtheorem{ex}[equation]{Example}
\begin{document}

\title[On the Operators with Numerical Range in an Ellipse]{On the Operators with Numerical Range in an Ellipse}

\author{Jim Agler}
\address{Department of Mathematics, University of California at San Diego, CA \textup{92103}, USA}

\email{jagler@ucsd.edu}

\author{Zinaida A. Lykova}
\address{School of Mathematics,  Statistics and Physics, Newcastle University, Newcastle upon Tyne
	NE\textup{1} \textup{7}RU, U.K.}

\email{Zinaida.Lykova@ncl.ac.uk}

\author{N. J. Young}
\address{School of Mathematics, Statistics and Physics, Newcastle University, Newcastle upon Tyne NE1 7RU, U.K.}

\email{Nicholas.Young@ncl.ac.uk}

\date{5th June, 2024}

\subjclass[2020]{47A12, 15A60, 47B99}

\keywords{Numerical range, field of values, dilation theorems, ellipse, calcular norms, Douglas-Paulsen operators, B. and F. Delyon family}

\thanks{Partially supported by National Science Foundation Grants
	DMS 1361720 and 1665260, Heilbronn Institute for Mathematical Research (Focused Research Grant) and the Engineering and Physical Sciences Research Council grant EP/N03242X/1. }

\begin{abstract} 
   We give new necessary and sufficient  conditions for the numerical range $W(T)$ of an operator $T \in  \mathcal{B}(\mathcal{H})$  to be a subset of the closed elliptical set $K_\de\subseteq \mathbb{C}$  given by
\[
K_\delta {\stackrel{\rm def}{=}}
 \left\{x+iy: \frac{x^2}{(1+\delta)^2} + \frac{y^2}{(1-\delta)^2} \leq 1\right\},
\] 
where  $0 < \delta < 1$. Here $\mathcal{B}(\mathcal{H})$ denotes the collection of bounded linear operators on a Hilbert space $\mathcal{H}$.
Central to our efforts is a direct generalization of Berger's well-known criterion for an operator to have numerical radius at most one, his so-called {\em strange dilation theorem}. 
Specifically, we show that, if $T$
acts on a finite-dimensional Hilbert space $\mathcal{H}$ and satisfies a certain genericity assumption, then $W(T) \subseteq K_\delta$  if and only if there exists a Hilbert space $\mathcal{K} \supseteq \mathcal{H}$, 
operators $X_1$ and $X_2$ on $\mathcal{H}$
and a unitary operator $U$ acting on $\mathcal{K}$ such that
\begin{equation}
X_1+X_2=T, \quad X_1X_2=\delta
\end{equation}
and
\begin{equation}
X_1^k + X_2^k = 2P_\mathcal{H} U^k\left|_{\mathcal{H}}, \right. \;  k = 1,2, \dots,
\end{equation}
where  $P_\mathcal{H}$ denotes the orthogonal projection from $\mathcal{K}$ to $\mathcal{H}$.
 
We next generalize the lemma of Sarason that describes power dilations in terms of semi-invariant subspaces to operators $T$ that satisfy the relations (0.1) and (0.2).  This generalization yields a  characterization of the operators $T\in \mathcal{B}(\mathcal{H})$ such that $W(T)$ is contained in $K_\delta$ in terms of certain structured contractions that act on $\mathcal{H} \oplus \mathcal{H}$. 

 As a corollary of our results we extend Ando's parametrization
 of operators having numerical range in a disc to those $T$ such that $W(T)\subseteq K_\delta$. We prove that, if $T$ acts on a finite-dimensional Hilbert space $\mathcal{H}$, then $W(T)\subseteq K_\delta$ if and only if there exist a pair of contractions $A,B \in \mathcal{B}(\mathcal{H})$ such that $A$ is self-adjoint and
\[
T=2\sqrt\delta A + (1-\delta)\sqrt{{1+A}}\ B\sqrt{{1-A}}.
\]
 We also obtain 
 a formula for the B. and F. Delyon calcular norm of an analytic function on the inside of an ellipse in terms of the extremal $H^\infty$-extension problem  for analytic functions defined on a slice of the symmetrized bidisc.
\end{abstract} 

\maketitle
\tableofcontents

\section{Introduction}
In this section we shall introduce notation, recall some classical results from the literature and describe the results of the paper.

For $\h$ a complex Hilbert space, $\b(\h)$ will denote the collection of bounded linear operators on $\h$.  For $T\in\b(\h)$, $\sigma(T)$ will denote the spectrum of $T$, and $W(T)$ the \emph{ numerical range} or {\em field of values} of $T$, which is defined by the formula
\[
W(T) = \set{\ip{Tu}{u}}{u\in\h, \norm{u}=1 }.
\]
Accounts of the well-established theory of the numerical range of a bounded linear operator can be found in the books \cite{GuRao97,WuGau}. 
 
    In this paper, for fixed $\de \in [0,1)$,  we study operators whose numerical ranges lie in the closed subset $K_\de$ of the complex plane $\c$ bounded by the ellipse with major axis $[-1-\de,1+\de]$ and minor axis $i[-1+\de,1-\de]$, so that
\be    \label{defKde}   
K_\de \df  \left\{x+iy: x,y\in \r, \frac{x^2}{(1+\de)^2} + \frac{y^2}{(1-\de)^2} \leq 1\right\}.
\ee
Note that $K_0= \d^-$, the closure of the unit disc $\d=\set{z\in\c}{|z|<1}$ in $\c$. In this case
there is already a well-developed theory, due to Berger \cite{berg}, Ando \cite{Ando}, Kato \cite{kato}, Dritschel and Woerdeman \cite{dw97} and others, of operators whose numerical ranges are contained in $\d^-$.  \\

 \begin{center}{\sc 1.1.    Berger's strange dilation theorem} \label{genBerger} \end{center}
In \cite{berg} Berger proved the following theorem.
\begin{thm}\label{int.thm.10} {\bf (Berger's Theorem)} \,
If $T\in \b(\h)$, then $W(T) \subseteq \d^-$ if and only if there exists a Hilbert space $\k \supseteq \h$ and a unitary operator $U\in \b(\k)$ such that
\be\label{int.10}
T^k = 2P_\h\ U^k \Big|\ \h, \qquad k=1,2,\dots,
\ee
where  $P_\mathcal{H}$ denotes the orthogonal projection from $\mathcal{K}$ to $\mathcal{H}$.
\end{thm}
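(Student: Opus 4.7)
My plan is to realise the resolvent $F(\lambda) := (I - \lambda T)^{-1}$ as a compression of a Poisson-like operator on the unit circle, and then to extract the unitary dilation via Naimark's dilation theorem. The first step is the scalar reformulation
\[
W(T) \subseteq \d^- \;\;\iff\;\; \re\,(I - \lambda T)^{-1} \geq 0 \text{ for all } \lambda \in \d.
\]
Since the hypothesis forces the spectral radius $\rho(T) \leq 1$, the resolvent is well-defined on $\d$. Setting $v = (I - \lambda T)^{-1}u$ for $u \in \h$, the identity $\re\ip{(I - \lambda T)^{-1}u}{u} = \|v\|^2 - \re(\bar\lambda \ip{Tv}{v})$ is nonnegative because $|\ip{Tv}{v}| \leq \|v\|^2$ and $|\lambda| < 1$; the converse is obtained by rotating $\lambda$ and letting $|\lambda| \to 1^-$ in the same identity.

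For the forward implication of the theorem I would then invoke the operator-valued Herglotz representation: any analytic $F\colon \d \to \b(\h)$ with $\re F \geq 0$ and $F(0) = I$ admits a representation
\[
\re\,F(\lambda) = \int_\t \frac{1 - |\lambda|^2}{|1 - \bar\lambda\zeta|^2}\, dE_0(\zeta)
\]
for a positive $\b(\h)$-valued measure $E_0$ on $\t$ with $E_0(\t) = I$. Naimark's theorem then dilates $E_0$ to a projection-valued measure $E$ on some $\k \supseteq \h$ via $E_0 = P_\h E|_\h$, and $U := \int_\t \zeta\,dE(\zeta)$ is unitary on $\k$. Expanding the Poisson kernel as $1 + \sum_{n\geq 1}(\bar\lambda^n \zeta^n + \lambda^n \zeta^{-n})$ and matching against the Taylor series $\re F(\lambda) = I + \tfrac12 \sum_{n\geq 1}(\lambda^n T^n + \bar\lambda^n T^{*n})$ yields $P_\h U^n|_\h = T^n/2$ for $n \geq 1$, possibly after replacing $U$ by $U^*$ to fix the sign convention.

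The reverse implication is a direct calculation: summing the geometric series in $T^n = 2P_\h U^n|_\h$ gives
\[
(I - \lambda T)^{-1} = -I + 2P_\h(I - \lambda U)^{-1}|_\h,
\]
and the pointwise identity $2\re(1 - \lambda\zeta)^{-1} = 1 + (1 - |\lambda|^2)|1 - \lambda\zeta|^{-2}$ applied spectrally to $U$ shows that $\re\,(I - \lambda T)^{-1}$ equals the compression to $\h$ of a positive Poisson-type operator on $\k$, hence is nonnegative. The scalar reformulation then delivers $W(T) \subseteq \d^-$.

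The principal obstacle is securing the operator-valued Herglotz representation with a $\b(\h)$-valued Poisson measure on $\t$; once that is in hand, extracting $U$ via Naimark's theorem and reading off the moments is essentially bookkeeping, subject only to the minor subtlety of keeping the conjugation conventions straight so that the final identity delivers $T^n$ rather than $T^{*n}$.
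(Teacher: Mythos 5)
Your proposal is correct and follows the same overall route to which the paper's general $K_\delta$ argument (Proposition \ref{ell.prop.10}, Lemma \ref{ell.prop.101}, Theorem \ref{ell.thm.10}, Theorem \ref{ell.prop.20}) specializes at $\delta=0$: characterize $W(T)\subseteq\d^-$ by positivity of $\re(I-\lambda T)^{-1}$ on $\d$, apply a Herglotz--Naimark representation to that positive-real-part function to produce a unitary $U$ on a larger space, and read off moments to obtain $T^k=2P_\h U^k|_\h$. The one substantive divergence is how you prove the scalar equivalence $W(T)\subseteq\d^-\iff \re(I-\lambda T)^{-1}\geq 0$ on $\d$: you use the direct algebraic identity $\re\ip{(I-\lambda T)^{-1}u}{u}=\norm{v}^2-\re\bigl(\bar\lambda\ip{v}{Tv}\bigr)$ with $v=(I-\lambda T)^{-1}u$, then rotate $\lambda$ and let $|\lambda|\to 1^-$; the paper's Proposition \ref{ell.prop.10} instead writes $K_\delta$ as the intersection of its supporting half-planes indexed by $\t$, converts each half-plane condition into positivity of the boundary values of the function $G$, and invokes the maximum principle to pass from $\t$ to $\d$. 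Your computation is shorter but leans on the rotational symmetry of the disc; the paper's supporting-hyperplane argument is precisely what lets the same skeleton work uniformly for $K_\delta$. The remaining ingredients in your sketch --- the Herglotz-then-Naimark two-step dilation (the paper bundles both into the single Theorem \ref{ell.thm.10}), the Poisson-kernel Fourier matching, the geometric-series computation for the converse, and the $U\leftrightarrow U^*$ convention adjustment --- all match the paper's machinery. One cosmetic slip: in your displayed identity $\bar\lambda$ should pair with $\ip{v}{Tv}$, not with $\ip{Tv}{v}$ (equivalently write $\re\bigl(\lambda\ip{Tv}{v}\bigr)$); since only $|\lambda|$ is used in the estimate, the conclusion is unaffected.
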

As the Sz-Nagy Dilation Theorem \cite{szn53} asserted that if $T\in \b(\h)$, then $\norm{T} \le 1$ if and only if there exists a Hilbert space $\k \supseteq \h$ and a unitary operator $U\in \b(\k)$ such that
\be\label{int.20}
T^k = P_\h\ U^k \Big|\ \h, \qquad k=1,2,\dots,
\ee
Berger referred to his theorem as a \emph{strange dilation theorem}. 

Here,  we shall show how Berger's theorem can be generalized to give conditions for the numerical range of an operator to be a subset of the elliptical set $K_\de$.  
To do this, we reinterpret the formulae \eqref{int.10} as a representation theorem for a certain rational $\b(\h)$-valued function defined in terms of $T$. Firstly, observe that if in Theorem \ref{int.thm.10} we relax the condition that $\h$ be a subspace of $\k$ to the condition that $\h$ merely be identified with a subspace of $\k$ via a Hilbert space isometry $I$, then the formula \eqref{int.10} takes the form
\be\label{int.30}
T^k = 2I^*U^k I, \qquad k=1,2,\dots.
\ee
Secondly, observe that the formulae \eqref{int.30} imply that,  for all $z \in \d$, and for any unitary operator $U$,
\begin{align*}
 I^*\frac{2}{1-zU} I &=2+\sum_{k=1}^\infty z^k\, 2I^*U^kI\\
&=2+\sum_{k=1}^ \infty z^k\, T^k= \frac{2-zT}{1-zT}.
\end{align*}
The above observations give us an alternative formulation of Berger's strange dilation theorem.
\begin{thm}\label{int.thm.20}  {\bf (Reformulation of Berger's Theorem)}
Let $T\in \b(\h)$ be an operator satisfying $\sigma(T) \subseteq \d^-$.
Then $W(T) \subseteq \d^-$ if and only if there exists a Hilbert space $\k$,
an isometry $I:\h \to \k$, and a unitary operator $U\in \b(\k)$ such that
\be\label{int.50}
\frac{1-\half zT}{1-zT} = I^*\frac{1}{1-zU} I \quad \text{ for all }z\in\d.
\ee
\end{thm}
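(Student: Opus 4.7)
The plan is to derive Theorem \ref{int.thm.20} directly from Berger's original Theorem \ref{int.thm.10} by recasting the sequence of dilation equations $T^k = 2I^*U^kI$, $k\ge 1$, as a single generating-function identity. The two statements should differ only in formulation, not in content, so the work is essentially bookkeeping with power series.

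For the forward implication, assume $W(T)\subseteq \d^-$. Apply Theorem \ref{int.thm.10} to obtain a Hilbert space $\k\supseteq\h$ and a unitary $U\in\b(\k)$ with $T^k = 2P_\h U^k|_\h$ for each $k\ge 1$; write $I$ for the isometric inclusion $\h\hookrightarrow\k$, so that $I^* = P_\h$ and $T^k = 2I^*U^kI$. Since $U$ is unitary, $(1-zU)\inv = \sum_{k\ge 0} z^k U^k$ converges in operator norm for every $z\in\d$. Sandwiching between $I^*$ and $I$, and using $I^*I = \id{\h}$, yields
\[
I^*\frac{1}{1-zU}I \;=\; \id{\h} + \sum_{k=1}^\infty z^k I^*U^kI \;=\; \id{\h} + \half \sum_{k=1}^\infty z^k T^k.
\]
The bound $\|T^k\|\le 2$ (from Theorem \ref{int.thm.10}) shows the right-hand series converges absolutely on $\d$, and a direct algebraic check gives $\id{\h} + \half\cdot\frac{zT}{1-zT} = \frac{1-\half zT}{1-zT}$ whenever $|z|<\|T\|\inv$. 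Both sides of \eqref{int.50} are holomorphic $\b(\h)$-valued functions on $\d$ (for the left-hand side, one uses the assumption $\sigma(T)\subseteq \d^-$) that agree near $0$, and so agree throughout $\d$ by analytic continuation.

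For the reverse implication, suppose \eqref{int.50} holds on $\d$ for some Hilbert space $\k$, isometry $I:\h\to\k$ and unitary $U\in\b(\k)$. For $|z|$ sufficiently small, both $(1-zT)\inv$ and $(1-zU)\inv$ are given by convergent Neumann series, and expanding each side of \eqref{int.50} yields
\[
\id{\h} + \half\sum_{k=1}^\infty z^k T^k \;=\; I^*I + \sum_{k=1}^\infty z^k I^*U^kI.
\]
Matching coefficients of $z^k$ (the constant terms agreeing forces $I^*I = \id{\h}$, as is already built into the assumption that $I$ is an isometry) gives $T^k = 2I^*U^kI$ for every $k\ge 1$. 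Replacing $\k$ by a unitarily equivalent copy in which $I$ becomes the inclusion of a subspace, this reads $T^k = 2P_\h U^k|_\h$, and an application of the easy direction of Theorem \ref{int.thm.10} gives $W(T)\subseteq \d^-$.

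I do not anticipate a serious obstacle: the proof reduces to the two power-series computations above combined with the Berger theorem already stated. The only minor technical points are justifying that both sides of \eqref{int.50} extend holomorphically from a neighborhood of $0$ to all of $\d$, which uses the spectral hypothesis $\sigma(T)\subseteq \d^-$ together with the norm bound $\|T^k\|\le 2$, and silently identifying the abstract isometric embedding $I$ with the inclusion of a subspace, which is a routine Hilbert-space manoeuvre.
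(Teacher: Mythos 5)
Your proposal is correct and follows essentially the same route as the paper, which derives the reformulation from Berger's original theorem by expanding $I^*(1-zU)^{-1}I$ into a Neumann series, substituting $T^k = 2I^*U^kI$, and summing the resulting geometric series to obtain $\frac{1-\half zT}{1-zT}$. The paper sketches only the forward direction informally in the introduction; you supply both directions with the same power-series/coefficient-matching argument plus the standard analytic-continuation and subspace-identification steps, which is exactly the intended content.
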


\begin{center}{\sc 1.2.   A generalization of Berger's strange dilation theorem} \label{sub-sec-genBerger} \end{center}

The following result, which we prove in Section \ref{dilation-sec}, is a Berger theorem on an ellipse, modelled on the alternative formulation in Theorem \ref{int.thm.20}. 

\begin{thm} \label{morestrange} For  an operator $T\in \b(\h)$ and for any $\delta\in [0,1),$ the numerical range $W(T)$ is contained in  the closed set $K_\de$
 if and only if there exists a Hilbert space $\k$, an isometry $I:\h \to \k$, and a unitary operator $U\in \b(\k)$ such that
\be\label{ell.30.intro1}
\frac{1-\tfrac12 z T}{1-zT +\delta z^2} = I^*\frac{1}{1-zU} I \quad \text{ for all }z\in\d.
\ee
\end{thm}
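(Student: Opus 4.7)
The plan is to recognize the desired identity as an operator-valued Herglotz representation. Setting
\[
F(z) \df \frac{1-\de z^2}{1 - zT + \de z^2}, \qquad G(z) \df \frac{1-\tfrac12 zT}{1 - zT + \de z^2},
\]
the elementary identity $\frac{1+\zeta}{1-\zeta} = \frac{2}{1-\zeta} - 1$ gives $F = 2G - I_\h$, so that \eqref{ell.30.intro1} is equivalent to the representation $F(z) = I^*(1+zU)(1-zU)^{-1}I$ for every $z \in \d$. By the operator-valued Herglotz representation theorem, such a representation with $I:\h\to\k$ an isometry and $U\in\b(\k)$ unitary exists if and only if $F$ is analytic on $\d$, $F(0) = I_\h$, and $\re F(z) \ge 0$ for every $z\in\d$. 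My task is therefore to show that these properties of $F$ are equivalent to $W(T) \subseteq K_\de$.

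For the analytic structure, observe that $1 - zT + \de z^2 = z(\alpha(z) - T)$ where $\alpha(z) \df z^{-1} + \de z$, and that $\alpha$ is a biholomorphism from $\d\setminus\{0\}$ onto $\c \setminus K_\de$ (since the two $\zeta$-roots of $\de \zeta^2 - \alpha \zeta + 1 = 0$ have product $1/\de > 1$, so at most one lies in $\d$). Hence $1 - zT + \de z^2$ is invertible for every $z\in\d$ if and only if $\sigma(T)\subseteq K_\de$, which holds automatically whenever $W(T) \subseteq K_\de$. The heart of the argument is then the equivalence
\[
\re F(z) \ge 0 \text{ for every } z\in\d \iff W(T) \subseteq K_\de.
\]
Writing $F(z) = \beta(\alpha - T)^{-1}$ with $\beta \df z^{-1} - \de z$, a direct manipulation (using that $\beta$ and $\alpha$ are scalars) yields the sandwich identity
\[
(\alpha - T)^*\bigl(F(z) + F(z)^*\bigr)(\alpha - T) = 2(|z|^{-2} - \de^2|z|^2)\,I_\h - 2\,\re(\bar\beta\,T),
\]
where $\re A \df \tfrac12(A+A^*)$. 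Since $\alpha - T$ is invertible on $\d\setminus\{0\}$, positivity of $F+F^*$ is equivalent to positivity of the right-hand side; testing against unit vectors of $\h$ reduces this to the scalar condition
\[
\re(\bar\beta\,\tau) \le |z|^{-2} - \de^2|z|^2 \quad\text{for every } \tau\in W(T),\ z\in\d\setminus\{0\}.
\]

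Writing $z = re^{i\phi}$ and optimizing over $\phi$ and over $\tau$ on $\partial K_\de$, this reduces further to the trigonometric inequality
\[
\frac{(1+\de)^2\cos^2\phi}{(r^{-1}+\de r)^2} + \frac{(1-\de)^2\sin^2\phi}{(r^{-1}-\de r)^2} \le 1,
\]
and a short monotonicity check in $r$ (using $r^{-1}\pm\de r \ge 1\pm\de$ on $(0,1]$) shows that the binding constraint occurs as $r\to 1^-$, which gives exactly the condition $\tau\in K_\de$. The main technical obstacle is this last geometric step: one must verify that the intersection over $z\in\d\setminus\{0\}$ of the half-planes $\{\tau: \re(\bar\beta\tau) \le |z|^{-2} - \de^2|z|^2\}$ is precisely $K_\de$. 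Once this is established, the forward direction of Theorem \ref{morestrange} is the Herglotz representation applied to $F$, while the converse follows because the existence of the representation forces $F$ to be analytic on $\d$ with $\re F\ge 0$, hence $W(T)\subseteq K_\de$ by the scalar inequality.
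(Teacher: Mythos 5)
Your proposal is correct and follows the same overall architecture as the paper: one identifies the left-hand side of \eqref{ell.30.intro1} with $\tfrac12(G(z)+1)$ for a suitable operator-valued Herglotz function (your $F$, the paper's $G$), reduces the theorem to the equivalence $\re G(z)\geq 0$ on $\d$ if and only if $W(T)\subseteq K_\de$, and then concludes with Naimark's operator-valued Herglotz representation theorem. Where you diverge is in how that intermediate equivalence (Proposition \ref{ell.prop.10} in the paper) is proved. The paper restricts attention to $z=e^{i\theta}$ on $\t$, observes that $e^{i\theta}f'(e^{i\theta})$ is an inward normal to $\Gamma_\de$ at the boundary point $f(e^{i\theta})$, identifies the positivity condition on $\t$ with membership of $W(T)$ in every supporting half-plane of $K_\de$, and then invokes the maximum principle (via a Cayley transform of $G$) to pass from the positivity on $\t$ to positivity on $\d$. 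You instead treat every $z\in\d\setminus\{0\}$ at once: your sandwich identity
\[
(\alpha-T)^*\,\re F(z)\,(\alpha-T) = (|z|^{-2}-\de^2|z|^2)\,I_\h - \re(\bar\beta\,T)
\]
(which plays the same role as the paper's observation that $\re X\geq 0 \iff \re X^{-1}\geq 0$) converts $\re F(z)\geq 0$ into a one-parameter family of scalar half-plane constraints on $W(T)$, and you then show directly that the intersection of these constraint sets over $z\in\d\setminus\{0\}$ is exactly $K_\de$. The constraint at $|z|=r$ is, after maximizing over $\arg z$, the solid ellipse with semi-axes $r^{-1}+\de r$ and $r^{-1}-\de r$, each strictly decreasing on $(0,1]$ with limits $1+\de$ and $1-\de$; the nested family therefore squeezes down to $K_\de$ as $r\to 1^-$. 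In effect your monotonicity computation is doing the paper's maximum-principle step by hand for this particular $F$. Both routes are valid; yours avoids the maximum principle and the Cayley-transform normalization at the cost of a slightly longer elementary computation with the family of ellipses. One small point to spell out in a final write-up: you should explicitly verify $F(0)=I_\h$ (trivially true here) before invoking the Herglotz theorem in the form you state it, and you should note that the $\de=0$ case of your biholomorphism claim for $\alpha$ needs to be handled separately (the argument via the product of roots presumes $\de>0$), though of course $\alpha(z)=1/z$ is clearly a bijection of $\d\setminus\{0\}$ onto $\c\setminus\d^-$.
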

Theorem \ref{morestrange} prompts us to  say that, for an operator $T\in \b(\h)$ satisfying $\sigma(T) \subseteq K_\de$, a  triple $(\k,I,U)$ is a {\em strange dilation  of $T$ relative to $K_\de$} if $\k$ is a Hilbert space, $I:\h\to\k$ is an isometry, $U$ is a unitary operator on $\k$ and the formula \eqref{ell.30.intro1} holds.

For any $\delta\in (0,1)$, an alternative expression of the notion of a strange dilation of $T$ relative to $K_\de$ is in terms of a pair $(X^1,X^2)$ of operators such that 
\[
\sigma(X^1)\cap\sigma(X^2)=\emptyset, \, X^1+X^2=T \; \text{ and}\; X^1X^2=\de.
\]
  Such a pair exists whenever $T$ is generic\footnote{For $\de \in (0,1)$,
we say that $T\in \b(\h)$ is  \emph{generic \forkde} if $\dim \h$ is finite, $\sigma(T) \subseteq K_\delta$, $\sigma(T)$ consists of $\dim \h$ distinct points,  and $4\delta \not\in \sigma(T^2)$.} for $K_\de$.

Given such a pair $(X^1,X^2)$ we may express the left hand side of equation \eqref{ell.30.intro1} in the definition of a strange dilation via partial fractions, so that equation \eqref{ell.30.intro1} becomes 
\[
\frac{\half}{1-zX^1} + \frac{\half}{1-zX^2} = I^*\frac{1}{1-zU}I\quad \text{ for all }z\in\d.
\] 
This representation, when applied to equation \eqref{ell.30.intro1} in Theorem \ref{morestrange}, leads us to the statement that,  for an operator  $T$ which is  generic  for $K_\de$,  $W(T) \subseteq K_\de$ if and only if there exist a Hilbert space $\k \supseteq \h$, a unitary operator $U\in \b(\k)$ and a pair $(X^1,X^2)$ of operators on $\h$ such that $\sigma(X^1)\cap\sigma(X^2)=\emptyset, \, X^1+X^2=T$, $X^1X^2=\de$ and
\be \label{compress-int}
X_1^k + X_2^k = 2P_\h U^k\left|_{\h},  \right.\quad k=1,2, \dots,
\ee
which is an interesting modification of the relations \eqref{int.10}.\\

\begin{center}{\sc 1.3.  The Sarason segue and even stranger dilations } \label{segue} \end{center}

Consider the operators $T\in\b(\h)$ that satisfy a simpler condition than that of Theorem \ref{morestrange}, to wit, that there exists a Hilbert space $\k \supseteq \h$, an isometry $I:\h \to \k$, and a unitary operator $U\in \b(\k)$ such that
\[
\frac{1+ z T}{1-zT} = I^*\frac{1+zU}{1-zU} I \quad \text{ for all } z\in\d,
\]
or equivalently $T^k= I^*U^k I$ for all positive integers $k$.  According to the Nagy dilation theorem these operators $T$ are precisely the contractions.  A principal reason that the Nagy dilation theorem has had such an impact on operator theory was Sarason's discovery \cite{sar} of a geometric interpretation of the operator-theoretic notion of a dilation
in terms of semi-invariant subspaces  -- see the Appendix for a brief account of the theory. In Sections \ref{cutting}-\ref{characterization}  we amend Sarason's analysis to apply to equation \eqref{ell.30.intro1} in the case $\de>0$. 
  We are led to the notion of an ``even stranger dilation", where the unitary operator $U$ in equation \eqref{ell.30.intro1} is replaced by a contraction $Y$ acting on a {\em finite-dimensional} space. 
\begin{defin}\label{ell.def.intro}
Let $T\in \b(\h)$ where $\h$ is a finite-dimensional Hilbert space and assume that $\sigma(T) \subseteq K_\delta$. We say that a triple $(\l,E,Y)$ is \emph{an even  stranger dilation of $T$ relative to $K_\de$} if $\l$ is a Hilbert space with $\dim \l = 2\dim \h$, $E:\h \to \l$ is an isometry, $Y\in \b(\l)$ is a contraction,
and
\be\label{ell.50.intro}
\frac{1-\frac{1}{2}  zT}{1-zT +\delta z^2}=E^*\frac{1}{1-zY}E \quad \text{ for all }z\in\d.
\ee
\end{defin}

In Section \ref{cutting}  we prove the following proposition (Proposition \ref{ell.prop.30} below).
\begin{prop}\label{ell.prop.30.intro}  Let $\de\in (0,1)$,
let $T\in \b(\h)$, let $\dim \h$ be finite. Assume that $T$ is generic  for $K_\de$. Then $W(T)\subseteq K_\de$
 if and only if $T$ has an even stranger dilation relative to $K_\de$.
\end{prop}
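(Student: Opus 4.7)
The proof has two directions.

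\textbf{The easier direction $(\Leftarrow)$.} Starting from an even stranger dilation $(\mathcal{L}, E, Y)$, apply the Sz.-Nagy dilation theorem to the contraction $Y$ on $\mathcal{L}$ to produce a unitary $U$ on a Hilbert space $\mathcal{K}\supseteq\mathcal{L}$ with $Y^k = P_\mathcal{L} U^k|_\mathcal{L}$ for all $k\geq 0$. Taking resolvents gives $(1-zY)^{-1} = P_\mathcal{L}(1-zU)^{-1}|_\mathcal{L}$ on $\mathbb{D}$, so if we let $I:\mathcal{H}\to\mathcal{K}$ be $E$ followed by the inclusion $\mathcal{L}\hookrightarrow\mathcal{K}$, then
\[
\frac{1-\tfrac12 zT}{1-zT+\delta z^2} = E^*(1-zY)^{-1}E = I^*(1-zU)^{-1}I.
\]
Thus $(\mathcal{K}, I, U)$ is a strange dilation of $T$, and Theorem \ref{morestrange} yields $W(T)\subseteq K_\delta$.

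\textbf{The harder direction $(\Rightarrow)$.} Conversely, assume $W(T)\subseteq K_\delta$. Theorem \ref{morestrange} furnishes a strange dilation $(\mathcal{K}, I, U)$ with $U$ unitary. The plan is to exhibit a semi-invariant subspace $\mathcal{L}\subseteq\mathcal{K}$ for $U$ that contains $I(\mathcal{H})$ and has dimension exactly $2\dim\mathcal{H}$; once this is done, $Y := P_\mathcal{L} U|_\mathcal{L}$ is a contraction (being the compression of a unitary), and semi-invariance yields $P_\mathcal{L} U^k|_\mathcal{L} = Y^k$ for all $k\geq 0$. Writing $E:\mathcal{H}\to\mathcal{L}$ for the natural isometry (so that $I$ equals $E$ followed by the inclusion $\mathcal{L}\hookrightarrow\mathcal{K}$), the strange dilation identity immediately becomes
\[
\frac{1-\tfrac12 zT}{1-zT+\delta z^2} = I^*(1-zU)^{-1}I = E^*(1-zY)^{-1}E,
\]
that is, the even stranger dilation identity.

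\textbf{Main obstacle.} The substance of the proof is constructing $\mathcal{L}$ of the exact dimension $2\dim\mathcal{H}$. The moments of the strange dilation are $I^* U^k I = \tfrac12(X_1^k + X_2^k)$, where $(X_1, X_2)$ is the commuting pair with $X_1 + X_2 = T$ and $X_1 X_2 = \delta I$ supplied by the genericity hypothesis. The partial-fraction identity $(1-\tfrac12 zT)/(1-zT+\delta z^2) = \tfrac12[(1-zX_1)^{-1} + (1-zX_2)^{-1}]$ exhibits the left-hand side of \eqref{ell.30.intro1} as a $\mathcal{B}(\mathcal{H})$-valued rational function with $2\dim\mathcal{H}$ distinct simple poles (guaranteed by $4\delta\notin\sigma(T^2)$ together with the distinctness of the eigenvalues of $T$), whose McMillan degree is therefore exactly $2\dim\mathcal{H}$. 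Following Sarason's geometric viewpoint (recalled in the Appendix), the minimal state space of such a realization may be extracted from $(\mathcal{K}, I, U)$ as a semi-invariant subspace of the form $\mathcal{L} = \mathcal{M}_1 \ominus \mathcal{M}_2$, where $\mathcal{M}_1$ is the smallest $U$-invariant subspace of $\mathcal{K}$ containing $I(\mathcal{H})$ and $\mathcal{M}_2 \subseteq \mathcal{M}_1$ is a suitable $U$-invariant subspace orthogonal to $I(\mathcal{H})$. Showing that this $\mathcal{L}$ contains $I(\mathcal{H})$ and has dimension exactly $2\dim\mathcal{H}$ is the key technical point, and it is where the genericity of $T$ is used in an essential way.
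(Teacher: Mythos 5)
Your easy direction ($\Leftarrow$) is exactly the paper's argument: Nagy-dilate the finite-dimensional contraction $Y$ to a unitary $U$, push $E$ forward to $I$, and read off the strange dilation identity. That part is correct and complete.

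For the hard direction ($\Rightarrow$) you have the right architecture — pass from a strange dilation $(\k,I,U)$ to a semi-invariant subspace $\l$ of $U$ containing $I(\h)$, compress to get a contraction $Y=P_\l U|\l$, and invoke Sarason's lemma so that $E^*(1-zY)^{-1}E=I^*(1-zU)^{-1}I$. But you explicitly defer the two facts that the proposition actually turns on, namely that a \emph{suitable} $\m_2$ exists so that $\l=\m_1\ominus\m_2$ contains $I(\h)$ and has $\dim\l=2\dim\h$, saying only that this ``may be extracted'' by a McMillan-degree argument from the partial-fraction form of $F$. That is a genuine gap: a minimal-realization heuristic tells you there is \emph{some} $2n$-dimensional state space realizing $F$, but it does not by itself produce $\l$ as an orthogonal difference of two $U$-invariant subspaces of $\k$ sandwiching $I(\h)$, which is what you need for the compression identity $P_\l p(U)|\l=p(Y)$ to hold. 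To make this rigorous one has to say what $\m_2$ is and then prove both inequalities $\dim\l\le 2n$ and $\dim\l\ge 2n$.

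The paper does this constructively rather than via realization theory. For each eigenvector $e_i$ of $T$ with the two preimage eigenvalues $\lam_i^1,\lam_i^2$ of the generic prepair, it sets $\m_i=\overline{\mathrm{span}}\{p(U)Ie_i: p\in\P\}$ and $\n_i=\overline{\mathrm{span}}\{p(U)Ie_i: p(\lam_i^1)=p(\lam_i^2)=0\}$, then $\m=\sum_i\m_i$, $\n=\sum_i\n_i$, $\l=\m\ominus\n$. The upper bound $\dim\l\le 2n$ is immediate because each $\n_i$ has codimension at most $2$ in $\m_i$. The inclusion $I(\h)\subseteq\l$ and the lower bound $\dim\l\ge 2n$ both come from the residue identity $\tfrac12 p(X^1)+\tfrac12 p(X^2)=I^*p(U)I$ (Lemma \ref{ell.lem.40}): the former because it forces $\n\subseteq\ker I^*$, the latter because it lets one show the $2n$ vectors $\chi_i^r(Y)Ie_i$ (with $\chi_i^r$ Lagrange-type interpolating polynomials at $\lam_i^1,\lam_i^2$) are linearly independent. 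This is precisely where genericity enters, and it is the part your sketch would need to fill in.
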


The first dividend of this proposition and Proposition \ref{ell.prop.50} below is the following characterization of  operators $T$ such that $W(T)\subseteq K_\de$. 

\begin{thm}\label{par.thm.10.intro} Let $\de \in (0,1)$.
Let $\dim\h$ be finite and let $T\in\b(\h)$  be generic for $K_\de$.
Then the following conditions are equivalent.
\begin{enumerate}[(i)]
\item $W(T)\subseteq K_\de$. 
\item For some (equivalently for all) choice of $Q$ commuting with $T$ and satisfying $Q^2=T^2-4\delta$, there exists an invertible $S \in \b(\h)$ such that
\be\label{par.10.intro}
\bignorm{\frac12\ \begin{bmatrix} T&QS\inv\\
SQ& STS\inv\end{bmatrix}}
 \le 1.
\ee
\end{enumerate}
\end{thm}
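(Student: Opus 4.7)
The plan is to use Proposition \ref{ell.prop.30.intro} to reduce the assertion to a calculation with a canonical family of even stranger dilations of $T$. Fix an admissible $Q$ and set $X_1=\tfrac12(T+Q)$, $X_2=\tfrac12(T-Q)$; these commute with $T$ and $Q$, satisfy $X_1+X_2=T$ and $X_1X_2=\delta$, and, by the genericity hypotheses on $T$, have disjoint spectra, so that $X_1-X_2=Q$ is invertible. For each invertible $S\in\b(\h)$ form the block operator and isometric embedding
\[
Y_S \df \tfrac12\begin{bmatrix} T & QS^{-1} \\ SQ & STS^{-1} \end{bmatrix}\in\b(\h\oplus\h),\qquad E\df\begin{bmatrix} I\\0\end{bmatrix}\colon\h\to\h\oplus\h.
\]

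The central observation is that $(\h\oplus\h,E,Y_S)$ satisfies the dilation identity \eqref{ell.50.intro} for every invertible $S$, regardless of the norm of $Y_S$. Setting $V=\diag(I,S)$ and $W=\tfrac1{\sqrt 2}\begin{bmatrix} I & I \\ I & -I \end{bmatrix}$ (a self-adjoint unitary on $\h\oplus\h$), block-matrix arithmetic gives $Y_S = VW\,\diag(X_1,X_2)\,W^*V^{-1}$, and hence $(1-zY_S)^{-1}=VW\,\diag((1-zX_1)^{-1},(1-zX_2)^{-1})\,W^*V^{-1}$. One computes $E^*VW=\tfrac1{\sqrt 2}[\,I\ \ I\,]$ and $W^*V^{-1}E=\tfrac1{\sqrt 2}\begin{bmatrix} I\\I\end{bmatrix}$, and combines these with the partial-fractions identity
\[
\frac{1-\tfrac12 zT}{1-zT+\delta z^2}=\tfrac12(1-zX_1)^{-1}+\tfrac12(1-zX_2)^{-1},
\]
which is valid since $X_1-X_2=Q$ is invertible, to obtain $E^*(1-zY_S)^{-1}E=(1-\tfrac12 zT)/(1-zT+\delta z^2)$ for all $z\in\d$. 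Consequently, if $\|Y_S\|\le 1$ for some invertible $S$, then $(\h\oplus\h,E,Y_S)$ is an even stranger dilation of $T$ relative to $K_\de$, and Proposition \ref{ell.prop.30.intro} yields $W(T)\subseteq K_\de$; this proves (ii)$\,\Rightarrow\,$(i).

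For the converse, suppose $W(T)\subseteq K_\de$. Proposition \ref{ell.prop.30.intro} supplies some even stranger dilation $(\l,E_0,Y_0)$ of $T$ with $\dim\l=2\dim\h$ and $\|Y_0\|\le 1$. The decisive step, which I anticipate to be the main technical obstacle and which is provided by Proposition \ref{ell.prop.50}, is to show that, for any prescribed admissible $Q$, this dilation is unitarily equivalent to one of the canonical triples $(\h\oplus\h,E,Y_S)$ associated with that $Q$ and a suitable invertible $S$; it then follows that $\|Y_S\|=\|Y_0\|\le 1$. Since $Q$ is arbitrary in this step, one obtains condition (ii) for every admissible $Q$, which simultaneously establishes (i)$\,\Rightarrow\,$(ii) and the ``for some $\Leftrightarrow$ for all'' equivalence in the choice of $Q$.
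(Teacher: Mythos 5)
Your proof of (ii)$\Rightarrow$(i) is correct and is in fact more direct than the paper's route. The paper's proof of Theorem~\ref{par.thm.10} is a pure chain of citations: Theorem~\ref{ell.prop.20} (strange dilation $\iff W(T)\subseteq K_\de$), Proposition~\ref{ell.prop.30} (strange $\iff$ even stranger, for generic $T$), Proposition~\ref{ell.prop.50} (even stranger $\iff$ norm condition). Your verification that $(\h\oplus\h,E,Y_S)$ with $E=\begin{bmatrix}I\\0\end{bmatrix}$ is an even stranger dilation whenever $\|Y_S\|\le 1$ is a clean, self-contained computation (the identity $Y_S = V W\,\diag(X_1,X_2)\,W^*V^{-1}$ with $V=\diag(I,S)$, $W$ the symmetry, is exactly equation~\eqref{ell.184} together with conjugation by $\diag(1,S)$; your partial-fraction step is equation~\eqref{ell.40} with residues $\tfrac12,\tfrac12$). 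This bypasses the more elaborate machinery of Lemma~\ref{ell.lem.60} and Propositions~\ref{ell.prop.40}--\ref{ell.prop.50} for this direction and is worth noting as an alternative. Your observation that $\sigma(X_1)\cap\sigma(X_2)=\varnothing$ is also correct: if $\lambda\in\sigma(X_1)\cap\sigma(X_2)$ were an eigenvalue belonging to eigenvectors of $T$ for eigenvalues $\mu_i,\mu_j$ respectively, then $\pi(\lambda)=\mu_i=\mu_j$, forcing $i=j$ and then $\lambda_i^1=\lambda_i^2$, contrary to genericity.

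For (i)$\Rightarrow$(ii), however, you mischaracterize what Proposition~\ref{ell.prop.50} actually provides. You assert that the abstract even stranger dilation $(\l,E_0,Y_0)$ coming from Proposition~\ref{ell.prop.30.intro} is \emph{unitarily equivalent} to one of the canonical triples $(\h\oplus\h,E,Y_S)$, and then conclude $\|Y_S\|=\|Y_0\|\le 1$ by norm invariance under unitary equivalence. Proposition~\ref{ell.prop.50} proves no such unitary equivalence; what it establishes (via Lemmas~\ref{ell.lem.50}, \ref{ell.lem.60} and Proposition~\ref{ell.prop.40}) is that the \emph{existence} of an even stranger dilation is equivalent to the existence of a strictly positive-definite $\Delta\in\b(\h\oplus\h)$ satisfying conditions \eqref{ell.150} and \eqref{ell.160}, which after conjugation by the symmetry $U$ yields the invertible $S$ (roughly $S^*S$ is built from $\Delta$). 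The relation between $Y_0$ and $X^1\oplus X^2$ provided by Lemma~\ref{ell.lem.50} is only a \emph{similarity}, not a unitary equivalence, so the norm equality $\|Y_S\|=\|Y_0\|$ that your argument relies on is not available by this route. Since Proposition~\ref{ell.prop.50} does supply the equivalence of conditions, simply invoking it (as the paper does) closes the gap; but as written, the mechanism you describe would not produce the claimed norm bound, and the ``unitary equivalence'' phrasing should be replaced by a plain citation of the implication (i)$\Rightarrow$(iii) of Proposition~\ref{ell.prop.50}.
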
  
Theorem \ref{par.thm.10.intro} is central to subsequent results of the paper.

\begin{center}{\sc 1.4. Germinators and the geometric structure of even stranger dilations}\label{evenstranger} \end{center}

Theorem \ref{par.thm.10.intro}  enables us to prove in Theorem \ref{thm.10} that, for generic $T$  with $W(T)\subseteq K_\de$, $T$ can be represented in the form
\be\label{formulais}
T=\left(C\oplus \de C\inv\right)|_{\calh\oplus\{0\}},
\ee
for some operator $C$ which is a {\em $(\delta,\h)$-germinator}, in the sense of the following definition.

\begin{defin}\label{par.def.10.intro}
Let $\de\in (0,1)$. If $\h$ is a finite-dimensional Hilbert space and $\dim \h=n$, we say that $C$ is a \emph{$(\delta,\h)$-germinator} if $C\in \b(\h\oplus \h)$ and $C$ satisfies the conditions:
\be\label{par.30.def.intro}
\norm{C} \le 1,
\ee
\be\label{par.40.def.intro}
C \text{ has } 2n \text{ distinct eigenvalues,}
\ee
\be\label{par.50.def.intro}
\frac{\delta}{C} =JCJ, \text{ where }  J=\begin{bmatrix}1&0\\
0& -1\end{bmatrix},
\ee
and
\be\label{par.55.def.intro}
0 \not\in \sigma(C_{12})
\ee
where $C_{12}$ is the 1-2 entry of $C$ when $C$ is represented as a $2\times 2$ block matrix acting on $\h \oplus \h$.
\end{defin}

\begin{center}\sc 1.5. An Ando theorem for the ellipse \end{center}

In Section \ref{parametrization} we exploit $(\de,\h)$-germinators, as defined in the previous subsection, to prove a generalization of Ando's theorem to the operators with numerical range in $K_\delta$.

We recall the relevant theorem of Ando \cite[Theorem 1]{Ando}.
\begin{thm} {\bf (Ando's theorem)}
For any bounded linear operator $T$ on a Hilbert space, the numerical range $W(T) \subseteq \d^-$ if and only if $T$ admits a factorization 
\[
T= \sqrt{1+A}~B\sqrt{1-A}
\]
for some self-adjoint contraction $A$ and some contraction $B$.
\end{thm}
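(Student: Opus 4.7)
My plan splits the equivalence into a short sufficiency argument and a more substantive necessity argument.

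For sufficiency, I would argue directly from the factorization. Suppose $T=\sqrt{1+A}\,B\,\sqrt{1-A}$ with $A=A^*$ satisfying $\|A\|\le 1$, and $\|B\|\le 1$. Then $I\pm A\ge 0$, so their square roots are well-defined positive contractions. For any unit vector $u\in\h$, Cauchy-Schwarz together with $\|B\|\le 1$ gives
\[
|\langle Tu,u\rangle|=|\langle B\sqrt{1-A}\,u,\sqrt{1+A}\,u\rangle|\le \|\sqrt{1-A}\,u\|\cdot\|\sqrt{1+A}\,u\|,
\]
and the AM-GM inequality bounds the right-hand side by
\[
\tfrac12\bigl(\langle(1-A)u,u\rangle+\langle(1+A)u,u\rangle\bigr)=1.
\]
Hence $W(T)\subseteq\d^-$.

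For necessity, I would invoke Berger's strange dilation theorem (Theorem \ref{int.thm.10}) to produce a Hilbert space $\k\supseteq\h$ and a unitary $U\in\b(\k)$ with $T^k=2P_\h U^k|_\h$ for every $k\ge 1$, and then extract the factorization from the block structure of $U$. Writing $U=\bbm T/2 & Y\\ Z & W\ebm$ on $\h\oplus\h^\perp$, unitarity yields the defect identities $Z^*Z=I-T^*T/4$ and $YY^*=I-TT^*/4$, while the $k=2$ Berger relation $T^2=2P_\h U^2|_\h$ forces $YZ=T^2/4$. The construction I have in mind is to select a self-adjoint contraction $A\in\b(\h)$ aligned with the defect geometry common to $Y$ and $Z$, and then to define $B$ by pulling $T$ back through the factors $\sqrt{1\pm A}$, verifying $\|B\|\le 1$ by re-expressing the unitarity of $U$ as an operator inequality on $B$.

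The main obstacle is that $I\pm A$ need not be invertible, so $B$ is only determined on the range of $\sqrt{1-A}$; one must arrange a consistent extension to all of $\h$ while preserving contractivity. Two standard devices address this. The first is an approximation argument: replace $T$ by $rT$ for $r<1$, obtain strictly contractive factorizations in the non-degenerate case, and pass to a weak-operator limit using norm-compactness of the operator unit ball. The second is a Douglas-lemma-type factorization, reading off a contractive middle factor from the majorization implicit in $YZ=T^2/4$ and $Z^*Z=I-T^*T/4$. The scaling route looks cleaner to me, since the Berger dilation scales covariantly with $r$, making the limit transparent and avoiding case-analysis over the kernels of $I\pm A$.
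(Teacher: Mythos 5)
Your sufficiency argument is correct, complete, and is essentially the same calculation the paper uses when it specializes Proposition \ref{form.prop.10} to $\de=0$: Cauchy--Schwarz against the factored form, then AM--GM on $\|\sqrt{1-A}\,u\|\|\sqrt{1+A}\,u\|$. But note the paper does not prove Ando's theorem at all; it cites it from \cite{Ando}. What the paper does prove is the $\de>0$, finite-dimensional generalization (Theorem \ref{Ando-ellipse}) by a long route through strange dilations, even stranger dilations, and $(\de,\h)$-germinators, and then simply remarks that the $\de=0$ limit of the statement is Ando's.

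The necessity direction in your proposal has a genuine gap at the decisive step. You correctly record the Berger block identities $Z^*Z=1-T^*T/4$, $YY^*=1-TT^*/4$, $YZ=T^2/4$, but then ``select a self-adjoint contraction $A\in\b(\h)$ aligned with the defect geometry common to $Y$ and $Z$'' is not a construction --- no candidate $A$ is produced, and none falls out of those identities. The operators $Y:\h^\perp\to\h$ and $Z:\h\to\h^\perp$ are cross-space and are in no canonical way simultaneously diagonalizable with any single self-adjoint $A$ on $\h$; their defect operators $1-T^*T/4$ and $1-TT^*/4$ are generally different positive operators, and neither is of the form $1\pm A$. What the factorization $T=\sqrt{1+A}\,B\sqrt{1-A}$ with $\|B\|\le 1$ is equivalent to (via a Douglas-type argument) is the block positivity
\[
\begin{bmatrix}1+A & T\\ T^* & 1-A\end{bmatrix}\ge 0,
\]
and producing such an $A$ from $W(T)\subseteq\d^-$ is precisely the hard content of Ando's theorem; Ando's own proof obtains it by a separation/Hahn--Banach argument over the convex set of admissible $A$'s, not by parsing the Berger dilation. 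The scaling device $T\mapsto rT$ only helps once a construction exists in the strictly non-degenerate case; it does not supply one. As written, the necessity direction reduces Ando's theorem to itself.
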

Theorem \ref{Ando-ellipse} implies the following result.
\begin{thm} \label{1.10}
Let $\h$ be a finite dimensional Hilbert space, let $T\in \b(\h)$ and let $\delta\in [0,1)$. $W(T)\subseteq K_\delta$ if and only if there exist a pair of contractions $A,B \in \b(\h)$ such that
$A$ is self-adjoint and
\[
T=2\sqrt\delta A + (1-\delta)\sqrt{{1+A}}\ B\sqrt{{1-A}}.
\]
\end{thm}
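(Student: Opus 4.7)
The plan has two parts: the easy sufficiency is a direct Cauchy--Schwarz computation, and the harder necessity reduces to Theorem \ref{Ando-ellipse}.

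\emph{Sufficiency.} Suppose $T=2\sqrt{\delta}\,A+(1-\delta)\sqrt{1+A}\,B\,\sqrt{1-A}$ with $A=A^*$, $\|A\|\le 1$ and $\|B\|\le 1$. For a unit vector $u\in\h$, set $\alpha=\langle Au,u\rangle\in[-1,1]$. By Cauchy--Schwarz,
\[
\bigl|\langle Tu,u\rangle-2\sqrt{\delta}\,\alpha\bigr|=(1-\delta)\bigl|\langle B\sqrt{1-A}\,u,\sqrt{1+A}\,u\rangle\bigr|\le (1-\delta)\sqrt{1-\alpha^2}.
\]
Writing $\langle Tu,u\rangle=x+iy$, this gives $(x-2\sqrt{\delta}\,\alpha)^2+y^2\le (1-\delta)^2(1-\alpha^2)$. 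Rearranging via the square identity
\[
\alpha^2+\frac{(x-2\sqrt{\delta}\,\alpha)^2}{(1-\delta)^2}-\frac{x^2}{(1+\delta)^2}=\frac{1}{(1-\delta)^2}\Bigl((1+\delta)\alpha-\frac{2\sqrt{\delta}\,x}{1+\delta}\Bigr)^{2}\ge 0
\]
then yields $\tfrac{x^2}{(1+\delta)^2}+\tfrac{y^2}{(1-\delta)^2}\le 1$, so $\langle Tu,u\rangle\in K_\delta$ and $W(T)\subseteq K_\delta$.

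\emph{Necessity.} Conversely, assume $W(T)\subseteq K_\delta$. The first step is a reduction to the generic case: if $T$ is not generic for $K_\delta$, approximate $T$ by a sequence of operators $T_m\to T$ that are generic for $K_\delta$ and still satisfy $W(T_m)\subseteq K_\delta$ (possible since the genericity conditions are open-dense in finite dimensions), obtain pairs $(A_m,B_m)$ by the generic statement, and extract a convergent subsequence using compactness of the self-adjoint contraction ball of $\b(\h)$ together with the continuity of $A\mapsto\sqrt{1\pm A}$ on $\{A=A^*:\|A\|\le 1\}$; the limit pair $(A,B)$ provides the required factorisation.

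For generic $T$, invoke Theorem \ref{Ando-ellipse}, which furnishes a $(\delta,\h)$-germinator $C\in\b(\h\oplus\h)$ related to $T$ via equation \eqref{formulais}. Because the germinator condition $\delta C^{-1}=JCJ$ forces $C+\delta C^{-1}=\diag(2C_{11},2C_{22})$ to be block diagonal, the relation \eqref{formulais} collapses to $T=2C_{11}$. The $(1,1)$-block $C_{11}$ is then parametrised by applying classical Ando to an auxiliary contraction derived from $C$ (using the germinator identities $C_{11}^2-C_{12}C_{21}=\delta$ and $C_{11}C_{12}=C_{12}C_{22}$ together with $\|C\|\le 1$) to produce
\[
C_{11}=\sqrt{\delta}\,A+\tfrac{1-\delta}{2}\sqrt{1+A}\,B\,\sqrt{1-A}
\]
for some self-adjoint contraction $A$ and some contraction $B$ on $\h$; doubling recovers the stated formula for $T$.

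\emph{Main obstacle.} The hardest step is the affine Ando decomposition of $C_{11}$ inside Theorem \ref{Ando-ellipse}. Geometrically, $2\sqrt{\delta}\,A$ corresponds to the focal segment $[-2\sqrt{\delta},2\sqrt{\delta}]$ of $K_\delta$---the image of the circle $|\zeta|=\sqrt{\delta}$ under the Joukowski map $\zeta\mapsto\zeta+\delta/\zeta$---while $(1-\delta)\sqrt{1+A}\,B\,\sqrt{1-A}$ supplies the perpendicular ``Ando perturbation'' that sweeps out the interior of the ellipse. Identifying the correct $A$ (a self-adjoint contraction) and simultaneously enforcing $\|B\|\le 1$ requires a delicate polar-type decomposition of the germinator's off-diagonal entries in conjunction with the structural constraints imposed by $(CJ)^2=\delta I$ and $\|C\|\le 1$; this is the technical core of Section \ref{parametrization}.
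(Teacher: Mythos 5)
Your sufficiency argument is correct and in fact a touch cleaner than the paper's: after the identical Cauchy--Schwarz estimate, the paper invokes Lemma \ref{form.lem.40} (which rests on the geometric Lemma \ref{form.lem.30} about maximal discs in $K_\de$ centred on the major axis), whereas your completion-of-square identity
\[
\alpha^2+\frac{(x-2\sqrt{\de}\,\alpha)^2}{(1-\de)^2}-\frac{x^2}{(1+\de)^2}
=\frac{1}{(1-\de)^2}\Bigl((1+\de)\alpha-\frac{2\sqrt{\de}\,x}{1+\de}\Bigr)^{2}
\]
(which checks out) proves the required disc-in-ellipse containment directly, bypassing those lemmas. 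This direction of your proof is complete.

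The necessity sketch has the right skeleton and mirrors the paper's route --- approximate by generic $T$, produce a $(\de,\h)$-germinator $C$ with $T=2C_{11}$, then decompose $C_{11}$ --- but there are three points to flag. First, the germinator is furnished by Theorem \ref{thm.10}, not by Theorem \ref{Ando-ellipse} (which is, up to the trivial case $\de=0$, the statement you are proving). Second, and more substantively, you leave the central step --- Lemmas \ref{form.lem.10} and \ref{form.lem.15} --- as a black box, and your description of it is not quite accurate: the paper neither ``applies classical Ando to an auxiliary contraction'' nor decomposes ``the germinator's off-diagonal entries.'' What it actually does is set $X=\de^{-1/2}CJ$, note $X^2=1$, represent $X\sim\bbm 1&E\\0&-1\ebm$ relative to the orthogonal decomposition $\ker(1-X)\oplus\ker(1-X)^\perp$ with $\|E\|\le\de^{-1/2}-\de^{1/2}$ forced by $\|C\|\le1$ via a Schur-complement congruence, and then write $C_{11}=\sqrt\de\,V^*XV$ for the inclusion isometry $V=\bbm V_1\\V_2\ebm$ of $\h\oplus\{0\}$; the self-adjoint contraction is $A=V_1^*V_1-V_2^*V_2$, read off from the polar decompositions of $V_1,V_2$, and $B$ is $E$ up to the unitary polar factors. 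The identities $C_{11}^2-C_{12}C_{21}=\de$ and $C_{11}C_{12}=C_{12}C_{22}$ that you cite belong to Lemma \ref{par.lem.20} and are not the mechanism for extracting $A$ and $B$. Third, the statement includes $\de=0$, which lies outside the germinator machinery (genericity requires $\de>0$) and must be dispatched separately by Ando's original theorem, as the paper's remark after Theorem \ref{Ando-ellipse} notes.
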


\begin{center}\sc 1.6. A connection to Douglas-Paulsen operators  \end{center}

 In Section \ref{Douglas-Paulsen} we probe yet another application of Theorem 
\ref{par.thm.10.intro} and  of  $(\de,\h)$-germinators. 
For $\de\in [0,1)$ we adopt the notation
\begin{align}\label{defRde}
R_\de &\df  \left\{z\in\c: \de<|z|<1\right\}.
\end{align}
For any $\de\in(0,1)$ an operator $T$ on some Hilbert space $\h$ is said to be a {\em Douglas-Paulsen operator with parameter $\de$} if $\|T\|\leq 1$ and $\|T\inv\|\leq 1/\de$.   
 The name recognizes work of R. G. Douglas and V. Paulsen \cite{dp86}
which gave a dilation theory for Douglas-Paulsen operators.  An important step in their theory was the following estimate.
{\em If $X$ is a Douglas-Paulsen operator with parameter $\de$, $\sigma(X)\subseteq R_\de$ and $\phi$ is a bounded holomorphic matrix-valued function on $R_\de$ then}
\be\label{dps}
\|\phi(X)\| \leq \left(2+\frac{1+\de}{1-\de}   \right) \sup_{z\in R_\de} \|\phi(z)\|.
\ee
This result allowed them to show  that if $T \in \b(\h)$ is a Douglas-Paulsen operator, then there exists an invertible $S \in \b(\h)$ such that 
\be
\|S\| \|S^{-1}\|\leq \left(2+\frac{1+\de}{1-\de}   \right) 
\ee
and $S T S^{-1}$ dilates  to a normal operator  with spectrum contained in the boundary $\partial R_\de$. 

In the scalar case a slightly stronger result had been obtained earlier by A. Shields \cite[Proposition 23]{Shields}, with the smaller constant $2+\sqrt{\frac{1+\de}{1-\de}}$ on the right hand side.
He asked whether the constant $2+\sqrt{\frac{1+\de}{1-\de}}$  could be replaced by a quantity that remains bounded as $\de\to 1$.  This question was answered 
in the affirmative 
 by  C. Badea, B. Beckermann and M. Crouzeix  \cite{bbc2009}
and subsequently a better constant was established by
M. Crouzeix \cite{Crouzeix}.

It is relatively straightforward to show that if $X$ is a Douglas-Paulsen operator with parameter $\de$ then $W(X+\de X\inv)\subseteq K_\de$. However, not every operator with numerical range in $K_\de$ is itself of the form $X+\de X\inv$ for some Douglas-Paulsen operator $X$ with parameter $\de$ (see Fact \ref{10.3}).
 It is, though, the case that every operator $T$ such that $W(T)\subseteq K_\de$ can be {\em extended} to a Douglas-Paulsen operator with parameter $\de$, as is summarized in the following statement, which follows from
 Theorem \ref{conn-to-D-P-oper} in the body of the paper.
\begin{thm}\label{conn-to-D-P-oper-intro}
Let $\h$ be a finite-dimensional Hilbert space, let $T\in\b(\h)$ and let $\de\in(0,1)$.
$W(T)\subseteq K_\de$ if and only if there exists a Hilbert space $\k$ containing $\h$ and a Douglas-Paulsen operator $X \in  \b(\k)$ with parameter $\de$ such that $\h$ is invariant under $X+\de X\inv$ and $T$ is the restriction of $X+\de X\inv$ to $\h$.
\end{thm}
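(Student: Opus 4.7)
The plan is to establish the equivalence in two directions, both anchored by the contractive block-matrix criterion of Theorem \ref{par.thm.10.intro}. The ``only if'' direction is the heart of the argument: assuming $T$ is generic for $K_\de$ with $W(T)\subseteq K_\de$, I would build the Douglas--Paulsen extension directly on $\k\df\h\oplus\h$. Pick $Q$ commuting with $T$ with $Q^2 = T^2-4\de$ and the invertible $S\in\b(\h)$ produced by Theorem \ref{par.thm.10.intro}, and set
\[
X \df \frac12\begin{bmatrix} T & QS^{-1} \\ SQ & STS^{-1}\end{bmatrix},
\]
so $\|X\|\le 1$. With $J = \diag(I,-I)$, the relations $TQ=QT$ and $Q^2=T^2-4\de$ give, by a direct block multiplication, $X\cdot JXJ = \de I_\k$. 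Consequently $X$ is invertible with $X^{-1}=\de^{-1}JXJ$, so $\|X^{-1}\|=\|X\|/\de\le 1/\de$, exhibiting $X$ as a Douglas--Paulsen operator with parameter $\de$. Since $JXJ$ agrees with $X$ on the diagonal blocks and differs in sign on the off-diagonal blocks,
\[
X+\de X^{-1} = X+JXJ = \begin{bmatrix} T & 0 \\ 0 & STS^{-1}\end{bmatrix},
\]
so $\h\cong\h\oplus\{0\}\subset\k$ is invariant under $X+\de X^{-1}$ with restriction $T$, which is exactly the required extension.

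For the ``if'' direction, suppose $X\in\b(\k)$ is Douglas--Paulsen with parameter $\de$ and $\h$ is invariant under $X+\de X^{-1}$ with restriction $T$. Since compression to an invariant subspace only shrinks the numerical range, it suffices to show $W(X+\de X^{-1})\subseteq K_\de$, which I would verify via Theorem \ref{morestrange}. The factorisation $1-z(X+\de X^{-1})+\de z^2 = (1-zX)(1-\de zX^{-1})$ together with the partial-fraction identity (valid because $X$ commutes with $X^{-1}$),
\[
\frac{1-\tfrac12 z(X+\de X^{-1})}{1-z(X+\de X^{-1})+\de z^2} = \tfrac12(1-zX)^{-1} + \tfrac12(1-\de zX^{-1})^{-1},
\]
presents the left-hand side as a sum of two resolvents of contractions. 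Each summand admits a Sz.-Nagy unitary dilation; combining these via the isometric embedding $u\mapsto\frac{1}{\sqrt 2}(J_1u, J_2u)$ into the direct sum of the dilation spaces produces the single unitary $U$ and isometry $I$ demanded by the right-hand side of Theorem \ref{morestrange}.

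The principal obstacles I anticipate are two. First, the algebraic identity $X\cdot JXJ=\de I$ must be verified cleanly from the block form of $X$: the off-diagonal blocks of the product vanish by $TQ=QT$, while the diagonal blocks collapse to $\de I_\h$ by $T^2-Q^2=4\de$. Second, Theorem \ref{par.thm.10.intro} is stated under a genericity hypothesis, so reducing the general finite-dimensional case requires an approximation argument: first replace $T$ by $(1-\eps)T$ to push $W$ strictly inside $K_\de$, then perturb to obtain generic $T_n\to T$ with $W(T_n)\subseteq K_\de$. The corresponding extensions $X_n\in\b(\h\oplus\h)$ lie in the compact unit ball, and any subsequential limit $X$ inherits $X\cdot JXJ=\de I$ by continuity, hence remains an invertible Douglas--Paulsen operator with $(X+\de X^{-1})|_{\h\oplus\{0\}}=T$.
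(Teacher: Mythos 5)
Your proof is correct. The \emph{only if} direction is essentially the paper's own argument in disguise: the operator $X$ you build on $\h\oplus\h$ is exactly the $(\de,\h)$-germinator $C=\diag(1,S)\,Z_T\,\diag(1,S)^{-1}$ of Theorem \ref{thm.10}, and your block computations $X\cdot JXJ=\de$ and $X+JXJ=T\oplus STS^{-1}$ reprove the germinator identity \eqref{par.50.def} and Lemma \ref{par.lem.20} from scratch rather than citing them, after which the compactness approximation to remove genericity matches the paper's treatment in Proposition \ref{dp.prop.20}. The \emph{if} direction, however, takes a genuinely different and arguably cleaner route. The paper proves $W(\pi(X))\subseteq K_\de$ via Proposition \ref{dp.prop.10}, which again reduces to the contractive-block criterion of Theorem \ref{par.thm.10} and requires a genericity assumption plus an approximation step, and which is stated only for finite-dimensional $\k$. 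You instead verify the strange-dilation characterisation of Theorem \ref{morestrange} directly: the factorisation $1-zT+\de z^2=(1-zX)(1-\de zX^{-1})$ and the resulting partial-fraction identity express $F(z)$ as a $\tfrac12$--$\tfrac12$ average of resolvents of the contractions $X$ and $\de X^{-1}$, whose Sz.-Nagy dilations you glue with the isometry $u\mapsto\tfrac{1}{\sqrt2}(J_1u,J_2u)$. This bypasses genericity and approximation entirely and, crucially, works verbatim for an arbitrary Hilbert space $\k$ — exactly the generality the statement of Theorem \ref{conn-to-D-P-oper-intro} demands, and slightly more than the paper's $\k=\h\oplus\h$ version \ref{conn-to-D-P-oper} supplies for the \emph{if} direction. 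One hygiene point worth recording explicitly: Theorem \ref{morestrange} presupposes $\sigma(\pi(X))\subseteq K_\de$ so that $F$ is defined, but this is automatic since $\sigma(X)\subseteq R_\de^-$ and $\pi(R_\de^-)\subseteq K_\de$.
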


\begin{center}\sc 1.7. The B. and F. Delyon family for an ellipse  \end{center}
For $\de\in [0,1)$ we adopt the notation
\begin{align}\label{defGde}
G_\de &\df  \left\{x+iy: x,y\in \r, \frac{x^2}{(1+\de)^2} + \frac{y^2}{(1-\de)^2} < 1\right\}.
\end{align}
In Section \ref{norms-BFD-DP} we shall describe relations between holomorphic functions on the elliptical region\footnote{ $G_\de$ is the interior of $K_\de$ introduced previously.} 
$G_\de$ and on the annulus $R_\de$ that arise from the map $\pi: R_\de \to G_\de, \pi(z) = z+\de/z$, which maps $R_\de$ onto $G_\de$ in a 2-to-1 manner.

We recall from \cite[Chapter 9]{amy20} 
that  the {\em B. and F. Delyon family} $\f_{\mathrm {bfd}}(C)$ corresponding to
a bounded open convex set $C$ in $\c$  is the class of operators $T$ such that  the closure of the numerical range of $T$, $\overline{W(T)} \subseteq C$.  The nomenclature is a tribute to the ground-breaking theorem of the brothers B. and F. Delyon \cite{bfd1999}, which asserts that, for any $T\in\b(\h)$ such that  $\overline{W(T)} \subseteq C$  and any polynomial $p$,
\[
\|p(T)\| \leq \kappa(C) \sup_{z\in C} |p(z)|,
\]
where
\[
\kappa(C) = 3+ \left(\frac{2\pi(\mathrm{diam}(C))^2}{\mathrm{area}(C)}\right)^3.
\]
By \cite[Theorem 1.2-1]{GuRao97}, the spectrum $\sigma(T)$  of an operator $T$ is contained in $\overline{W(T)}$, and so, by the Riesz-Dunford functional calculus, $\ph(T)$ is defined for all $\phi\in \hol(C)$ and $T\in \f_{\mathrm {bfd}}(C)$. 
Recall further from \cite[Chapter 9]{amy20}, that
the {\em Douglas-Paulsen family } $\f_{\mathrm{dp}}(\de)$ corresponding to the annulus $R_\de$  is  the class of Douglas-Paulsen operators $X$ with parameter $\de$ 
 that satisfy the additional condition $ \sigma(X)\subseteq R_\de$. 
For $\Omega$ an open set in the plane, $\hol(\Omega)$ will denote the set of holomorphic functions defined on $\Omega$   and $\hinf(\Omega)$ will denote the set of bounded holomorphic functions defined on $\Omega$, with the supremum norm $\norm{\phi}_\infty= \sup_{z\in\Omega}|\phi(z)|$. 
 Following the setup of \cite[Chapter 9]{amy20}, we consider the calcular norms
\be \label{dpnorm}
\norm{\phi}_{\mathrm {dp}} = \sup_{X\in  \f_{\mathrm{dp}}(\de) }\norm{\phi(X)},
\ee
defined for $\phi \in \hol(R_\delta)$, and
\be \label{bfdnorm}
\norm{\phi}_{\mathrm {bfd}}=\sup_{T\in \f_{\mathrm {bfd}}(G_\de)}\norm{\phi(T)},
\ee
defined for $\phi \in \hol(G_\delta)$. 
There is no guarantee that the quantities defined by equations \eqref{dpnorm} and \eqref{bfdnorm} are finite.
Accordingly, we introduce the associated Banach algebras
\[
\hinf_{\mathrm {dp}}(R_\de)=\set{\phi \in \hol(R_\delta)}{\norm{\phi}_{\mathrm {dp}}<\infty}
\]
and
\[
\hinf_{\mathrm {bfd}}(G_\de)=\set{\phi \in \hol(G_\delta)}{\norm{\phi}_{\mathrm {\mathrm {bfd}}} <\infty}.
\]

The relationship described in Theorem \ref{conn-to-D-P-oper-intro}
 between the  B. and F. Delyon class corresponding to the elliptical region $G_\de$ and the Douglas-Paulsen operators with parameter $\de$ induces the following inequality
 \[
 \|\phi\|_{\mathrm {bfd}} \leq \|\phi\circ\pi\|_{\mathrm {dp}} \; \text{for all} \;
 \phi\in \hol(G_\de).
 \]
Surprisingly, Theorem \ref{dp.thm.10} from Section \ref{norms-BFD-DP} below shows that the reverse inequality also holds, so that
\[
\|\phi\|_{\mathrm {bfd}} = \|\phi\circ\pi\|_{\mathrm {dp}}   \quad \text{for all} \; \phi\in\hol(G_\delta).
\]

We shall say that a function $f\in\hol(R_\de)$ is {\em symmetric with respect to the involution $\lam\mapsto \de/\lam$ of $R_\de$} if $f(\lam)= f(\de/\lam)$ for all $\lam\in R_\de$.
By noting that a holomorphic function $f$ on $R_\de$ has the form $\phi\circ\pi$ for some $\phi\in\hol(R_\de)$ if and only if $f$ is symmetric with respect to the involution $z\mapsto \de/z$ on $R_\de$, we obtain  a precise relation between  the spaces $\hinf_{\mathrm {dp}}(R_\de)$ and $\hinf_{\mathrm {bfd}}(G_\de)$
(see Theorem \ref{dp.thm.10} below). 

\begin{thm}\label{dp.thm.10.intro} Let $\de\in(0,1)$.
The mapping $\phi  \mapsto \phi\circ\pi$ from
$ \hol(G_\delta)$ to $\hol(R_\delta)$
is an isometric isomorphism from $\hinf_{\mathrm {bfd}}(G_\de)$ onto the set
 of functions in $\hinf_{\mathrm {dp}}(R_\de)$ that are symmetric with respect to the involution $\lam\mapsto \de/\lam$ of $R_\de$.
\end{thm}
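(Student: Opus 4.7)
The map $\phi\mapsto\phi\circ\pi$ is clearly a unital algebra homomorphism from $\hol(G_\de)$ into $\hol(R_\de)$ whose image, as noted immediately before the theorem, is exactly the subalgebra of functions symmetric under the involution $\lam\mapsto\de/\lam$. Thus the entire content of the theorem lies in the isometry $\|\phi\|_{\mathrm{bfd}}=\|\phi\circ\pi\|_{\mathrm{dp}}$; once this is established, finiteness on either side automatically identifies $\hinf_{\mathrm{bfd}}(G_\de)$ with the symmetric part of $\hinf_{\mathrm{dp}}(R_\de)$ as Banach algebras.

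For the inequality $\|\phi\|_{\mathrm{bfd}}\le\|\phi\circ\pi\|_{\mathrm{dp}}$, which was already flagged in the introduction, I would take $T\in\f_{\mathrm{bfd}}(G_\de)$, so that $\overline{W(T)}\subseteq G_\de$ and in particular $W(T)\subseteq K_\de$, and invoke Theorem \ref{conn-to-D-P-oper-intro} to produce a Douglas-Paulsen operator $X$ on some superspace $\k\supseteq\h$ with $\h$ invariant under $\pi(X)$ and $T=\pi(X)|_\h$. The holomorphic functional calculus then gives $\phi(T)=(\phi\circ\pi)(X)|_\h$ and hence $\|\phi(T)\|\le\|(\phi\circ\pi)(X)\|$. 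A wrinkle here is that Theorem \ref{conn-to-D-P-oper-intro} delivers only a Douglas-Paulsen operator, while $\f_{\mathrm{dp}}(\de)$ additionally requires the strict spectral condition $\sigma(X)\subseteq R_\de$; I would close this gap by first replacing $T$ by $rT$ with $r<1$, whose numerical range closure sits in the compact set $rK_\de\subseteq G_\de$ and hence forces the spectrum of the associated $X$ strictly inside $R_\de$, and then letting $r\to 1$ using norm-continuity of the calculus together with $\phi(rT)\to\phi(T)$.

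For the reverse inequality $\|\phi\circ\pi\|_{\mathrm{dp}}\le\|\phi\|_{\mathrm{bfd}}$ I would take $X\in\f_{\mathrm{dp}}(\de)$, set $T=\pi(X)=X+\de X\inv$, and aim to conclude $T\in\f_{\mathrm{bfd}}(G_\de)$, which would give $\|(\phi\circ\pi)(X)\|=\|\phi(T)\|\le\|\phi\|_{\mathrm{bfd}}$. The straightforward observation recorded in Subsection 1.6 gives $W(T)\subseteq K_\de$, but the strict inclusion $\overline{W(T)}\subseteq G_\de$ needs more. I would secure it by a two-step approximation: first invoke the Douglas-Paulsen similarity-dilation theorem quoted in Subsection 1.6 to realize $X$, after an inner similarity of controlled norm, as having a normal dilation $N$ with $\sigma(N)\subseteq\partial R_\de$; then spectrally push the relevant spectral measure of $N$ slightly inward to obtain operators $X_r\to X$ in operator norm with $\sigma(X_r)$ contained in a compact subset of $R_\de$. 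Spectral mapping places $\sigma(\pi(X_r))$ in a compact subset of $G_\de$, and a short estimate on $W(\pi(X_r))$ carried out in the normal model confines $\overline{W(\pi(X_r))}$ to $G_\de$; thus $\pi(X_r)\in\f_{\mathrm{bfd}}(G_\de)$, so $\|\phi(\pi(X_r))\|\le\|\phi\|_{\mathrm{bfd}}$, and norm-continuity of the calculus passes this bound to $X$ in the limit.

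The main obstacle is precisely this strict-boundary issue for $\pi(X)$ in the reverse direction: the inclusion $W(X+\de X\inv)\subseteq K_\de$ need not be strict at the boundary from $\f_{\mathrm{dp}}$-membership of $X$ alone, and the cleanest route to controlling it seems to be via the Douglas-Paulsen similarity-to-normal-dilation theorem combined with a spectral perturbation in the ambient normal model.
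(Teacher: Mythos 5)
Your overall plan is structurally right: reduce to the two norm inequalities, then use the characterization of $\ran\pi^\sharp$ as the symmetric subalgebra, with Theorem \ref{conn-to-D-P-oper-intro} pushing one inequality and the elementary inclusion $W(X+\de X^{-1})\subseteq K_\de$ pushing the other. The serious gap is that both of these tools are stated and proved in the paper \emph{only for operators on finite-dimensional spaces} (see the hypotheses of Theorem \ref{conn-to-D-P-oper-intro} / Theorem \ref{conn-to-D-P-oper} and Proposition \ref{dp.prop.10}), while the families $\f_{\mathrm{bfd}}(G_\de)$ and $\f_{\mathrm{dp}}(\de)$ over which the calcular norms are defined consist of operators on arbitrary Hilbert spaces. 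You apply Theorem \ref{conn-to-D-P-oper-intro} to a general $T\in\f_{\mathrm{bfd}}(G_\de)$ as though it were unrestricted. The paper fills precisely this hole by first proving that both calcular norms are already attained as suprema over matrices (Propositions \ref{dpmatrices} and \ref{bfdmatrices}); this reduction is not a formality — it rests on the Agler--McCarthy realization theorem for $H^\infty_\ga$ families (Theorem \ref{amthm1} and Corollary \ref{matrices}) and, for the bfd side, on the Badea--Beckermann--Crouzeix approximation (Lemma \ref{bbclem20}). Your outline omits this entirely, so as written the argument is incomplete.

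For the reverse inequality $\|\phi\circ\pi\|_{\mathrm{dp}}\le\|\phi\|_{\mathrm{bfd}}$ you also depart from the paper's route, and the departure does not clearly work. You propose passing to the Douglas--Paulsen normal dilation of a \emph{similarity transform} $SXS^{-1}$ and then perturbing the spectral measure of $N$ inward; but the numerical range is not invariant under similarity, and compressing $\sigma(X_r)$ into a compact subset of $R_\de$ does not by itself put $\overline{W(\pi(X_r))}$ inside $G_\de$ (the numerical range can be far larger than the spectrum). This step, as sketched, conflates spectral and numerical-range control. The paper's route is simpler: after the matrix reduction, apply Proposition \ref{dp.prop.10} to a matrix $X\in\f_{\mathrm{dp}}(\de)$, which gives $W(\pi(X))\subseteq K_\de$, and then handle the boundary with exactly the rescaling $T\mapsto rT$ (using $rK_\de\subset G_\de$ and continuity of the functional calculus) that you already employ in the opposite direction. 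In short, carry over your own rescaling trick, add the matrix reduction, and drop the similarity-dilation/spectral-perturbation detour.
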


\begin{center}\sc 1.8.   An extremal problem for analytic functions on the symmetrized bidisc
 \end{center}

In the previous subsection we described results of Section \ref{norms-BFD-DP} which show how  the B. and F. Delyon norm $\|\cdot\|_{\mathrm{bfd}}$ can be interpreted in terms of function theory on $R_\de$.  
 In Section \ref{delyonnorm} we show that $\|\cdot\|_{\mathrm{bfd}}$  on $\hol(G_\de)$ can also be interpreted in terms of function theory on the symmetrized bidisc $G$,
the domain in $\c^2$ defined by
\begin{align*}
G & \df \{(z+w,zw):z,w\in\c, |z|<1,|w|<1\} \\
	&= \{(s,p)\in\c^2: |s-\bar s p| < 1-|p|^2\}.
\end{align*}
We observe that, for $\de\in (0,1)$ and $s\in\c$, a point $(s,\de)\in\c^2$ belongs to $G$ if and only if $s\in G_\de$.  Thus, if we are given a function $\phi\in\hol(G_\de)$, we could look to express $\phi$ as the restriction to $G_\de$ of a function $\Phi \in \hol(G)$.  Oka's Extension Theorem tells us there does exist $\Phi\in\hol(G)$ such that $\Phi(s,\de)=\phi(s)$ for all $s\in G_\de$.  We ask further, for which $\phi\in\hol(G_\de)$ can $\Phi$ be chosen bounded, and what is the minimal $H^\infty$ norm of all $\Phi$ that extend $\phi$?

The next theorem gives a full answer to these questions.
\begin{thm} \label{extremalproblem} Let $\de\in(0,1)$.  For any $\phi\in \hol(G_\de)$, the minimum of $\|\Phi\|_{H^\infty(G)}$ over all functions $\Phi \in H^\infty(G)$ that extend the function $(s,\de)\mapsto \phi(s)$ is $\norm{\phi}_{\mathrm {bfd}}$.
In particular, an analytic function $\phi$ on $G_\de$ has a bounded extension $\Phi$ to $G$ if
and only if $\phi\in H^\infty_{\mathrm{bfd}}(G_\de)$.\footnote{Note that, by the theorem of B. and F. Delyon, an analytic function $\phi$ on $G_\de$ belongs to $H^\infty_{\mathrm{bfd}}(G_\de)$ if and only if $\phi$ is bounded.  It follows that an analytic function $\phi$ on $G_\de$ has a bounded analytic extension $\Phi$ to $G$ if
and only if $\phi\in H^\infty(G_\de)$, though in general it can happen that the infimum of $\|\Phi\|_\infty$  over all such extensions $\Phi$ is strictly greater than $\|\phi\|_\infty$. }
\end{thm}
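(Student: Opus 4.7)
The plan is to prove both inequalities between $\norm{\phi}_{\mathrm{bfd}}$ and $\inf_\Phi\norm{\Phi}_{\hinf(G)}$ (the infimum running over all $\Phi\in\hinf(G)$ that extend the function $(s,\de)\mapsto\phi(s)$) and to verify that the infimum is attained.  The lower bound will be established via Theorem~\ref{conn-to-D-P-oper-intro} combined with And\^o's dilation theorem; the upper bound requires an explicit construction of a norm-achieving extension, and that is where the substantive work lies.

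\emph{Lower bound.} Given an extension $\Phi$ and $T\in\b(\h)$ with $W(T)\subseteq K_\de$, apply Theorem~\ref{conn-to-D-P-oper-intro} to obtain a Douglas--Paulsen operator $X$ with parameter $\de$ on some $\k\supseteq\h$ such that $\h$ is invariant under $X+\de X\inv$ and $T=(X+\de X\inv)|_\h$. The pair $(X,\de X\inv)$ is a commuting pair of contractions on $\k$, so And\^o's theorem produces commuting unitaries $(U_1,U_2)$ on $\k'\supseteq\k$ with $X^m(\de X\inv)^n = P_\k U_1^m U_2^n|_\k$ for all $m,n\ge 0$.  Expanding $\Phi\circ\sigma$, where $\sigma(z,w)=(z+w,zw)$, as a locally uniform limit of symmetric polynomials in $z,w$ and invoking the And\^o relations yields $\Phi(X+\de X\inv,\de I) = P_\k\,\Phi(U_1+U_2,U_1 U_2)|_\k$.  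Since the joint spectrum of $(U_1,U_2)$ lies in $\ttwo$, that of $(U_1+U_2,U_1 U_2)$ lies in the distinguished boundary of $\bar G$; the spectral theorem together with the maximum principle on $G$ give $\norm{\Phi(U_1+U_2,U_1 U_2)}\le\norm{\Phi}_{\hinf(G)}$.  Restricting to the invariant subspace $\h$ yields $\phi(T)=P_\h\,\Phi(U_1+U_2,U_1 U_2)|_\h$, whence $\norm{\phi(T)}\le\norm{\Phi}_{\hinf(G)}$, and supremising over $T$ gives $\norm{\phi}_{\mathrm{bfd}}\le\norm{\Phi}_{\hinf(G)}$.

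\emph{Upper bound and main obstacle.} Theorem~\ref{dp.thm.10.intro} identifies $f\df\phi\circ\pi$ with an element of $\hinf_{\mathrm{dp}}(R_\de)$ of norm $\norm{\phi}_{\mathrm{bfd}}$ that is symmetric under $z\mapsto\de/z$.  The symmetrization map $\sigma$ furnishes an isometric isomorphism of $\hinf(G)$ onto the subspace of symmetric functions in $\hinf(\dtwo)$, under which the slice $\{(s,\de):s\in G_\de\}$ of $G$ corresponds to the analytic subvariety $V\df\set{(z,\de/z)}{z\in R_\de}\subseteq\dtwo$.  The problem therefore reduces to producing a symmetric $F\in\hinf(\dtwo)$ with $F(z,\de/z)=f(z)$ for $z\in R_\de$ and $\norm{F}_\infty\le\norm{f}_{\mathrm{dp}}$; averaging $F(z,w)$ with $F(w,z)$ preserves the trace on $V$ (by symmetry of $f$) and does not increase the sup norm, so it suffices to produce any such extension.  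The $\ge$ half of the resulting quotient identity
\[
\inf\set{\norm{F}_{\hinf(\dtwo)}}{F\in\hinf(\dtwo),\ F|_V=f} \;=\; \norm{f}_{\mathrm{dp}}
\]
follows from And\^o together with the identity $F(X,\de X\inv)=f(X)$ (valid by the Riesz calculus whenever $X$ is a Douglas--Paulsen operator with $\sigma(X)\subseteq R_\de$), and in essence reprises the lower-bound argument.  The $\le$ half --- the construction of a norm-preserving bidisc extension of a dp-bounded function on $V$ --- is the crux of the proof; I would attempt it either via a transfer-function realization built from an extremal Douglas--Paulsen operator attaining $\norm{f}_{\mathrm{dp}}$ (whose existence follows from a direct-sum / weak-$*$ compactness argument), or via a commutant-lifting-style extension in the spirit of the symmetrized-bidisc theory.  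Once such an extension is in hand, attainment of the infimum follows from weak-$*$ compactness of the unit ball of $\hinf(G)$.
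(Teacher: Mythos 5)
Your overall structure follows the same reduction as the paper: translate via $\pi^\sharp$ and Theorem~\ref{dp.thm.10.intro} to the annulus picture, observe that the target slice of $G$ corresponds under the symmetrization map to the curve $V=\{(z,\de/z):z\in R_\de\}\subseteq\d^2$, and symmetrize any bidisc extension to descend to a function on $G$. Your lower bound is argued differently — by extending an arbitrary $T$ with $W(T)\subseteq K_\de$ to $\pi(X)$ for a Douglas--Paulsen operator $X$ (Theorem~\ref{conn-to-D-P-oper-intro}) and then applying And\^o's unitary dilation to the commuting pair $(X,\de X\inv)$ — whereas the paper's Proposition~\ref{sym.prop.10} routes this direction too through the cited extension theorem for the annulus. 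Your operator-theoretic argument is a reasonable alternative, though it quietly needs Proposition~\ref{bfdmatrices} (or an equivalent matrix-reduction) since Theorem~\ref{conn-to-D-P-oper-intro} is stated only for finite-dimensional $\h$, and it glosses over the passage from polynomial dilation identities to $H^\infty(G)$ functional calculus on operators with boundary joint spectrum.

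The genuine gap is in the upper bound, and you name it yourself: the assertion that every symmetric $f\in\hinf_{\mathrm{dp}}(R_\de)$ admits a bidisc extension $F\in\hinf(\d^2)$ with $F|_V=f$ and $\norm{F}_\infty=\norm{f}_{\mathrm{dp}}$ is the crux, and you say only that you \emph{would attempt} it via a transfer-function realization or a commutant-lifting argument, without executing either. The paper does not re-derive this fact either, but it closes the gap by invoking a specific, previously established result, \cite[Theorem 9.56]{amy20}, which is precisely the statement that $\norm{\cdot}_{\mathrm{dp}}$ equals the quotient norm from $\hinf(\d^2)$ restricted to $V$, with the infimum attained. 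As your write-up stands, the inequality $\inf_\Phi\norm{\Phi}_{H^\infty(G)}\le\norm{\phi}_{\mathrm{bfd}}$ is asserted but not proved; a correct proof must either carry out one of the two constructions you sketch or cite that extension theorem (and, once one has an extension of the correct norm, the infimum is automatically attained and the concluding weak-$*$ compactness step is unnecessary).
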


It follows from Theorem \ref{extremalproblem} that $\hinf_{\mathrm {bfd}}(G_\delta)$ is isometrically isomorphic as a Banach algebra to the quotient algebra $\hinf(G)$ modulo the ideal $\{\Phi\in\hinf(G):\Phi(s,\de)=0$ for all $s\in G_\de\}$, with the quotient norm.\\

\section{A condition for $W(T)$ to be in an ellipse}\label{ellipse}
Let $\delta \in [0,1)$.  We have introduced the open and closed elliptical sets $G_\delta$ and $K_\delta$ defined in equations \eqref{defGde} and \eqref{defKde}.
Their common boundary is
\[
\Gamma_\delta = K_\delta \setminus G_\delta = \set{x+iy\in \c}{\frac{x^2}{(1+\delta)^2} +\frac{y^2}{(1-\delta)^2} = 1}.
\]
We shall make extensive use of the Zhukovskii map, which relates an annulus to the elliptical region $G_\de$.  It is widely used in engineering applications, especially in aerodynamics, in the design of aerofoils \cite[page 677]{krey}.
\begin{lem}\label{ell.lem.10}
Let $\delta \in [0,1)$. If $f:\c \cup \{\infty\} \to \sphere$ is defined by
\[
f(z) = 1/z +\delta z,
\]
then
\begin{enumerate}[(i)]
\item $f$ maps $\d$ in a 1-1 manner onto $(\sphere) \setminus K_\delta$,
\item for $\delta \in (0,1)$, $f$ maps $A_\delta= \set{z \in \c}{1<|z|<1/\delta}$ in a 2\,-1 manner onto $G_\delta$,
 except for the points $\pm 2\sqrt{\de} \in G_\de$, whose pre-images in $A_\de$ are the singleton sets $\{\pm1/\sqrt{\de}\}$.
\item for $\delta \in (0,1)$, $f$ maps $\set{z \in \c}{|z|>1/\delta}$ in a 1\,-1 manner onto $(\sphere) \setminus K_\delta$, and
\item $f$ maps $\t$ onto $\Gamma_\delta$ and $f'(z) \ne 0$ for all $z \in \t$.
\end{enumerate}
\end{lem}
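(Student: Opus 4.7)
My plan is to exploit the symmetry of $f$ under the involution $\iota: z \mapsto 1/(\de z)$ of $\sphere$, which satisfies $f\circ\iota = f$ because $1/\iota(z) + \de \iota(z) = \de z + 1/z = f(z)$. Since $f$ is a rational function of degree $2$ on $\sphere$, each fibre $f\inv(w)$ consists of a pair $\{z,\iota(z)\}$, counted with multiplicity. The critical points, obtained from $f'(z)=\de - 1/z^2 = 0$, are precisely the two fixed points $\pm 1/\sqrt\de$ of $\iota$, and $f(\pm 1/\sqrt\de) = \pm 2\sqrt\de$. The case $\de=0$ reduces to $f(z)=1/z$, the standard M\"obius inversion, for which (i) and (iv) are immediate; hence I assume $\de\in(0,1)$ below.

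I handle (iv) first by a direct calculation. Parametrizing $\t$ by $z=e^{i\theta}$ yields $f(e^{i\theta}) = (1+\de)\cos\theta - i(1-\de)\sin\theta$, which covers $\Gamma_\de$ as $\theta$ varies over $[0,2\pi)$; and for $z\in\t$ we have $|1/z^2|=1\ne\de$, so $f'(z)\ne 0$.

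The next step, which is the crux of the argument, is the identification $f\inv(\Gamma_\de) = \t\cup\{|z|=1/\de\}$. Since $\iota(\t)=\{|z|=1/\de\}$ and $f\circ\iota=f$, both circles lie in $f\inv(\Gamma_\de)$; equality then follows from the degree-$2$ count, as each point of $\Gamma_\de$ has exactly two preimages under $f$. Consequently $\sphere\setminus f\inv(\Gamma_\de)$ splits into the three open components $\d$, $A_\de$, $\{|z|>1/\de\}$, and $f$ carries each one holomorphically into one of the two components $G_\de$, $\sphere\setminus K_\de$ of $\sphere\setminus\Gamma_\de$. Since $f(0)=f(\infty)=\infty\notin K_\de$, both $\d$ and $\{|z|>1/\de\}$ map into $\sphere\setminus K_\de$, forcing $f(A_\de)\subseteq G_\de$.

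Finally I count preimages inside each source region. For $w\in\sphere\setminus K_\de$ the pair of preimages $\{z,\iota(z)\}$ satisfies $|z|\cdot|\iota(z)|=1/\de$, so exactly one preimage lies in $\d$ and one in $\{|z|>1/\de\}$, proving (i) and (iii) simultaneously. For $w\in G_\de$ both preimages lie in $A_\de$, and they coincide precisely at the critical values $w=\pm 2\sqrt\de$, whose unique preimages in $A_\de$ are $\pm 1/\sqrt\de$; elsewhere they are distinct, which yields (ii) with the stated exceptions. The only mildly non-trivial step is the identification of $f\inv(\Gamma_\de)$ with the two circles, which is essentially forced by degree considerations combined with the $\iota$-invariance of $f$; everything else then follows mechanically.
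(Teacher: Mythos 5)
The paper states Lemma~\ref{ell.lem.10} without proof, treating it as a classical fact about the Zhukovskii map, so there is no paper argument to compare against. Your proof is correct and self-contained, and the organizing idea — the involution $\iota(z)=1/(\de z)$ with $f\circ\iota=f$ together with the degree-$2$ fibre count — is exactly the right tool: it identifies $f\inv(\Gamma_\de)$ as $\t\cup\{|z|=1/\de\}$ and then sorts the three annular components of the complement into the two components of $\sphere\setminus\Gamma_\de$ cleanly.

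One step is stated more tightly than it is actually justified. In the final paragraph you write that for $w\in\sphere\setminus K_\de$ the product $|z|\cdot|\iota(z)|=1/\de$ \emph{by itself} forces one preimage into $\d$ and the other into $\{|z|>1/\de\}$. That implication fails in isolation: the product constraint is also satisfied when both preimages lie on the circle $|z|=1/\sqrt\de\subset A_\de$ (indeed $\iota$ maps that circle to itself). What saves the argument is the conclusion of your previous paragraph, namely that $f(A_\de)\subseteq G_\de$, which already rules out any preimage of $w$ lying in $A_\de$; combined with $f\inv(\Gamma_\de)=\t\cup\{|z|=1/\de\}$, both preimages must lie in $\d\cup\{|z|>1/\de\}$, and only then does the product constraint deliver one in each. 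It would be worth making this dependence explicit (or, alternatively, simply noting that $f(1/\sqrt\de)=2\sqrt\de\in G_\de$ to establish $f(A_\de)\subseteq G_\de$ directly, without the surjectivity-by-exhaustion argument). The ``forcing'' step for $f(A_\de)\subseteq G_\de$ has the same flavour — it is correct because the degree-$2$ rational $f$ is onto $\sphere$, so some component must cover $G_\de$, but spelling that out would tighten the exposition.
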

\begin{prop}\label{ell.prop.10} Let $\de \in [0,1)$.
Let $T\in \b(\h)$ with $\sigma(T) \subseteq K_\delta$. Then, for
$f$ as described in Lemma \ref{ell.lem.10},
\[
G(z) = \frac{zf'(z)}{T-f(z)}
\]
 is an analytic  $\b(\h)$-valued function on $\d\setminus\{0\}$ which extends to an analytic function on $\d$ satisfying $G(0)=1$. Furthermore,
 \be\label{ell.10}
 W(T) \subseteq K_\delta \iff \forall_{z\in \d}\ \re G(z) \ge 0.
 \ee
\end{prop}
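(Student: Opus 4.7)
The plan has three steps: (i) simplify $G$ so that its analyticity on $\d$ and the value $G(0)=1$ are transparent; (ii) convert the inequality $\re G(z)\ge 0$ into a half-plane condition on $W(T)$ via an operator congruence; and (iii) identify the intersection, over $z\in\d$, of those half-planes with $K_\de$ using the Zhukovskii geometry.

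For (i), $zf'(z)=\de z-1/z$ and $T-f(z)=T-1/z-\de z$; multiplying numerator and denominator of $G$ by $-z$ yields
\[
G(z)=(1-\de z^2)(1-zT+\de z^2)^{-1}.
\]
The denominator is invertible precisely when $f(z)\notin\sigma(T)$, and by Lemma \ref{ell.lem.10}(i), $f(\d)=\sphere\setminus K_\de$ is disjoint from $\sigma(T)\subseteq K_\de$. Hence $G$ is holomorphic on $\d$ with $G(0)=1$.

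For (ii), a further multiplication of top and bottom by $1/z$ yields $G(z)=\omega(z)(f(z)-T)^{-1}$, where $\omega(z):=1/z-\de z$. With $A:=f(z)-T$, the identity
\[
2\re G(z)=(A^*)^{-1}\bigl[\,\overline{\omega(z)}\,A+\omega(z)\,A^*\,\bigr]A^{-1}
\]
shows that $\re G(z)\ge 0$ is equivalent to the operator inequality $\re\bigl(\overline{\omega(z)}\,T\bigr)\le\re\bigl(\overline{\omega(z)}\,f(z)\bigr)I$. Since $\re(\overline\omega\mu)$ is the real inner product of $\omega$ and $\mu$ in $\c\cong\r^2$, this is exactly the half-plane containment
\[
W(T)\subseteq H_z:=\{w\in\c:\omega(z)\cdot w\le\omega(z)\cdot f(z)\}.
\]

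For (iii), a direct calculation with $z_0=e^{i\theta}\in\t$ identifies $\omega(z_0)=\bar z_0-\de z_0$ as a positive scalar multiple of the outward normal to $\Gamma_\de$ at $f(z_0)$, so that $\lim_{r\to 1^-}H_{rz_0}$ is the inner supporting half-plane of $K_\de$ along its tangent at $f(z_0)$. One then verifies $K_\de\subseteq H_z$ for every $z\in\d$ by a Lagrange-multiplier optimisation: writing $z=re^{i\theta}$, the scalar $\omega(z)\cdot f(z)$ simplifies to $1/r^2-\de^2r^2$, and $\max_{\mu\in K_\de}\omega(z)\cdot\mu$ reduces to the inequality $(1+\de)^2\cos^2\theta/(1/r+\de r)^2+(1-\de)^2\sin^2\theta/(1/r-\de r)^2\le 1$, which holds strictly for $r<1$. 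Conversely, any $w\notin K_\de$ is strictly separated from $K_\de$ by the tangent at some $f(z_0)\in\Gamma_\de$, and continuity of $(\omega(z),f(z))$ on $\bar\d$ supplies $r<1$ near $1$ with $w\notin H_{rz_0}$. Thus $\bigcap_{z\in\d}H_z=K_\de$, and combining with (ii) delivers the equivalence. The main bookkeeping will be the sign check identifying $\omega(z_0)$ with the outward normal at $f(z_0)$ (the parametrization $e^{i\theta}\mapsto f(e^{i\theta})$ traces $\Gamma_\de$ clockwise) and the optimisation step above; once these are pinned down, the argument requires neither boundary regularity of $G$ on $\t$ nor any perturbation of $T$.
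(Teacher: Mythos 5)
Your argument is correct, and it follows a genuinely different route from the paper's in the step where the pointwise positivity condition is linked back to the numerical range. Both arguments begin the same way: expressing $G$ in the cleaned-up form $(1-\de z^2)(1-zT+\de z^2)^{-1}$ (equivalently computing the limit at $0$), and using the congruence $2\re(\omega A^{-1}) = (A^*)^{-1}(\bar\omega A + \omega A^*)A^{-1}$ to convert $\re G(z)\ge 0$ into a scalar half-plane inclusion $W(T)\subseteq H_z$. The divergence comes next. The paper restricts attention to $z\in\t$, where each $H_z$ is a supporting half-plane of $K_\de$, concludes that $\re G\ge 0$ on $\t$ iff $W(T)\subseteq K_\de$, and then transfers the boundary positivity to the interior of $\d$ via the Maximum Principle applied to the Cayley transform of $G$. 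You instead keep $z\in\d$ throughout and prove directly that $\bigcap_{z\in\d}H_z = K_\de$: the containment $K_\de\subseteq H_z$ by the elementary optimization $(1+\de)^2\cos^2\theta/(1/r+\de r)^2 + (1-\de)^2\sin^2\theta/(1/r-\de r)^2\le 1$ for $r<1$, and the reverse by separating $w\notin K_\de$ with a tangent half-plane and perturbing slightly inward. This trades the appeal to the operator-valued Maximum Principle (and the attendant need for boundary continuity of $G$) for an explicit scalar optimization and a continuity/separation argument; both are sound, but yours is more self-contained and brings out the interior-support-function geometry explicitly. One small bookkeeping point worth spelling out in the final write-up is the sign convention: the paper takes $e^{i\theta}f'(e^{i\theta})$ as the inward normal, so your $\omega(z)=-zf'(z)$ is the outward normal; this is exactly what makes your $H_z$ the supporting half-plane that contains $K_\de$, and you already flag this in your proposal.
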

\begin{proof}
By assumption, $\sigma(T) \subseteq K_\delta$.  By Lemma \ref{ell.lem.10} (i),
$f$ maps $\d\setminus\{0\}$ to the resolvent set of $T$.
Hence $(T-f(z))\inv$ is analytic on $\d\setminus\{0\}$.  Thus,
$G$ is analytic on $\d \setminus \{0\}$. Furthermore,
\[
\lim_{z \to 0} G(z) = \lim_{z \to 0} \frac{zf'(z)}{T-f(z)}= \lim_{z \to 0} \frac{\delta z^2-1}{zT-1- \delta z^2} =1,
\]
so the singularity of $G$ at 0 is removable, with $G(0)=1$.

Now observe that Lemma \ref{ell.lem.10} (iv) implies that for each $\theta$, $f(e^{i\theta})\in \Gamma_\delta$ and $ie^{i\theta}f'(e^{i\theta})$ points tangent to $\Gamma_\delta$ at $f(e^{i\theta})$. Furthermore, Lemma \ref{ell.lem.10} (i) implies that $ie^{i\theta}f'(e^{i\theta})$ is oriented in the clockwise direction. Consequently, for each $\theta$, $e^{i\theta}f'(e^{i\theta})$ is an inward pointing normal to $\Gamma_\delta$ at $f(e^{i\theta})$.

The observations in the preceding paragraph imply that for each $\theta$, the set $H_\theta$ of complex numbers defined by
\[
H_\theta = \set{\mu \in \c}{\re \overline{e^{i\theta}f'(e^{i\theta})}(\mu-f(e^{i\theta})) \ge 0},
\]
is the supporting half plane to $K_\delta$ at $f(e^{i\theta})$. Since $W(T) \subseteq K_\delta$ if and only if $W(T) \subseteq H_\theta$ for all $\theta$, and $W(T) \subseteq H_\theta$ if and only if
\[
\re \overline{e^{i\theta}f'(e^{i\theta})}(T-f(e^{i\theta})) \ge 0,
\]
it follows that
\be\label{ell.15}
 W(T) \subseteq K_\delta \iff \forall_{\theta\in \r}\ \re \overline{e^{i\theta}f'(e^{i\theta})}(T-f(e^{i\theta})) \ge 0.
\ee
Note
\[
 \overline{e^{i\theta}f'(e^{i\theta})}(T-f(e^{i\theta}))=
 |f'(e^{i\theta})|^2\frac{T-f(e^{i\theta})}{e^{i\theta}f'(e^{i\theta})}.
\]
If $X$ is an invertible operator, then 
\[
\re X^{-1}= \frac{1}{2} (X^{-1} + (X^{-1})^*)=X^{-1} \frac{1}{2}(X + X^*) (X^{-1})^*=  X^{-1} \re X (X^{-1})^*.
\]
Thus 
\be\label{ell.17}
\re X \ge 0 \ \text{if and only if}\  \re X^{-1} \ge 0. 
\ee
Hence, by the equivalences \eqref{ell.15} and \eqref{ell.17},
\begin{align}
 W(T) \subseteq K_\delta &\iff \forall_{\theta\in \r}\ \re \frac{e^{i\theta}f'(e^{i\theta})}{T-f(e^{i\theta})}\ge 0 \label{ell.20}\\
 	&\iff \re G(z) \geq 0  \ \text{for all}\ z\in\t \; \text{(by the definition of} \, G).   \label{ell.22}
\end{align}
  Finally, let us show that $\re G(z) \geq 0$ for all $z\in \d$ if and only if 
  $\re G(z) \geq 0$ for all $z\in \t$.
We have already shown that $G$
 is an analytic  $\b(\h)$-valued function on $\d$ and is continuous 
 on $\overline{\d}$.  It is immediate from the continuity of $G$ that if 
 $\re G(z) \geq 0$ for all $z\in \d$ then
  $\re G(z) \geq 0$ for all $z\in \d^-$, and so, in particular for all $z \in\t$.
Conversely,  suppose that, for all $z \in \t$, $\re G(z)\ge 0.$
Take the composition of the inverse $C^{-1}$ of the Cayley transform $C: \d \to \RHP$ with $G$, where $\RHP$ denotes the right half plane $\{z\in\c: \re z> 0\}$.
Then $ C^{-1} \circ G$ is an analytic $\b(\h)$-valued function on $\d$, continuous  on $\overline{\d}$ and maps $\t$ to the unit ball of $\b(\h)$. By the Maximum Principle, for all $z \in \d$,
\[ \| C^{-1} \circ G(z)\| \le 1.
\]
Hence, for all $z \in \d$, $\re G(z)\ge 0.$
The statement \eqref{ell.10} follows.
\end{proof}

\section{Strange dilation on an ellipse}\label{dilation-sec}

In this section we shall combine the  Herglotz Representation Theorem for operator-valued functions with Proposition \ref{ell.prop.10} to obtain an analog of Berger's Theorem for $K_\delta$.

We first recall the following generalization of the Herglotz Representation Theorem due to Naimark \cite{nai2,nai1}.
\begin{thm}\label{ell.thm.10}
Let $\h$ be a Hilbert space and assume that $V$ is an analytic $\b(\h)$-valued function defined on $\d$ satisfying $\re V(z) \ge 0$ for all $z\in \d$. Then there exist a Hilbert space $\k$, an isometry $I:\h \to \k$, and a unitary operator $U\in \b(\k)$ such that
\[
V(z) = I^*\frac{1+zU}{1-zU}I \quad \text{ for all }z\in\d.
\]
\end{thm}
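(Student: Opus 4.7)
The plan is to reduce the statement to two classical ingredients: the operator-valued Herglotz--Riesz integral representation on the disc and Naimark's dilation theorem for positive operator-valued measures. These are the two natural generalizations to the operator setting of the pieces that prove the scalar Herglotz theorem.

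First, I would apply the operator-valued Herglotz--Riesz representation to the analytic $\b(\h)$-valued function $V$ with $\re V(z)\ge 0$. The family $\{\re V(re^{i\theta})\,d\theta/2\pi : 0<r<1\}$ is a bounded, norm-uniformly-positive family of $\b(\h)$-valued measures on $\t$ (boundedness follows from the Schwarz-Pick type bound $\|V(z)-V(0)\| \le 2\|V(0)\|\,|z|/(1-|z|)$ derived from $\re V\ge 0$). A weak-operator compactness argument extracts a positive $\b(\h)$-valued Borel measure $F$ on $\t$ such that
\[
V(z)=iS+\int_\t \frac{e^{i\theta}+z}{e^{i\theta}-z}\,dF(\theta),\qquad z\in\d,
\]
where $S=\im V(0)$ and $F(\t)=\re V(0)$. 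The isometric normalization built into the statement forces $S=0$ and $F(\t)=1_\h$.

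Second, I would invoke Naimark's dilation theorem: any positive $\b(\h)$-valued measure $F$ on $\t$ with $F(\t)=1_\h$ admits a minimal spectral (projection-valued) dilation on a larger Hilbert space, i.e.\ there exist a Hilbert space $\k$, an isometry $I:\h\to\k$ and a projection-valued measure $E$ on $\t$ with $F(\omega)=I^*E(\omega)I$ for every Borel set $\omega$. One builds $\k$ as the Hilbert space completion of the space of $\h$-valued simple functions on $\t$ under the sesquilinear form $\langle \chi_\omega h,\chi_{\omega'} h'\rangle := \langle F(\omega\cap\omega')h,h'\rangle_\h$, defines $E(\omega)$ as the multiplication by $\chi_\omega$ on this space, and takes $Ih:=\chi_\t\otimes h$.

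Third, I would set $U:=\int_\t e^{i\theta}\,dE(\theta)$, which is unitary on $\k$ since $e^{i\theta}$ is unimodular and $E$ is projection-valued. The holomorphic functional calculus for $U$ applied to $\zeta\mapsto (1+z\zeta)/(1-z\zeta)$, valid for each fixed $z\in\d$, gives
\[
\frac{1+zU}{1-zU}=\int_\t \frac{e^{i\theta}+z}{e^{i\theta}-z}\,dE(\theta),
\]
and compressing by $I$ yields exactly $V(z)$. The main technical obstacle is the construction of Naimark's dilation --- verifying that the GNS-type sesquilinear form is positive semidefinite (this is the content of positivity of $F$) and that the multiplication operators pass to genuine projections on the completion. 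Everything else is routine weak-operator analysis and functional calculus.
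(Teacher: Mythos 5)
The paper does not prove Theorem \ref{ell.thm.10}: it attributes the result to Naimark and simply cites \cite{nai1,nai2}, so there is no in-paper proof against which to compare your argument. Your route --- operator-valued Herglotz--Riesz representation followed by Naimark dilation of the resulting positive $\b(\h)$-valued measure, and then functional calculus applied to the unitary $U=\int_\t e^{i\theta}\,dE(\theta)$ --- is the standard and correct proof of this theorem, and the outline you give is essentially complete. The step $I^*\int g\,dE\,I=\int g\,dF$ for bounded Borel $g$, used at the end, follows from $F(\omega)=I^*E(\omega)I$ by passing from simple functions to a weak limit, so nothing is missing there.

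One point deserves more care, however. You write that ``the isometric normalization built into the statement forces $S=0$ and $F(\t)=1_\h$.'' This is logically backwards: a hypothesis cannot be forced by the desired conclusion. Setting $z=0$ in the conclusion $V(z)=I^*\tfrac{1+zU}{1-zU}I$ gives $V(0)=I^*I=1_\h$, so the theorem as literally stated (with no hypothesis on $V(0)$) is false --- take $V\equiv 2$. The correct reading is that the theorem carries the implicit normalization $V(0)=1_\h$, which indeed holds in every application in the paper (each of the functions $G$ and $F$ satisfies $G(0)=F(0)=1$). Under that extra hypothesis your $S=\im V(0)=0$ and $F(\t)=\re V(0)=1_\h$, and the rest of your argument goes through. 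You should state the hypothesis explicitly rather than deriving it from the conclusion; with that fix the proof is sound.
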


We shall massage $G$ to produce a closely related function $F$, which has the virtue that it gives rise to a dilation theorem for operators with numerical range in an elliptical region that is closely analogous to Berger's strange dilation theorem (see Theorem \ref{int.thm.20} above).

\begin{lem}\label{ell.prop.101} Let $\de \in [0,1)$.
Let $T\in \b(\h)$ with $\sigma(T) \subseteq K_\delta$. Then
\be\label{ell.25}
F(z) = \frac{1-\tfrac12 z T}{1-zT +\delta z^2},\qquad z\in \d,
\ee
is  an analytic  $\b(\h)$-valued function on $\d$ satisfying $F(0)=1$. 
Moreover, for every $z \in \d$,  
\be\label{ell.250}
F(z)=\frac{1}{2}\ (G(z) +1),
\ee
where 
\[
G(z) = \frac{zf'(z)}{T-f(z)}\;\; \text{and} \;\; f(z) = 1/z +\delta z.
\]
\end{lem}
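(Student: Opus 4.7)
The plan is to prove this lemma by a direct algebraic computation that reduces everything to Proposition \ref{ell.prop.10}. The identity \eqref{ell.250} is really the heart of the matter; once it is established, analyticity and the value at $0$ follow immediately from the corresponding properties of $G$ proved in Proposition \ref{ell.prop.10}.

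First I would unpack the definition of $G$. Since $f(z) = 1/z + \delta z$, we have $f'(z) = -1/z^2 + \delta$, so
\[
zf'(z) = -\tfrac{1}{z} + \delta z \quad \text{and} \quad T - f(z) = T - \tfrac{1}{z} - \delta z.
\]
Multiplying numerator and denominator of $G(z)$ by $z$ (valid for $z \in \mathbb{D} \setminus \{0\}$) gives
\[
G(z) = \frac{-1 + \delta z^2}{zT - 1 - \delta z^2} = \frac{1 - \delta z^2}{1 - zT + \delta z^2}.
\]
Then a one-line calculation yields
\[
\tfrac12\bigl(G(z) + 1\bigr) = \frac{1}{2} \cdot \frac{(1 - \delta z^2) + (1 - zT + \delta z^2)}{1 - zT + \delta z^2} = \frac{1 - \tfrac12 zT}{1 - zT + \delta z^2} = F(z),
\]
which is the identity \eqref{ell.250}, valid a priori on $\mathbb{D} \setminus \{0\}$.

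Next, I would deduce the analyticity and normalization of $F$. By Proposition \ref{ell.prop.10}, $G$ extends to an analytic $\b(\h)$-valued function on $\d$ with $G(0) = 1$. Since \eqref{ell.250} holds on $\d \setminus \{0\}$, the formula $F = \tfrac12(G+1)$ extends $F$ analytically to all of $\d$, and $F(0) = \tfrac12(G(0)+1) = 1$. Equivalently, one can verify directly that the denominator $1 - zT + \delta z^2$ is invertible for $z \in \d$: factoring $1 - zT + \delta z^2 = -\delta z^2(f(z)\cdot\tfrac{1}{\delta z} - T/(\delta z))$—more cleanly, writing $1 - zT + \delta z^2 = -z(T - f(z))$ for $z \neq 0$—shows invertibility from the hypothesis $\sigma(T) \subseteq K_\delta$ combined with Lemma \ref{ell.lem.10}(i).

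There is no serious obstacle here; the entire lemma is a rearrangement verifying that the ``massaged'' function $F$ in \eqref{ell.25} is precisely the Cayley-type average $\tfrac12(G+1)$ that turns the positive-real-part condition $\re G \geq 0$ of Proposition \ref{ell.prop.10} into the form needed to apply the Naimark--Herglotz Theorem \ref{ell.thm.10}.
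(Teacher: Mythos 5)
Your proof is correct and takes essentially the same approach as the paper: establish the algebraic identity $F = \tfrac12(G+1)$ on $\d\setminus\{0\}$ by direct rearrangement using $1-zT+\delta z^2 = z(f(z)-T)$, then import analyticity on $\d$ and $F(0)=1$ from Proposition \ref{ell.prop.10}. The only cosmetic difference is that you first rewrite $G(z)=(1-\delta z^2)(1-zT+\delta z^2)^{-1}$ and then add $1$, whereas the paper combines the $+1$ into the fraction over the denominator $T-f(z)$ before simplifying; both are the same one-line calculation.
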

\begin{proof}
Note that, for all $z\neq 0$,
\[
1-zT +\delta z^2 = z(\frac{1}{z} +\delta z -T) =z (f(z)-T).
\]
Hence, for $z\in\d\setminus \{0\}$,
\begin{align*}
\frac{1}{2}\ (G(z) +1) &= \frac{1}{2}\ \left( \frac{zf'(z)}{T-f(z)} +1 \right) 
= \frac{1}{2}\ \left( \frac{z( -1/z^2 + \delta)}{T-f(z)} +1 \right)\\
&= \frac{1}{2}\ \left(\frac{-1/z + \delta z - 1/z -\delta z +T )}{T-f(z)}\right)
= \frac{1}{2}\ \left(\frac{-2/z +T }{T-f(z)}\right)\\
&= \frac{1 - \frac{1}{2}zT}{z(f(z)-T)}= F(z).
\end{align*}
By  Proposition  \ref{ell.prop.10}, $G$ is an analytic $\b(\h)$-valued function on $\d\setminus \{0\}$, with a removable singularity at $0$, satisfying $G(0)=1$. 
Hence, $F$ is a well defined analytic function on $\d\setminus\{0\}$, with a removable singularity at $0$, satisfying $F(0)=1$. 
\end{proof}

We shall now prove Theorem \ref{morestrange} from the Introduction.
\begin{thm}\label{ell.prop.20} Let $\de \in [0,1)$.
Let $T\in \b(\h)$ be an operator satisfying $\sigma(T) \subseteq K_\delta$, 
and let $F:\d \to \b(\h)$ be defined by the formula
\[
F(z) = \frac{1-\tfrac12 z T}{1-zT +\delta z^2} \qquad \text{ for }  \; z\in \d.
\]
Then $W(T) \subseteq K_\delta$ if and only if there exists a Hilbert space $\k \supseteq \h$, an isometry $I:\h \to \k$, and a unitary operator $U\in \b(\k)$ such that
\be\label{ell.30}
F(z) = I^*\frac{1}{1-zU} I \quad \text{ for all }z\in\d.
\ee
\end{thm}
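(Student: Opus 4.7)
The plan is to reduce Theorem \ref{ell.prop.20} to the Naimark--Herglotz Representation Theorem (Theorem \ref{ell.thm.10}), using the function $G$ of Proposition \ref{ell.prop.10} as a bridge. The bridge is powered by the scalar identity
\[
\frac{1}{1-w} \;=\; \tfrac12\!\left(\frac{1+w}{1-w} + 1\right),
\]
together with Lemma \ref{ell.prop.101}, which already supplies $F(z) = \tfrac12(G(z)+1)$. These two identities let us convert a positive-real-part representation of $G$ into a representation of $F$ of the required form, and vice versa.

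For the forward implication, assume $W(T) \subseteq K_\delta$. Proposition \ref{ell.prop.10} yields $\re G(z) \ge 0$ for all $z \in \d$. Applying Theorem \ref{ell.thm.10} to $V = G$, I would obtain a Hilbert space $\k_0$, an isometry $J:\h \to \k_0$, and a unitary $U_0 \in \b(\k_0)$ with $G(z) = J^*\frac{1+zU_0}{1-zU_0}J$. Adding $1 = J^*J$ to both sides and halving, together with the scalar identity above, produces $F(z) = J^*(1-zU_0)^{-1}J$. A standard embedding step, replacing $\k_0$ by $\h \oplus (\k_0 \ominus J\h)$ and transporting $U_0$ accordingly, then yields a Hilbert space $\k \supseteq \h$, an isometry $I:\h \to \k$ which is literally the inclusion, and a unitary $U \in \b(\k)$ for which \eqref{ell.30} holds.

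For the converse, suppose $F$ has the representation \eqref{ell.30}. Then
\[
2F(z) - 1 \;=\; I^*\!\left(\frac{2}{1-zU} - 1\right)\!I \;=\; I^*\frac{1+zU}{1-zU}I,
\]
and since $F = \tfrac12(G+1)$ by Lemma \ref{ell.prop.101}, this gives $G(z) = I^*\frac{1+zU}{1-zU}I$ on $\d$. A direct computation using $U^*U = UU^* = 1$ shows
\[
\re\frac{1+zU}{1-zU} \;=\; (1-|z|^2)(1-\bar z U^*)^{-1}(1-zU)^{-1} \;\ge\; 0
\]
for every $z \in \d$. Sandwiching by $I^*$ and $I$ preserves positivity, so $\re G(z) \ge 0$ on $\d$, and Proposition \ref{ell.prop.10} delivers $W(T) \subseteq K_\delta$.

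The principal obstacle is minor: one has only to recognise that $F$ is the Cayley-type companion of the function $G$ to which Herglotz applies directly. Once Lemma \ref{ell.prop.101} supplies $F = \tfrac12(G+1)$, both directions reduce to a few lines of Hilbert-space manipulation, plus the standard embedding argument needed to replace the isometric image $J\h$ by $\h$ itself.
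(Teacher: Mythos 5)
Your argument is correct and follows essentially the same route as the paper's: both reduce the problem to the Herglotz--Naimark representation of $G$ via Proposition \ref{ell.prop.10} and Theorem \ref{ell.thm.10}, then pass between the $G$- and $F$-representations using $F=\tfrac12(G+1)$ from Lemma \ref{ell.prop.101} together with the identity $\tfrac12(\frac{1+zU}{1-zU}+1)=\frac{1}{1-zU}$. Your version merely fills in a couple of details the paper leaves implicit (the direct verification that $\re\frac{1+zU}{1-zU}\geq 0$ in the converse, and the unitary re-identification of $J\h$ with $\h$ in the forward step), which is a faithful expansion rather than a different proof.
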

\begin{proof}
Proposition \ref{ell.prop.10} and Theorem \ref{ell.thm.10} imply that $W(T) \subseteq K_\delta$ if and only if,  for  the analytic function $G:\d \to \b(\h)$ defined by
\[
G(z) = \frac{zf'(z)}{T-f(z)}, \quad z \in \d,
\]  
there exists a Hilbert space $\k$, an isometry $I:\h \to \k$, and a unitary operator $U\in \b(\k)$ such that
\be\label{ell.300}
G(z) = I^*\frac{1+zU}{1-zU}I \quad \text{ for all }z\in\d.
\ee 

 Note that, for all $z \in \d$, 
\[
\frac{1}{2}\ \left(\frac{1+zU}{1-zU}+1\right) = \frac{1}{1-zU}.
\]
Hence, for all $z \in \d$,
\[
I^*\frac{1}{1-zU}I = I^* \frac{1}{2}\ \left(\frac{1+zU}{1-zU}+1\right) I
 = \frac{1}{2}\ \left(I^*\frac{1+zU}{1-zU}I +1\right). 
\]
By Lemma \ref{ell.prop.101}, for all $z \in \d$,
\[
 F(z)= \frac{1}{2}\ (G(z) +1).
\]
Thus 
\[
G(z) = I^*\frac{1+zU}{1-zU}I \;\; \text{for all} \; z \in \d,
\] 
if and only if 
\[
F(z) = I^*\frac{1}{1-zU} I \;\; \text{for all} \; z \in \d.
\]
Therefore, $W(T) \subseteq K_\delta$ if and only if there exists a Hilbert space $\k \supseteq \h$, an isometry $I:\h \to \k$, and a unitary operator $U\in \b(\k)$ such that
\[
F(z) = I^*\frac{1}{1-zU} I  \quad \text{ for all }z\in\d.
\]
\end{proof}

In light of the obvious similarity of Theorem \ref{int.thm.20} and Theorem \ref{ell.prop.20}, and  in honor of Berger's seminal contribution \cite{berg} we make the following definition.
\begin{defin}\label{ell.def.10}
Let $\delta \in [0,1)$, and let $T\in \b(\h)$ be an operator satisfying $\sigma(T) \subseteq K_\delta$. We say that a triple $(\k, I, U)$ is a \emph{strange dilation of $T$  relative to $K_\de$ } if $\k$ is a Hilbert space, $I:\h \to \k$ is an isometry, $U\in \b(\k)$ is unitary, and
the formula 
\be\label{ell.30.sd}
\frac{1-\tfrac12 z T}{1-zT +\delta z^2}= I^*\frac{1}{1-zU} I \quad \text{ holds for all }z\in\d.
\ee
\end{defin}
Where the role of $K_\de$ is clear from the context, we will omit ``relative to $K_\de$" when speaking of strange dilations.

\begin{prop} \label{str-dil-inf}
Let $\de \in (0,1)$, let $\mu\in G_\de$, and let  $T$ be the scalar operator of multiplication by $\mu$ on a (finite- or infinite-dimensional) Hilbert space $\h$. Then, for any  strange dilation $(\k,I,U)$ of $T$,   $\k$ has infinite dimension. 
\end{prop}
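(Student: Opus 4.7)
The plan is to exploit the rational-function structure forced on both sides of the strange dilation identity \eqref{ell.30.sd} when $T$ is scalar, and derive a contradiction by comparing the locations of the poles. Throughout, let $F(z) = \frac{1 - \half z\mu}{1 - z\mu + \de z^2}$; substituting $T = \mu\,\id{\h}$ into \eqref{ell.30.sd} gives
\[
F(z)\,\id{\h} = I^* (1-zU)\inv I \quad\text{for all } z \in \d.
\]

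My first step would be to pin down the poles of $F$. The denominator $\de z^2 - \mu z + 1$ has roots $w_1, w_2$, and from $\de w_j^2 - \mu w_j + 1 = 0$ one reads off $\mu = 1/w_j + \de w_j$, so each $w_j$ is a pre-image of $\mu$ under the Zhukovskii map $f$ of Lemma \ref{ell.lem.10}. Lemma \ref{ell.lem.10}(ii) then places both $w_j$ in the annulus $A_\de = \{1 < |z| < 1/\de\}$, so in particular $|w_j| > 1$. I would next observe that the numerator zero $z = 2/\mu$ equals $w_j$ only when $w_j^2 = 1/\de$, which occurs precisely at the branch-point values $\mu = \pm 2\sqrt\de$. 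The upshot is that $F$ always has at least one pole $z_0$ with $|z_0| > 1$: for generic $\mu$ both denominator roots survive, while at the branch points a direct simplification shows $F$ reduces to $1/(1 \mp \sqrt\de z)$, whose pole $\pm 1/\sqrt\de$ still lies outside $\ov\d$.

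Second, I would suppose for contradiction that $\dim\k < \infty$. Then $U$ has unimodular eigenvalues $\lam_1, \dots, \lam_n$, and $(1-zU)\inv$ is a rational $\b(\k)$-valued function on $\sphere$ whose only possible poles are at $1/\lam_j \in \t$. Testing the identity against any unit vector $v \in \h$ turns it into the scalar equality
\[
F(z) = \ip{(1-zU)\inv Iv}{Iv}, \quad z \in \d.
\]
Two rational functions which coincide on $\d$ coincide on all of $\sphere$ and must share the same pole set, but the right-hand side has no pole outside $\t$ while $F$ has a pole at $z_0 \notin \t$ -- contradiction. Hence $\dim\k = \infty$.

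The one point that requires care is the branch-point case $\mu = \pm 2\sqrt\de$, where the apparent two poles of $F$ collapse to one via numerator/denominator cancellation; the short computation above is what guarantees that the surviving simple pole still sits outside $\ov\d$, so the pole-comparison argument goes through uniformly for every $\mu \in G_\de$.
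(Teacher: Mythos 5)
Your proof is correct. It is a variant of the paper's argument, resting on the same underlying facts (that $F$ is a rational function of a very specific shape, and that the resolvent of a finite-dimensional unitary has poles confined to $\t$), but it derives the contradiction differently. The paper verifies only that the quadratic $1-\mu z+\de z^2$ has no zero on $\t$, so that $F$ is analytic on $\d^-$; it then expands $I^*(1-zU)^{-1}Ix$ in an eigenbasis $\{e_i\}$ of $U$ with unimodular eigenvalues $\tau_i$, picks $j$ with $\ip{Ix}{e_j}\neq 0$, and obtains a contradiction from the blow-up of the eigenbasis expansion as $z\to\bar\tau_j$ against the boundedness of $F(z)x$ on $\d^-$. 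You instead invoke Lemma \ref{ell.lem.10}(ii) to place the denominator zeros $w_1,w_2$ in the open annulus $A_\de$, check that a pole of $F$ survives strictly outside $\ov\d$ even at the branch points $\mu=\pm2\sqrt\de$, and compare pole sets of two globally defined scalar rational functions. Your route buys a little robustness: it needs no eigenbasis expansion, and it thereby sidesteps a point the paper glosses over, namely that when $U$ has a repeated eigenvalue the coefficient $\sum_{\tau_i=\tau_j}x_i\,I^*e_i$ of the singular term in the paper's expansion could in principle vanish, so the asserted unboundedness at $\bar\tau_j$ is not automatic from $x_j\neq0$ alone. Placing $F$'s pole strictly outside $\ov\d$ rather than merely noting there is none on $\t$ makes the final pole-location comparison immediate and uniform in $\mu\in G_\de$.
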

\begin{proof} 
Assume that $T$ has the strange dilation $(\k,I,U)$, so that
\be\label{dilation}
\frac{1-\half T z}{1-T z+\de z^2} =I^*\frac{1}{1-zU}I \; \text{for all}\; z\in\d.
\ee
To see that $\k$ must be infinite-dimensional, suppose to the contray that $\dim \k =n < \infty$. Then there is an orthonormal basis $e_1,\dots, e_n$ of $\k$ consisting of eigenvectors of $U$, with corresponding eigenvalues $\tau_1,\dots,\tau_n$, all of which have unit modulus.
Let $x$ be a nonzero vector in $\h$. We have
\[
 \frac{1-\half\mu z}{1-\mu z+\de z^2}x= \frac{1-\half T z}{1-T z+\de z^2}x=I^*\frac{1}{1-zU}Ix.
\]
Write $x_i=\ip{Ix}{e_i}_\k$ for $i=1,\dots,n$, so that $Ix= \sum_{i=1}^n x_i e_i$.
Observe that since $x\neq 0$ and $I$ is an isometry, the components $x_1,\dots,x_n$ of $Ix$ are not all zero.  Choose $j\in\{1,\dots,n\}$ such that $x_j\neq 0$ and notice that $I^*e_j \neq 0$, since $0\neq x_j=\ip{x}{I^*e_j}$.  We have
\begin{align}\label{inf-dil}
\frac{1-\half\mu z}{1-\mu z+\de z^2}x &=I^*\frac{1}{1-zU}\sum_{i=1}^n x_i e_i \notag\\
&=I^*\sum_{i=1}^nx_i (1-zU)\inv e_i  =\sum_{i=1}^nx_i (1-z\tau_i)\inv I^*e_i.
\end{align}
 By Lemma \ref{ell.prop.101} the rational function $F(z) \df \frac{1-\half\mu z}{1-\mu z+\de z^2}$ is analytic on $\d$. One can check that, when $\mu\in G_\de$, the quadratic
$1-\mu z+\de z^2$ has no zeros on the unit circle $\t$. Thus
 $F(z)$ is analytic on $\d^-$.
As $z \to \bar\tau_j$ in equation \eqref{inf-dil}
the left hand side tends to a limit in $\h$, while the right hand side is unbounded.  This contradiction shows that $\k$ is not finite-dimensional.
\end{proof}

\section{Prepairs for generic operators $T$  and residues}
Let $\de \in (0,1)$, and let
\[
R_\delta = \set{z\in \c}{\delta <|z| <1}.
\]
Note that if we define $\pi:\sphere \to \sphere$ by the formula
\[
\pi(\lambda) = \lambda+\frac{\delta}{\lambda},
\]
then as $\pi(\lambda)=f(1/\lambda)$, Lemma \ref{ell.lem.10} implies that $\pi:R_\delta \to G_\delta$ is a 2-1 cover. Also, for each $\mu \in G_\delta$, all solutions to the equation $\pi(\lambda)=\mu$ lie in $R_\delta$.
\begin{defin}\label{ell.def.20}  Let $\de\in(0,1)$.
We say that $T\in \b(\h)$ is  \emph{generic \forkde}  if $\dim \h$ is finite, $\sigma(T) \subseteq K_\delta$,
$\sigma(T)$ consists of $\dim \h$ distinct points,  and for each $\mu \in \sigma(T)$,  the equation $\pi(\lambda) =\mu$ has two distinct solutions in $R_\delta$ (equivalently, $4\delta \not\in \sigma(T^2)$).
\end{defin}
If $T\in \b(\h)$ is generic \forkde and $n=\dim \h$, then there exist $n$ distinct points $\mu_1,\ldots,\mu_n \in \c$ (the eigenvalues of $T$) and $n$ vectors $e_1,\ldots, e_n \in \h$ (the corresponding eigenvectors of $T$) such that
\be\label{ell.32}
Te_i = \mu_i e_i\qquad\mbox{ for } i=1,\ldots,n.
\ee
Once such an enumeration of the eigenvalues of $T$ is chosen, then $T$ is generic precisely when the equation $\pi(\lambda) = \mu_i$ has two distinct solutions for each $i$. If for each $i$, we let $\lambda_i^1,\lambda_i^2$ be an enumeration of these two solutions, then we may define a pair of operators $X^1, X^2 \in \b(\h)$ by the formulas
\be\label{ell.33}
X^r e_i = \lambda_i^r e_i  \qquad \mbox{ for } r=1,2\qquad \mbox{ and } i=1,\ldots,n.
\ee
Evidently, if $X^1$ and $X^2$ are so defined, then

\be\label{34}
\sigma(X^1) \cap \sigma(X^2) = \varnothing,
\ee
\be\label{35}
X^1+X^2 = T, 
\ee
\be\label{36}  \text{and}\;
X^1X^2=\delta.
\ee
\begin{defin}\label{ell.def.30}  Let $\de\in(0,1)$.
Let $T\in \b(\h)$  be generic \forkde and let $\dim \h =n$. We say that a pair $X=(X^1,X^2)$ where $X^1,X^2 \in \b(\h)$ is a \emph{prepair} for $T$ \emph{relative to $K_\de$} if
\begin{enumerate}[(i)]
\item $\sigma(X^1) \cap \sigma(X^2) = \varnothing,$
\item $X^1+X^2 = T, \;\; \text{and}$
\item $X^1X^2=\delta$.
\end{enumerate}
We say that $X$ is a \emph{generic prepair} for $T$ if, in addition, both $X^1$ and $X^2$
have $n$ distinct eigenvalues. 
\end{defin}
\begin{rem}  Let $T\in \b(\h)$  be generic \forkde ~and let $X=(X^1,X^2)$ be a generic prepair for $T$.
It is easy to see that 
\be\label{TX1X2}
T= X^1 + \delta (X^1)^{-1}\; \; \text{and} \;\; T= X^2 + \delta (X^2)^{-1}.
\ee
\end{rem} 

\begin{lem}\label{ell.lem.20} Let $\de\in(0,1)$, let $T\in \b(\h)$ and let $\sigma(T) \subseteq K_\delta$. 
Then $T$ is generic  \forkde      ~if and only if there exists a generic prepair for $T$.
\end{lem}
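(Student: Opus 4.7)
The plan is to treat the two implications separately, using essentially the construction already foreshadowed in equations \eqref{ell.32}--\eqref{36} of the preamble to Definition \ref{ell.def.30}. Throughout let $n=\dim\h$.

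For the forward direction, assume $T$ is generic for $K_\de$. I would fix an eigenbasis $e_1,\dots,e_n$ of $T$ with distinct eigenvalues $\mu_1,\dots,\mu_n$, enumerate the two roots $\lam_i^1,\lam_i^2$ of $\lam^2-\mu_i\lam+\de=0$ (distinct for each $i$ because $4\de\notin\sigma(T^2)$), and define operators $X^r$ by $X^r e_i=\lam_i^r e_i$. Vieta's formulas immediately give $X^1+X^2=T$ and $X^1X^2=\de I$, so conditions (ii) and (iii) of Definition \ref{ell.def.30} hold. The remaining points are that $\sigma(X^1)\cap\sigma(X^2)=\emptyset$ and that each $X^r$ has $n$ distinct eigenvalues; both reduce to the observation that $\pi(\lam_i^a)=\mu_i$, so any equality among the $\lam_i^a$'s either forces $\mu_i=\mu_j$ for some $i\neq j$ (contradicting distinctness of the spectrum of $T$) or forces $\lam_i^1=\lam_i^2$ (contradicting $4\de\notin\sigma(T^2)$).

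For the reverse direction, assume $(X^1,X^2)$ is a generic prepair for $T$. Condition (iii) and $\de\neq 0$ force $X^1$ to be invertible with $X^2=\de(X^1)\inv$, so $X^1$ and $X^2$ commute. Since $X^1$ has $n$ distinct eigenvalues in an $n$-dimensional space, it is diagonalizable with one-dimensional eigenspaces, each of which is preserved by $X^2$; hence there is a joint eigenbasis $e_1,\dots,e_n$ on which $X^r e_i=\lam_i^r e_i$, and therefore $Te_i=(\lam_i^1+\lam_i^2)e_i$. Thus the eigenvalues of $T$ are $\mu_i:=\lam_i^1+\lam_i^2$ with $\lam_i^1\lam_i^2=\de$.

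It then remains to verify distinctness of the $\mu_i$ and that $4\de\notin\sigma(T^2)$. For distinctness, $\mu_i=\mu_j$ together with $\lam_i^1\lam_i^2=\lam_j^1\lam_j^2=\de$ makes $\{\lam_i^1,\lam_i^2\}$ and $\{\lam_j^1,\lam_j^2\}$ the root sets of the same quadratic and hence equal as multisets; a case split shows that either $\lam_i^1=\lam_j^1$ (contradicting distinctness of eigenvalues of $X^1$) or $\lam_i^1=\lam_j^2$ (contradicting (i)). Similarly, $\mu_i^2=4\de$ forces the discriminant of $\lam^2-\mu_i\lam+\de$ to vanish, so $\lam_i^1=\lam_i^2$, putting a common value in $\sigma(X^1)\cap\sigma(X^2)$. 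The step I expect to require the most care is this case analysis, but every possibility is resolved by Vieta's relations combined with either condition (i) or the hypothesis that each $X^r$ has $n$ distinct eigenvalues; no genuine obstacle is anticipated.
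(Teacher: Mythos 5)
Your proposal is correct and follows essentially the same route as the paper: the forward direction is exactly the construction in the paragraph after Definition \ref{ell.def.20}, and the reverse direction builds a joint eigenbasis from $X^1X^2=\de$ and diagonalizability of $X^1$, concluding $Te_j=(\lam_j^1+\de/\lam_j^1)e_j$. The only difference is that you spell out the final verifications---distinctness of the $\mu_i$ and $4\de\notin\sigma(T^2)$ via the case split on Vieta's relations---which the paper's proof leaves implicit after establishing the eigenvalue formula, so if anything your version is slightly more complete.
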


\begin{proof} If $T$ is generic \forkde ~then the construction described in the paragraph following Definition \ref{ell.def.20} shows that there is a generic prepair for $T$.  Conversely, suppose that 
$T\in \b(\h)$ and  $\sigma(T) \subseteq K_\delta$, and suppose that there exists a generic prepair $X=(X^1,X^2)$  for $T$, where $X^1,X^2 \in \b(\h)$. 
By property (iii) of Definition \ref{ell.def.30}, $X^1X^2=\de$, and so $X^1$ and $X^2$ are both nonsingular since $\dim \h =n < \infty$. Therefore $0$ does not lie in either $\sigma(X^1)$ or $\sigma(X^2)$.
 By assumption $X=(X^1,X^2)$ is a generic prepair for $T$, and so, for $j=1,2$,  $X^j$ has $n$ distinct eigenvalues $\lam^j_1,\dots,\lam^j_n$. Let corresponding eigenvectors of $X^1$ be $e_1, \dots,e_n$.  
Since also $X^1X^2 = \de$, we have $X^2=\de(X^1)\inv$, and so (possibly after  renumbering) $\lam^2_j=\de(\lam^1_j)\inv$ for $j=1,\dots,n$, and an eigenvector of $X^2$ corresponding to $\lam^2_j$ is the eigenvector $e_j$ of $X^1$ corresponding to $\lam^1_j$.  Further, $T=X^1+X^2=X^1+\de (X^1)\inv$, and so $e_j$ is an eigenvector of $T$ corresponding to the eigenvalue $\lam^1_j + \de(\lam^1_j)\inv$.
\end{proof}

We denote by $\P$ the set of polynomials over $\c$.  Note that if $T \in \b(\h)$ has a generic prepair then $T$ has exactly $2^n$ generic prepairs.
If $T$ is generic \forkde, then for each choice of generic prepair $X=(X^1,X^2)$ one can define a pair  of \emph{residues} $A=(A^1,A^2)$ to be the unique pair of operators in the algebra generated by $T$ satisfying
\be\label{ell.40}
\frac{1-\frac12 zT}{1-zT +\delta z^2}=\frac{A^1}{1-zX^1}+\frac{A^2}{1-zX^2} \quad \mbox{ for all } z \in \d.
\ee
It is easy to verify that
\[
A^1=\frac12\ \  \text{ and }\ \  A^2 = \frac12
\]
for all choices of generic prepairs. The following lemma gives a valuable formula which relates prepairs and residues to strange dilations.
\begin{lem}\label{ell.lem.40} Let $\de\in (0,1)$.
Suppose that $T\in \b(\h)$ is  generic \forkde and $X$ is a generic prepair for $T$ with residues $A$. If $(\k, I, U)$ is a strange dilation of $T$, then
\[
A^1p(X^1) + A^2p(X^2) =I^*p(U)I
\]
for all $p \in \P$.
\end{lem}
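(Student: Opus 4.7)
The plan is to read off the coefficients of the two obvious power series in $z$ and then extend linearly.

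First I would substitute the partial fraction decomposition
\[
\frac{1 - \tfrac12 zT}{1 - zT + \delta z^2} = \frac{A^1}{1 - zX^1} + \frac{A^2}{1 - zX^2}
\]
(the defining equation of the residues $A^1,A^2$) into the strange dilation identity \eqref{ell.30.sd} to obtain
\[
\frac{A^1}{1 - zX^1} + \frac{A^2}{1 - zX^2} = I^* \frac{1}{1 - zU} I \quad\text{for all } z \in \mathbb{D}.
\]
Both sides are $\mathcal{B}(\mathcal{H})$-valued analytic functions of $z$ in a neighborhood of $0$: the right-hand side converges in norm for $|z| < 1$ because $U$ is unitary, and each term $A^r(1 - zX^r)^{-1}$ is analytic in a disc of radius $1/\rho(X^r)$, which contains a neighborhood of $0$ (in fact, since $\sigma(X^r) \subseteq \overline{R_\delta} \subseteq \overline{\mathbb{D}}$, the Neumann series converges on all of $\mathbb{D}$).

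Next I would expand both sides as Neumann series around $z = 0$. The left-hand side becomes
\[
\sum_{k=0}^{\infty} z^k \bigl(A^1 (X^1)^k + A^2 (X^2)^k\bigr),
\]
while the right-hand side becomes
\[
\sum_{k=0}^{\infty} z^k\, I^* U^k I.
\]
By uniqueness of the Taylor coefficients of an analytic $\mathcal{B}(\mathcal{H})$-valued function, this yields
\[
A^1 (X^1)^k + A^2 (X^2)^k = I^* U^k I \qquad (k = 0, 1, 2, \dots).
\]

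Finally, for an arbitrary polynomial $p(z) = \sum_{k} c_k z^k \in \mathcal{P}$, linearity of the functional calculus for $X^1$, $X^2$ and $U$ gives
\[
A^1 p(X^1) + A^2 p(X^2) = \sum_k c_k\bigl(A^1 (X^1)^k + A^2 (X^2)^k\bigr) = \sum_k c_k\, I^* U^k I = I^* p(U) I,
\]
as required. There is no real obstacle here; the only thing to be careful about is the convergence of the partial fraction side on a common neighborhood of $0$, which is immediate in finite dimensions since $X^1, X^2$ are bounded operators.
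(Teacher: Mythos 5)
Your proof is correct and follows essentially the same route as the paper: substitute the partial fraction identity \eqref{ell.40} into the strange dilation formula, expand both sides in power series around $z=0$, equate coefficients to get the relation for monomials, and extend by linearity to arbitrary polynomials. The only cosmetic difference is that the paper immediately replaces $A^1$ and $A^2$ by the scalar $\tfrac12$ (their known common value), while you keep them symbolic throughout; this makes no substantive difference.
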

\begin{proof} 
Under the assumptions of the lemma, since $(\k, I, U)$ is a strange dilation of $T$, then the formula
\be\label{ell.30.sd1}
\frac{1-\tfrac12 z T}{1-zT +\delta z^2}= I^*\frac{1}{1-zU} I
\ee
 holds for all $z\in \d$. The  formula  \eqref{ell.40} implies that, for  all $z\in \d$, 
\[
\frac{\half}{1-zX^1}+\frac{\half}{1-zX^2}=
 I^*\frac{1}{1-zU} I.
\]
If we expand the analytic functions in this formula into power series and equate coefficients we obtain the relations
\be\label{coeff}
  (X^1)^k +(X^2)^k =2 I^* U^k I, \;\;\;  k = 1,2, \dots.
\ee
The lemma follows if one takes linear combinations of  equations of the form \eqref{coeff}.
\end{proof}

\section{Cutting down strange dilations}\label{cutting}

We have shown in Proposition \ref{str-dil-inf} that if $T$ is the operator of multiplication by a scalar $\mu\in G_\de$
 on a finite- or infinite-dimensional Hilbert space $\h$, then necessarily, for any  strange dilation $(\k,I,U)$ of $T$, $\k$ is infinite dimensional. 
 One can obtain a {\em finite}-dimensional dilation of $T$ if one is 
 willing to replace the unitary $U$ in Definition \ref{ell.def.10} with a contraction $Y$.
 This idea is formalized in the following definition.
\begin{defin}\label{ell.def.40}
Let $\de\in (0,1)$ and let $T\in \b(\h)$ where $\h$ is a finite-dimensional Hilbert space and assume that $\sigma(T) \subseteq K_\delta$. We say that a triple $(\l,E,Y)$ is \emph{an even  stranger dilation of $T$ relative to $K_\de$ } if $\l$ is a Hilbert space with $\dim \l = 2\dim \h$, $E:\h \to \l$ is an isometry, $Y\in \b(\l)$ is a contraction,
and
\be\label{ell.50}
\frac{1-\frac{1}{2}  zT}{1-zT +\delta z^2}=E^*\frac{1}{1-zY}E \quad \text{ for all }z\in\d.
\ee
\end{defin}
  We may omit the phrase  ``relative to $K_\de$"  when it is clear from the context.
\begin{prop}\label{ell.prop.30}
Let $\de \in (0,1)$, let $T\in \b(\h)$ and let $\dim \h$ be finite. Assume that $T$ is generic for $K_\de$. Then $T$ has a strange dilation  relative to $K_\de$  if and only if $T$ has an even stranger dilation  relative to $K_\de$.
\end{prop}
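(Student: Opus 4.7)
My approach is to prove the two implications separately.

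For the direction $(\Leftarrow)$, given an even stranger dilation $(\l, E, Y)$ of $T$, I apply the Sz.-Nagy unitary dilation theorem to the contraction $Y$ to obtain a Hilbert space $\k\supseteq\l$ and a unitary $U\in\b(\k)$ with $Y^k = P_\l U^k|_\l$ for all $k\ge 0$. Composing $E$ with the inclusion $\l\hookrightarrow\k$ yields an isometry $I:\h\to\k$ satisfying $I^*U^kI = E^*P_\l U^k|_\l E = E^*Y^kE$ for each $k\ge 0$. Summing $\sum_{k\ge 0} z^k$ for $z\in\d$ converts the even stranger identity \eqref{ell.50} into the strange identity \eqref{ell.30.sd}, so $(\k,I,U)$ is a strange dilation of $T$.

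For the direction $(\Rightarrow)$, suppose $(\k,I,U)$ is a strange dilation of $T$, and set $n=\dim\h$, so that the target dimension is $2n$. Since $T$ is generic, Lemma~\ref{ell.lem.20} supplies a generic prepair $X=(X^1,X^2)$ for $T$, and Lemma~\ref{ell.lem.40} (with residues $A^1=A^2=\half$) yields
\[
I^*U^kI=\half\bigl((X^1)^k+(X^2)^k\bigr)\qquad(k\ge 0).
\]
Summing as a power series exhibits the strange dilation identity as the partial-fraction expansion
\[
\frac{1-\half zT}{1-zT+\de z^2}=\half(1-zX^1)\inv+\half(1-zX^2)\inv\qquad(z\in\d),
\]
a rational $\b(\h)$-valued function of McMillan degree $2n$ by the genericity of the prepair.

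My plan, consistent with the section title ``Cutting down strange dilations,'' is to compress $U$ to a $2n$-dimensional semi-invariant subspace $\l\subseteq\k$ containing $I\h$. Given such $\l$, set $E:\h\to\l$ equal to $I$ (with codomain restricted to $\l$, which is an isometry since $I\h\subseteq\l$) and $Y=P_\l U|_\l$. Then $Y$ is automatically a contraction (as the compression of the unitary $U$), Sarason's semi-invariance criterion gives $Y^k=P_\l U^k|_\l$ for all $k\ge 0$, and consequently
\[
E^*Y^kE \;=\; I^*P_\l U^k|_\l I \;=\; I^*U^kI \;=\; \half\bigl((X^1)^k+(X^2)^k\bigr),
\]
so summing $\sum_{k\ge 0}z^k$ recovers the even stranger identity \eqref{ell.50}. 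The main obstacle is the construction of the $2n$-dimensional semi-invariant subspace: the naive candidate $\l=I\h+UI\h$ is typically not semi-invariant for $U$, so $\l$ must instead be built as $\n_+\ominus\n_-$ for nested $U$-invariant subspaces $\n_-\subseteq\n_+$ adapted to the $2n$ poles in the partial-fraction expansion, with the genericity hypothesis on $T$ ensuring $\dim\l=2n$ and $I\h\subseteq\l$.
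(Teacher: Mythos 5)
The reverse implication $(\Leftarrow)$ is handled exactly as in the paper: apply the Nagy dilation theorem to the contraction $Y$ and compose the isometries. That part is fine.

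For the forward implication $(\Rightarrow)$, your outline correctly identifies both the goal (compress $U$ to a $2n$-dimensional semi-invariant subspace $\l\supseteq\ran I$) and the obstacle (the naive candidate $I\h+UI\h$ is not semi-invariant), but you stop precisely where the real work begins. You assert that $\l$ ``must instead be built as $\n_+\ominus\n_-$ for nested $U$-invariant subspaces \ldots adapted to the $2n$ poles,'' with genericity ``ensuring $\dim\l=2n$ and $I\h\subseteq\l$'' --- this is a description of what a correct construction should accomplish, not a construction. Three things are missing. First, you do not define the invariant subspaces. The paper takes $\m_i=\overline{\operatorname{span}\{p(U)Ie_i:p\in\P\}}$ (the cyclic $U$-invariant subspace generated by $Ie_i$, where $e_i$ is the $i$-th eigenvector of $T$) and $\n_i=\overline{\operatorname{span}\{p(U)Ie_i:p(\lambda_i^1)=p(\lambda_i^2)=0\}}$, then sets $\m=\sum\m_i$, $\n=\sum\n_i$, $\l=\m\ominus\n$. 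Second, you do not prove $\ran I\subseteq\l$: the nontrivial half is $\ran I\perp\n$, which in the paper follows because Lemma~\ref{ell.lem.40} gives $2I^*p(U)Ie_i=p(\lambda_i^1)A^1e_i+p(\lambda_i^2)A^2e_i=0$ for $p\in\I_i$, so each $\n_i\subseteq\ker I^*$. Third, you do not establish $\dim\l=2n$: the bound $\dim\l\le 2n$ is easy (each $\n_i$ has codimension $\le 2$ in $\m_i$), but $\dim\l\ge 2n$ requires exhibiting $2n$ linearly independent vectors in $\l$, which the paper does via Lagrange interpolation polynomials $\chi_i^r$ and the identity $E^*p(Y)\chi_i^r(Y)Ie_i=\delta_r^1 p(\lambda_i^1)A^1e_i+\delta_r^2 p(\lambda_i^2)A^2e_i$. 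Without these three pieces your forward direction is a plan, not a proof.
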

\begin{proof}
First assume that $T$ has a strange dilation $(\k, I, U)$ relative to $K_\de$, as in Definition \ref{ell.def.10}. Fix a generic prepair $X=(X^1,X^2)$ of $T$ with residue $A=(A^1,A^2)$ and assume that $e_i$ is an eigenvector of $T$ corresponding to the eigenvalue $\mu_i$, as in Definition \ref{ell.def.20} and
$\lambda_i^r$ is the corresponding eigenvalue of $X^r$, so that
\be\label{ell.33-2}
X^r e_i = \lambda_i^r e_i  \qquad \mbox{ for } r=1,2\;\;  \mbox{ and } \; i=1,\ldots,n.
\ee
 For $i=1,\ldots,n$, let
\[
\I_i=\set{p\in \P}{p(\lambda_i^r)=0, r=1,2}
\]
and define closed linear subspaces $\m_i$ and $\n_i$ in $\k$ by 
\[
\m_i = \overline{\text{span}\set{p(U)Ie_i}{  p \in \P}}
\]
and
\[
\n_i = \overline{\text{span}\set{p(U)Ie_i}{p \in \I_i}}.
\]
Note that $\m_i$ and $\n_i$ are invariant for $U$, $\n_i \subseteq \m_i$, and that $\n_i$ has codimension at most 2 in $\m_i$.

Let
\[
\m = \sum_{i=1}^n \m_i\qquad \text{ and }\qquad \n= \sum_{i=1}^n \n_i.
\]
Since for each $i$, both $\m_i$ and $\n_i$ are invariant for $U$, so also $\m$ and $\n$ are invariant for $U$. Consequently, if we define a Hilbert space $\l$ by
\[
\l = \m \ominus \n =\set{y\in \m}{y \perp \n},
\]
and $Y\in \b(\l)$ by
\[
Y=P_\l U |\l,
\]
then
\be\label{ell.60}
p(Y) = P_\l p(U)|\l
\ee
for all $p\in \P$. Also, since for each $i$, $\n_i$ has codimension at most 2 in $\m_i$,
\be\label{ell.70}
\dim \l \le 2n.
\ee

Now notice that if $x=\sum_i c_ie_i \in \h$, then
\[
Ix = \sum_{i} c_i Ie_i \in \sum_i  \m_i =\m.
\]
Therefore, $\ran I \subseteq \m$.  Also observe that if $p \in \I_i$ and $y=p(U)Ie_i$, then by Lemma \ref{ell.lem.40}
\[
2I^* y =2I^*p(U)Ie_i =(A^1p(X^1) + A^2p(X^2))e_i =p(\lambda_i^1)A^1e_i+p(\lambda_i^2)A^2e_i=0.
\]
Therefore, $\n_i \subseteq \ker I^*$ for each $i$. Consequently, $\n \subseteq \ker I^*$, or equivalently, $\ran I \subseteq \n^\perp$. As we have both  $\ran I \subseteq \m$ and $\ran I \subseteq \n^\perp$,
\be\label{ell.80}
\ran I \subseteq \l.
\ee

Now observe that if we define $E:\h \to \l$, by the formula
\[
E(x) = P_\l I(x)
\]
then inclusion \eqref{ell.80} implies that $E$ is an isometry. Also, using formula \eqref{ell.60}, we deduce that if $k\ge 0$, then
\be\label{ell.80.2}
I^*U^kI =  (I^*P_\l) (P_\l U^k|\l ) (P_\l I)=(I^*P_\l) (P_\l U|\l )^k (P_\l I)=E^*Y^kE.
\ee
Therefore, if $z \in \d$,
 \begin{align*}
  \frac{1-\frac12 zT}{1-zT +\delta z^2}
 =& I^*\frac{1}{1-zU} I
= \sum_{k=0}^\infty z^k I^*U^kI\\ \\
  =&\sum_{k=0}^\infty z^k E^*Y^kE
   = E^*\frac{1}{1-zY}E,
 \end{align*}
that is, the formula \eqref{ell.50} holds for all $z\in \d$.

 To summarize, we have shown that $E$ is an isometry and the formula \eqref{ell.50} holds for all $z\in \d$.  It is clear that  $Y=P_\l U|\l$ is a contraction, since $U$ is unitary. \\
 
 There remains to show that $\dim \l = 2\dim \h$. For each $i=1,\ldots,n$ and $r=1,2$, choose $\chi_i^r \in \P$ such that for each $i$ and $r$.
 \[
 \chi_i^r(\lambda_i^s) =
\left\{
	\begin{array}{ll}
		1  & \mbox{if } s=r \\
		0 & \mbox{if } s \not=r 
	\end{array}.
\right.
 \] 
Note that if $p\in \P$, then, by equation \eqref{ell.80.2} and by Lemma \ref {ell.lem.40},
\begin{align} \label{ell.80.3}
E^* \big(p(Y)\chi_i^r(Y)Ie_i\big) &= \big(A^1p(X^1)\chi_i^r(X^1) + A^2p(X^2)\chi_i^r(X^1)\big)e_i  \notag\\
&=\big(\delta^1_r p(\lambda_i^1)A^1 + \delta^2_r p(\lambda_i^2)A^2\big)e_i,
\end{align}
where $\delta^k_r$ is the Kronecker symbol.
 We claim that the set $S$ of vectors in $\l$ defined by
 \[
 S=\set{\chi_i^r(Y)Ie_i}{i=1,\ldots,n,\ \ r=1,2}
 \]
 is linearly independent. For if $c_i^r$ are complex numbers with
\[
\sum_{i,r}c_i^r\ \chi_i^r(Y)Ie_i =0,
\]
and $p\in \P$, then by equation \eqref{ell.80.3},
\begin{align*}
0&=E^* p(Y)\sum_{i,r}c_i^r\ \chi_i^r(Y)Ie_i\\
&=\sum_{i,r}c_i^r\  E^*p(Y)\chi_i^r(Y)Ie_i\\
&=\sum_{i,r}c_i^r\  \big(\delta^1_r p(\lambda_i^1)A^1 + \delta^2_r p(\lambda_i^2)A^2\big)e_i.
\end{align*} 
But since $p\in \P$ is arbitrary and the residues $A^1$ and $A^2$ are invertible, this calculation shows that the set $S$ is linearly independent. In particular, $\dim \l \ge 2n$. Hence, in light of inequality \eqref{ell.70}, $\dim \l =2n$, as was to be proved.

Now assume that $(\l,E,Y)$ is an even stranger dilation of $T$  relative to $K_\de$. As in particular, $Y$ is a contraction, by the Nagy Dilation Theorem, there exists a Hilbert Space $\k$, an isometry $F:\l \to \k$, and a unitary $U\in \b(\k)$ such that
\[
p(Y)=F^*p(U)F
\]
for all $p\in \P$. Consequently, if we set $I=FE$, then it follows that $(\k,I,U)$ is a strange dilation of $T$ relative to $K_\de$.
\end{proof}

\section{The structure of even stranger dilations}

In this section we characterize generic operators $T$ with $W(T) \subseteq K_\de$ using even stranger dilations of $T$.

\begin{lem}\label{ell.lem.50} Let $\de \in (0,1)$ and let $T\in \b(\h)$ be generic for $K_\de$. Assume that $X=(X^1,X^2)$ is a generic prepair for $T$. If $(\k,E,Y)$ is an even stranger dilation of $T$  relative to $K_\de$, then $Y$ is similar to $ X^1\oplus X^2$, that is, there exists an invertible operator $S\in \b(\h \oplus \h,\k)$
such that $ Y=S(X^1\oplus X^2)S^{-1}$.
\end{lem}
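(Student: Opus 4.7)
The plan is to extract a polynomial moment formula from the even-stranger-dilation identity \eqref{ell.50} and then write down an explicit eigenbasis of $Y$ built from vectors in $E(\h)$. Expanding both sides of \eqref{ell.50} as power series in $z$ and using the partial-fraction decomposition \eqref{ell.40} (with residues $A^1=A^2=\tfrac12$), one obtains the moment formula
\[
E^* p(Y)E \;=\; \tfrac12\bigl(p(X^1)+p(X^2)\bigr)\qquad\text{for every } p\in\P.
\]
From the proof of Lemma \ref{ell.lem.20}, the generic prepair $(X^1,X^2)$ is jointly diagonalised by a common basis $e_1,\dots,e_n$ of $\h$ with $X^r e_i=\lambda_i^r e_i$, and the $2n$ scalars $\{\lambda_i^r\}_{i,r}$ are all distinct because $T$ is generic for $K_\de$ and $\sigma(X^1)\cap\sigma(X^2)=\varnothing$.

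For each pair $(i,r)$ with $1\le i\le n$ and $r\in\{1,2\}$ I would choose $\chi_i^r\in\P$ with $\chi_i^r(\lambda_i^s)=\delta_{rs}$ and set
\[
v_i^r \;=\; \chi_i^r(Y)Ee_i \;\in\;\k.
\]
Applying the moment formula to the product $p\chi_j^s$ on $e_j$ then yields the testing identity
\[
E^*p(Y)v_j^s \;=\; \tfrac12\, p(\lambda_j^s)\,e_j\qquad\text{for every } p\in\P.
\]
The linear independence argument in the proof of Proposition \ref{ell.prop.30} goes through verbatim with $E$ in place of $I$ and shows, on the basis of this identity, that the $2n$ vectors $\{v_i^r\}$ are linearly independent in $\k$; since $\dim\k=2n$, they form a basis.

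The key step is to show that each $v_i^r$ is an eigenvector of $Y$ with eigenvalue $\lambda_i^r$. I would expand $Yv_i^r=\sum_{j,s}c_{j,s}^{i,r}v_j^s$ and compute $E^*p(Y)(Yv_i^r)$ in two ways. On the one hand, the moment formula applied to the polynomial $zp(z)\chi_i^r(z)$ gives $\tfrac12\lambda_i^r p(\lambda_i^r)\,e_i$. On the other hand, the testing identity applied termwise gives $\sum_{j,s}c_{j,s}^{i,r}\cdot\tfrac12 p(\lambda_j^s)\,e_j$. Equating these, and letting $p$ range over Lagrange interpolants at the $2n$ distinct nodes $\{\lambda_j^s\}$, forces $c_{j,s}^{i,r}=\lambda_i^r\delta_{ij}\delta_{rs}$, whence $Yv_i^r=\lambda_i^r v_i^r$. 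Defining $S:\h\oplus\h\to\k$ by $S(e_i\oplus 0)=v_i^1$ and $S(0\oplus e_i)=v_i^2$ then gives an invertible operator that intertwines $X^1\oplus X^2$ with $Y$, completing the proof.

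The main obstacle, I expect, is psychological rather than technical: one is tempted to try to prove directly that the annihilating polynomial $q(z)=\prod_{j,s}(z-\lambda_j^s)$ satisfies $q(Y)=0$, but the moment formula only controls the compression $E^*q(Y)E$, not $q(Y)$ itself, because we have no access to $p(Y)^*$. The route sketched above sidesteps this by working entirely inside the basis $\{v_i^r\}$ of $\k$ and using only the scalar-valued data $E^*p(Y)v_j^s$ supplied by the testing identity.
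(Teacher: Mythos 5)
Your proof is correct, and it takes a genuinely different route from the paper's. The paper extends the partial-fraction identity
\[
\frac{\tfrac12}{1-zX^1}+\frac{\tfrac12}{1-zX^2}=E^*\frac{1}{1-zY}E
\]
meromorphically to $\c$ minus finitely many poles, and then argues that each eigenvalue $\lam$ of $X^1$ (and of $X^2$) must be a pole of the right-hand side because $\|(z-X^1)^{-1}e\|$ blows up near $\lam$ while $\|(z-X^2)^{-1}e\|$ stays bounded; a counting argument then forces $\sigma(Y)=\sigma(X^1)\cup\sigma(X^2)$, and since both operators have $2n$ simple eigenvalues on $2n$-dimensional spaces, the similarity $S$ follows by matching eigenvectors. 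By contrast, you avoid the analytic continuation and resolvent-blowup argument entirely: you work with the moment identity $E^*p(Y)E=\tfrac12(p(X^1)+p(X^2))$, construct the candidate eigenvectors $v_i^r=\chi_i^r(Y)Ee_i$ directly, prove their linear independence and the eigenvalue relation $Yv_i^r=\lam_i^r v_i^r$ by polynomial testing, and read off $S$. This makes the argument purely algebraic and renders the similarity explicit; it also has the aesthetic advantage of reusing the very same polynomial construction $\chi_i^r(Y)Ee_i$ that the paper already deploys in the proof of Proposition \ref{ell.prop.30} to establish $\dim\l\ge 2n$, rather than switching to a different (resolvent-pole) technique. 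The paper's method, on the other hand, needs less bookkeeping and gives the spectral equality $\sigma(Y)=\sigma(X^1)\cup\sigma(X^2)$ as an intermediate fact without first having to exhibit a basis. Both arguments require, and use, that the $2n$ scalars $\lam_i^r$ are pairwise distinct (guaranteed by the generic-prepair hypothesis), and both deliver the same conclusion; yours is arguably the more self-contained derivation given the ingredients already in the paper.
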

\begin{proof}
If $(\k,E,Y)$  is an even  stranger dilation of $T$  relative to $K_\de$, then equations \eqref{ell.40} and \eqref{ell.50} imply that
\be\label{ell.90}
\frac{\frac{1}{2}}{1-z X^1} + \frac{\frac{1}{2}}{1-zX^2} = E^*\frac{1}{1-zY} E
\quad \text{ for all }z\in\d.
\ee
Let $S_1=1/\sigma(X^1) \cup 1/\sigma(X^2)$ and $S_2=1/\sigma(Y)$.  Since $\dim \h <\infty$ and $\dim \k = 2\dim \h$, $S_1$ and $S_2$ are finite sets. Furthermore, the left hand side of equation \eqref{ell.90} is analytic on $\c \setminus S_1$ and the right hand side of equation \eqref{ell.90} is analytic on $\c \setminus S_2$.  Since $\frac{\frac{1}{2}}{1-z X^1} + \frac{\frac{1}{2}}{1-zX^2}$ and $E^*\frac{1}{1-zY} E$ are analytic functions on $\c \setminus (S_1 \cup S_2)$ that agree on a nonempty open set, they agree on $\c \setminus (S_1 \cup S_2)$.
Therefore, equation $\eqref{ell.90}$ holds for all $z\in \c \setminus (S_1 \cup S_2)$.

Fix $\lambda\in \sigma(X^1)$ with corresponding eigenvector $e$. Then since $\sigma(X^1)$ and $\sigma(X^2)$ are disjoint, $\norm{(z-X^1)^{-1}e}$ is unbounded and  $\norm{(z-X^2)^{-1}e}$ is bounded on $\set{z\in \c}{0<|z-\lambda|<\eps}$ for all 
sufficiently small $\eps>0$. Consequently, equation \eqref{ell.90} implies that $\norm{(z-Y)^{-1}e}$ is unbounded on $\set{z\in \c }{0<|z-\lambda|<\eps}$  
for all sufficiently small $\eps >0$. Therefore, $\lambda \in \sigma(Y)$.

The previous paragraph showed that $\sigma(X^1) \subseteq \sigma(Y)$. By a similar argument it follows that  $\sigma(X^2) \subseteq \sigma(Y)$ as well.  Since $X=(X^1, X^2)$ is a  generic prepair, $\sigma(X^1) \cup \sigma(X^2)$ consists of $2\dim \h$ points and $\dim \k = 2\dim \h$. Let $\dim \h =n$,
\[
\sigma(X^1)= \{\lambda_i: i=1,2, \dots, n \} \; \text{ and } \; 
\sigma(X^2)= \{\lambda_i: i=n+1,\;n+2, \dots, 2n\}.
\]
There are vectors
\[ 
u_1,  \dots, u_{2n} \in \h \oplus \h
\]
such that
\[ 
(X^1\oplus X^2) u_i = \lambda_i u_i \; \text{  for} \; i=1,2, \dots, 2n.
\]
Since $Y$ acts on a $2n$-dimensional space and $\sigma(X^1) \cup \sigma(X^2)\subseteq \sigma(Y)$, it follows that 
 $\sigma(Y)= \sigma(X^1) \cup \sigma(X^2)$. Hence  there are  vectors
\[ 
v_1,  \dots, v_{2n} \in \k
\]
such that 
\[
Y v_i = \lambda_i v_i \; \text{  for} \; i=1,2, \dots, 2n.
\]
The vectors $u_i, i= 1,\dots,2n,$ and $v_i, i= 1,\dots,2n,$ comprise bases for $\h \oplus \h$ and $\k$ respectively, since they are both $2n$ in number and correspond to distinct eigenvalues.
Define an operator
\[
S: \h \oplus \h \to \k\; \text{ by } \; S u_i = v_i\; \text{  for} \; i=1,2, \dots, 2n.
\]
Then 
\[
Y S u_i = Y v_i = \lambda_i v_i = \lambda_i S u_i,
\]
and so
\[
S^{-1} Y S u_i = S^{-1}(Y v_i )= S^{-1}(\lambda_i v_i) = \lambda_i u_i=(X^1\oplus X^2) u_i\; \text{  for} \; i=1,2, \dots, 2n.
\]
\color{black}
It follows that $S\in \b(\h \oplus \h,\k)$ is an invertible operator 
such that $ Y=S(X^1\oplus X^2)S^{-1}$.
\end{proof}

\begin{lem}\label{ell.lem.60} Let $\de \in (0,1)$ and let
 $T\in \b(\h)$ be generic for $K_\de$. Assume that $X=(X^1,X^2)$ is a generic prepair for $T$. Then $(\k,E,Y)$ is an even  stranger dilation of $T$  relative to $K_\de $  if and only if there exist an invertible $S\in \b(\h \oplus \h,\k)$ and $C\in \b(\h, \h\oplus \h)$ such that
\be\label{ell.100}
Y=S(X^1\oplus X^2)S^{-1}\ \ \text{ and }\ \ E=S C,
\ee
and where $S$ and $C$ satisfy the following three conditions.
\begin{enumerate}[(i)]
\item
\[
(X^1\oplus X^2)^* S^*S (X^1\oplus X^2) \le S^*S.
\]
\item If we decompose $C$ as
\[
C =\begin{bmatrix}C^1 \\ C^2\end{bmatrix},
\]
then, for each $i=1,2$,  $C^i$ is invertible and commutes with $X^i$ and $T$. 
\item If $C$ is decomposed as in Condition (ii) above, then
\[
S^*S \begin{bmatrix}C^1 \\ C^2\end{bmatrix} =\frac12 \begin{bmatrix}{(C^1)^*}^{-1} \\ {(C^2)^*}^{-1}\end{bmatrix}.
\]
\end{enumerate}
\end{lem}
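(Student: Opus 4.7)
The plan is to prove the two directions separately. For the only-if direction, I first apply Lemma~\ref{ell.lem.50} to write $Y = S(X^1\oplus X^2)S^{-1}$ for some invertible $S\in\b(\h\oplus\h,\k)$, and set $C := S^{-1}E$, so that $E=SC$. Condition (i) is then immediate: $Y^*Y\leq I$ rewrites, after conjugation by $S$, as $(X^1\oplus X^2)^*S^*S(X^1\oplus X^2)\leq S^*S$.

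The essential work is extracting (ii) and (iii) from the integrated identity \eqref{ell.50}. Expanding both sides as power series in $z$ and taking linear combinations yields $E^*p(Y)E = \tfrac12\bigl(p(X^1)+p(X^2)\bigr)$ for every $p\in\P$. Writing $C=\begin{bmatrix}C^1\\ C^2\end{bmatrix}$, $S^*S$ as a $2\times 2$ block matrix $M=(M_{ij})$, and using $p(Y)=Sp(X^1\oplus X^2)S^{-1}$, a direct block computation reduces this identity to
\[
Np(X^1)C^1 + N'p(X^2)C^2 = \tfrac12\bigl(p(X^1)+p(X^2)\bigr),
\]
where $N := (C^1)^*M_{11}+(C^2)^*M_{21}$ and $N' := (C^1)^*M_{12}+(C^2)^*M_{22}$. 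The generic prepair assumption gives $\sigma(X^1)\cap\sigma(X^2)=\emptyset$ with each $X^i$ diagonalizable, so Lagrange interpolation produces, for every $p\in\P$, a polynomial $q$ with $q(X^1)=p(X^1)$ and $q(X^2)=0$ (and similarly with the roles reversed); substituting these separates the displayed identity into $Np(X^1)C^1 = \tfrac12 p(X^1)$ and $N'p(X^2)C^2 = \tfrac12 p(X^2)$ for every polynomial $p$. Taking $p\equiv 1$ yields $NC^1 = N'C^2 = \tfrac12 I$, so $C^1,C^2$ are invertible (one-sided inverses suffice in finite dimensions) with $N=\tfrac12(C^1)^{-1}$ and $N'=\tfrac12(C^2)^{-1}$. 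Taking $p(X)=X$ then gives $(C^i)^{-1}X^iC^i = X^i$, so each $C^i$ commutes with $X^i$ and hence with $T=X^i+\delta(X^i)^{-1}$ by the remark following Definition~\ref{ell.def.30}; this is condition (ii). For (iii), I take adjoints of the two scalar identities $N=\tfrac12(C^1)^{-1}$ and $N'=\tfrac12(C^2)^{-1}$ and use the Hermiticity $M_{ij}^*=M_{ji}$; the resulting relations
\[
M_{11}C^1 + M_{12}C^2 = \tfrac12 (C^1)^{*-1}, \qquad M_{21}C^1 + M_{22}C^2 = \tfrac12 (C^2)^{*-1}
\]
are precisely the two block rows of condition (iii).

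The if direction is a direct verification. Condition (i), after conjugation by $S^{-1}$, gives $Y^*Y\leq I$, so $Y$ is a contraction. Condition (iii) combined with the block form of $C$ yields
\[
E^*E = C^*(S^*S)C = \tfrac12\bigl[(C^1)^*(C^1)^{*-1} + (C^2)^*(C^2)^{*-1}\bigr] = I,
\]
so $E$ is an isometry. For the dilation formula, I start from $E^*(1-zY)^{-1}E = C^*S^*S\bigl(1-z(X^1\oplus X^2)\bigr)^{-1}C$; using (ii) to move each $C^i$ past $(1-zX^i)^{-1}$ and then invoking (iii) together with the Hermiticity of $S^*S$ on the resulting coefficient operators, the expression collapses to $\tfrac12(1-zX^1)^{-1}+\tfrac12(1-zX^2)^{-1}$, which by the partial fraction identity \eqref{ell.40} equals $(1-\tfrac12 zT)/(1-zT+\delta z^2)$, as required.

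The main obstacle is the separation step in the only-if direction: the single operator identity $E^*p(Y)E=\tfrac12(p(X^1)+p(X^2))$ must be split into an $X^1$-piece and an $X^2$-piece in order to force simultaneously both the invertibility and commutativity in (ii) and the precise quantitative relation (iii). This separation depends critically on $\sigma(X^1)\cap\sigma(X^2)=\emptyset$, which is built into the genericity of the prepair.
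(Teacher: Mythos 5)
Your proof is correct and follows essentially the same route as the paper's: Lemma~\ref{ell.lem.50} to fix $S$, the definition $C=S^{-1}E$, the polynomial identity $E^*p(Y)E=\tfrac12(p(X^1)+p(X^2))$, separation via a polynomial vanishing on $\sigma(X^2)$ but not $\sigma(X^1)$, and the specializations $p\equiv 1$, $p(z)=z$ to extract invertibility and commutativity. The only cosmetic difference is that the paper introduces the auxiliary operator $B=E^*S$ and reads off (iii) from $S^*SC=(E^*S)^*=B^*$, whereas you write $S^*S$ as a block matrix $(M_{ij})$ and use its Hermiticity directly; since $B^1,B^2$ are exactly your $N,N'$, the two computations coincide.
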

\begin{proof}
First assume that $(\k,E,Y)$ is an even stranger dilation of $T$.
The  formula
\be\label{ell.30.sd2}
\frac{1-\tfrac12 z T}{1-zT +\delta z^2}= E^*\frac{1}{1-zY} E
\ee
 holds for all $z\in \d$. The  formula \eqref{ell.40} implies that, for  all $z\in \d$,
\[
\frac{\frac12}{1-zX^1}+\frac{\frac12}{1-zX^2}=
 E^*\frac{1}{1-zY} E.
\]
If we expand the analytic functions in this formula into power series and equate coefficients we obtain 
\[
  \frac12 (X^1)^k +\frac12 (X^2)^k = E^* Y^k E,   \quad k=1,2, \dots.
\]
Let us take linear combinations of  the last equations to get
\color{black} 
\be\label{ell.110}
\frac12 p(X^1) + \frac12 p(X^2) =E^*p(Y)E
\ee
for all $p\in \P$.

By Lemma \ref{ell.lem.50}, there exists an invertible $S\in \b(\h \oplus \h,\k)$ such that
\be\label{ell.105}
Y=S(X^1\oplus X^2)S^{-1}.
 \ee
 Fix such an $S$ and define $C\in \b(\h, \h\oplus \h)$ by $C=S^{-1}E$. Clearly,  equation \eqref{ell.100} holds. Also, as $\norm{Y} \le 1$, $Y^*Y \le 1$, which implies that Condition (i) holds.

To see that Condition (ii) holds, let $B \in \b(\h \oplus \h,\h)$ be defined by $B=E^*S$. If we decompose $B$ as
\[
B=\begin{bmatrix}B^1&B^2\end{bmatrix},
\]
then, by equations  \eqref{ell.110}, \eqref{ell.105} and the definitions of $B$ and $C$, it follows that for all $p \in \P$,
\begin{align}\label{ell.106}
\frac12 p(X^1) + \frac12 p(X^2) &=E^*p(Y)E \nonumber\\
&=BS^{-1} p(Y) SC  \nonumber\\
&=Bp(X^1 \oplus X^2) C  \nonumber\\
&=B^1p(X^1)C^1 +B^2p(X^2)C^2.
\end{align}
By assumption, $X$ is a generic prepair for $T$, and so   $\sigma(X^1)$ and $\sigma(X^2)$ are disjoint and finite. Hence there exists a polynomial $p_0$ such that $p_0(z) =1$ for all $z \in
\sigma(X^1)$ and $p_0(z) =0 \; $  for all $z \in \sigma(X^2)$. In equation \eqref{ell.106}
replace $p$ by $p p_0$ and observe that 
 $p_0(X^1) =1$ and $p_0(X^2) =0$, and so equation \eqref{ell.106} becomes
 \be\label{ell.120}
 \frac12 p(X^1) = B^1p(X^1)C^1
 \ee
 for all $p \in \P$.
 Similarly,
 \be\label{ell.121}
 \frac12 p(X^2) = B^2p(X^2)C^2
 \ee
 for all $p \in \P$.
 Let $p=1$ in equations \eqref{ell.120} and \eqref{ell.121}, to get 
\[
B^1C^1=\frac12\ \ \text{ and }\ \ B^2C^2=\frac12,
\]
which implies that $C^1$ and $C^2$ are invertible. 
 Let $p=z$ in equations \eqref{ell.120} and  \eqref{ell.121}, to get 
 \be\label{x1c1x2c2}
 \frac12 X^1 =\frac12 {C^1}^{-1}X^1 C^1\ \ \text{ and }\ \
  \frac12 X^2=\frac12 {C^2}^{-1}X^2 C^2,
 \ee
 which implies that $C^1$  commutes with $X^1$ and $C^2$ commutes with $X^2$. 
 By equation \eqref{TX1X2},
\[
T= X^1 + \delta (X^1)^{-1}\; \; \text{and} \;\; T= X^2 + \delta (X^2)^{-1}.
\]
Therefore, 
 $C^1$  and $C^2$ commute with $T$.
\color{black}

 Finally, to see that Condition (iii) holds, let $B$ be as in the previous paragraph and compute that
 \begin{align*}
 S^*S \begin{bmatrix}C^1 \\ C^2\end{bmatrix}&=S^* E\\
 &=(E^*S)^*\\
 &=\begin{bmatrix}B^1&B^2\end{bmatrix}^*\\
 &=\frac12 \begin{bmatrix}{(C^1)^*}^{-1} \\ {(C^2)^*}^{-1}\end{bmatrix}.
 \end{align*}

 Now assume that $S\in \b(\h \oplus \h,\k)$ is invertible, $C\in \b(\h, \h\oplus \h)$, and that $S$ and $C$ satisfy Conditions (i), (ii), and (iii). We wish to show that if $Y$ and $E$ are defined by formulae \eqref{ell.100}, then $(\k,E,Y)$ is an even  stranger dilation of $T$. First observe that since $S$ is invertible, $\dim \k =2 \dim \h$. Also note that as $Y=S(X^1\oplus X^2)S^{-1}$, Condition (i) implies that $Y^*Y \le 1$, so that $\norm{Y} \le 1$. There remains to show that $E$ is an isometry and that equation \eqref{ell.50} holds.

Using Condition (iii) we see that
\[
E^*E=C^*(S^*S C)= \begin{bmatrix}{(C^1)^*} & {(C^2)^*}\end{bmatrix}\Big(\frac12 \begin{bmatrix}{(C^1)^*}^{-1} \\ {(C^2)^*}^{-1}\end{bmatrix}\Big)
=1.
\]
Therefore $E$ is an isometry.

To prove that equation \eqref{ell.50} holds, fix $z\in \d$ and note that
\begin{align*}
E^*\frac{1}{1-zY}E &=(C^*S^*) \frac{1}{1-zS(X^1\oplus X^2)S^{-1}}(SC) &~\\
&=(C^*S^*) S\frac{1}{1-z(X^1\oplus X^2)}S^{-1}(SC) &~\\
&=(C^*S^*S)\frac{1}{1-z(X^1\oplus X^2)}C &~\\
&=\frac12 \begin{bmatrix}{{C^1}^{-1}} &{{C^2}^{-1}}\end{bmatrix}
\begin{bmatrix}\frac{1}{1-zX^1}&0\\
0&\frac{1}{1-zX^2}\end{bmatrix}
\begin{bmatrix}C^1 \\ C^2\end{bmatrix}
\qquad   & \text{by Condition (iii)}\\
&=\frac12\  {C^1}^{-1}\frac{1}{1-zX^1}C^1+
\frac12\  {C^2}^{-1}\frac{1}{1-zX^2}C^2&~ \\
&=\frac{\frac12}{1-zX^1}+\frac{\frac12}{1-zX^2} \qquad  \qquad \qquad & \text{by Condition (ii)}\\
&=\frac{1-\frac12 zT}{1-zT +\delta z^2}
\qquad  \qquad \qquad \qquad  &  \text{by equation  \eqref{ell.40}}.
\end{align*}
 The proof of Lemma \ref{ell.lem.60} is complete. 
\end{proof}

\section{The existence of even stranger dilations}
\begin{prop}\label{ell.prop.40} Let $\de\in (0,1)$ and
let $T\in \b(\h)$ be generic for $K_\de$. Assume that $X=(X^1,X^2)$ is a generic prepair for $T$. There exists an even stranger dilation of $T$ relative to $K_\de$  if and only if there exists a self-adjoint operator $\Delta \in \b(\h \oplus \h)$ satisfying
\be\label{ell.150}
(X^1\oplus X^2)^*\Delta (X^1\oplus X^2) \le \Delta
\ee
and
\be\label{ell.160}
\Delta\begin{bmatrix}1\\1\end{bmatrix}=\frac12 \begin{bmatrix}1\\1\end{bmatrix}.
\ee
Furthermore, if $\Delta$ is a self-adjoint operator in $\b(\h \oplus \h)$ satisfying conditions \eqref{ell.150} and \eqref{ell.160}, then $\Delta$ is strictly positive definite.
\end{prop}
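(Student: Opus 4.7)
The plan is to leverage Lemma \ref{ell.lem.60}, which already characterizes even stranger dilations in terms of a pair $(S,C)$ of operators, by collapsing this data into a single self-adjoint $\Delta$ via $\Delta := (SD)^*(SD)$, where $D := C^1 \oplus C^2$. Two of the three conditions in Lemma \ref{ell.lem.60} will match the two hypotheses on $\Delta$ quite directly; the main obstacle will be to prove strict positivity of $\Delta$ from the hypotheses alone, which has to be extracted from the spectral information on $X^1$ and $X^2$ rather than from Lemma \ref{ell.lem.60}.

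For the forward direction, I would take $S$ and $C=\begin{bmatrix}C^1\\C^2\end{bmatrix}$ from Lemma \ref{ell.lem.60} and set $D := C^1\oplus C^2$, $M := X^1\oplus X^2$. By Condition (ii) of that lemma, $D$ is invertible and commutes with $M$, so $\Delta := D^*S^*SD = (SD)^*(SD)$ is automatically self-adjoint and strictly positive. Condition (i) combined with the commutation $[D,M]=0$ yields $M^*\Delta M \le \Delta$, while multiplying Condition (iii) on the left by $D^*$ yields $\Delta\begin{bmatrix}1\\1\end{bmatrix} = \tfrac12\begin{bmatrix}1\\1\end{bmatrix}$, using $D\begin{bmatrix}1\\1\end{bmatrix}=\begin{bmatrix}C^1\\C^2\end{bmatrix}$ and $D^*\begin{bmatrix}{(C^1)^*}^{-1}\\{(C^2)^*}^{-1}\end{bmatrix}=\begin{bmatrix}1\\1\end{bmatrix}$.

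For the converse, granted the strict positivity of $\Delta$, the even stranger dilation is built by the canonical choice $\k := \h\oplus\h$, $S := \Delta^{1/2}$ (invertible since $\Delta>0$), and $C := \begin{bmatrix}1\\1\end{bmatrix}$, so that $C^1=C^2=1$. Then Conditions (i), (ii), (iii) of Lemma \ref{ell.lem.60} reduce verbatim to the two hypotheses on $\Delta$, and the lemma supplies the triple $(\k,E,Y)$ with $Y=SMS\inv$ and $E=SC$. The whole argument therefore hinges on proving that every self-adjoint $\Delta\in\b(\h\oplus\h)$ satisfying the two stated conditions is strictly positive definite.

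To handle strict positivity, I would first observe that the spectrum of $M=X^1\oplus X^2$ lies in the open annulus $R_\de=\set{z\in\c}{\de<|z|<1}$ by the genericity of $T$ for $K_\de$, so $M^n\to 0$. Iterating $M^*\Delta M\le\Delta$ and rewriting the difference as the telescoping sum $\Delta - M^{*n}\Delta M^n = \sum_{k=0}^{n-1}M^{*k}(\Delta-M^*\Delta M)M^k$ of positive operators, then letting $n\to\infty$, gives $\Delta\ge 0$. For $v\in\ker\Delta$, the inequality $0\le\ip{\Delta Mv}{Mv}\le\ip{\Delta v}{v}=0$ forces $Mv\in\ker\Delta$, so $\ker\Delta$ is $M$-invariant; and the normalization $\Delta\begin{bmatrix}h\\h\end{bmatrix}=\tfrac12\begin{bmatrix}h\\h\end{bmatrix}$ together with self-adjointness of $\Delta$ gives $\ip{v}{\begin{bmatrix}h\\h\end{bmatrix}}=2\ip{\Delta v}{\begin{bmatrix}h\\h\end{bmatrix}}=0$ for every $h\in\h$, forcing $v=\begin{bmatrix}a\\-a\end{bmatrix}$ for some $a\in\h$. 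The $M$-invariance of $\ker\Delta$ then makes $M\begin{bmatrix}a\\-a\end{bmatrix}=\begin{bmatrix}X^1a\\-X^2a\end{bmatrix}$ have the same shape, whence $X^1a=X^2a$. Since $X^1$ and $X^2$ lie in the commutative algebra generated by $T$ and have disjoint spectra, $X^1-X^2$ is invertible, so $a=0$ and $\ker\Delta=\{0\}$, establishing $\Delta>0$.
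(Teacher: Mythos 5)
Your proof is correct and follows essentially the same route as the paper: both directions hinge on Lemma \ref{ell.lem.60} with the same definition $\Delta=(C^1\oplus C^2)^*S^*S(C^1\oplus C^2)$, the same telescoping-series argument for $\Delta\ge 0$, and the same ingredients (\eqref{ell.160}, $M$-invariance of $\ker\Delta$, disjointness of $\sigma(X^1)$ and $\sigma(X^2)$) to kill the kernel. The only cosmetic variation is in the kernel step: the paper first uses $M$-invariance to extract an eigenvector of the form $u\oplus 0$ or $0\oplus u$ and then derives a contradiction from \eqref{ell.160}, whereas you first use \eqref{ell.160} to pin $\ker\Delta$ to the anti-diagonal and then invoke invertibility of $X^1-X^2$ (which indeed follows from commutativity plus disjoint spectra via simultaneous triangularization) — both are valid and of equal weight.
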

\begin{proof}
First assume that $(\k, E,Y)$ is an even stranger dilation for $T$. Let $S$ and $C$ be as in Lemma \ref{ell.lem.60} and define $\Delta$ by
\[
\Delta= (C^1\oplus C^2)^*S^*S(C^1\oplus C^2).
\]
We have 
\begin{align*}
(X^1\oplus X^2)^*\Delta (X^1\oplus X^2)
=&(X^1\oplus X^2)^*(C^1\oplus C^2)^*S^*S(C^1\oplus C^2) (X^1\oplus X^2)\\
=&
(C^1\oplus C^2)^*(X^1\oplus X^2)^*S^*S(X^1\oplus X^2) (C^1\oplus C^2)\\
&\qquad \text{by equation \eqref{x1c1x2c2}}\qquad\\
\le&
(C^1\oplus C^2)^*S^*S (C^1\oplus C^2)\\
&\qquad \text{by Condition (i) of Lemma \ref{ell.lem.60}}\qquad\\
&=\Delta,
\end{align*}
that is, inequality \eqref{ell.150} holds. Also,
\begin{align*}
\Delta\begin{bmatrix}1\\1\end{bmatrix}&=
(C^1\oplus C^2)^*S^*S
(C^1\oplus C^2)\begin{bmatrix}1\\1\end{bmatrix}\\
&=(C^1\oplus C^2)^*S^*S\begin{bmatrix}C^1 \\ C^2\end{bmatrix}\\
&=(C^1\oplus C^2)^*
\Big(\frac12 \begin{bmatrix}{(C^1)^*}^{-1} \\ {(C^2)^*}^{-1}\end{bmatrix}\Big)\\
&\qquad \text{by Condition (iii) of Lemma \ref{ell.lem.60} }\qquad\\
&=\frac12 \begin{bmatrix}1\\1\end{bmatrix},
\end{align*}
that is, equality \eqref{ell.160} holds.

Now assume that $\Delta$ is a self-adjoint operator in $\b(\h \oplus \h)$ satisfying the relations \eqref{ell.150} and \eqref{ell.160}. We first show that $\Delta$ is strictly positive definite and then show that there exists an even stranger dilation for $T$.

To prove that $\Delta>0$, observe that if we let
\be\label{defP}
P=\Delta-(X^1\oplus X^2)^*\Delta (X^1\oplus X^2),
\ee
then inequality \eqref{ell.150} guarantees that $P\ge0$.
Recall that $\sigma(X^i)\subset R_\de\subset \d$, and hence $\sigma(X^1\oplus X^2)\subseteq \d$.
Premultiply equation \eqref{defP} by ${(X^1\oplus X^2)^n}^*$ and postmultiply by
$(X^1\oplus X^2)^n$ to obtain
\[
{(X^1\oplus X^2)^n}^* P(X^1\oplus X^2)^n  = {(X^1\oplus X^2)^n}^* \Delta
(X^1\oplus X^2)^n - {(X^1\oplus X^2)^{n+1}}^*\Delta  (X^1\oplus X^2)^{n+1}
\]
Sum this telescoping series for $n$ from $0$ to $N$ to get
\[
\sum_{n=0}^N {(X^1\oplus X^2)^n}^* P(X^1\oplus X^2)^n  =  \Delta
 - {(X^1\oplus X^2)^{N+1}}^*\Delta  (X^1\oplus X^2)^{N+1}.
\]
Since $\sigma(X^1\oplus X^2) \subset \d, (X^1\oplus X^2)^{N+1} \to 0$ as $N \to \infty$, and therefore 
\[
\Delta = \sum_{n=0}^\infty {(X^1\oplus X^2)^n}^*P(X^1\oplus X^2)^n
\ge0.
\]
To see that $\ker \Delta =\{0\}$ assume to the contrary that $\ker \Delta \not=\{0\}$. Since $\Delta \ge 0$, the inequality \eqref{ell.150} implies that $\ker \Delta$ is invariant for $X^1\oplus X^2$. Therefore, as $\sigma(X^1) \cap \sigma(X^2) =\varnothing$, there exists $u\in \h$ such that $u\not=0$ and either $u\oplus 0\in \ker \Delta$ or $0 \oplus u \in \ker \Delta$. But if $u\oplus 0\in \ker \Delta$, equation \eqref{ell.160} implies that
\begin{align*}
\frac12 \ip{v}{u}_\h&= \ip{\frac12\begin{bmatrix}1\\1\end{bmatrix}v}
{\begin{bmatrix}u\\0\end{bmatrix}}_{\h \oplus \h}\\
&=\ip{\Delta\begin{bmatrix}1\\1\end{bmatrix}v}
{\begin{bmatrix}u\\0\end{bmatrix}}_{\h \oplus \h}\\
&=\ip{\begin{bmatrix}1\\1\end{bmatrix}v}
{\Delta \begin{bmatrix}u\\0\end{bmatrix}}_{\h \oplus \h}\\
&=0
\end{align*}
for all $v\in \h$, contradicting the fact that $u\not=0$. Similarly, if $0 \oplus u \in \ker \Delta$, we obtain a contradiction. This completes the proof that $\Delta>0$.

To construct an even stranger dilation for $T$, let $\k=\h \oplus \h$, define $E$ by
\[
E=\frac{1}{\sqrt2}\begin{bmatrix}1\\1\end{bmatrix},
\]
and let
\[
Y=\Delta^{\frac12}(X^1\oplus X^2)\Delta^{-\frac12}.
\]
Then $\dim \k =2\dim \h$ and the inequality \eqref{ell.150} implies that $\norm{Y}\le 1$. Also, as $E^*E=1$, $E$ is an isometry. Finally, noting that equation \eqref{ell.160} implies that
\[
\Delta^{\frac12}\begin{bmatrix}1\\1\end{bmatrix}=\frac{1}{\sqrt2} \begin{bmatrix}1\\1\end{bmatrix},
\]
we have, for $z \in \d$,
\begin{align*}
E^*\frac{1}{1-zY}E &=\Big(\frac{1}{\sqrt2} \begin{bmatrix}1\\1\end{bmatrix}\Big)^* \Big(\frac{1}{1-z\Delta^{\frac12}(X^1\oplus X^2)\Delta^{-\frac12}}\Big)\Big(\frac{1}{\sqrt2} \begin{bmatrix}1\\1\end{bmatrix}\Big)\\
&=\Big(\begin{bmatrix}1&1\end{bmatrix}\Delta^{\frac12}\Big)
\Big(\Delta^{\frac12}\frac{1}{1-z(X^1\oplus X^2)}\Delta^{-\frac12}\Big)
\Big(\Delta^{\frac12}\begin{bmatrix}1\\1\end{bmatrix}\Big)\\
&=\big(\begin{bmatrix}1&1\end{bmatrix}\Delta\big) \frac{1}{1-z(X^1\oplus X^2)}\begin{bmatrix}1\\1\end{bmatrix}\\
&=\frac12 \begin{bmatrix}1&1\end{bmatrix}
\frac{1}{1-z(X^1\oplus X^2)}\begin{bmatrix}1\\1\end{bmatrix}\\
&=\frac{\frac12}{1-zX^1}+\frac{\frac12}{1-zX^2}\\
&=\frac{1-\frac12 zT}{1-zT +\delta z^2},
\end{align*}
that is, formula \eqref{ell.50} holds.
\end{proof}

Observe that the above calculation is valid for {\em any} choice of a prepair $(X^1, X^2)$ for $T$, and so the existence statement in Proposition \ref{ell.prop.40} does not depend on the choice of a generic prepair for $T$.
\color{black}

The following proposition, which is essentially equivalent to Proposition \ref{ell.prop.40}, gives necessary and sufficient conditions for the existence of an even stranger dilation for $T$ directly in terms of $T$ rather than in terms of a prepair $X$ for $T$. If $T\in \b(\h)$ is generic for $K_\de$, then we may define $Z_T\in \b(\h \oplus \h)$ by
\be\label{ell.170}
Z_T=\frac12\ \begin{bmatrix} T&Q\\
Q& T\end{bmatrix},
\ee
where $Q$ is any operator in $\b(\h)$ commuting with $T$ and satisfying \be\label{ell.172}
Q^2=T^2-4\delta.
\ee
Just as $T$ has $2^{\dim\h}$ prepairs, there are $2^{\dim\h}$ choices for $Q$ satisfying equation \eqref{ell.172}. Also, the notation `$Z_T$' for the operator defined by formula \eqref{ell.170} is slightly ambiguous in that it does not reflect the dependence of $Z_T$ on the choice of $Q$.
\begin{prop}\label{ell.prop.50}
Let $\de\in(0,1)$. If $T\in \b(\h)$ is generic for $K_\de$, then the following conditions are equivalent.
\begin{enumerate}[(i)]
\item There exists an even  stranger dilation of $T$  relative to $K_\de$.
 \item For some (equivalently for all) choice of $Q$ satisfying equation \eqref{ell.172}, there exists a strictly positive definite $\Gamma \in \b(\h)$ such that
\be\label{ell.180}
\bignorm{\begin{bmatrix}1&0\\
0& \Gamma\end{bmatrix}^{\frac12}Z_T\begin{bmatrix}1&0\\
0& \Gamma\end{bmatrix}^{-\frac12}} \le 1.
\ee
 \item For some (equivalently for all) choice of $Q$ commuting with $T$ and satisfying $Q^2=T^2-4\delta$, there exists an invertible $S \in \b(\h)$ such that
\be\label{ell.182}
\bignorm{\begin{bmatrix}1&0\\
0& S\end{bmatrix}Z_T
\begin{bmatrix}1&0\\
0& S\end{bmatrix}^{-1}} \le 1.
\ee
\end{enumerate}
\end{prop}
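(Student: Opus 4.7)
My plan is to deduce Proposition \ref{ell.prop.50} from Proposition \ref{ell.prop.40} by passing between the ``prepair picture'' ($X^1\oplus X^2$) and the ``$Z_T$ picture'' via a Hadamard-style unitary. Fix a generic prepair $X=(X^1,X^2)$ for $T$ and set $Q=X^1-X^2$; this $Q$ commutes with $T$ and satisfies $Q^2 = (X^1+X^2)^2-4X^1X^2 = T^2-4\de$, so it is an admissible choice in the definition of $Z_T$. A direct block calculation shows that the unitary
\[
U = \tfrac{1}{\sqrt 2}\begin{bmatrix} 1 & 1 \\ 1 & -1 \end{bmatrix}
\]
on $\h\oplus\h$ satisfies $U Z_T U^* = X^1\oplus X^2$. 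This identity is the bridge between the two proposition statements.

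For (i)$\Leftrightarrow$(ii): given $\Delta$ as in Proposition \ref{ell.prop.40}, set $\Delta' = U^*\Delta U$. The Stein-type inequality transports cleanly by unitarity, so $(X^1\oplus X^2)^*\Delta(X^1\oplus X^2)\le \Delta$ becomes $Z_T^*\Delta' Z_T \le \Delta'$. The normalization $\Delta\begin{bmatrix}1\\1\end{bmatrix}=\tfrac12\begin{bmatrix}1\\1\end{bmatrix}$ translates, upon applying $U^*$ to both sides, into $\Delta'\begin{bmatrix}v\\0\end{bmatrix} = \tfrac12\begin{bmatrix}v\\0\end{bmatrix}$ for every $v\in\h$. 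Writing $\Delta'$ in $2\times 2$ block form and invoking self-adjointness, this forces $\Delta' = \tfrac12(1\oplus \Gamma)$ for some self-adjoint $\Gamma\in\b(\h)$; strict positivity of $\Delta$ (established inside Proposition \ref{ell.prop.40}) yields $\Gamma>0$. Substituting into the Stein inequality produces $Z_T^*(1\oplus\Gamma) Z_T \le (1\oplus\Gamma)$, which is exactly \eqref{ell.180}. The converse is symmetric: from $\Gamma$ satisfying \eqref{ell.180} I recover $\Delta = U\cdot \tfrac12(1\oplus\Gamma)\cdot U^*$ and verify both conditions of Proposition \ref{ell.prop.40}.

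For (ii)$\Leftrightarrow$(iii): the forward direction is immediate with $S = \Gamma^{1/2}$. For the reverse direction I would invoke the polar decomposition $S = V|S|$ on the finite-dimensional space $\h$, where $V$ is unitary and $|S|=(S^*S)^{1/2}$ is positive definite. Then $\begin{bmatrix}1&0\\0&S\end{bmatrix} = \begin{bmatrix}1&0\\0&V\end{bmatrix}\begin{bmatrix}1&0\\0&|S|\end{bmatrix}$, and since $\begin{bmatrix}1&0\\0&V\end{bmatrix}$ is unitary it does not affect the operator norm when we conjugate $Z_T$. Taking $\Gamma = |S|^2 = S^*S>0$ shows that \eqref{ell.182} is equivalent to \eqref{ell.180}.

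The ``for some (equivalently for all)'' clause in (ii) and (iii) is inherited from the corresponding independence-of-prepair remark immediately following the proof of Proposition \ref{ell.prop.40}: each admissible $Q$ arises as $X^1-X^2$ for a uniquely determined generic prepair of $T$, and the existence assertion in Proposition \ref{ell.prop.40} is independent of which prepair is chosen. The main step I anticipate requiring care is purely bookkeeping: correctly transporting the normalization $\Delta\begin{bmatrix}1\\1\end{bmatrix}=\tfrac12\begin{bmatrix}1\\1\end{bmatrix}$ under $U^*$ and extracting the block-diagonal structure of $\Delta'$ while tracking the factor $\tfrac12$. Once the identity $UZ_TU^* = X^1\oplus X^2$ is in place, the remainder of the argument is essentially linear algebra.
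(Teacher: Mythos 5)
Your proposal is correct and follows essentially the same route as the paper: Proposition \ref{ell.prop.40} plus conjugation by the Hadamard symmetry $U=\tfrac{1}{\sqrt 2}\begin{bmatrix}1&1\\1&-1\end{bmatrix}$, which intertwines $X^1\oplus X^2$ and $Z_T$. In fact you spell out a small step the paper leaves implicit — namely, why the normalization $\Delta'\begin{bmatrix}1\\0\end{bmatrix}=\tfrac12\begin{bmatrix}1\\0\end{bmatrix}$ together with self-adjointness forces $\Delta'=\tfrac12(1\oplus\Gamma)$ with $\Gamma>0$ — and your polar-decomposition argument for (iii)$\Rightarrow$(ii) produces exactly the same $\Delta=\tfrac12 U(1\oplus S^*S)U$ that the paper writes down directly in its (iii)$\Rightarrow$(i) step.

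One small caveat worth flagging: when you justify the ``for some (equivalently for all)'' clause by saying ``each admissible $Q$ arises as $X^1-X^2$ for a uniquely determined generic prepair of $T$,'' the pair $(\tfrac{T+Q}{2},\tfrac{T-Q}{2})$ need not a priori satisfy the disjoint-spectra (or distinct-eigenvalue) requirements of a generic prepair. This gap is also present in the paper's own treatment, which simply appeals to the remark after Proposition \ref{ell.prop.40}, so it does not distinguish your argument from theirs; but if you wanted a fully rigorous version of that clause you would need to verify the bijection between the $2^n$ choices of $Q$ and the $2^n$ generic prepairs.
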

\begin{proof}

Suppose Condition (i) holds. By Proposition \ref{ell.prop.40}, for a generic \forkde  $T$
and a  generic prepair $X=(X^1,X^2)$  for $T$, there exists an even stranger dilation of $T$ if and only if there exists a self-adjoint operator $\Delta \in \b(\h \oplus \h)$ satisfying
\be\label{ell.150Z}
(X^1\oplus X^2)^*\Delta (X^1\oplus X^2) \le \Delta
\ee
and
\be\label{ell.160Z}
\Delta\begin{bmatrix}1\\1\end{bmatrix}=\frac12 \begin{bmatrix}1\\1\end{bmatrix}.
\ee
\color{black}
If we define a symmetry  $U\in \b(\h\oplus \h)$  by
\[
U=\frac{1}{\sqrt2}\begin{bmatrix}1&1\\ 1&-1\end{bmatrix},
\]
then for each choice of prepair $X= (X^1, X^2)$ for $T$,
\begin{align}\label{ell.184}
U(X^1\oplus X^2)U= &  \frac{1}{\sqrt2}\begin{bmatrix}1&1\\ 1&-1\end{bmatrix}
\begin{bmatrix}X^1 &0\\ 0& X^2\end{bmatrix}\frac{1}{\sqrt2}\begin{bmatrix}1&1\\ 1&-1\end{bmatrix} \nonumber\\
= & \frac{1}{2}\begin{bmatrix}X^1 +X^2 &X^1 -X^2\\ X^1 -X^2 & X^1 +X^2\end{bmatrix}
\nonumber\\
= & \frac{1}{2}\begin{bmatrix} T & Q\\ Q & T\end{bmatrix}
= Z_T,
\end{align}
where $Q = X^1 -X^2$ is an operator in $\b(\h)$. By the definition \ref{ell.def.30} of a prepair $X= (X^1, X^2)$ for $T$, the equations  $X^1X^2= X^2X^1$, 
$X^1 +X^2 =T$ and $X^1 X^2=\delta$ hold. Thus one can check that, for $Q = X^1 -X^2$, $QT=TQ$ and $Q^2=T^2-4\delta$.
\color{black}
Hence, the inequality \eqref{ell.150Z} is equivalent to
\[
Z_T^* \big(U\Delta U\big)Z_T \le U\Delta U.
\]
Also, equation  \eqref{ell.160Z} is equivalent to
\[
\big(U\Delta U\big) \begin{bmatrix}1\\0\end{bmatrix}=\frac12
 \begin{bmatrix}1\\0\end{bmatrix}.
\]
Therefore, if we assume that $\Delta$ satisfies the relations \eqref{ell.150Z} and \eqref{ell.160Z}, then inequality \eqref{ell.180} holds if we let
\[
\begin{bmatrix}1&0\\
0& \Gamma\end{bmatrix}=2U\Delta U.
\]
This proves that Condition (i) implies Condition (ii).

Suppose Condition (ii) is satisfied. That is, there is a strictly positive definite $\Gamma \in \b(\h)$ such that equations \eqref{ell.180} holds.  Since $\Gamma > 0$, $\sigma(\Gamma)$ is a compact subset of $(0,\infty)$, and so $0\notin \sigma(\Gamma)$. 
Thus  $\Gamma$ is invertible and Condition (iii) is satisfied too.
\color{black}

 There remains to prove that Condition (iii) implies Condition (i). 
Let Condition (iii) hold, so that,  for some choice of an operator $Q$ in $\b(\h)$ commuting with $T$ and satisfying $Q^2=T^2-4\delta$, there exists an invertible $S \in \b(\h)$ such that
\be\label{ell.1820}
\bignorm{\begin{bmatrix}1&0\\
0& S\end{bmatrix}Z_T
\begin{bmatrix}1&0\\
0& S\end{bmatrix}^{-1}} \le 1.
\ee
Define
\be\label{ell.delta11}
\Delta_{11} = \frac{1}{4}( 1 + S^*S)
\ee
and 
\be\label{ell.delta}
\Delta = \begin{bmatrix} \Delta_{11}  & \frac{1}{2} - \Delta_{11}\\
 \frac{1}{2} - \Delta_{11} & \Delta_{11} \end{bmatrix}.
\ee
Then 
\[
U \Delta U =  U \begin{bmatrix} \Delta_{11}  & \frac{1}{2} - \Delta_{11}\\
 \frac{1}{2} - \Delta_{11} & \Delta_{11} \end{bmatrix} U= 
 \frac12 \begin{bmatrix} 1  & 0\\
 0 & -1 + 4\Delta_{11} \end{bmatrix}. 
\] 
Since, by definition, 
\[ \Delta_{11} = \frac{1}{4}( 1 + S^*S), \]
we have
\be \label{Delta-S}
\Delta = \frac{1}{2} U \begin{bmatrix}1&0\\
0& S^*S\end{bmatrix}U,
\ee
One can see that, $\Delta$ defined by equations \ref{ell.delta} and \ref{ell.delta11}, is  a self-adjoint operator $\Delta \in \b(\h \oplus \h)$.

By assumption of Condition (iii),  $S \in \b(\h)$ is invertible and  such that
\be\label{ell.1820Z}
\bignorm{\begin{bmatrix}1&0\\
0& S\end{bmatrix}Z_T
\begin{bmatrix}1&0\\
0& S\end{bmatrix}^{-1}} \le 1.
\ee
Hence
\[
1 - \left(\begin{bmatrix}1&0\\
0& S\end{bmatrix}^{-1}\right)^* Z_T^*  \begin{bmatrix}1&0\\
0& S\end{bmatrix}^*\begin{bmatrix}1&0\\
0& S\end{bmatrix}Z_T
\begin{bmatrix}1&0\\
0& S\end{bmatrix}^{-1} \ge 0.
\]
In view of equation \eqref{Delta-S}, the above inequality is equivalent to
\[
1 - \left(\begin{bmatrix}1&0\\
0& S\end{bmatrix}^{-1}\right)^* \;Z_T^* \; 2 U \Delta U \; Z_T\;
\begin{bmatrix}1&0\\
0& S\end{bmatrix}^{-1} \ge 0.
\]
Therefore
\[
\begin{bmatrix}1&0\\
0& S\end{bmatrix}^*\begin{bmatrix}1&0\\
0& S\end{bmatrix} - 
\begin{bmatrix}1&0\\
0& S\end{bmatrix}^*
\left(\begin{bmatrix}1&0\\
0& S\end{bmatrix}^{-1}\right)^* 
Z_T^*  2 U \Delta U Z_T
\begin{bmatrix}1&0\\
0& S\end{bmatrix}^{-1} 
\begin{bmatrix}1&0\\
0& S\end{bmatrix} \ge 0,
\]
and so
\[
  U \Delta U - 
Z_T^*  \; U \Delta U \; Z_T \ge 0.
\]
We have shown at the beginning of the proof that the last inequality 
implies that  $\Delta$ satisfies
\be\label{ell.1500}
(X^1\oplus X^2)^*\Delta (X^1\oplus X^2) \le \Delta
\ee
and
\be\label{ell.1600}
\Delta\begin{bmatrix}1\\1\end{bmatrix}=\frac12 \begin{bmatrix}1\\1\end{bmatrix}.
\ee
By Proposition \ref{ell.prop.40}, the existence of such self-adjoint operator $\Delta$  satisfying 
inequality \eqref{ell.1500} and equation \eqref{ell.1600} implies the existence of an even stranger dilation of $T$.
\end{proof}

\section{A characterization of the operators $T$ with $W(T) \subseteq {K}_\delta$}\label{characterization}
Let $\de\in (0,1)$.
If $\h$ is a Hilbert space we let
\[
\w_\delta(\h)=\set{T\in\b(\h)}{W(T)\subseteq K_\delta}.
\]
Let $T\in \b(\h)$ be generic for $K_\de$. Recall that  $Z_T\in \b(\h \oplus \h)$ is defined by
\be\label{ell.170.par}
Z_T=\frac12\ \begin{bmatrix} T&Q\\
Q& T\end{bmatrix},
\ee
where $Q$ is any operator in $\b(\h)$ commuting with $T$ and satisfying \be\label{ell.172.par}
Q^2=T^2-4\delta.
\ee
\begin{thm}\label{par.thm.10}
Let $T\in\b(\h)$ be generic for $K_\de$. Then $T\in \w_\delta(\h)$ if and only if there exists an invertible $S \in \b(\h)$ such that
\be\label{par.10}
\bignorm{\begin{bmatrix}1&0\\
0& S\end{bmatrix}Z_T\begin{bmatrix}1&0\\
0& S\end{bmatrix}^{-1}} \le 1.
\ee

\end{thm}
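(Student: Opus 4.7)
The plan is to assemble Theorem \ref{par.thm.10} as a direct chain of equivalences from the machinery already established, with no new content beyond verifying that the hypotheses carry over at each step.

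First, I would observe that because $T$ is generic for $K_\de$ we have $\sigma(T)\subseteq K_\de$, so the hypothesis $\sigma(T)\subseteq K_\de$ required by Theorem \ref{ell.prop.20} is automatic. By Theorem \ref{ell.prop.20}, $W(T)\subseteq K_\de$ is equivalent to the existence of a strange dilation $(\k,I,U)$ of $T$ relative to $K_\de$, i.e.\ a Hilbert space $\k$, an isometry $I\colon\h\to\k$ and a unitary $U\in\b(\k)$ satisfying the representation \eqref{ell.30}.

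Next, since $\dim\h$ is finite and $T$ is generic for $K_\de$, Proposition \ref{ell.prop.30} applies: $T$ has a strange dilation relative to $K_\de$ if and only if $T$ has an even stranger dilation relative to $K_\de$ in the sense of Definition \ref{ell.def.40}.

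Finally, I would invoke Proposition \ref{ell.prop.50}, whose Condition (i) is precisely the existence of an even stranger dilation, and whose Condition (iii) is precisely the existence of an invertible $S\in\b(\h)$ such that
\[
\bignorm{\begin{bmatrix}1&0\\ 0&S\end{bmatrix}Z_T\begin{bmatrix}1&0\\ 0&S\end{bmatrix}^{-1}}\le 1,
\]
where $Z_T$ is defined by \eqref{ell.170.par}–\eqref{ell.172.par} for some (equivalently every) choice of $Q$ commuting with $T$ and satisfying $Q^2=T^2-4\de$. Concatenating these three equivalences yields the theorem. There is no serious obstacle here; the only delicate point is bookkeeping—checking that the phrase ``for some (equivalently for all) choice of $Q$'' in Proposition \ref{ell.prop.50} transfers cleanly into the statement of Theorem \ref{par.thm.10}, and noting that the genericity hypothesis on $T$ (which is needed so that a generic prepair $X=(X^1,X^2)$ exists, by Lemma \ref{ell.lem.20}) is in force throughout the chain.
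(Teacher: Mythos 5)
Your proposal is correct and follows exactly the same chain of equivalences as the paper's own proof: Theorem \ref{ell.prop.20} gives $W(T)\subseteq K_\de$ iff $T$ has a strange dilation, Proposition \ref{ell.prop.30} converts this to an even stranger dilation, and the equivalence of Conditions (i) and (iii) in Proposition \ref{ell.prop.50} yields the similarity condition on $Z_T$. No gaps; the bookkeeping remark about the ``for some (equivalently for all) $Q$'' clause is handled in Proposition \ref{ell.prop.50} itself and passes straight through.
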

\begin{proof}
Since $T$ is generic for $K_\de$, $\sigma(T) \subseteq K_\de$. By Theorem \ref{ell.prop.20}, $W(T)\subseteq K_\de$ if and only if there is a strange dilation of $T$.  By Proposition \ref{ell.prop.30}, for $T$  which is generic for $K_\de$, $T$ has a strange dilation if and only if $T$ has an even stranger dilation.
By Proposition \ref{ell.prop.50}, for $T$  which is generic for $K_\de$, $T$ has an even stranger dilation if and only if the following condition is satisfied:
for some (equivalently for all) choice of $Q$ commuting with $T$ and satisfying $Q^2=T^2-4\delta$, there exists an invertible $S \in \b(\h)$ such that
\be\label{ell.1820ZZ}
\bignorm{\begin{bmatrix}1&0\\
0& S\end{bmatrix}Z_T
\begin{bmatrix}1&0\\
0& S\end{bmatrix}^{-1}} \le 1.
\ee
The statement of Theorem \ref{par.thm.10} follows.   \color{black}
\end{proof}

This theorem suggests that there should be a parametrization of the operators $T\in \b(\h)$ for which $W(T) \subseteq K_\delta$ in terms of contractions $C$ that act on $\h \oplus \h$. In the remainder of this section we shall explicitly describe such a parametrization.

When $\h$ is a Hilbert space we let $J$ denote the symmetry in $\b(\h\oplus \h)$ defined by
\[
J=\begin{bmatrix}1&0\\
0& -1\end{bmatrix}.
\]
If $T\in\b(\h)$ is generic \forkde  and $W(T) \subseteq K_\delta$, then using Theorem \ref{par.thm.10} we may define $C\in \b(\h \oplus \h)$ by the formula
\be\label{par.20}
C=\begin{bmatrix}1&0\\
0& S\end{bmatrix}Z_T\begin{bmatrix}1&0\\
0& S\end{bmatrix}^{-1}.
\ee
Evidently, then $C$ is a \emph{contraction}, that is,
\be\label{par.30}
\norm{C} \le 1.
\ee
Also, let $\dim \h =n$ and $\sigma(T) = \{\mu_1,\ldots,\mu_n\}$. 
As we showed in Proposition \ref{ell.prop.50}, since $T$ is generic \forkde  and $W(T)\subseteq K_\de$, $T$ has an even stranger dilation, and hence the implication (i)$\implies$(ii) of Proposition \ref{ell.prop.50} shows that, for every choice $(X^1,X^2)$ of generic prepair for $T$,
\[
U(X^1\oplus X^2)U = Z_T,
\]
where
\[
U=\frac{1}{\sqrt{2}} \bbm 1 & 1\\ 1 & -1 \ebm
\]
(see equation \eqref{ell.184}).  Note that $U^* =U$, so that $Z_T$ is unitarily equivalent to $X^1\oplus X^2$, and therefore
\begin{align*}
\sigma(C) &=\sigma(Z_T) = \sigma(X^1 \oplus X^2) = \sigma(X^1) \cup \sigma( X^2) \\
   &=\left\{ \frac{\mu_1}{2}\pm \frac{\sqrt{(\mu_1)^2-4\delta}}{2},\ldots,
\frac{\mu_n}{2}\pm \frac{\sqrt{(\mu_n)^2-4\delta}}{2} \right\}.
\end{align*}
As a consequence, as $T$ is generic for $K_\de$, $C$ is \emph{generic}, that is,
\be\label{par.40}
C \text{ has } 2n \text{ distinct eigenvalues.}
\ee
Another, more subtle, property of $C$ is that $C$ is \emph{J-equivalent} to $\delta C^{-1}$
\footnote{We say that two operators $A,B\in\b(\h\oplus \h)$ are $J$-equivalent if $A=JBJ$.},
 that is,
\be\label{par.50}
\frac{\delta}{C} =JCJ.
\ee
To prove equation \eqref{par.50} first observe that
 \begin{align*}
 JZ_TJ&=\begin{bmatrix}1&0\\0& -1\end{bmatrix}
 \frac12\begin{bmatrix} T&\sqrt{T^2-4\delta}\\
\sqrt{T^2-4\delta}& T\end{bmatrix}
\begin{bmatrix}1&0\\0& -1\end{bmatrix}\\ \\
&= \frac12\begin{bmatrix} T&-\sqrt{T^2-4\delta}\\
-\sqrt{T^2-4\delta}& T\end{bmatrix}\\ \\
&=\frac{\delta}{Z_T}.
 \end{align*}
 The last equation follows from the facts that $QT=TQ$ and $Q^2=T^2-4\de$.
 \color{black}
 Therefore,
  \begin{align*}
 JCJ&=J\big(
 \begin{bmatrix}1&0\\
0& S\end{bmatrix}Z_T\begin{bmatrix}1&0\\
0& S\end{bmatrix}^{-1}\big)J\\ \\
&=\begin{bmatrix}1&0\\
0& S\end{bmatrix}\big(JZ_TJ\big)\begin{bmatrix}1&0\\
0& S\end{bmatrix}^{-1}\\ \\
&=\begin{bmatrix}1&0\\
0& S\end{bmatrix}\big(\frac{\delta}{Z_T})\begin{bmatrix}1&0\\
0& S\end{bmatrix}^{-1}\\ \\
&=\frac{\delta}{C}.
 \end{align*}

A final property of $C$ that we wish to observe is that
\be\label{par.55}
0 \not\in \sigma(C_{12})
\ee
where $C_{12}$ is the 1-2 entry of $C$ when $C$ is represented as a $2\times 2$ block matrix acting on $\h \oplus \h$. To see the relation \eqref{par.55} note that equation \eqref{par.20} implies that $C_{12}=QS^{-1}$ and that if $Q$ is not invertible, then $4\delta \in \sigma(T^2)$, contradicting the genericity of $T$.

In light of the above properties of $C$ we make the following definition.
\begin{defin}\label{par.def.10}
Let $\de\in(0,1)$. If $\h$ is a finite-dimensional Hilbert space and $\dim \h=n$, we say that $C$ is a \emph{$(\delta,\h)$-germinator} if $C\in \b(\h\oplus \h)$ and $C$ satisfies the conditions:
\be\label{par.30.def}
\norm{C} \le 1,
\ee
\be\label{par.40.def}
C \text{ has } 2n \text{ distinct eigenvalues,}
\ee
\be\label{par.50.def}
\frac{\delta}{C} =JCJ,
\ee
and
\be\label{par.55.def}
0 \not\in \sigma(C_{12})
\ee
where $C_{12}$ is the 1-2 entry of $C$ when $C$ is represented as a $2\times 2$ block matrix acting on $\h \oplus \h$.
\end{defin}
\noindent Armed with this definition, we summarize  the above observations  in the following proposition.
\begin{prop}\label{par.prop.10}
Assume that $T\in \w_\delta (\h)$ is generic for $K_\de$. There exists an invertible $S\in \b(\h)$ such that if $C$ is defined by the equation 
\be\label{par.20.prop}
C=\begin{bmatrix}1&0\\
0& S\end{bmatrix}Z_T\begin{bmatrix}1&0\\
0& S\end{bmatrix}^{-1},
\ee
then $C$ is a $(\delta,\h)$-germinator.
\end{prop}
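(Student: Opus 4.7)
The plan is to take the invertible $S$ produced by Theorem \ref{par.thm.10} and verify that the operator $C$ defined by \eqref{par.20.prop} satisfies each of the four conditions \eqref{par.30.def}--\eqref{par.55.def} of Definition \ref{par.def.10} in turn. Most of the verification is already implicit in the discussion preceding the proposition, so the proof is essentially a matter of assembling those observations.

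First, Theorem \ref{par.thm.10} applied to the generic operator $T\in \w_\de(\h)$ produces an invertible $S \in \b(\h)$ so that $\norm{C} \le 1$, giving \eqref{par.30.def}. For the eigenvalue count \eqref{par.40.def}, I would invoke \eqref{ell.184}, which says $Z_T = U(X^1 \oplus X^2) U$ for the self-adjoint unitary $U = \tfrac{1}{\sqrt 2}\bbm 1 & 1\\ 1 & -1\ebm$ and any generic prepair $(X^1,X^2)$ for $T$. Since $T$ is generic for $K_\de$, Lemma \ref{ell.lem.20} supplies a generic prepair with $\sigma(X^1)\cap\sigma(X^2)=\varnothing$ and each $X^r$ having $n$ distinct eigenvalues, so $X^1\oplus X^2$ has $2n$ distinct eigenvalues, and the same holds for the similar operators $Z_T$ and $C$.

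For the $J$-equivalence \eqref{par.50.def}, the key computation is $JZ_T J = \de Z_T^{-1}$. This follows by conjugating $Z_T$ by $J = \bbm 1&0\\0&-1\ebm$ to get $\tfrac12 \bbm T & -Q\\ -Q & T\ebm$, and then using the identities $QT=TQ$ and $Q^2 = T^2 - 4\de$ to verify $Z_T \cdot \tfrac12\bbm T&-Q\\-Q&T\ebm = \de I$. Since $J$ commutes with $\bbm 1&0\\0&S\ebm$, conjugating the identity $JZ_T J = \de Z_T^{-1}$ by this block-diagonal matrix gives $JCJ = \de C^{-1}$, which is exactly \eqref{par.50.def}. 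Finally, for \eqref{par.55.def}, a direct block computation from \eqref{par.20.prop} shows $C_{12} = \tfrac{1}{2} Q S^{-1}$. Genericity of $T$ ensures $4\de \notin \sigma(T^2)$, so $Q^2 = T^2 - 4\de$ is invertible and hence $Q$ is invertible; together with invertibility of $S$, this yields invertibility of $C_{12}$.

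There is no substantive obstacle here: Theorem \ref{par.thm.10} is doing all of the serious work, and the remaining three conditions reduce to elementary block-matrix manipulations using $QT=TQ$, $Q^2=T^2-4\de$, and the commutation of $J$ with $\bbm 1&0\\0&S\ebm$. The proposition essentially records the structural properties of the contraction $C$ that have already surfaced in the discussion leading up to it.
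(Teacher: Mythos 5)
Your proposal is correct and follows essentially the same route as the paper: take $S$ from Theorem \ref{par.thm.10} and check each of \eqref{par.30.def}--\eqref{par.55.def} via the similarity $Z_T = U(X^1\oplus X^2)U$, the $J$-conjugation identity $JZ_TJ = \delta Z_T^{-1}$, and the genericity of $T$. (Minor point in your favor: you correctly include the factor $\tfrac12$ in $C_{12}=\tfrac12 QS^{-1}$, which the paper omits when it writes $C_{12}=QS^{-1}$; the factor is of course irrelevant to invertibility.)
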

A converse to Proposition \ref{par.prop.10} hinges on the inversion of equation \eqref{par.20} which we turn to now.

\begin{lem}\label{par.lem.10}
If $T\in \b(\h)$  is generic for $K_\de$, then
\[
Z_T + \frac{\delta}{Z_T} =T \oplus T.
\]
\end{lem}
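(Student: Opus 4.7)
The plan is to verify the identity by a direct block matrix computation. First, using the defining formula \eqref{ell.170.par}, I would write
\[
T \oplus T - Z_T = \tfrac{1}{2}\begin{bmatrix} T & -Q \\ -Q & T \end{bmatrix},
\]
where $Q$ is any operator on $\h$ commuting with $T$ and satisfying $Q^2 = T^2 - 4\de$. Then I would compute the product
\[
Z_T\,(T \oplus T - Z_T) = \tfrac{1}{4}\begin{bmatrix} T & Q \\ Q & T \end{bmatrix}\begin{bmatrix} T & -Q \\ -Q & T \end{bmatrix}
\]
and simplify using $QT = TQ$ and $Q^2 = T^2 - 4\de$: each diagonal entry becomes $\tfrac{1}{4}(T^2 - Q^2) = \de$, while each off-diagonal entry collapses to $\tfrac{1}{4}(-TQ + QT) = 0$. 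Hence $Z_T(T \oplus T - Z_T) = \de\, I_{\h \oplus \h}$.

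The next step is to ensure that $Z_T$ is invertible, so that the previous identity can be rearranged as $\de Z_T^{-1} = T \oplus T - Z_T$. This is immediate from the genericity of $T$: since $4\de \notin \sigma(T^2)$, the operator $Q$ is invertible, and by \eqref{ell.184} the operator $Z_T$ is unitarily equivalent to $X^1 \oplus X^2$ for any generic prepair $(X^1,X^2)$, where both $X^i$ are invertible (indeed $X^1 X^2 = \de$). Alternatively, the mirror-image block computation $(T \oplus T - Z_T)\,Z_T = \de\, I_{\h\oplus\h}$, obtained identically from $QT = TQ$ and $Q^2 = T^2 - 4\de$, exhibits $\de^{-1}(T \oplus T - Z_T)$ as a two-sided inverse of $Z_T$ outright. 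Combining these yields $Z_T + \de Z_T^{-1} = T \oplus T$, as required.

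I do not anticipate any real obstacle: the proof reduces to a short two-by-two block matrix calculation, with the commutation $QT = TQ$ and the quadratic relation $Q^2 = T^2 - 4\de$ doing all the work. The only point that requires a brief remark is the invertibility of $Z_T$, and this is handled either directly from genericity or, more elegantly, by the symmetric computation $(T\oplus T - Z_T)Z_T = \de\, I$ that the same block multiplication produces.
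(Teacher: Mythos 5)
Your proof is correct and rests on the same underlying block-matrix algebra as the paper's, so the difference is largely one of packaging. The paper's proof is a one-liner: it invokes the identity $JZ_TJ = \de Z_T^{-1}$, which was established in the text just before the lemma (as part of the verification of the germinator property \eqref{par.50}), and then simply adds the two explicit block matrices for $Z_T$ and $JZ_TJ$. You instead compute the product $Z_T(T\oplus T - Z_T)$ from scratch, using the same two relations $QT=TQ$ and $Q^2 = T^2 - 4\de$, and find it equals $\de I_{\h\oplus\h}$. Both routes are correct. One modest advantage of your version is that it addresses explicitly the invertibility of $Z_T$, which the paper's notation $\de/Z_T$ takes for granted: your remark that the mirror computation $(T\oplus T - Z_T)Z_T = \de I_{\h\oplus\h}$ exhibits $\de^{-1}(T\oplus T - Z_T)$ as a two-sided inverse of $Z_T$ disposes of this point without appealing separately to genericity or to the unitary equivalence of $Z_T$ with $X^1\oplus X^2$.
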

\begin{proof}
By equation \eqref{par.50}, $Z_T$ is \emph{J-equivalent} to $\delta Z_T^{-1}$.
Thus
\begin{align}\label{JZJ-T}
Z_T + \frac{\delta}{Z_T} &=Z_T + J Z_T J \\
&= \frac12\begin{bmatrix} T& \sqrt{T^2-4\delta}\\
\sqrt{T^2-4\delta}& T\end{bmatrix}+
\frac12\begin{bmatrix} T&-\sqrt{T^2-4\delta}\\
-\sqrt{T^2-4\delta}& T\end{bmatrix} \nonumber\\
&= T \oplus T.\nonumber
\end{align}
\end{proof}

\begin{lem}\label{par.lem.20}
If $C$ is a $(\delta,\h)$-germinator, then
\[
C + \frac{\delta}{C} =C_{1} \oplus C_{2},
\]
where $C_{2}$ is similar to $C_{1}$.
\end{lem}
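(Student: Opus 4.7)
The plan is to prove this by a direct block-matrix computation that exploits the defining relation $\delta/C = JCJ$ of a germinator together with the invertibility of the off-diagonal entry $C_{12}$.

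First, I would write $C$ in $2\times 2$ block form as
\[
C = \begin{bmatrix} C_{11} & C_{12}\\ C_{21} & C_{22}\end{bmatrix},
\]
so that, since conjugation by $J$ flips the sign of the off-diagonal blocks,
\[
JCJ = \begin{bmatrix} C_{11} & -C_{12}\\ -C_{21} & C_{22}\end{bmatrix}.
\]
Adding $C$ and $JCJ$ kills the off-diagonal entries, yielding
\[
C + \frac{\delta}{C} \;=\; C + JCJ \;=\; 2C_{11}\oplus 2C_{22}.
\]
This identifies the decomposition claimed in the lemma with $C_1 = 2C_{11}$ and $C_2 = 2C_{22}$; it remains to produce a similarity between these two summands.

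For that step I would multiply the germinator identity $\delta I = C\cdot JCJ$ out in block form. The $(1,2)$-block of the product $C\cdot JCJ$ is $-C_{11}C_{12} + C_{12}C_{22}$, and this must equal zero, so
\[
C_{11}C_{12} \;=\; C_{12}C_{22}.
\]
By the germinator condition \eqref{par.55.def}, $0\notin\sigma(C_{12})$, so $C_{12}$ is invertible on the finite-dimensional space $\h$. Hence
\[
C_{22} \;=\; C_{12}^{-1}\,C_{11}\,C_{12},
\]
and multiplying through by $2$ gives $C_2 = C_{12}^{-1} C_1 C_{12}$, which is the desired similarity.

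There is no serious obstacle: the proof is a pure computation in which the defining relations of a germinator immediately produce both the block-diagonal form and the similarity. The only point requiring any care is verifying that the invertibility hypothesis \eqref{par.55.def} is precisely what is needed to invert $C_{12}$ in the last step.
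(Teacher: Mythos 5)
Your proof is correct and essentially identical to the paper's: both use the block form of $C$ to see that $C + JCJ$ is block diagonal, then extract the relation $C_{11}C_{12} = C_{12}C_{22}$ from the vanishing off-diagonal block of a product of $C$ with $\delta/C$, and invoke the invertibility of $C_{12}$ from condition \eqref{par.55.def} to conjugate. (The paper happens to compute $(\delta/C)\cdot C$ where you compute $C\cdot(\delta/C)$, but the resulting identity is the same; also, your normalization $C_1 = 2C_{11}$, $C_2 = 2C_{22}$ is the one consistent with the displayed computation, whereas the paper's in-line ``define $C_1 = \tfrac12 C_{11}$'' appears to be a typo.)
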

\begin{proof}
Since $C$ is a $(\delta,\h)$-germinator, $\frac{\delta}{C} = J C J$, and so 
\begin{align}\label{JZJ}
C + \frac{\delta}{C} &=C + J C J \\
&= \begin{bmatrix} C_{11}& C_{12}\\
 C_{21} &  C_{22}\end{bmatrix}+
 \begin{bmatrix} C_{11}& -C_{12}\\
 -C_{21} &  C_{22}\end{bmatrix}\nonumber\\
&=  2C_{11} \oplus  2C_{22}.\nonumber
\end{align}
Accordingly, define $C_{1} = \frac12 C_{11}$ and $C_{2} = \frac12 C_{22}$.

Noting that
\[
 \begin{bmatrix} \delta& 0\\
 0 &  \delta\end{bmatrix} = (\frac{\delta}{C})\ C=\begin{bmatrix} C_{11}& -C_{12}\\
 -C_{21} &  C_{22}\end{bmatrix}
\begin{bmatrix} C_{11}& C_{12}\\
 C_{21} &  C_{22}\end{bmatrix}=
 \begin{bmatrix} C_{11}^2 - C_{12}C_{21} & C_{11}C_{12} - C_{12}C_{22}\\
 C_{22}C_{21} - C_{21}C_{11}&   C_{22}^2 - C_{21}C_{12}\end{bmatrix},
\]
we see from the 1-2 entry that
\[
 C_{11}C_{12} - C_{12}C_{22}=0.
\]
Therefore, the relation \eqref{par.55.def}
$$0 \not\in \sigma(C_{12}),$$
implies that $C_{12}$ is invertible and 
\[
 C_1=\half C_{11}=\half  C_{12}C_{22}C_{12}^{-1} = C_{12}C_2C_{12}\inv.
\]
Thus $C_{2}$ is similar to $C_{1}$.
\end{proof}

\begin{prop}\label{par.prop.20} Let $\h$ be a finite-dimensional Hilbert space and $\dim \h=n$, and let $C\in \b(\h\oplus \h)$ 
be a $(\delta,\h)$-germinator, then there exists an invertible $S\in \b(\h)$ and a  $T\in \w_\de(\h)$ such that $T$ is generic for $K_\de$ and the equation 
\be\label{par.20.prop.20}
C=\begin{bmatrix}1&0\\
0& S\end{bmatrix}Z_T\begin{bmatrix}1&0\\
0& S\end{bmatrix}^{-1}
\ee
holds.
\end{prop}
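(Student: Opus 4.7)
The plan is to invert the formula \eqref{par.20.prop.20} block by block.  Writing $C = \begin{bmatrix} C_{11} & C_{12} \\ C_{21} & C_{22} \end{bmatrix}$ and expanding,
\[
\begin{bmatrix} 1 & 0 \\ 0 & S \end{bmatrix} Z_T \begin{bmatrix} 1 & 0 \\ 0 & S^{-1} \end{bmatrix} = \tfrac12 \begin{bmatrix} T & QS^{-1} \\ SQ & STS^{-1} \end{bmatrix},
\]
so matching entries forces $T = 2 C_{11}$, $Q = 2 C_{12} S = 2 S^{-1} C_{21}$ and $S C_{11} S^{-1} = C_{22}$.  The germinator identity $\delta/C = JCJ$, written out as $C \cdot JCJ = \delta I$, yields the three block relations $C_{11} C_{12} = C_{12} C_{22}$, $C_{12} C_{21} = C_{11}^2 - \delta$, and $C_{21} C_{12} = C_{22}^2 - \delta$ already exploited in Lemma \ref{par.lem.20}.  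The first says $C_{22} = C_{12}^{-1} C_{11} C_{12}$, so every invertible $S$ of the form $S = R C_{12}^{-1}$ with $R$ commuting with $C_{22}$ satisfies $S C_{11} S^{-1} = C_{22}$; the compatibility constraint $S C_{12} S = C_{21}$ then collapses to the single equation $R^2 = C_{22}^2 - \delta$.

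Next I would produce the required $R$ and establish that $T = 2C_{11}$ is generic for $K_\de$.  Because $C \cdot JCJ = \delta I$ the operators $C$ and $JCJ$ commute, and each is diagonalizable by \eqref{par.40.def} and similarity; hence they are simultaneously diagonalizable and $C + JCJ = 2C_{11} \oplus 2C_{22}$ is diagonalizable.  Since $JCJ = \delta C^{-1}$, the involution $\mu \mapsto \delta/\mu$ permutes $\sigma(C)$; since $C_{11}$ and $C_{22}$ are similar via $C_{12}$, every eigenvalue of $2C_{11} \oplus 2C_{22}$ has even algebraic multiplicity.  A joint-eigenspace calculation shows that the multiplicity of a value $\nu$ in $\sigma(C+JCJ)$ is the $\sigma(C)$-multiplicity of $\{\mu : \mu + \delta/\mu = \nu\} = \{\mu, \delta/\mu\}$, which is $1$ exactly when $\mu = \pm\sqrt{\delta}$ is a fixed point of the involution and $2$ otherwise.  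Evenness therefore forces $\pm\sqrt{\delta} \notin \sigma(C)$, so $\sigma(C)$ partitions into $n$ genuine pairs $\{\lambda_i, \delta/\lambda_i\}$ and $2C_{11}$ has exactly $n$ distinct eigenvalues $\pi(\lambda_i) = \lambda_i + \delta/\lambda_i$, none of them $\pm 2\sqrt{\delta}$.  From $\norm{C} \le 1$ and the pairing we get $\delta \le |\lambda_i| \le 1$, so $\sigma(C) \subseteq \overline{R_\delta}$, and Lemma \ref{ell.lem.10} places $\pi(\lambda_i) \in K_\delta$.  Thus $T$ meets every clause of Definition \ref{ell.def.20}, and $C_{22}^2 - \delta$ is invertible.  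Because $C_{22}$ is diagonalizable with simple spectrum its commutant equals the unital algebra it generates, and the functional calculus produces an invertible $R$ in that algebra with $R^2 = C_{22}^2 - \delta$.

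Setting $S = R C_{12}^{-1}$ and $Q = 2 C_{12} R C_{12}^{-1}$, a direct block computation verifies each entry of $\diag(1,S)\, Z_T\, \diag(1,S^{-1})$: the $(1,2)$ block is $\tfrac12 Q S^{-1} = C_{12} R C_{12}^{-1} \cdot C_{12} R^{-1} = C_{12}$; the $(2,1)$ block is $\tfrac12 SQ = R^2 C_{12}^{-1} = (C_{22}^2 - \delta) C_{12}^{-1} = C_{21}$; the $(2,2)$ block is $\tfrac12 S T S^{-1} = R C_{22} R^{-1} = C_{22}$; and $Q$ commutes with $T = 2 C_{12} C_{22} C_{12}^{-1}$ because $R$ commutes with $C_{22}$, while $Q^2 = 4 C_{12} R^2 C_{12}^{-1} = 4 C_{11}^2 - 4\delta = T^2 - 4\delta$.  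Thus \eqref{par.20.prop.20} holds, inequality \eqref{par.10} follows from $\norm{C} \le 1$, and Theorem \ref{par.thm.10} applied to the generic $T$ delivers $T \in \w_\delta(\h)$.  The main obstacle I anticipate is the combined spectral argument in the middle paragraph: extracting from only the germinator hypotheses ($\norm{C} \le 1$, $2n$ distinct eigenvalues, $\delta/C = JCJ$, and $C_{12}$ invertible) the three facts that $C_{11}$ has $n$ distinct eigenvalues, that $\sigma(T) \subseteq K_\delta$, and that $C_{22}^2 - \delta$ is invertible; the hinge is the observation that $C$ and $JCJ$ commute, which converts all three into a single fixed-point-free-involution statement on $\sigma(C)$.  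Once that spectral picture is in hand, the construction of $S$ and the rest of the verification are routine.
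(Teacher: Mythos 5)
Your proposal is correct, and it is a genuinely different construction from the paper's. The paper builds $S$ pointwise through eigenvectors: it projects the eigenvectors $(u_i,v_i)$ of $C$ onto their two components and defines $S$ by $Su_i = v_i$, relying tacitly on the claim that $\sigma(C)$ decomposes into $n$ disjoint pairs $\{\lambda_i,\delta/\lambda_i\}$ and that each $v_i \neq 0$. You instead invert the block structure algebraically, writing $S = RC_{12}^{-1}$ where $R$ is a square root of $C_{22}^2 - \delta$ chosen in the commutant of $C_{22}$, and verifying the four block identities directly. Both approaches yield the same theorem; your block computation is arguably cleaner to verify entrywise.

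Your proof has a further merit that deserves emphasis: you explicitly justify that the involution $\mu \mapsto \delta/\mu$ has no fixed point in $\sigma(C)$, i.e.\ $\pm\sqrt\delta \notin \sigma(C)$, by the even-multiplicity argument. The germinator axioms $\norm{C}\le 1$, $C$ having $2n$ distinct eigenvalues, $\delta/C=JCJ$ and $C_{12}$ invertible do not on their face exclude an eigenvector of the form $(u,0)$ for eigenvalue $\sqrt\delta$ (one checks that such an eigenvector is consistent with $C_{12}C_{21}=C_{11}^2-\delta$ and $C_{12}$ invertible). Your observation that $C+JCJ=2C_{11}\oplus 2C_{22}$ must have all eigenvalues of even multiplicity (because $C_{11}$ and $C_{22}$ are similar via $C_{12}$), whereas a fixed point $\pm\sqrt\delta$ of the involution would contribute odd multiplicity, is exactly the right way to close this. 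The paper's proof simply asserts the pair decomposition without this step; indeed without it the $v_i$ might vanish and the paper's $S$ need not be invertible. This point is needed in any case to conclude that $4\delta\notin\sigma(T^2)$, i.e.\ that the resulting $T$ is generic. You also correctly replace the paper's assertion that $\sigma(C)\subseteq R_\delta$ (open) with the accurate $\sigma(C)\subseteq \overline{R_\delta}$, which is what $\norm{C}\le 1$ and $\norm{\delta C^{-1}}\le 1$ actually give and which, via $\pi(\overline{R_\delta})=K_\delta$, still yields $\sigma(T)\subseteq K_\delta$. One small typographical note, not a mathematical gap: you introduce the "compatibility constraint $SC_{12}S=C_{21}$" in the first paragraph as if it were a given, when it is really the condition being imposed (that the $(2,1)$ block match); you then verify it in the third paragraph. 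The logic is sound, but the phrasing invites confusion.
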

\begin{proof}
First observe using equation \eqref{par.50.def} that if
\[
C\begin{pmatrix}u\\v\end{pmatrix}=\lambda\begin{pmatrix}u\\v\end{pmatrix},
\]
then
\[
C\begin{pmatrix}u\\-v\end{pmatrix}=\frac{\delta}{\lambda}\begin{pmatrix}u\\-v\end{pmatrix}.
\]
Thus, using the fact \eqref{par.40.def} that $C$ has $2n$ distinct eigenvalues,
 we may decompose the set of eigenvalues of $C$ into a disjoint union
\[
\bigcup_{i=1}^n\  \{\lambda_i,\frac{\delta}{\lambda_i}\}.
\]

Since all eigenvalues of $C$ lie in $R_\de$, $\sigma(C)\subseteq R_\de$.
\color{black}
Furthermore, if $\begin{pmatrix}u_i\\v_i\end{pmatrix}$ and $\begin{pmatrix}u_i\\-v_i\end{pmatrix}$ are the eigenvectors corresponding to $\lambda_i$ and $\delta/\lambda_i$ respectively, then the statement  \eqref{par.40.def}, that $C$ has $2n$ distinct eigenvalues, implies that $\{u_1,u_2,\ldots,u_n\}$ is a basis for $\h$. Therefore, there exist $S,T\in \b(\h)$ satisfying
\be\label{par.56}
S u_i = v_i\qquad \text{and}\qquad T u_i=(\lambda_i+\frac{\delta}{\lambda_i})u_i,\qquad \ \ \ \ i=1,2,\ldots,n,
\ee
and the operator $Q$ defined by
\[
Q u_i =(\lambda_i-\frac{\delta}{\lambda_i})u_i,\qquad \ \ \ \ i=1,2,\ldots,n
\]
is a square root of $T^2-4\delta$ that commutes with $T$.
Observe that $T$ is the restriction of $C+\frac{\de}{C}$ to $\h \oplus \{0\}$ which is the span of $u_1,\dots,u_n$, 
and so
\[
\sigma(T)\subseteq \sigma(\pi(C)) \subseteq \pi(R_\de) \subseteq K_\de.
\]
By the definition of $T$, the eigenvalues of $T$ are the points $\mu_i=\pi(\lambda_i), i=1,\dots,n$.
Since $C$ is a $(\de,\h)$-germinator, $C$ has $2n$ distinct eigenvalues $\lambda_i, \frac{\de}{\lambda_i}, i=1,\dots,n$.  Therefore the equation $\pi(\lambda)=\mu_i$ has two distinct solutions in $R_\de$ for each $i$.  Hence $T$ is generic for $K_\de$.
\color{black}

To see that the equation \eqref{par.20.prop.20} holds it suffices to show that the two operators
\[
\begin{bmatrix}1&0\\
0& S\end{bmatrix}^{-1}C\qquad \text{ and }\qquad Z_T\begin{bmatrix}1&0\\
0& S\end{bmatrix}^{-1}
\]
agree on the eigenvectors of $C$ (which the relation \eqref{par.40.def} guarantees to be a basis for $\h \oplus \h$). But for fixed $i$,
\begin{align*}
\begin{bmatrix}1&0\\
0& S\end{bmatrix}^{-1}C\begin{pmatrix}u_i\\v_i\end{pmatrix}&=
\lambda_i \begin{bmatrix}1&0\\
0& S\end{bmatrix}^{-1} \begin{pmatrix}u_i\\v_i\end{pmatrix}=\lambda_i \begin{pmatrix}u_i\\u_i\end{pmatrix}\\ \\
&=\frac12 \begin{bmatrix}T&Q\\
Q& T\end{bmatrix}\begin{pmatrix}u_i\\u_i\end{pmatrix}=Z_T\begin{pmatrix}u_i\\u_i\end{pmatrix}\\ \\
&=Z_T\begin{bmatrix}1&0\\
0& S\end{bmatrix}^{-1}\begin{pmatrix}u_i\\v_i\end{pmatrix},
\end{align*}
and likewise,
\begin{align*}
\begin{bmatrix}1&0\\
0& S\end{bmatrix}^{-1}C\begin{pmatrix}u_i\\-v_i\end{pmatrix}&=
\frac{\delta}{\lambda_i} \begin{bmatrix}1&0\\
0& S\end{bmatrix}^{-1} \begin{pmatrix}u_i\\-v_i\end{pmatrix}
=\frac{\delta}{\lambda_i} \begin{pmatrix}u_i\\-u_i\end{pmatrix}\\ \\
&=\frac12 \begin{bmatrix}T&Q\\
Q& T\end{bmatrix}\begin{pmatrix}u_i\\-u_i\end{pmatrix}
=Z_T\begin{pmatrix}u_i\\-u_i\end{pmatrix}\\ \\
&=Z_T\begin{bmatrix}1&0\\
0& S\end{bmatrix}^{-1}\begin{pmatrix}u_i\\-v_i\end{pmatrix}.
\end{align*}
Therefore,
\[
\begin{bmatrix}1&0\\
0& S\end{bmatrix}^{-1} C = Z_T \begin{bmatrix}1&0\\
0& S\end{bmatrix}^{-1}
\]
and equation \eqref{par.20.prop.20} holds.  By Theorem \ref{par.thm.10}, $W(T)\subseteq K_\de$.
\end{proof}
Evidently, Theorem \ref{par.thm.10} may be restated in the following way.
\begin{thm}\label{thm.10}
Let $\de \in (0,1)$ and let $T\in \b(\h)$ be generic for $K_\delta$. Then $T \in \w_\delta(\h)$ if and only if
there exists a $(\delta,\h)$-germinator  $C\in \b(\h\oplus \h)$  and an invertible operator $S\in\b(\h)$ such that
\be\label{par.70}
C=\begin{bmatrix}1&0\\
0& S\end{bmatrix}Z_T\begin{bmatrix}1&0\\
0& S\end{bmatrix}^{-1}.
\ee
Moreover,  for any such $(\de,\h)$-germinator $C$,
\be \label{T-S}
T = \left(C+ \frac{\de}{C} \right)\big|_{\h\oplus \{0\}}.
\ee
\end{thm}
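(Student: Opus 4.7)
The plan is to observe that Theorem~\ref{thm.10} is essentially an assembly of results that have already been established in this section, together with a short verification of the restriction formula. So I would organize the proof in three short steps: forward implication, reverse implication, and the moreover clause.

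For the forward direction, assume $T \in \w_\de(\h)$ is generic for $K_\de$. Theorem~\ref{par.thm.10} supplies an invertible $S \in \b(\h)$ such that the operator $C$ defined by \eqref{par.70} satisfies $\|C\| \le 1$. Proposition~\ref{par.prop.10} then records that this $C$ is in fact a $(\de,\h)$-germinator (the properties \eqref{par.40.def}, \eqref{par.50.def} and \eqref{par.55.def} were all verified in the discussion leading up to Definition~\ref{par.def.10}). Conversely, if an invertible $S$ and a $(\de,\h)$-germinator $C$ satisfy equation~\eqref{par.70}, then condition \eqref{par.30.def} reads $\|C\| \le 1$, which is exactly the inequality \eqref{par.10} of Theorem~\ref{par.thm.10}; that theorem then yields $T \in \w_\de(\h)$.

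For the moreover clause, I would simply conjugate Lemma~\ref{par.lem.10} by $\bbm 1 & 0 \\ 0 & S \ebm$. Since $C$ and $\de/C$ are related to $Z_T$ and $\de/Z_T$ by the same similarity, we get
\[
C + \frac{\de}{C} = \bbm 1 & 0 \\ 0 & S \ebm \Bigl( Z_T + \frac{\de}{Z_T} \Bigr) \bbm 1 & 0 \\ 0 & S \ebm^{-1} = \bbm 1 & 0 \\ 0 & S \ebm (T \oplus T) \bbm 1 & 0 \\ 0 & S \ebm^{-1} = T \oplus S T S^{-1},
\]
where the middle equality invokes Lemma~\ref{par.lem.10}. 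Restricting the operator $T \oplus STS^{-1}$ to the invariant subspace $\h \oplus \{0\}$ and identifying $\h \oplus \{0\}$ with $\h$ yields precisely $T$, which gives equation \eqref{T-S}.

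Since every ingredient is already in place, there is really no serious obstacle; the only thing worth double-checking is that the similarity $\bbm 1 & 0 \\ 0 & S \ebm$ leaves $\h \oplus \{0\}$ pointwise fixed, which guarantees that the identification of the restriction with $T$ (rather than with some conjugate of $T$) is the correct one.
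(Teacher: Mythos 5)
Your proof of the equivalence matches the paper exactly: the forward direction invokes Proposition \ref{par.prop.10}, and the reverse direction observes that the germinator condition $\|C\|\le 1$ is precisely inequality \eqref{par.10} and applies Theorem \ref{par.thm.10}. For the moreover clause, your route differs slightly from the paper's. You conjugate the identity $Z_T + \de/Z_T = T\oplus T$ of Lemma \ref{par.lem.10} by $\begin{bmatrix}1&0\\0&S\end{bmatrix}$ (using the elementary fact that conjugation commutes with $X\mapsto X+\de X^{-1}$), obtaining $C+\de/C = T\oplus STS^{-1}$ directly; the paper instead writes out the explicit block form $C=\tfrac12\begin{bmatrix}T&QS^{-1}\\SQ&STS^{-1}\end{bmatrix}$ and applies Lemma \ref{par.lem.20}, which says that for any germinator $C+\de/C = 2C_{11}\oplus 2C_{22}$. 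Both arguments are correct and equally short; yours has the small advantage of not needing the explicit entries of $C$ nor any property of germinators beyond the similarity relation \eqref{par.70}, whereas the paper's version makes the block structure of $C$ visible and reuses a lemma it has already proved. Your closing remark that $\begin{bmatrix}1&0\\0&S\end{bmatrix}$ fixes $\h\oplus\{0\}$ pointwise is a fine sanity check, though once you have $C+\de/C = T\oplus STS^{-1}$ the restriction to $\h\oplus\{0\}$ is immediate from the block-diagonal form.
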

\begin{proof}
By Proposition \ref{par.prop.10}, if $T\in \w_\delta (\h)$ is generic for $K_\delta$, then  exists an invertible $S\in \b(\h)$ such that if $C$ is defined by the equation 
\be\label{par.20.prop.Z}
C=\begin{bmatrix}1&0\\
0& S\end{bmatrix}Z_T\begin{bmatrix}1&0\\
0& S\end{bmatrix}^{-1},
\ee
then $C$ is a $(\delta,\h)$-germinator.

Conversely, suppose that  $C\in \b(\h\oplus \h)$ is a $(\delta,\h)$-germinator and there is   an invertible operator $S\in\b(\h)$ such that
\be\label{par.70.2}
C=\begin{bmatrix}1&0\\
0& S\end{bmatrix}Z_T\begin{bmatrix}1&0\\
0& S\end{bmatrix}^{-1}.
\ee
By the definition of a $(\delta,\h)$-germinator, $ \| C\| \le 1$, and so equation 
\be\label{par.10.2}
 \bignorm{\begin{bmatrix}1&0\\
0& S\end{bmatrix}Z_T\begin{bmatrix}1&0\\
0& S\end{bmatrix}^{-1}}= \|C\| \le 1.
\ee
is satisfied. Thus, by Theorem \ref{par.thm.10}, $T\in \w_\delta(\h)$.\\

Note that, by equation \eqref{par.70},
\[
C =\begin{bmatrix}1&0\\
0& S\end{bmatrix}Z_T\begin{bmatrix}1&0\\
0& S\end{bmatrix}^{-1}= \frac12 \begin{bmatrix} T& Q S^{-1}\\
 S Q&  S T S^{-1}\end{bmatrix}.
\]
Thus, by Lemma \ref{par.lem.20}, since $C\in \b(\h\oplus \h)$ is a $(\delta,\h)$-germinator,
\[
C + \frac{\delta}{C} =2C_{11} \oplus 2 C_{22}=\begin{bmatrix} T& 0\\
 0&  S T S^{-1}\end{bmatrix}.
\]
Therefore,
\[
T = \left(C+ \frac{\de}{C} \right)\big|_{\h\oplus \{0\}}.
\]
\end{proof}

\section{A parametrization of the operators $T$ with $W(T) \subseteq {K}_\delta$} \label{parametrization}

In view of Theorem \ref{thm.10} it is relatively straightforward to parametrize 
$\w_\delta(\h)$.
\begin{lem}\label{form.lem.10} 
Let $\de\in (0,1)$, let $\h$ be a Hilbert space of finite dimension $n$ and let $C\in \b(\h \oplus \h)$.  Suppose that the operator $C$ satisfies the relations 
\be \label{par.3050}
\| C \| \leq 1   \text{ and } \frac{\de}{C} = JCJ.
\ee
 Then $C=\sqrt \delta XJ$ for some $X$ having the block matrix representation
\be\label{form.10}
X=\begin{bmatrix}1&E\\0&-1\end{bmatrix},
\ee
with respect to the decomposition
\be\label{form.20}
\h\oplus \h = \ker (1-X) \oplus \big(\ker (1-X)\big)^\perp,
\ee
and where the operator $E: \big(\ker (1-X)\big)^\perp \to \ker (1-X)$ satisfies
\be\label{form.30}
\norm{E} \le \frac{1}{\sqrt \delta}-\sqrt \delta.
\ee
\end{lem}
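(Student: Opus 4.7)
The natural candidate is $X = CJ/\sqrt{\delta}$, since any $X$ satisfying $C = \sqrt{\delta} XJ$ must equal this (using $J^2 = I$). The strategy is in three steps: (1) show that $X$ is an involution, (2) read off the block structure from $X^2 = I$ together with the definition of the decomposition, and (3) translate the norm bound $\|C\| \le 1$ into the claimed bound on $\|E\|$ by a direct calculation.

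\textbf{Step 1: $X^2 = I$.} The hypothesis $\delta/C = JCJ$ is equivalent to $CJCJ = \delta I$. Therefore
\[
X^2 = \frac{1}{\delta}(CJ)(CJ) = \frac{1}{\delta}\,CJCJ = I,
\]
so $X$ is an involution on $\h \oplus \h$.

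\textbf{Step 2: block form.} Write $X$ as a $2\times 2$ block matrix with respect to $\h \oplus \h = \ker(1-X) \oplus \ker(1-X)^\perp$. Since $X v = v$ for $v \in \ker(1-X)$, the top-left block is $I$ and the bottom-left block is $0$, so
\[
X = \begin{bmatrix} I & E \\ 0 & D \end{bmatrix}
\]
for some $E$ and $D$. The identity $X^2 = I$ gives $D^2 = I$ and $E(I+D) = 0$. To conclude $D = -I$, suppose $v \in \ker(1-X)^\perp$ satisfies $Dv = v$. Then $E(I+D)v = 2Ev = 0$, so $Xv = Ev + Dv = v$, forcing $v \in \ker(1-X) \cap \ker(1-X)^\perp = \{0\}$. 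Hence $D$ has no $+1$ eigenvalue, and since $D^2 = I$, $D = -I$.

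\textbf{Step 3: the norm estimate.} Since $J$ is a unitary symmetry, $\|C\| = \sqrt{\delta}\,\|X\|$, and the hypothesis $\|C\| \le 1$ is equivalent to $\|X\|^2 \le 1/\delta$. Setting $e = \|E\|$, for a unit vector $(x,y)$ with $\|x\|^2 + \|y\|^2 = 1$ we have
\[
\|X(x,y)\|^2 = \|x + Ey\|^2 + \|y\|^2,
\]
and a Cauchy--Schwarz optimization (attained in finite dimensions by aligning $x$ with a top singular vector of $E$) yields
\[
\|X\|^2 \;=\; 1 + \tfrac{e^2}{2} + \tfrac{e}{2}\sqrt{e^2+4}.
\]
Equivalently, by the SVD of $E$, $X$ decomposes into $2\times 2$ blocks of the form $\bigl(\begin{smallmatrix} 1 & \sigma \\ 0 & -1 \end{smallmatrix}\bigr)$, and the largest singular value of such a block is exactly $\sqrt{1 + \sigma^2/2 + (\sigma/2)\sqrt{\sigma^2+4}}$, so taking $\sigma = e$ gives the same formula. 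Imposing $\|X\|^2 \le 1/\delta$, isolating the radical, and squaring produces $e^2/\delta \le (1-\delta)^2/\delta^2$, i.e.\ $e \le (1-\delta)/\sqrt{\delta} = 1/\sqrt{\delta} - \sqrt{\delta}$.

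\textbf{Main obstacle.} Steps 1 and 2 are essentially formal. The one step requiring some care is the explicit evaluation of $\|X\|$ in Step 3; once one recognizes that $X$ reduces via the SVD of $E$ to a direct sum of $2\times 2$ upper triangular involutions $\bigl(\begin{smallmatrix} 1 & \sigma \\ 0 & -1 \end{smallmatrix}\bigr)$, the rest is a short computation that the inequality $\|X\|^2 \le 1/\delta$ is equivalent to $e \le 1/\sqrt{\delta}-\sqrt{\delta}$.
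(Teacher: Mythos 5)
Your proof is correct, and it diverges from the paper's only in the last step. Steps 1 and 2 coincide in essence with the paper's argument: take $X = \delta^{-1/2}CJ$, verify $X^2 = I$, and read off the upper triangular block form of $X$ relative to $\ker(1-X)\oplus\ker(1-X)^\perp$. In fact your Step 2 is slightly more careful than the paper's: the paper passes from ``$V_-^\perp$ contains no eigenvector of $X$ for eigenvalue $1$'' to $\sigma(X_{22})=\{-1\}$ without noting that an eigenvector of the compression $X_{22}$ need not a priori be an eigenvector of $X$; your observation that $Dv=v$ forces $Ev=0$ (via $E(I+D)=0$) and hence $v\in\ker(1-X)\cap\ker(1-X)^\perp=\{0\}$ closes that small gap explicitly. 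In Step 3 the routes genuinely differ. The paper deduces $\delta X^*X \le 1$, writes $1-\delta X^*X$ as a block matrix, and uses a congruence (Schur complement) to reduce the positivity condition to $E^*E \le (1-\delta)^2/\delta$. You instead compute $\|X\|$ in closed form, $\|X\|^2 = 1 + e^2/2 + (e/2)\sqrt{e^2+4}$ with $e=\|E\|$, by reducing $X$ via the SVD of $E$ to a direct sum of $2\times 2$ blocks $\bigl(\begin{smallmatrix}1 & \sigma\\ 0 & -1\end{smallmatrix}\bigr)$ (any leftover $1\times 1$ blocks when $\dim\ker(1-X)\ne\dim\ker(1-X)^\perp$ contribute only singular value $1$, so the formula is still exact), and then solve the scalar inequality $\|X\|^2\le 1/\delta$. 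The paper's route is shorter and avoids the explicit norm formula; yours yields the exact value of $\|X\|$ and makes the sharpness of the bound transparent. One minor point worth spelling out in your write-up: when you isolate the radical and square, you should check that $1/\delta - 1 - e^2/2 \ge 0$, which holds automatically because $(1-\delta)^2 \le 2(1-\delta)$ for $\delta\in(0,1)$, so the squaring is reversible and the equivalence you claim is correct. Both proofs are complete.
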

\begin{proof}
 Suppose that $C$ satisfies the relations \eqref{par.3050}. Then $\de=CJCJ$.  Let $X=\de^{-\half}CJ$.  Then $X^2=1$, so that, for any $u\in\h\oplus\h, (1+X)(1-X)u=(1-X)(1+X)u=(1-X^2)u=0$.  It follows that
\[
\ran (1+X) \subseteq \ker (1-X), \quad \ran (1-X) \subseteq \ker (1+X).
\]
Since
\[
x=\half(1+X)x +\half(1-X)x \quad \text{ for all } x\in \h\oplus\h,
\]
it follows that $\ker(1-X) + \ker(1+X)=\h\oplus\h$.  Since clearly $\ker(1-X) \cap \ker(1+X)=\{0\}$,
$\h\oplus \h$ is the vector space direct sum of $\ker(1-X)$ and $ \ker(1+X)$.  \color{black}
Let us write 
\[
V_-=\ker(1-X), \quad V_+= \ker(1+X)
\]
for the eigenspaces of $X$ corresponding to the eigenvalues $1, -1$ respectively.  The operator matrix of $X$ relative to the orthogonal decomposition $\h\oplus\h=V_- \oplus V_-^\perp$ has the form
\[
X \sim \bpm 1 & E \\ 0 & X_{22} \epm
\]
for some operators $E:V_-^\perp \to V_-$ and $X_{22}:V_-^\perp \to V_-^\perp$.
Since $X^2=1$, we have also $X_{22}^2 =1$ and therefore $\sigma(X_{22})\subseteq \{1,-1\}$, by the spectral mapping theorem.  However, $V_-^\perp$ does not contain any eigenvector of $X$ corresponding to the eigenvalue $1$, and so $\sigma(X_{22})= \{-1\}$.  Thus $1-X_{22}$ is invertible,
and so 
\[
0=\big(1-X_{22}\big)\inv \big(1-X_{22}^2\big)= 1+ X_{22},
\]
that is, $X_{22}=-1$.
The expression of $X$ by an operator matrix with respect to the direct decomposition $\h\oplus\h =V_-\oplus V_-^\perp$ becomes
\[
X \sim \bpm 1 & E \\ 0 & -1 \epm.
\]
Since $\|C\|\leq 1$, $\|X\| = \|\de^{-\half}CJ\|\leq \de^{-\half}$, and so $\de X^*X \leq 1$.  Thus
\be \label{deineq}
0 \leq 1-\de X^*X = \bbm 1-\de & -\de E \\ -\de E^* & 1-\de -\de E^*E \ebm
\ee
 For general matrices $A,B,D$ of appropriate sizes the identity
\[
\bbm 1 & 0\\-B^*A\inv & 1\ebm \bbm A & B \\ B^* & D \ebm \bbm 1 & -A\inv B \\ 0 & 1\ebm =\bbm A & 0\\ 0 & D-B^*A\inv B \ebm,
\]
is valid whenever the operator $A$ is invertible. Select $A=1-\de, B=-\de E, D=1-\de - \de E^*E$ to deduce that
\[
\bbm 1-\de & -\de E \\ -\de E^* & 1-\de -\de E^*E \ebm \mbox{ is congruent to } \bbm 1-\de & 0 \\ 0 & 1-\de - \frac{\de}{1-\de}E^*E \ebm.
\]
It follows from the inequality \eqref{deineq} that $1-\de - \frac{\de}{1-\de}E^*E \geq 0$, so that 
\[
E^*E \leq \frac{(1-\de)^2}{\de} = \big(\frac{1}{\sqrt{\de}} -\sqrt{\de}\big)^2.
\]
Hence $\|E\| \leq  \frac{1}{\sqrt{\de}} -\sqrt{\de}$.
\end{proof}

\begin{lem}\label{form.lem.15} Let $\de \in (0,1)$.
If $C$ is a $(\delta,\h)$-germinator, then there exist $A,F
\in \b(\h)$ such that $A$ is a self-adjoint contraction,
\be\label{form.35}
\norm{F}\le \frac{1}{\sqrt \delta}-\sqrt \delta,
\ee
 and
\be\label{form.40}
C_{11}=\sqrt\delta\ \big(A +\half \sqrt{1+A}\ F\ \sqrt{1-A}\big)
\ee
\end{lem}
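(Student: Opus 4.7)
The plan is to apply Lemma \ref{form.lem.10} to the germinator $C$ and then read off $C_{11}$ from the resulting factorization. Lemma \ref{form.lem.10} gives $C = \sqrt{\delta}\, XJ$ with $X = \bbm 1 & E\\ 0 & -1 \ebm$ taken relative to the orthogonal decomposition $\h\oplus\h = V_- \oplus V_-^\perp$, where $V_- = \ker(1-X)$ and $\norm{E}\le 1/\sqrt{\delta}-\sqrt{\delta}$. Since $J = \bbm 1 & 0 \\ 0 & -1 \ebm$ is diagonal in the \emph{standard} decomposition of $\h\oplus\h$, multiplication by $J$ on the right leaves the $(1,1)$-block unchanged, so $C_{11} = \sqrt{\delta}\, X_{11}$, where the subscript refers to the standard decomposition.

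Next I would split $X = W + N$ where $W = 2P - 1$ is the self-adjoint symmetry associated with the orthogonal projection $P$ onto $V_-$, and $N$ is the operator corresponding to $\bbm 0 & E \\ 0 & 0 \ebm$ in the $V_- \oplus V_-^\perp$ decomposition; that is, $N = \iota_- E \pi_+$, where $\iota_- : V_- \hookrightarrow \h\oplus\h$ is the inclusion and $\pi_+ : \h\oplus\h \to V_-^\perp$ is the orthogonal projection. Setting $A := W_{11}$, the identity $W^2 = 1$ together with $W = W^*$ forces $\norm{A}\le \norm{W}=1$ and $A = A^*$, so $A$ is a self-adjoint contraction. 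A direct computation of the block expression for $P = \tfrac{1+W}{2}$ shows that $P_{11} = \tfrac{1+A}{2}$ and $(1-P)_{11} = \tfrac{1-A}{2}$, and that $X_{11} = A + N_{11}$.

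The core of the proof is the extraction of $\sqrt{1\pm A}$ from $N_{11}$. Define $\alpha = \Pi_1 \iota_- : V_- \to \h$ and $\beta = \pi_+ \iota_1 : \h \to V_-^\perp$, where $\Pi_1, \iota_1$ are the standard projection/inclusion for the first summand. Then $N_{11} = \alpha E \beta$, and
\[
\alpha \alpha^* = \Pi_1 P \iota_1 = P_{11} = \tfrac{1+A}{2}, \qquad \beta^* \beta = \iota_1^* (1-P) \iota_1 = \tfrac{1-A}{2}.
\]
The polar decompositions (written in the form $\alpha = |\alpha^*| U_\alpha$ and $\beta = U_\beta |\beta|$) therefore give
\[
\alpha = \sqrt{\tfrac{1+A}{2}}\, U_\alpha, \qquad \beta = U_\beta \sqrt{\tfrac{1-A}{2}},
\]
for partial isometries $U_\alpha : V_- \to \h$ and $U_\beta : \h \to V_-^\perp$. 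Setting $F := U_\alpha E U_\beta \in \b(\h)$, the submultiplicativity of the operator norm and $\norm{U_\alpha}, \norm{U_\beta} \le 1$ yield $\norm{F} \le \norm{E} \le 1/\sqrt{\delta} - \sqrt{\delta}$, and
\[
N_{11} = \sqrt{\tfrac{1+A}{2}}\, U_\alpha E U_\beta\, \sqrt{\tfrac{1-A}{2}} = \tfrac{1}{2}\sqrt{1+A}\, F \,\sqrt{1-A}.
\]
Combining with $C_{11} = \sqrt{\delta}(A + N_{11})$ gives the desired formula \eqref{form.40}.

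The main obstacle I anticipate is bookkeeping between the two orthogonal decompositions of $\h\oplus\h$ (the standard one in which $J$ and the blocks $C_{ij}$ are defined, versus the $V_- \oplus V_-^\perp$ one supplied by Lemma \ref{form.lem.10}); in particular, one must verify that the polar decompositions produce $\sqrt{1+A}$ on the left of $F$ and $\sqrt{1-A}$ on the right, not the reverse. This is handled by the observation that $\alpha$ maps \emph{into} $\h$ (so the factor $\sqrt{(1+A)/2}$ appears on the left) while $\beta$ maps \emph{out of} $\h$ (so the factor $\sqrt{(1-A)/2}$ appears on the right), which is forced by the position of $V_-$ as the range of $E$ and $V_-^\perp$ as its domain.
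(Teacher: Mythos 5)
Your proof is correct and takes essentially the same route as the paper's: apply Lemma \ref{form.lem.10} to write $C=\sqrt{\delta}\,XJ$, observe $C_{11}=\sqrt{\delta}\,X_{11}$, set $A$ equal to the $(1,1)$-block of the symmetry $2P-1$ (which is the paper's $V_1^*V_1-V_2^*V_2$), and extract $\sqrt{(1\pm A)/2}$ via polar decomposition of the off-diagonal contribution. Your treatment via partial isometries $U_\alpha,U_\beta$ is in fact a touch more careful than the paper's assertion that $U_1,U_2$ are Hilbert space isomorphisms (which can fail when $\pm 1\in\sigma(A)$), but it amounts to the same computation.
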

\begin{proof}
Using Lemma \ref{form.lem.10} we see that
\[
C_{11}=\sqrt\delta\ V^*(XJ)V
\]
where $X$ is as in the relations \eqref{form.10}, \eqref{form.30}, and
\[
V:\h \to \ker (1-X) \oplus \big(\ker (1-X)\big)^\perp
\]
is an isometry. Decomposing $V$ into a block $2\times 1$ matrix
\[
V=\begin{bmatrix}V_1\\V_2\end{bmatrix}:\h \to \ker (1-X) \oplus \big(\ker (1-X)\big)^\perp
\]
we obtain the formula
\be\label{form.50}
C_{11}=\sqrt\delta\ (V_1^*V_1-V_2^*V_2 -V_1^*EV_2^*).
\ee

If we set $A=V_1^*V_1-V_2^*V_2$, then as $V_1^*V_1+V_2^*V_2=1$,
$A$ is a self-adjoint contraction, as required by the statement of the lemma.
Furthermore,
\[
V_1^*V_1={\frac{1+A}{2}}\qquad\text{ and }\qquad V_2^*V_2={\frac{1-A}{2}}
\]
so that $V_1$ and $V_2$ have the polar decompositions
\[
V_1=U_1\sqrt{\frac{1+A}{2}}\qquad \text{ and }\qquad V_2=U_2\sqrt{\frac{1-A}{2}}
\]
where $U_1$ and $U_2$ are Hilbert space isomorphisms. In terms of these new operators, equation  \eqref{form.50} becomes
\be\label{form.60}
C_{11}=\sqrt\delta\ \big(A - \half \sqrt{1+A} \ U_1^*EU_2\ \sqrt{1-A}\big).
\ee
Finally, if we define $F=-U_1^*EU_2$, equation \eqref{form.60} becomes equation \eqref{form.40} and as $U_1$ and $U_2$ are Hilbert space isomorphisms, equation \eqref{form.30} implies equation \eqref{form.35}.
\end{proof}

\begin{lem}\label{form.lem.20}
Let $\de \in (0,1)$ and let $\h$ be a finite dimensional Hilbert space. If $T \in \w_\delta (\h)$ is generic for $K_\de$, then there exist a pair of contractions $A,Y \in \b(\h)$ such that
$A$ is self-adjoint and
\be\label{form.70}
T=2\sqrt\delta A + (1-\delta)\sqrt{1+A}\ Y\sqrt{1-A}.
\ee
\end{lem}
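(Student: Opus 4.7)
The plan is to assemble the previously established results. By Theorem \ref{thm.10}, since $T\in \w_\delta(\h)$ is generic for $K_\de$, there exists a $(\delta,\h)$-germinator $C\in\b(\h\oplus\h)$ such that
\[
T = \left(C + \tfrac{\de}{C}\right)\big|_{\h\oplus\{0\}}.
\]
Combining this with Lemma \ref{par.lem.20}, which tells us that $C+\de/C = 2C_{11}\oplus 2C_{22}$ in the $2\times 2$ block decomposition of $C$, I get the identification
\[
T = 2C_{11}.
\]

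Next, I apply Lemma \ref{form.lem.15} to this germinator $C$. It provides a self-adjoint contraction $A\in\b(\h)$ and an operator $F\in\b(\h)$ with $\norm{F}\le \frac{1}{\sqrt{\de}} - \sqrt{\de}$ such that
\[
C_{11} = \sqrt{\delta}\bigl(A + \tfrac{1}{2}\sqrt{1+A}\, F\, \sqrt{1-A}\bigr).
\]
Multiplying by $2$, I obtain
\[
T = 2\sqrt{\delta}\, A + \sqrt{\delta}\,\sqrt{1+A}\, F\,\sqrt{1-A}.
\]

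The remaining step is to convert $\sqrt{\de}\, F$ into $(1-\de) Y$ with $Y$ a contraction. I define
\[
Y \df \frac{\sqrt{\de}}{1-\de}\, F.
\]
Then by the bound on $\norm{F}$,
\[
\norm{Y} \le \frac{\sqrt{\de}}{1-\de}\left(\frac{1}{\sqrt{\de}} - \sqrt{\de}\right) = \frac{\sqrt{\de}}{1-\de}\cdot\frac{1-\de}{\sqrt{\de}} = 1,
\]
so $Y$ is indeed a contraction. Substituting back yields the desired factorization
\[
T = 2\sqrt{\delta}\, A + (1-\de)\sqrt{1+A}\, Y\, \sqrt{1-A}.
\]
There is no real obstacle here: the proof is just an orchestration of Theorem \ref{thm.10}, Lemma \ref{par.lem.20}, and Lemma \ref{form.lem.15}, with the crucial arithmetic being that the estimate $\|F\|\le \frac{1}{\sqrt{\de}} - \sqrt{\de}$ from Lemma \ref{form.lem.10} matches exactly the rescaling $(1-\de)/\sqrt{\de}$ needed to turn $\sqrt{\de} F$ into $(1-\de)$ times a contraction.
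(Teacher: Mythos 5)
Your proposal is correct and follows essentially the same route as the paper's proof: both invoke Theorem \ref{thm.10} to produce a $(\de,\h)$-germinator $C$ with $T=2C_{11}$, then apply Lemma \ref{form.lem.15} and set $Y=\frac{\sqrt\de}{1-\de}F$, checking via the estimate $\|F\|\le \frac{1}{\sqrt\de}-\sqrt\de$ that $Y$ is a contraction. You spell out the step $T=2C_{11}$ through Lemma \ref{par.lem.20} where the paper states it more tersely, but the argument is the same.
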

\begin{proof}
Assume that $T \in \w_\delta (\h)$ is generic for $K_\de$.
By Theorem \ref{thm.10}, there exists a $(\delta,\h)$-germinator $C$ such that \eqref{par.70} holds. In particular,
\[
T=2C_{11}.
\]
By Lemma \ref{form.lem.15}, there exist $A,F \in \b(\h)$ such that $A$ is a self-adjoint contraction such that \eqref{form.35} and \eqref{form.40} hold.
If we define 
\[
Y=\frac{\sqrt\delta}{1-\delta} F,
\]
then $\| F \| \le 1$ and the equation \eqref{form.70} holds. \color{black}
\end{proof}

For $c\in \c$ and $r\ge 0$ we let $D_c(r)$ denote the closed disc in the complex plane centered at $c$ of radius $r$, that is,
\[
D_c(r)=\set{z\in \c}{|z-c|\le r}.
\]
The following lemma describes the radii of the maximal discs in $K_\delta$ centered on the major axis of $K_\delta$.
\begin{lem}\label{form.lem.30}
Let $\de\in (0,1)$.
If, for $t\in [-(1+\delta),1+\delta]$, we define
\[
\rho(t) = \sup\ \set{r}{D_{(t,0)}(r)\subseteq K_\delta},
\]
then
\[
\rho(t)=
\twopartdef{(1-\delta)\sqrt{1-\frac{t^2}{4\delta}}}
{t\in[-\frac{4\delta}{1+\delta},\frac{4\delta}{1+\delta}\ ]}
{1+\delta-|t|}{t\in [-(1+\delta),1+\delta]\setminus [-\frac{4\delta}{1+\delta},\frac{4\delta}{1+\delta}\ ]}.
\]
\end{lem}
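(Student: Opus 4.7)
The plan is to reduce the calculation of $\rho(t)$ to a single-variable minimization. Since $K_\de$ is closed and convex with boundary $\Gamma_\de$, a closed disc centred at $(t,0)\in K_\de$ is contained in $K_\de$ precisely when its radius is at most the distance from $(t,0)$ to $\Gamma_\de$; hence $\rho(t) = \mathrm{dist}((t,0),\Gamma_\de)$, and the task is the elementary geometric problem of locating the closest point of $\Gamma_\de$ to $(t,0)$.

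Next I would parametrize $\Gamma_\de$ by $\gamma(\theta) = ((1+\de)\cos\theta,(1-\de)\sin\theta)$ and differentiate $D(\theta) = |\gamma(\theta)-(t,0)|^2$. Using the identity $(1+\de)^2 - (1-\de)^2 = 4\de$, the equation $D'(\theta)=0$ simplifies to
\[
\sin\theta \cdot \bigl[-4\de\cos\theta + (1+\de)t\bigr] = 0,
\]
whose solutions split into two families: the vertices $(\pm(1+\de),0)$, where $\sin\theta = 0$; and, when $|t|\le 4\de/(1+\de)$, the interior critical point(s) characterised by $\cos\theta = (1+\de)t/(4\de)$. At the nearer vertex the distance is $1+\de-|t|$. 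At the interior critical point, the convenient reduction $(1+\de)\cos\theta - t = t(1-\de)^2/(4\de)$ leads, after a short simplification, to $D(\theta) = (1-\de)^2\bigl(1 - t^2/(4\de)\bigr)$, giving the second candidate value $(1-\de)\sqrt{1-t^2/(4\de)}$.

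It then remains to compare the two candidates. On $|t| \le 4\de/(1+\de)$ both are available; at $t=0$ the interior value is $1-\de < 1+\de$, and at $|t|=4\de/(1+\de)$ the two candidates coincide, both equal to $(1-\de)^2/(1+\de)$. A direct sign check of the difference of their squares shows that the interior value is the smaller one throughout $|t|\le 4\de/(1+\de)$, confirming $\rho(t) = (1-\de)\sqrt{1-t^2/(4\de)}$ in that range. Outside this interval the equation $\cos\theta = (1+\de)t/(4\de)$ has no solution, leaving the vertex as the only boundary critical point, so $\rho(t) = 1+\de - |t|$ there.

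The argument is entirely elementary calculus of one variable; no conceptual obstacle arises. The principal hurdle is simply the bookkeeping in the algebraic simplification, which is dispatched cleanly by the identity $(1+\de)^2-(1-\de)^2 = 4\de$ together with the derived identity $(1+\de)\cos\theta - t = t(1-\de)^2/(4\de)$ at the interior critical point.
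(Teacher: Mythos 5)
Your proof is correct and is essentially an explicit version of the paper's two-sentence sketch, which merely asserts that the formula ``can be justified by Lagrange multipliers'' for the interior case and that the maximal disc touches only the vertex otherwise. Your single-variable parametrized minimization supplies the missing detail, and the final comparison step is clean since $(1+\de-|t|)^2 - (1-\de)^2\bigl(1-\tfrac{t^2}{4\de}\bigr) = \tfrac{(1+\de)^2}{4\de}\bigl(|t|-\tfrac{4\de}{1+\de}\bigr)^2 \ge 0$, confirming that the interior critical value is the minimum on $|t|\le 4\de/(1+\de)$.
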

\begin{proof}
For $t \in [-\frac{4\delta}{1+\delta},\frac{4\delta}{1+\delta}\ ]$, the maximal disc always touches the boundary of $K_\delta$ in two distinct points symmetric about the $x$-axis and the formula for $\rho(t)$ can be justified by Lagrange multipliers. For $t\not\in [-\frac{4\delta}{1+\delta},\frac{4\delta}{1+\delta}\ ]$, the maximal disc touches the boundary of $K_\delta$ in a single point $\pm(1+\delta)$, which explains the formula for $\rho(t)$ in that case.
\end{proof}
It was slightly surprising to the authors that the role played by the transition points $\pm\frac{4\delta}{1+\delta}$ in the preceding lemma was not played by the foci ($\pm2\sqrt\delta$). However, the foci of $K_\delta$ appear in the following corollary to Lemma \ref{form.lem.30}.
\begin{lem}\label{form.lem.40}  Let $\de\in (0,1)$.
If $t\in [-2\sqrt\delta,2\sqrt\delta]$ and $z\in \c$, then
\[
|z|\le (1-\delta)\sqrt{1-\frac{t^2}{4\delta}}\implies
t+z \in K_\delta.
\]
\end{lem}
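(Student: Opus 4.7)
The plan is to derive the lemma directly from Lemma \ref{form.lem.30}. For any $t\in\r$ and $z\in\c$, the condition $|z|\le r$ is equivalent to $t+z\in D_{(t,0)}(r)$, so the strategy is to show that the bound $(1-\de)\sqrt{1-t^2/(4\de)}$ never exceeds the maximal-disc radius $\rho(t)$ supplied by Lemma \ref{form.lem.30}, at least throughout $[-2\sqrt\de,2\sqrt\de]$. Once this is established, the hypothesis $|z|\le(1-\de)\sqrt{1-t^2/(4\de)}\le \rho(t)$ will place $t+z$ in $D_{(t,0)}(\rho(t))\subseteq K_\de$.

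To make the comparison, I would first observe that $(1-\sqrt\de)^2\ge 0$ gives $1+\de\ge 2\sqrt\de$, and hence $\tfrac{4\de}{1+\de}\le 2\sqrt\de$. So the interval $[-2\sqrt\de,2\sqrt\de]$ splits into an inner subinterval $|t|\le \tfrac{4\de}{1+\de}$ and two outer subintervals $\tfrac{4\de}{1+\de}\le |t|\le 2\sqrt\de$. On the inner subinterval Lemma \ref{form.lem.30} gives $\rho(t)=(1-\de)\sqrt{1-t^2/(4\de)}$ on the nose, so the required inequality is in fact an equality and the conclusion is immediate.

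On the outer subintervals we have $\rho(t)=1+\de-|t|\ge 1+\de-2\sqrt\de=(1-\sqrt\de)^2\ge 0$, while the left-hand side $(1-\de)\sqrt{1-t^2/(4\de)}$ is also nonnegative. I would therefore square both sides and use the two identities
\[
(1+\de)^2-(1-\de)^2 = 4\de, \qquad 1+\frac{(1-\de)^2}{4\de}=\frac{(1+\de)^2}{4\de},
\]
to collapse the difference (after clearing by $4\de$) into the perfect square
\[
0 \ \le\ \bigl((1+\de)|t|-4\de\bigr)^2,
\]
which is evidently true, with equality exactly at $|t|=\tfrac{4\de}{1+\de}$, matching the inner-case boundary.

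I do not anticipate a real obstacle: the lemma is just a weakening of Lemma \ref{form.lem.30} on a subinterval that excludes the vertices $\pm(1+\de)$, and the only work beyond invoking that lemma is the perfect-square identification above. The mildly pleasant point, remarked upon by the authors after Lemma \ref{form.lem.30}, is that the foci $\pm 2\sqrt\de$ of the ellipse appear here as the natural endpoints, even though they did not feature in Lemma \ref{form.lem.30} itself.
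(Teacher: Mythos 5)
Your proposal is correct and follows essentially the same route as the paper: the paper's proof is the one-liner that $\pm\tfrac{4\de}{1+\de}\in[-2\sqrt\de,2\sqrt\de]$ splits the interval into two cases, ``both of which follow immediately from Lemma \ref{form.lem.30}.'' You have simply filled in the detail the paper elides in the outer case, namely the computation reducing $(1-\de)^2\bigl(1-\tfrac{t^2}{4\de}\bigr)\le(1+\de-|t|)^2$ to the perfect square $\bigl((1+\de)|t|-4\de\bigr)^2\ge 0$, which is accurate.
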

\begin{proof}
As $\pm\frac{4\delta}{1+\delta} \in [-2\sqrt\delta,2\sqrt\delta] $, there are two cases, both of which follow immediately from Lemma \ref{form.lem.30}.
\end{proof}
\begin{prop}\label{form.prop.10} Let $\de \in (0,1)$.
If $\h$ is a Hilbert space (not assumed to be finite dimensional), $A$ and $Y$ are contractions in  $\b(\h)$, $A$ is self-adjoint, and $T\in \b(\h)$ is defined by the formula,
\[
T=2\sqrt\delta A + (1-\delta)\sqrt{1+A}\ Y\sqrt{1-A},
\]
then $W(T) \subseteq K_\delta$.
\end{prop}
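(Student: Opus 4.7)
The plan is to verify directly that every value of the sesquilinear form of $T$ falls into $K_\delta$, by separating the real ``axial'' part coming from the self-adjoint summand $2\sqrt\delta A$ from the ``transverse'' part coming from $(1-\delta)\sqrt{1+A}\,Y\sqrt{1-A}$, and then appealing to Lemma \ref{form.lem.40}.

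First, I would fix a unit vector $u\in\h$ and set $a = \langle Au,u\rangle$. Since $A=A^*$ and $\|A\|\le 1$, we have $a\in[-1,1]$, so
\[
t \df \langle 2\sqrt\delta A u, u\rangle = 2\sqrt\delta\, a \in [-2\sqrt\delta,\,2\sqrt\delta].
\]
Next, writing $z \df \langle (1-\delta)\sqrt{1+A}\,Y\sqrt{1-A}\,u, u\rangle$ and using that $\sqrt{1+A}$ is self-adjoint, I would rewrite
\[
z = (1-\delta)\,\langle Y\sqrt{1-A}\,u,\ \sqrt{1+A}\,u\rangle.
\]
By Cauchy--Schwarz together with $\|Y\|\le 1$,
\[
|z| \le (1-\delta)\,\|\sqrt{1-A}\,u\|\,\|\sqrt{1+A}\,u\| = (1-\delta)\sqrt{(1-a)(1+a)} = (1-\delta)\sqrt{1-a^2}.
\]
Since $a^2 = t^2/(4\delta)$, this yields the key inequality $|z|\le (1-\delta)\sqrt{1 - t^2/(4\delta)}$.

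Finally, since $t\in[-2\sqrt\delta,\,2\sqrt\delta]$ and $|z|\le (1-\delta)\sqrt{1-t^2/(4\delta)}$, Lemma \ref{form.lem.40} applies and gives $t+z\in K_\delta$. But $\langle Tu,u\rangle = t+z$, so every element of $W(T)$ lies in $K_\delta$, and we conclude $W(T)\subseteq K_\delta$. There is no serious obstacle here; the only nontrivial ingredient is Lemma \ref{form.lem.40}, which has already been established. The proof works in arbitrary (not necessarily finite-dimensional) Hilbert space because all that is used are Cauchy--Schwarz, the spectral calculus for the self-adjoint contraction $A$ (giving $\sqrt{1\pm A}$), and the operator norm bound $\|Y\|\le 1$.
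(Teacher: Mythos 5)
Your argument is correct and is essentially the same as the paper's: fix a unit vector, set $a=\langle Au,u\rangle$, bound the term $(1-\delta)\langle Y\sqrt{1-A}\,u,\sqrt{1+A}\,u\rangle$ by $(1-\delta)\sqrt{1-a^2}$ via Cauchy--Schwarz and $\|Y\|\le 1$, and then invoke Lemma \ref{form.lem.40} with $t=2\sqrt\delta a$. The only cosmetic difference is that you make the substitution $a^2=t^2/(4\delta)$ explicit; the paper simply records $\ip{Tv}{v}\in D_{2\sqrt\delta a}\bigl((1-\delta)\sqrt{1-a^2}\bigr)$ before applying the same lemma.
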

\begin{proof}
Fix $v\in \h$ with $\norm{v}=1$. We need to show that
\be\label{form.80}
\ip{Tv}{v} \in K_\delta.
\ee
We observe that if we set $a=\ip{Av}{v}$
\begin{align*}
\ip{Tv}{v}&=2\sqrt\delta a +
(1-\delta)\ip{\sqrt{1+A}\ Y\sqrt{1-A}v}{v}\\
&=2\sqrt\delta a +
(1-\delta)\ip{Y\sqrt{1-A}v}{\sqrt{1+A}v}.
\end{align*}
Hence, as $Y$ is a contraction,
\begin{align*}
|\ip{Tv}{v}-2\sqrt\delta a|^2&=(1-\delta)^2\ \big|\ip{Y\sqrt{1-A}v}{\sqrt{1+A}v}\big|^2\\ \\
&\le (1-\delta)^2\ \norm{\sqrt{1-A}v}^2\  \norm{\sqrt{1+A}v}^2\\ \\
&=(1-\delta)^2\ \ip{(1-A)v}{v}\  \ip{(1+A)v}{v}\\ \\
&=(1-\delta)^2(1-a^2).
\end{align*}
Therefore,
\[
\ip{Tv}{v} \in D_{2\sqrt\delta a}(\ (1-\delta)\sqrt{1-a^2}\ ),
\]
which implies via Lemma \ref{form.lem.40} (with $t=2\sqrt\delta a$) that equation \eqref{form.80} holds.
\end{proof}
We close this section by noting that, in the finite dimensional case, Lemma \ref{form.lem.20} and Proposition \ref{form.prop.10} can be combined to yield the following generalization of a theorem of Ando that parametrizes the operators with numerical range in $\d^-$ (\cite[Theorem  1]{Ando}).
\begin{thm}\label{Ando-ellipse} Let $\de \in (0,1)$.
Let $\h$ be a finite dimensional Hilbert space and assume that $T\in \b(\h)$. $W(T)\subseteq K_\delta$ if and only if there exist a pair of contractions $A,Y \in \b(\h)$ such that
$A$ is self-adjoint and
\be\label{form.70.thm}
T=2\sqrt\delta A + (1-\delta)\sqrt{1+A}\ Y\sqrt{1-A}.
\ee
\end{thm}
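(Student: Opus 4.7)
The sufficiency direction of the theorem is already proved: Proposition \ref{form.prop.10} establishes that whenever $A$ is a self-adjoint contraction and $Y$ is a contraction, the operator $T=2\sqrt\delta A+(1-\delta)\sqrt{1+A}\,Y\sqrt{1-A}$ satisfies $W(T)\subseteq K_\delta$, without any finite-dimensional assumption on $\h$. So the work lies in the necessity direction, and my plan is to reduce it to the generic case handled by Lemma \ref{form.lem.20} via a perturb-and-take-limit argument that exploits the finite dimensionality of $\h$.

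First I would approximate $T$ by generic operators that also have numerical range in $K_\delta$. For each $n$, replace $T$ by $r_nT$ with $r_n\uparrow 1$, so that $W(r_nT)=r_nW(T)$ is strictly interior to $K_\delta$, and then add a perturbation $N_n$ of operator norm less than $1/n$ chosen so that $T_n=r_nT+N_n$ satisfies the three genericity conditions of Definition \ref{ell.def.20}: the eigenvalues of $T_n$ are $\dim\h$ distinct points, $\sigma(T_n)\subseteq K_\delta$, and $4\delta\notin\sigma(T_n^2)$. Each of these is an algebraically generic condition on $\b(\h)$, so a small generic $N_n$ works; and the separation of $W(r_nT)$ from $\Gamma_\delta$ guarantees that $W(T_n)$ still lies in $K_\delta$ provided $\norm{N_n}$ is small enough relative to $1-r_n$.

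Next, applying Lemma \ref{form.lem.20} to each generic operator $T_n$ yields self-adjoint contractions $A_n\in\b(\h)$ and contractions $Y_n\in\b(\h)$ with
\[
T_n=2\sqrt\delta A_n+(1-\delta)\sqrt{1+A_n}\,Y_n\sqrt{1-A_n}.
\]
Because $\h$ is finite-dimensional, the closed unit ball of $\b(\h)$ is compact in the operator-norm topology, so after passing to a subsequence I may assume $A_n\to A$ and $Y_n\to Y$ in norm. The limit $A$ is a self-adjoint contraction and $Y$ is a contraction; since $\sigma(A_n)\subseteq[-1,1]$, the continuous functional calculus applied to the uniformly continuous functions $t\mapsto\sqrt{1\pm t}$ gives $\sqrt{1\pm A_n}\to\sqrt{1\pm A}$ in norm. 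Taking limits in the displayed identity produces the desired factorization of $T$.

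I expect the perturbation step to be the main delicate point: I must simultaneously arrange the three genericity conditions and preserve $W(T_n)\subseteq K_\delta$. Scaling $T$ by $r_n<1$ creates the room for this, since it pushes $W(r_nT)$ uniformly away from $\Gamma_\delta$ by a positive margin depending only on $r_n$ and $T$. Once that margin is quantified, sufficiently small generic perturbations stay inside $K_\delta$ and satisfy the three genericity conditions, whose complement is a proper algebraic subvariety of $\b(\h)$.
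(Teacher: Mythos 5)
Your proposal takes essentially the same approach as the paper's proof: sufficiency comes from Proposition~\ref{form.prop.10}, necessity in the generic case from Lemma~\ref{form.lem.20}, and genericity is removed by approximating $T$ by a sequence of generic operators in $\w_\delta(\h)$ and passing to a convergent subsequence of $(A_k,Y_k)$ using finite-dimensional compactness. The only difference is that you flesh out two points the paper leaves terse — the explicit construction of the approximating sequence via rescaling by $r_n\uparrow 1$ followed by a small generic perturbation, and the norm-continuity of $A\mapsto\sqrt{1\pm A}$ via functional calculus — both of which are correct and fill in detail rather than change the argument.
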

\begin{proof}
The theorem follows from Lemma \ref{form.lem.20} and Proposition \ref{form.prop.10} provided only that we may remove the genericity assumption in Lemma \ref{form.lem.20}. Accordingly assume $T\in\w_\delta(\h)$. Choose a generic sequence $\{T_k\}$ such that $T_k\in \w_\delta(\h)$ for all $k$ and $T_k \to T$ as $k\to \infty$ in the weak operator topology. By Lemma \ref{form.lem.20}, for each $k$ there exist contractions $A_k,Y_k \in \b(\h)$ such that $A_k$ is self-adjoint and
\be\label{form.90}
T_k=2\sqrt\delta A_k + 2(1-\delta)\sqrt{\frac{1+A_k}{2}}\ Y_k\sqrt{\frac{1-A_k}{2}}.
\ee
By passing to a subsequence if necessary we may assume that there exist $A,Y\in \b(\h)$ such that
\be\label{form.100}
A_k \to A\qquad \text{and}\;\; Y_k \to Y \;\;\; \text{as} \;\; k \to \infty.
\ee
 Evidently $A_k$ and $Y_k$ contractive for all $k$ imply that $A$ and $Y$ are contractions. Similarly, as $A_k$ is self-adjoint for all $k$, $A$ is self-adjoint. Finally, equation \eqref{form.90} and relations \eqref{form.100}  imply equation \eqref{form.70.thm}.
\end{proof}

\begin{rem}
Note that if we take $\de=0$ in equation \ref{form.70.thm} then
the statement of Theorem \ref{Ando-ellipse} formally becomes the following. Let  $T\in \b(\h)$ where $\h$ is a finite dimensional Hilbert space. Then $W(T)\subseteq \ov{\d}$ if and only if there exist a pair of contractions $A,Y \in \b(\h)$ such that  $A$ is self-adjoint and
\[
T= \sqrt{1+A} \; Y \: \sqrt{1-A}.
\]
This is a true statement, by Ando's Theorem \cite[Theorem 1]{Ando}. 
We can therefore consider Theorem \ref{Ando-ellipse} as a generalization of Ando's theorem   to operators $T$ with $W(T)\subseteq K_\delta$ in the finite-dimensional case.
\end{rem}

\section{Operators $T$ with $W(T)\subseteq K_\de$  and Douglas-Paulsen operators}\label{Douglas-Paulsen}

In this section we present yet another consequence of Theorem \ref{thm.10}, a tight connection between the operators with numerical range in an ellipse and Douglas-Paulsen operators. 
 Consider a number $\de \in (0,1)$.  
  In \cite{dp86} R. G. Douglas and V. Paulsen developed a dilation theory for operators $T$
  on some Hilbert space $\h$  such that $\de^2\leq T^*T \leq 1$.  Accordingly, we shall call any such operator a {\em Douglas-Paulsen operator with parameter $\de$.}  
 
More exposition and references on such operators can be found in \cite[Chapter 9]{amy20}.

\begin{prop}\label{dp.prop.10}
Let $\de \in (0,1)$.
Fix a finite-dimensional Hilbert space $\h$ and let $X \in \b(\h)$ be a Douglas-Paulsen operator with parameter $\de$. Then $W(\pi(X))\subseteq K_\de$.
\end{prop}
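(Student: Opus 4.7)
The plan is to construct a strange dilation of $T = \pi(X) = X + \de X\inv$ relative to $K_\de$ and then invoke Theorem \ref{morestrange} to conclude $W(T) \subseteq K_\de$.

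First I would verify the spectral hypothesis of Theorem \ref{morestrange}. Since $\de^2 \le X^*X \le 1$, $X$ is invertible with $\norm{X}\le 1$ and $\norm{X\inv}\le \de\inv$, so $\sigma(X) \subseteq \ov{R_\de}$. Because $\pi(\lambda) = f(1/\lambda)$ for the Zhukovskii map $f$ of Lemma \ref{ell.lem.10}, and because $\pi$ satisfies $\pi(\lambda) = \pi(\de/\lambda)$, parts (ii) and (iv) of that lemma imply $\pi(\ov{R_\de}) \subseteq K_\de$. By the spectral mapping theorem, $\sigma(T) \subseteq K_\de$.

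Next I construct the dilation. Let $Y = X \oplus \de X\inv$ acting on $\h\oplus\h$. Both direct summands are contractions, so $Y$ is a contraction. By the Sz.-Nagy dilation theorem there exist a Hilbert space $\k \supseteq \h\oplus\h$ and a unitary $U \in \b(\k)$ with $Y^k = P_{\h\oplus\h}\, U^k|_{\h\oplus\h}$ for every $k \ge 0$. Define $I:\h \to \k$ by $Iv = \tfrac{1}{\sqrt 2}(v,v)$; this is an isometry. The key structural observation is that $(X,\de X\inv)$ plays the role of a prepair for $T$: the two operators commute, and their elementary symmetric functions are $X + \de X\inv = T$ and $X\cdot \de X\inv = \de$. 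For $z\in\d$, $\norm{zX} < 1$ and $\norm{z\de X\inv}<1$, so expanding in geometric series and using the Sz.-Nagy dilation property yields
\begin{align*}
I^*(1-zU)\inv I &= \sum_{k=0}^\infty z^k\, I^* U^k I = \sum_{k=0}^\infty \tfrac{z^k}{2}\bigl(X^k + \de^k X^{-k}\bigr) \\
&= \tfrac12 (1-zX)\inv + \tfrac12 (1-z\de X\inv)\inv.
\end{align*}
Multiplying by the common denominator $(1-zX)(1-z\de X\inv) = 1-zT+\de z^2$ collapses the right-hand side to $(1-\tfrac12 zT)/(1-zT+\de z^2)$. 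Therefore $(\k,I,U)$ is a strange dilation of $T$ relative to $K_\de$, and Theorem \ref{morestrange} delivers $W(T) \subseteq K_\de$.

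There is no serious obstacle. The only conceptually important step is recognizing that the direct sum $X \oplus \de X\inv$ realises the prepair structure that underlies the paper's notion of strange dilation; once that is in hand, convergence of the two geometric series on $\d$ and the one-line partial-fraction identity do all the remaining work.
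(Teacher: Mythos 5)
Your proof is correct and takes a genuinely different, more direct route than the paper's. The paper routes through Theorem~\ref{par.thm.10}: it first assumes $X$ is generic, so that $(X,\de/X)$ is a generic prepair for $T=\pi(X)$, then observes that with the choices $Q=X-\de/X$ and $S=1$ one gets $Z_T=U(X\oplus\de X^{-1})U$, a contraction, and finally removes the genericity restriction by a weak-operator-topology approximation argument. You instead build a strange dilation of $T$ directly: the contraction $X\oplus\de X^{-1}$ has a Sz.-Nagy unitary dilation $U$, the map $v\mapsto\tfrac1{\sqrt2}(v,v)$ isometrically embeds $\h$ into $\h\oplus\h\subseteq\k$, and summing the two geometric series and collapsing by partial fractions yields exactly the resolvent identity \eqref{ell.30.intro1} required by Theorem~\ref{morestrange}. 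Both arguments hinge on the same core observation that $X\oplus\de X^{-1}$ is a contraction, but your construction avoids the generic-case reduction, the limiting argument, and the whole $Z_T$/even-stranger-dilation machinery; and since it invokes only Theorem~\ref{ell.prop.20} and the Sz.-Nagy dilation theorem, neither of which needs $\dim\h<\infty$, it in fact establishes the conclusion for Douglas-Paulsen operators on Hilbert spaces of arbitrary dimension, a slightly stronger statement than the one the proposition records.
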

\begin{proof}
Let $\h$ be $n$-dimensional and notice that $\sigma(X)\subseteq R_\de^-$. Set $T=\pi(X)=X+\de/X$. We first prove the proposition under the assumptions that 
\be\label{dp.10}
\sigma(X)\cap \sigma(\delta/X) =\emptyset
\ee
and both $X$ and $\de/X$ have $n$ distinct eigenvalues.
The hypotheses imply that  $(X,\delta/X)$ is a generic prepair for $T$.
We observe that the fact 
\[
\sigma(T) = \sigma(\pi(X)) = \pi(\sigma(X)) \subseteq \pi(R_\de^-) \subseteq K_\de
\] 
ensures (with the aid of Lemma \ref{ell.lem.20}) that $T$ is generic  for $K_\de$.
 Furthermore, as
\[
T=X+\delta/X\qquad 
\]
and $X- \de/X$ is an operator that commutes with $T$ and satisfies $(X-\de/X)^2 =T^2-4\de$, we may choose $Q=X-\de/X$ in the definition of $Z_T$, equation \eqref{ell.170}, and we have
\begin{align*}
Z_T &=\frac12
\begin{bmatrix}T&Q\\Q&T\end{bmatrix}\\ \\
&=\frac12 \begin{bmatrix}X+\delta/X&X-\delta/X\\X-\delta/X&X+\delta/X\end{bmatrix}\\ \\
&=U\begin{bmatrix}X&0\\0&\delta/X\end{bmatrix}U,
\end{align*}
where
\[
U=\frac{1}{\sqrt{2}} \bbm 1 & 1\\ 1 & -1 \ebm.
\]
Consequently, as $\norm{X},\norm{\delta/X}\le 1$ and $U$ is unitary, $\|Z_T\|\leq 1$. Hence 
condition \eqref{par.10} holds (with $S=1$) and Theorem \ref{par.thm.10} implies that $W(T)\subseteq K_\delta$.

To relax the assumptions \eqref{dp.10} and the assumption that $X$ has $n$ distinct eigenvalues, assume that $X$ is now a general Douglas-Paulsen operator with parameter $\de$.  Choose a sequence $\{X_k\}$ such that $X_k$ is a Douglas-Paulsen operator with parameter $\de$, $X_k$ has $n$ distinct eigenvalues, the stated assumptions hold for all $k$ and $X_k \to X$ as $k\to \infty$ for the weak operator topology. 
   By the special case that has been proved above, $W(\pi(X_k)) \subseteq K_\delta$ for all $k$. Hence $W(\pi(X)) \subseteq K_\de$.
\end{proof}
Proposition \ref{dp.prop.10} asserts that $\pi: X\mapsto X+\de/X$ maps the class of Douglas-Paulsen operators  with parameter $\delta$ on a Hilbert space $\h$  into the class of operators on $\h$ whose numerical ranges are contained in $K_\delta$.

\begin{fact} \label{10.3}
 The map $\pi$ is not onto, even in the case of a $2$-dimensional $\h$. 
 \end{fact}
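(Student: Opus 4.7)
The plan is to exhibit a concrete $T\in\w_\de(\h)$ on a 2-dimensional $\h$ which is not of the form $X+\de X\inv$ for any Douglas-Paulsen operator $X\in\b(\h)$ with parameter $\de$. The obstruction will be located at the focus $\mu=2\sqrt\de$ of $K_\de$, which is one of the two critical values of the Zhukovskii-type map $\pi\colon R_\de\to G_\de$ (where $\pi'(\lambda)=0$).

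Take $\h=\c^2$ and
\[
T=\bpm 2\sqrt\de & c\\ 0 & 2\sqrt\de\epm
\]
for a small nonzero complex number $c$. Being an upper-triangular $2\times 2$ matrix with equal diagonal entries, $T$ has numerical range equal to the closed disc centred at $2\sqrt\de$ of radius $|c|/2$, by the elliptical range theorem for $2\times 2$ matrices. Since $2\sqrt\de\in\bigl(\tfrac{4\de}{1+\de},\,1+\de\bigr]$ for $\de\in(0,1)$, Lemma \ref{form.lem.30} gives $\rho(2\sqrt\de)=1+\de-2\sqrt\de=(1-\sqrt\de)^2>0$, so taking $|c|\le 2(1-\sqrt\de)^2$ secures the inclusion $W(T)\subseteq K_\de$.

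Now suppose for contradiction that $T=X+\de X\inv$ for some Douglas-Paulsen $X\in\b(\h)$ with parameter $\de$. Since $X$ commutes with $X\inv$, $X$ commutes with $T$, and every eigenvalue $\lambda$ of $X$ satisfies $\pi(\lambda)=2\sqrt\de$, i.e.\ $(\lambda-\sqrt\de)^2=0$. Hence $X$ has the single eigenvalue $\sqrt\de$ with algebraic multiplicity $2$, and therefore $X=\sqrt\de\,I+N$ with $N\in\b(\h)$ satisfying $N^2=0$. A direct computation (using $(\sqrt\de\,I+N)(\sqrt\de^{\,-1}I-\de\inv N)=I$) yields $\de X\inv=\sqrt\de\,I-N$, so that
\[
T=X+\de X\inv=(\sqrt\de\,I+N)+(\sqrt\de\,I-N)=2\sqrt\de\,I,
\]
contradicting $c\ne 0$. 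Consequently $T$ lies outside the image of the map $\pi$.

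I do not anticipate any serious obstacle beyond selecting the right test operator; the rest is a direct spectral computation. The key insight is that $\pi'(\lambda)=1-\de/\lambda^2$ vanishes exactly at $\lambda=\pm\sqrt\de$, the unique pre-images of the foci $\pm 2\sqrt\de$. At such a branch point any nilpotent increment added to $\pm\sqrt\de\,I$ is exactly cancelled by the corresponding increment in $\de X\inv$, so $\pi$ cannot realise a non-trivial Jordan block at $\pm 2\sqrt\de$, and the $T$ constructed above exploits precisely this phenomenon.
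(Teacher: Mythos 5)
Your proof is correct, and it takes a genuinely different route from the paper's. The paper chooses $T=2\begin{bmatrix}\sqrt\de&1-\de\\0&-\sqrt\de\end{bmatrix}$, whose eigenvalues are the two distinct foci $\pm2\sqrt\de$; it then solves $T=\pi(X)$ explicitly (the solution is unique since each focus has a unique preimage in $\c\setminus\{0\}$) and shows by a direct norm computation that the resulting $X$ fails to be a Douglas--Paulsen operator when $1-6\de+\de^2<0$, i.e.\ only for $\de>3-2\sqrt2$. Your argument instead places a nontrivial Jordan block at a single focus, where $\pi'(\lambda)=1-\de/\lambda^2$ vanishes; this is cleaner in two ways. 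First, it proves more: your $T$ is not of the form $X+\de X\inv$ for \emph{any} $X\in\b(\c^2)$ whatsoever, not just for any Douglas--Paulsen $X$, because the branch-point collapse $X=\sqrt\de I+N\Rightarrow \de X\inv=\sqrt\de I-N$ cancels the nilpotent part identically. Second, it works for every $\de\in(0,1)$, whereas the paper's calculation as written only disposes of $\de$ in the range where $1-6\de+\de^2<0$. The only small thing worth making explicit in your write-up is the appeal to the spectral mapping theorem in the step ``every eigenvalue $\lambda$ of $X$ satisfies $\pi(\lambda)=2\sqrt\de$'': since $T=\pi(X)$ with $\pi$ holomorphic on a neighbourhood of $\sigma(X)\subseteq\c\setminus\{0\}$, we have $\sigma(T)=\pi(\sigma(X))$, and $\pi(\lambda)=2\sqrt\de$ forces $\lambda=\sqrt\de$; otherwise the argument is complete.
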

 \begin{proof}
Consider the operator
\[
T=2\bbm  \sqrt{\de} & 1-\de \\ 0 & -\sqrt{\de} \ebm
\]
on $\c^2$. Explicit calculation in Example 3 of \cite[Section 1]{GuRao97}) shows that the 
numerical range of $T$ is $K_\de$, and so $W(T) \subseteq K_\de$.  Note that $T$ has eigenvalues $\mu_1=2\sqrt{\de}, \mu_2=-2\sqrt{\de}$, with corresponding eigenvectors $v_1=\bpm 1\\0 \epm, v_2=\bpm 2\\\sqrt{\de}-\frac{1}{\sqrt{\de}} \epm$. 

Suppose that $T=\pi(X)=X+\de X\inv$ for some $X\in\b(\c^2)$. Let us show that $X$ is not a Douglas-Paulsen operator with parameter $\de$.  By the Spectral Mapping Theorem, the eigenvalues $\lam_1,\lam_2$ of $X$ satisfy 
 $$\{\pi(\lam_1), \pi(\lam_2)\} = \{\mu_1,\mu_2\} = \{ 2\sqrt{\de},-2\sqrt{\de} \}.$$
Thus  $X$ has eigenvalues $\lam_1=\sqrt{\de}, \lam_2=-\sqrt{\de}$, which are the unique preimages of $\mu_1,\mu_2$ respectively under $\pi$, and corresponding eigenvectors $v_1,v_2$.  Therefore
\begin{align*}
X\bbm v_1 & v_2 \ebm &= \bbm v_1 & v_2 \ebm \bbm \sqrt{\de} & 0\\0 & -\sqrt{\de} \ebm, \; \text{ and so}\\
 X &= \bbm 1&2\\0 & \sqrt{\de}- \frac{1}{\sqrt{\de}} \ebm \bbm \sqrt{\de}&0\\0& -\sqrt{\de} \ebm \bbm 1 & 2\\0 & \sqrt{\de}- \frac{1}{\sqrt{\de}} \ebm\inv\\
    &= \bbm \sqrt{\de} & -2\sqrt{\de}\\ 0 & 1-\de \ebm \bbm \sqrt{\de} & 0\\0 & -\sqrt{\de} \ebm \bbm \sqrt{\de}- \frac{1}{\sqrt{\de}}  & -2 \\ 0 & 1 \ebm \frac{1}{\sqrt{\de}- \frac{1}{\sqrt{\de}} } \\
    &=\frac{\sqrt{\de}}{\de -1} \bbm \de-1 & -4\sqrt{\de} \\ 0 & 1-\de \ebm \\
    &= \bbm \sqrt{\de} & \frac{4\de}{1-\de} \\ 0 & -\sqrt{\de} \ebm.
\end{align*}
Now $\|X\| \leq 1$ if and only if $X^*X \leq 1$, which is so if and only if
$1-6\de + \de^2 \geq 0$. The last inequality is false for example if $\de=\half$, so that in this case $X$ is not a Douglas-Paulsen operator with parameter $\de$, and so
$T$ is not in the range of $\pi$.
\end{proof}

Thus, not every operator with numerical range in $K_\delta$ is in the range of $\pi$ acting on the class of Douglas-Paulsen operators with parameter $\de$. However, every operator with numerical range in $K_\delta$ has an {\em extension} to an operator of the form $\pi(X)$ for some Douglas-Paulsen operator $X$ with parameter $\de$.

\begin{prop}\label{dp.prop.20}
Let $\h$ be a finite dimensional Hilbert space and fix
$T \in \w_\delta(\h)$, where $\de\in(0,1)$. There exists $X \in  \b(\h\oplus \h)$ such that $\|X\|\leq 1$ and $\|X\inv\| \leq 1/\de$ and
\be\label{dp.20}
T=(X+\de X\inv)|\ \h \oplus \{0\}.
\ee
\end{prop}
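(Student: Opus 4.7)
The plan is to combine Theorem \ref{thm.10} with an approximation argument, as was done at the end of Theorem \ref{Ando-ellipse}.

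First I would handle the case that $T$ is generic for $K_\de$. By Theorem \ref{thm.10}, there exists a $(\de,\h)$-germinator $C \in \b(\h\oplus\h)$ and an invertible $S\in \b(\h)$ such that equation \eqref{par.70} holds, and moreover
\[
T = \left(C+\tfrac{\de}{C}\right)\Big|_{\h\oplus\{0\}}.
\]
Setting $X=C$, the condition $\norm{C}\le 1$ built into the definition of a germinator gives $\norm{X}\le 1$, while the $J$-equivalence $\de/C = JCJ$ yields $X\inv = \tfrac{1}{\de} JCJ$, hence $\norm{X\inv} \le \tfrac{1}{\de}\norm{C} \le 1/\de$. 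The identity $T=(X+\de X\inv)|_{\h\oplus\{0\}}$ follows from the displayed equation above, so $X=C$ is the required extension.

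Next I would reduce the general case to the generic case by a limiting argument. Given an arbitrary $T\in\w_\de(\h)$, I claim we may choose a sequence $\{T_k\}\subseteq\w_\de(\h)$ of operators generic for $K_\de$ such that $T_k\to T$ in norm. Indeed, the set of $n\times n$ matrices having $n$ distinct eigenvalues and avoiding the two points $\pm 2\sqrt\de$ in their spectra is open and dense in $\b(\h)$; contracting $T$ slightly, say by replacing $T$ by $(1-1/k)T$, moves $W(T)$ into the interior $G_\de$, and then a further small perturbation within $\w_\de(\h)$ produces $T_k$ that is generic for $K_\de$. For each such $T_k$, the generic case supplies $X_k\in\b(\h\oplus\h)$ with $\norm{X_k}\le 1$, $\norm{X_k\inv}\le 1/\de$, and $T_k = (X_k+\de X_k\inv)|_{\h\oplus\{0\}}$.

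Finally I would pass to a subsequential limit. The set
\[
\mathcal{C}_\de = \set{X\in\b(\h\oplus\h)}{\norm{X}\le 1,\ X\text{ invertible},\ \norm{X\inv}\le 1/\de}
\]
is norm compact in the finite-dimensional algebra $\b(\h\oplus\h)$: it is bounded, and if $X_k\in\mathcal{C}_\de$ converges to $X$, then $\norm{Xv}\ge\de\norm{v}$ for every $v$ by taking limits in $\norm{X_k v}\ge \de\norm{v}$, so $X$ is injective, hence invertible, and $\norm{X\inv}\le 1/\de$. Passing to a subsequence, $X_k\to X\in\mathcal{C}_\de$. Continuity of inversion on $\mathcal{C}_\de$ yields $X_k\inv \to X\inv$, so $X_k+\de X_k\inv \to X+\de X\inv$. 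Restricting to the invariant subspace $\h\oplus\{0\}$ and using $T_k\to T$ gives $T=(X+\de X\inv)|_{\h\oplus\{0\}}$, as required.

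The main obstacle in this plan is the density of generic operators inside $\w_\de(\h)$ -- one must ensure that the perturbations used to produce distinct eigenvalues and to keep $4\de$ out of $\sigma(T_k^2)$ can be carried out without leaving $\w_\de(\h)$. The contraction-then-perturb trick handles this, exploiting the fact that $W$ is a continuous set-valued map and $K_\de$ has non-empty interior $G_\de$.
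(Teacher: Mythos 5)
Your proposal is correct and follows the same basic route as the paper: in the generic case, invoke Theorem \ref{thm.10}, set $X=C$ where $C$ is a $(\de,\h)$-germinator, and read off $\|X\|\le 1$, $\|X\inv\|\le 1/\de$ and $T=(X+\de X\inv)|_{\h\oplus\{0\}}$; in the general case, approximate by generic operators. The genuine difference is in the limiting step. The paper dispatches the general case with ``As in the proof of Proposition \ref{dp.prop.10}, the general case follows from the generic case,'' but that earlier limiting argument concludes a closed condition ($W(\pi(X))\subseteq K_\de$) and so passes to the limit trivially. Here the conclusion is an \emph{existence} statement, and some compactness is genuinely needed to extract an $X$ for the limit $T$. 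You supply exactly the missing ingredient: you observe that $\{X\in\b(\h\oplus\h): \|X\|\le1,\ \|X\inv\|\le 1/\de\}$ is bounded and norm-closed (via the equivalent lower bound $\|Xv\|\ge\de\|v\|$), hence compact in finite dimensions, with inversion continuous on it; you pass to a subsequential limit $X$; and you note that invariance of $\h\oplus\{0\}$ survives the limit since the invariance condition $(1-P_1)(X_k+\de X_k\inv)P_1=0$ is preserved. Your density claim for generic operators inside $\w_\de(\h)$ (shrink to $(1-1/k)T$ to push $\overline{W(T)}$ into $G_\de$, then make a small generic perturbation) is also a fuller justification of what the paper takes for granted. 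In short, same approach, but your write-up closes a step the paper leaves implicit.
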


\begin{proof}
As in the proof of Proposition \ref{dp.prop.10}, the general case follows from the generic case. Accordingly, assume that $\h$ is a finite dimensional Hilbert space and fix generic $T \in \w_\delta(\h)$. By Proposition \ref{thm.10}, there exists a generic contraction $C\in \b(\h \oplus \h)$ satisfying equations \eqref{par.50} and \eqref{par.70}. 

Let $X=C$. Note that $\norm{X} \le 1$ and also, that equation \eqref{par.50} implies that $\norm{\delta/X} \le 1$.  To see that equation \eqref{dp.20} holds observe using equation \eqref{par.70} that
\begin{align*}
\pi(X)&=X+\delta/X=C+\delta/C\\ 
&=\begin{bmatrix}1&0\\0&S\end{bmatrix}
(Z_T+\delta/Z_T)
\begin{bmatrix}1&0\\0&S\end{bmatrix}^{-1}\\ 
&=\begin{bmatrix}T&0\\0&STS^{-1}\end{bmatrix}.
\end{align*}
Thus $\h\oplus \{0\}$ is invariant under $\pi(X)$ and $T$ is the restriction of $\pi(X)$ to $\h\oplus \{0\}$.
\end{proof}

Propositions \ref{dp.prop.10} and \ref{dp.prop.20} combine to give the following characterization.
\begin{thm}\label{conn-to-D-P-oper}
Let $\h$ be a finite-dimensional Hilbert space, let $T\in\b(\h)$ and let $\de\in(0,1)$.
$W(T)\subseteq K_\de$ if and only if there exists  $X \in  \b(\h\oplus \h)$ such that $\|X\|\leq 1$ and $\|X\inv\| \leq 1/\de$ such that
\be\label{1.23}
T=(X+\de X\inv)| \h\oplus\{0\}.
\ee
\end{thm}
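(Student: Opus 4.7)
The plan is to derive Theorem~\ref{conn-to-D-P-oper} as an immediate amalgamation of Propositions~\ref{dp.prop.10} and~\ref{dp.prop.20}, which between them supply the two directions of the equivalence. All of the analytic content has already been absorbed upstream (Theorem~\ref{par.thm.10} and the structural results on $(\de,\h)$-germinators feeding into Proposition~\ref{dp.prop.20}, plus the direct computation in Proposition~\ref{dp.prop.10}), so what remains is essentially a bookkeeping argument.

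For the ``only if'' direction, I would suppose $W(T)\subseteq K_\de$, i.e., $T\in\w_\de(\h)$, and invoke Proposition~\ref{dp.prop.20} to produce an operator $X\in\b(\h\oplus\h)$ satisfying $\|X\|\leq 1$ and $\|X\inv\|\leq 1/\de$ (so that $X$ is a Douglas-Paulsen operator with parameter $\de$) such that $T=(X+\de X\inv)|_{\h\oplus\{0\}}$. The proof of that proposition exhibits $X+\de X\inv$ in the block-diagonal form $T\oplus STS\inv$, which makes both the invariance of $\h\oplus\{0\}$ under $X+\de X\inv$ and the identification of $T$ with its restriction manifest.

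For the ``if'' direction, I would suppose $X\in\b(\h\oplus\h)$ satisfies $\|X\|\leq 1$ and $\|X\inv\|\leq 1/\de$, so $X$ is a Douglas-Paulsen operator with parameter $\de$ acting on the finite-dimensional Hilbert space $\h\oplus\h$. Proposition~\ref{dp.prop.10} then yields $W(X+\de X\inv)\subseteq K_\de$. Since $T$ is the restriction of $X+\de X\inv$ to the invariant subspace $\h\oplus\{0\}$, for any unit vector $u\in\h$ identified with $u\oplus 0\in\h\oplus\h$ I would write
\[
\ip{Tu}{u} \;=\; \ip{(X+\de X\inv)(u\oplus 0)}{u\oplus 0} \;\in\; W(X+\de X\inv) \;\subseteq\; K_\de,
\]
whence $W(T)\subseteq K_\de$, as required.

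No substantive obstacle is anticipated; the only point worth flagging is the correct interpretation of the notation $(X+\de X\inv)|_{\h\oplus\{0\}}$ as a restriction to an invariant subspace (rather than merely a compression). In the ``only if'' direction this is explicit from the block form produced in the proof of Proposition~\ref{dp.prop.20}, and in the ``if'' direction the argument goes through unchanged even if one were to read the notation as a compression, by the general inequality $W(P_M A|_M)\subseteq W(A)$ for any subspace $M$. Thus the theorem follows with no further work.
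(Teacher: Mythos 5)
Your proposal is correct and follows essentially the same approach as the paper: both directions are obtained by directly invoking Propositions~\ref{dp.prop.10} and~\ref{dp.prop.20}, with the ``if'' direction closing via the containment $W(T)\subseteq W(\pi(X))$ for a restriction to an invariant subspace. The additional remark about restriction versus compression is a harmless clarification and does not change the argument.
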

\begin{proof}  Suppose there exists $X\in \b(\h\oplus\h)$ such that $\h\oplus\{0\}$ is invariant for $X+\de X\inv$ and the relation \eqref{1.23} holds.  By Proposition \ref{dp.prop.10}, $W(\pi(X))\subseteq K_\de$.  Since $T$ is a restriction of $\pi(X)$, $W(T)\subseteq W(\pi(X)) \subseteq K_\de$.

Conversely, if $W(T)\subseteq K_\de$, by Proposition \ref{dp.prop.20} there exists a Douglas-Paulsen operator $X$ on $\h\oplus\h$ with parameter $\de$ such that 
$T=(X+\de X\inv)| \h\oplus\{0\}$.
\end{proof}

\section {The $\mathrm{bfd}$ norm on $\hol(G_\de)$ and the $\mathrm{dp}$ norm on $\hol(R_\de)$}\label{norms-BFD-DP}

 Following \cite[Chapter 9]{amy20}, for any $\de \in (0,1)$,  we define the {\em Douglas-Paulsen family with parameter $\de$} to be the class $\f_{\mathrm {dp}}(\de)$ of all Douglas-Paulsen operators $T$ with parameter $\de$ such that $\sigma(T) \subseteq R_\de$ and consider
 the associated calcular norm 
\be\label{dpnorm-main}
\|\phi\|_{\mathrm {dp}}= \sup_{T\in \f_{\mathrm {dp}}(\de)} \|\phi(T)\| \;\;\;\text{for } \; \phi\in \hol(R_\de).
\ee
Also  following \cite[Chapter 9]{amy20} we define the {\em B. and F. Delyon family} $\f_{\mathrm {bfd}}(G_\de)$ corresponding to the elliptical region $G_\de$ to be the class of operators $T$ such that  $\overline{W(T)} \subseteq G_\de$. 
By \cite[Theorem 1.2-1]{GuRao97}, the spectrum $\sigma(T)$  of an operator $T$ is contained in $\overline{W(T)}$, and so, by the Riesz-Dunford functional calculus, $\ph(T)$ is defined for all $\phi\in \hol(G_\de)$ and $T\in \f_{\mathrm {bfd}}(G_\de)$. 
Consider the calcular norm   
\be \label{bfdnorm-main}
\norm{\phi}_{\mathrm {bfd}}
=\sup_{T\in \f_{\mathrm {bfd}}(G_\de)}\norm{\phi(T)} \;\;\;\text{for } \; \phi \in \hol(G_\delta).
\ee
We note  that these two quantities in equations \eqref{dpnorm-main}
and \eqref{bfdnorm-main} may be infinite.
The results of Section \ref{Douglas-Paulsen}, which describe an intimate connection between the families
$\f_{\mathrm {dp}}(\de)$ and  $\f_{\mathrm {bfd}}(G_\de)$, give rise to a relationship between  
 the norms $\|\cdot\|_{\mathrm {dp}}$ and $\norm{\cdot}_{\mathrm {bfd}}$,
and between the associated Banach algebras   
\[
\hinf_{\mathrm {dp}}(R_\de)=\set{\phi \in \hol(R_\delta)}{\norm{\phi}_{\mathrm {dp}}<\infty}
\]
and
\[
\hinf_{\mathrm {bfd}}(G_\de)=\set{\phi \in \hol(G_\delta)}{\norm{\phi}_{\mathrm {\mathrm {bfd}}} <\infty}.
\]

This relationship is formalised in Theorem \ref{dp.thm.10} below.
The proof of Theorem \ref{dp.thm.10} will require a number of ingredients.
The first one is contained in
 Propositions \ref{dpmatrices}
and \ref{bfdmatrices} which show that, instead of taking the supremum in the foregoing formulae, equations   \eqref{dpnorm-main} and \eqref{bfdnorm-main},
over {\em all} operators in $\f_{\mathrm {dp}}(\de)$ and 
$\f_{\mathrm {bfd}}(G_\de)$ respectively, it is sufficient to take the supremum only over the  operators in the corresponding families that act on {\em finite-dimensional} Hilbert spaces.

A second ingredient that we shall require for the proof of Proposition \ref{bfdmatrices} is the notion of the   
 {\em Badea-Beckermann-Crouzeix family} corresponding to a pair $(c,r)$, where, for some positive integer $m$, $c=(c_1,\dots,c_m)$ and $r=(r_1,\dots,r_m)$, each $c_j \in\c$ and each $r_j > 0$.  Such a pair $(c,r)$ is called (in \cite[Section 9.6]{amy20}) a {\em bbc pair}, and corresponding to such a pair the {\em bbc family} $\f_{\mathrm{bbc}}(c,r)$ is defined to be the family of operators $T$ such that the spectrum $\sigma(T)$ is contained in the set
\[
D(c,r) \df \{z\in\c: |z-c_j| < r_j \ \text{for} \ j=1,\dots,m\}
\]
and such that
\[
\|T-c_j\|< r_j \ \text{for} \ j=1,\dots,m.
\]
With this family we associate the calcular norm $\|\cdot\|_{\f_{\mathrm{bbc}}(c,r)}$ on $\hol(D(c,r))$ defined by
\[
\|\phi\|_{\mathrm{bbc}} = \sup_{T\in \f_{\mathrm{bbc}}(c,r)} \|\phi(T)\|\;\text{for } \; \phi\in \hol(D(c,r)),
\]
which we call the {\em bbc norm} or {\em Badea-Beckermann-Crouzeix norm} with pair $(c,r)$.

A final ingredient is a useful result from \cite{am2015}, stated below as Theorem \ref{amthm1}, which sheds light   
 on the dp and bbc norms -- see Proposition \ref{dpmatrices} below.  For the convenience of the reader we recall  from \cite{am2015}
that, corresponding to a domain of holomorphy $U$ in $\c^d$ and an $m$-tuple $\ga=(\ga_1,\dots,\ga_m)$ of holomorphic functions on $U$ we denote\footnote{Here we vary the notation of \cite{am2015} in order to avoid conflict with the notation $G_\de$ that we are using in this paper.} by $E_\ga$ the domain 
 \[
 E_\ga\df\{z\in U:|\ga_j(z)|<1\  \text{for} \ j=1,\dots,m\},
 \]
 and by $\f_\ga$ the family of commuting $d$-tuples $T=(T_1,\dots,T_d)$ of Hilbert space operators such that $\sigma(T)\subseteq E_\ga$ and $\|\ga_j(T)\|\leq 1$ for $j=1,\dots,m$, where $\sigma(T)$ denotes the Taylor spectrum of the $d$-tuple $T$ of commuting operators. 
 Using $\f_\ga$ one defines a norm $\|\cdot\|_\ga$ on $\hol(E_\ga)$ thus: for any $\phi\in\hol(E_\ga)$,
 \[
  \|\phi\|_\ga = \sup\{ \|\phi(T)\|:T\in\f_\ga\},
  \] 
  and then one defines 
   $H^\infty_\ga$  to be the set of functions $\phi\in\hol(E_\ga)$ such that $\|\phi\|_\ga < \infty$.  Here, $\phi(T)$ is defined by the Taylor functional calculus (though in this paper we only require the case that $d=1$, so that the classical functional calculus for operators is sufficient).

Two examples of families expressible in the form $\f_\ga$ that we shall exploit are the following.  
   \begin{ex} \label{Fdp=Fgamma} \rm  The Douglas-Paulsen family with parameter $\de$ is an example of $\f_\ga$ for a suitable choice of $\ga$.  Indeed,  if we choose $m=2$, $U=\c\setminus\{0\}$, $\ga_1(z)=z$ and $\ga_2(z)=\de/z$ then 
\[
E_\ga=R_\de \; \text{and} \; \f_\ga = \f_{\mathrm{dp}}(\de).
\]
\hfill $\Box$
   \end{ex}
  
   \begin{ex} \label{Fbbc=Fgamma} \rm  The  bbc family $\f_{\mathrm{bbc}}(c,r)$ with parameter $(c,r)$ is another example of $\f_\ga$ for a suitable choice of $\ga$.  Indeed,    
   for the positive integer $d$ as above, if $U=\c$ and $\ga_j(z)=\frac{z-c_j}{r_j}$ for $j=1,\dots,m$ then 
\[
E_\ga\df\{z\in U:|\ga_j(z)|<1\  \text{for} \ j=1,\dots,m\}= D(c,r)
 \; \text{and} \; \f_\ga = \f_{\mathrm{bbc}}(c,r).
\]
\hfill $\Box$
   \end{ex}
 
   The norm $\|\cdot\|_\ga$ is in general quite complex and difficult to analyse, especially in several variables, depending as it does on the Taylor spectrum and the Taylor functional calculus.  To make the norm more amenable to analysis, the authors of  \cite{am2015} introduced a more elementary approach to the norm  $\|\cdot\|_\ga$ which bypasses the use of the Taylor spectrum and calculus, making use of diagonalizable matrices rather than general operators on Hilbert space, thereby making the notions of spectrum and functional calculus much simpler.
    To this end the authors  of \cite{am2015}  introduced  the family of generic operators belonging to $\f_\ga$ and the norm
 $\|\cdot\|_{\ga,\mathrm{gen}}$, defined as follows.

  A $d$-tuple $T$ of pairwise commuting operators on an $n$-dimensional Hilbert space $\h$ is said to be {\em generic} if there exist  $n$ linearly independent joint eigenvectors of $(T_1,\dots,T_d)$ whose corresponding joint eigenvalues are distinct, or equivalently, if $(T_1,\dots,T_d)$ are jointly diagonalizable and $\sigma(T)$ has cardinality $n$.
   Denote by $\f_{\ga,\mathrm{gen}}$ the family of generic commuting $d$-tuples of
  operators belonging to $\f_\ga$, and define, for $\phi\in\hol(E_\ga)$,
\[
\|\phi\|_{\ga,\mathrm{gen}} = \sup_{T\in\f_{\ga,\mathrm{gen}}} \|\phi(T)\|.
\]
Evidently
\be\label{easy}
\|\phi\|_{\ga,\mathrm{gen}} = \sup_{T\in\f_{\ga,\mathrm{gen}}} \|\phi(T)\|
\leq \sup_{T\in\f_{\ga}} \|\phi(T)\| =
\|\phi\|_\ga.
\ee
One of the main results of \cite{am2015} implies that in fact  the inequality \eqref{easy} holds with equality.  We show this fact in Corollary \ref{matrices} below.

  The following theorem is a special case  of \cite[Theorem 1]{am2015}.  The statement makes use of the ``polydisc norm" $\|\cdot\|_{H^\infty_m}$ on $\hol(\d^m)$\footnote{The norm $\|\cdot\|_{H^\infty_m}$ is another example of $\|\cdot\|_\ga$, this time with $\ga=(z_1,\dots,z_d)$.}, often also called the Schur-Agler norm and defined for $F\in\hol(\d^d)$ by
  \begin{align*}
  \|F\|_{H^\infty_m} \df  \sup \{\|F(T)\|:  & \; T \; \text{is a commuting $m$-tuple of contractions $T=(T_1,\dots,T_m)$} \\
   &\; \text{such that } \; \sigma(T)\subseteq \d^m\}.
  \end{align*}
 Then $H^\infty_m$ is defined to be the set of functions $\phi\in\hol(\d^m)$ such that $\|\phi\|_{H^\infty_m} < \infty$.  For more about $H^\infty_m$ see \cite[Section 9.7]{amy20}.
 
  A useful relationship between $H^\infty_\ga$  and $H^\infty_m$
 is revealed in the following theorem.
 
\begin{thm}\label{amthm1} {\rm \cite[Theorem 1]{am2015}.}
Let $U$ be an open set in $\c^d$ and let $\ga=(\ga_1,\dots,\ga_m)$ be a $m$-tuple of analytic functions on $U$.
For any $\phi\in\hol(E_\ga), \|\phi\|_{\ga,\mathrm{gen}} \leq 1$ if and only if there exists a  function $F\in\hol(\d^m)$ such that  $\|F\|_{H^\infty_m}\leq 1$ and $\phi(z)=F\circ \ga(z)$ for all $z\in E_\ga$.
\end{thm}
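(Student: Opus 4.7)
The easy direction follows directly from the holomorphic functional calculus. The converse I would prove by reducing, via testing $\ph$ against generic diagonal $d$-tuples, to a Nevanlinna--Pick interpolation problem on $\d^m$, applying Agler's Nevanlinna--Pick theorem for the Schur--Agler class, and then assembling the resulting local interpolants into a single $F$ by a normal-families argument.

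\emph{Easy direction.} Suppose $F\in\hol(\d^m)$ satisfies $\|F\|_{H^\infty_m}\le 1$ and $\ph=F\circ\ga$. For any $T=(T_1,\dots,T_d)\in\f_{\ga,\mathrm{gen}}$ put $S_\ell\df\ga_\ell(T)$. The $S_\ell$ pairwise commute, each satisfies $\|S_\ell\|\le 1$ by membership of $T$ in $\f_\ga$, and the joint Taylor spectrum of $S$ is contained in $\ga(\sigma(T))\subseteq\d^m$ by the spectral mapping theorem. The composition rule for the holomorphic functional calculus gives $\ph(T)=F(S_1,\dots,S_m)$, whose norm is at most $\|F\|_{H^\infty_m}\le 1$ by the definition of the Schur--Agler norm. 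Taking the supremum over $T\in\f_{\ga,\mathrm{gen}}$ yields $\|\ph\|_{\ga,\mathrm{gen}}\le 1$.

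\emph{Converse.} Assume $\|\ph\|_{\ga,\mathrm{gen}}\le 1$. Fix distinct $z_1,\dots,z_n\in E_\ga$ and any strictly positive Gram matrix $G\in M_n$. The $d$-tuple $T$ on $(\c^n,G)$ diagonalised by the standard basis with joint eigenvalues $\lambda_i\df(z_i^1,\dots,z_i^d)$ is generic, and a direct computation shows that a diagonal operator $A=\mathrm{diag}(a_1,\dots,a_n)$ on $(\c^n,G)$ is a contraction iff $G\circ K_a\ge 0$, where $K_a(i,j)\df 1-a_i\overline{a_j}$ and $\circ$ denotes the Schur product. Hence the hypothesis translates into the implication: for every $n$, every choice of $z_1,\dots,z_n\in E_\ga$, and every $G>0$ with $G\circ K_{\ga_\ell}\ge 0$ for all $\ell=1,\dots,m$, one has $G\circ K_\ph\ge 0$. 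A duality argument in the self-dual PSD cone, suitably amplified so as to yield a complete rather than merely scalar positivity statement, converts this implication into the Schur--Agler Pick positivity condition
\[
K_\ph(i,j)=\sum_{\ell=1}^m K_{\ga_\ell}(i,j)\,\Gamma_\ell(i,j), \qquad 1\le i,j\le n,
\]
for some positive semidefinite $\Gamma_1,\dots,\Gamma_m\in M_n$. By Agler's Nevanlinna--Pick theorem on $\d^m$, this furnishes $F_n\in\hol(\d^m)$ with $\|F_n\|_{H^\infty_m}\le 1$ and $F_n(\ga(z_i))=\ph(z_i)$ for $i=1,\dots,n$. To assemble a single $F$, fix a countable dense subset $\{z_k\}_{k\ge 1}\subseteq E_\ga$; Montel's theorem applied to the uniformly bounded family of these Schur--Agler interpolants, together with a diagonal extraction, yields $F\in\hol(\d^m)$ with $\|F\|_{H^\infty_m}\le 1$ and $F\circ\ga=\ph$ on $\{z_k\}_{k\ge 1}$, hence on all of $E_\ga$ by continuity.

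\emph{Main obstacle.} The delicate point is the PSD-cone duality in the converse: the separating Hermitian functional produced by the Hahn--Banach theorem, applied to the cone $\{\sum_\ell K_{\ga_\ell}\circ\Gamma_\ell:\Gamma_\ell\ge 0\}$, is a priori merely Hermitian and not positive, so the hypothesis $\|\ph\|_{\ga,\mathrm{gen}}\le 1$ cannot be invoked against it directly. Bridging this gap requires the standard amplification / Kolmogorov-decomposition device in which the Gram matrix $G$ is allowed to be operator-valued and one exploits complete positivity of the Schur product against PSD operator-valued kernels; this is precisely the move that underlies Agler's original realisation theorem for the Schur--Agler class, and packaging it cleanly constitutes the bulk of the technical work.
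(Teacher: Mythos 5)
The theorem you are asked to prove is not proved in the paper at all: the authors state it with the attribution \cite[Theorem~1]{am2015} and rely on it as a black box (they prove only the easy Corollary~\ref{matrices} from it). So there is no in-paper argument to compare against, and the question is whether your sketch would actually establish the theorem.

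Your easy direction is fine, as is the reduction of the hypothesis $\|\phi\|_{\ga,\mathrm{gen}}\le 1$ to the quantified implication over Gram matrices: a generic $d$-tuple on an $n$-dimensional space is, after choosing joint eigenvectors, exactly a diagonal tuple on $(\c^n,G)$ for some $G>0$, and your contractivity criterion $G\circ K_a\ge 0$ is the correct translation. The final patching by Montel and diagonal extraction is also correct provided you additionally check (i) that the Schur--Agler unit ball is closed under local uniform convergence (true, e.g.\ by passing the finite-point Agler decompositions to the limit), and (ii) that the hypothesis forces $\phi$ to be constant on the fibres of $\ga$ (it does — testing against $2$-point generic tuples on $(\c^2,G)$ with $G$ close to the all-ones matrix rules out $\ga(z)=\ga(z'),\ \phi(z)\ne\phi(z')$), otherwise the interpolation problem is ill-posed.

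The genuine gap is exactly the one you flag yourself: the passage from the quantified scalar implication to the Agler decomposition $K_\phi=\sum_\ell K_{\ga_\ell}\circ\Gamma_\ell$ with $\Gamma_\ell\ge 0$. Hahn--Banach separation of $K_\phi$ from the cone $\{\sum_\ell K_{\ga_\ell}\circ\Gamma_\ell : \Gamma_\ell\ge 0\}$ (one must also first verify this cone is closed) yields a Hermitian $H$ with $K_{\ga_\ell}^T\circ H\ge 0$ for all $\ell$, but $H$ need not be positive, so you cannot feed it back into the hypothesis, which only concerns $G>0$. Your remedy — allow the Gram matrix to be operator-valued and use a Kolmogorov decomposition / complete-positivity argument, i.e.\ essentially re-prove Agler's realization theorem — is the right idea, but as written it is a pointer to a proof rather than a proof. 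That step is not a technical loose end: it is the entire mathematical content of the theorem, and the rest of your outline (testing on generic diagonal tuples, invoking Agler's Pick theorem once the decomposition is in hand, normal-families glueing) is the easy scaffolding around it. As it stands, the proposal does not constitute a proof.
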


A simple consequence of Theorem \ref{amthm1} is the following statement.
\begin{cor}\label{matrices}
Let $U$ be an open set in $\c^d$ and let $\ga=(\ga_1,\dots,\ga_m)$ be an $m$-tuple of analytic functions on $U$.
For any $\phi\in\hol(E_\ga)$,
\be\label{useful}
\|\phi\|_{\ga,\mathrm{gen}} = \|\phi\|_\ga.
\ee
\end{cor}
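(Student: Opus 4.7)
The plan is to deduce Corollary \ref{matrices} from Theorem \ref{amthm1} together with the definition of the polydisc norm $\|\cdot\|_{H^\infty_m}$. The inequality $\|\phi\|_{\ga,\mathrm{gen}} \leq \|\phi\|_\ga$ is immediate from the inclusion $\f_{\ga,\mathrm{gen}} \subseteq \f_\ga$, as already noted in inequality \eqref{easy}, so the substance lies in establishing the reverse inequality.

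First I would reduce to the normalized case $\|\phi\|_{\ga,\mathrm{gen}} \leq 1$ by scaling: if $\|\phi\|_{\ga,\mathrm{gen}} = c > 0$, apply the argument to $\phi/c$; the case $c=0$ is trivial since then $\phi(T)=0$ on a rich enough set of operators to force $\phi\equiv 0$. Under the normalization, Theorem \ref{amthm1} produces an $F \in \hol(\d^m)$ with $\|F\|_{H^\infty_m}\leq 1$ such that $\phi(z) = F(\ga_1(z),\ldots,\ga_m(z))$ for every $z \in E_\ga$.

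Next, fix an arbitrary $T \in \f_\ga$. By the definition of $\f_\ga$, the tuple $\ga(T) \df (\ga_1(T),\ldots,\ga_m(T))$ consists of pairwise commuting operators (commutativity is inherited from the holomorphic functional calculus applied to the commuting tuple $T$), each with $\|\ga_j(T)\| \leq 1$, and the spectral mapping theorem for the Taylor calculus gives $\sigma(\ga(T)) = \ga(\sigma(T)) \subseteq \ga(E_\ga) \subseteq \d^m$. Thus $\ga(T)$ is an admissible tuple in the definition of the polydisc norm $\|F\|_{H^\infty_m}$, and consequently $\|F(\ga(T))\| \leq \|F\|_{H^\infty_m} \leq 1$. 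The composition property of the Taylor functional calculus yields $F(\ga(T)) = (F\circ \ga)(T) = \phi(T)$, so $\|\phi(T)\| \leq 1$.

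Taking the supremum over $T \in \f_\ga$ gives $\|\phi\|_\ga \leq 1$, completing the desired inequality after undoing the normalization. The one subtlety I would be careful about is the appeal to the spectral mapping theorem and composition law for the Taylor calculus on a commuting tuple; these are standard but not entirely trivial, and for the applications of Corollary \ref{matrices} in this paper the case $d=1$ (as in Examples \ref{Fdp=Fgamma} and \ref{Fbbc=Fgamma}) actually suffices, in which case the classical single-variable functional calculus makes the argument routine.
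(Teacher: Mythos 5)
Your proof is correct and follows essentially the same route as the paper's: normalize, invoke Theorem \ref{amthm1} to obtain the Schur--Agler function $F$, then apply the spectral mapping theorem to show $\ga(T)$ is admissible for the polydisc norm when $T\in\f_\ga$, and take the supremum. The only addition is your explicit handling of the degenerate case $\|\phi\|_{\ga,\mathrm{gen}}=0$, which the paper silently skips; that is a reasonable bit of extra care rather than a divergence in method.
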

\begin{proof}
We have noted the inequality \eqref{easy} above.

Now let us prove the reverse of the inequality \eqref{easy}.  Consider any function $\phi\in\hol(E_\ga)$ and let $\psi=\phi/\|\phi\|_{\ga,\mathrm{gen}}$, so that $\psi\in\hol(E_\ga)$ and $\|\psi\|_{\ga,\mathrm{gen}} =1$.  By Theorem \ref{amthm1},
there exists $F\in\hol(\d^m)$ such that  $\|F\|_{H^\infty_m}\leq 1$ and $\psi(z)=F\circ \ga(z)$ for all $z\in E_\ga$.

Consider any operator $T\in\f_\ga$.  Then $\sigma(T)\subseteq E_\ga$, and so, by the spectral mapping theorem for the Taylor spectrum,
\[
\sigma(\ga(T))=\ga(\sigma(T))\subseteq \ga(E_\ga) \subseteq \d^m.
\]
Moreover $\ga(T)=(\ga_1(T),\dots,\ga_m(T))$ is a commuting $m$-tuple of contractions.  Hence, since $\|F\|_{H^\infty_m}\leq 1$, $\|F(\ga(T))\|\leq 1$, that is to say $\|\psi(T)\|\leq 1$.
Taking the supremum over $T\in\f_\ga$ in this inequality we find that $\|\psi\|_\ga \leq 1$, and so $\|\phi\|_\ga/\|\phi\|_{\ga,\mathrm{gen}}\leq 1$.  Thus the equation \eqref{useful} holds.
\end{proof}

\begin{prop}\label{dpmatrices}
{\rm (i)}. Let $\de\in (0,1)$ and let $\phi\in\hol(R_\de)$. The following equation holds:
\be\label{usefuldp} 
\|\phi\|_{\mathrm {dp}} = \sup_{T\in \f_{\mathrm {dp}}(\de), \; T\text{ is a matrix}} \|\phi(T)\|.
\ee
{\rm (ii)}. Let $c=(c_1,\dots,c_m)$ and $r=(r_1,\dots,r_m)$, where $c_j\in\c$ and $r_j>0$ for each $j$, and let
\[
D(c,r) \df \{z\in\c: |z-c_j| < r_j \ \text{for} \ j=1,\dots,m\}.
\]
Let $\phi\in\hol(D(c,r))$. The following equation holds:
\be\label{usefulbbc} 
\|\phi\|_{\mathrm {bbc}} = \sup_{T\in \f_{\mathrm {bbc}}(c,r), \; T\text{ is a matrix}} \|\phi(T)\|.
\ee
\end{prop}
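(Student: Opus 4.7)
The plan is to reduce both parts of Proposition \ref{dpmatrices} to Corollary \ref{matrices} by invoking the identifications of the Douglas--Paulsen and bbc families as families of the form $\f_\ga$ already provided in Examples \ref{Fdp=Fgamma} and \ref{Fbbc=Fgamma}. The key point is that the ``generic'' operators in $\f_\ga$, as defined in the excerpt, are by construction jointly diagonalizable operators on a \emph{finite-dimensional} Hilbert space, hence in particular matrices belonging to the ambient family.

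For part (i), I would argue as follows. By Example \ref{Fdp=Fgamma}, with $U=\c\setminus\{0\}$, $m=2$, $\ga_1(z)=z$ and $\ga_2(z)=\de/z$, we have $E_\ga=R_\de$ and $\f_\ga=\f_{\mathrm{dp}}(\de)$. Consequently $\|\phi\|_\ga=\|\phi\|_{\mathrm{dp}}$ for every $\phi\in\hol(R_\de)$. Since any $T\in\f_{\ga,\mathrm{gen}}$ acts on a finite-dimensional Hilbert space and belongs to $\f_{\mathrm{dp}}(\de)$, while every matrix $T$ belonging to $\f_{\mathrm{dp}}(\de)$ belongs to $\f_\ga$, I obtain the sandwich
\[
\|\phi\|_{\ga,\mathrm{gen}} \;=\; \sup_{T\in\f_{\ga,\mathrm{gen}}}\|\phi(T)\| \;\leq\; \sup_{\substack{T\in\f_{\mathrm{dp}}(\de)\\ T\text{ is a matrix}}}\|\phi(T)\| \;\leq\; \sup_{T\in\f_{\mathrm{dp}}(\de)}\|\phi(T)\| \;=\; \|\phi\|_{\mathrm{dp}} \;=\; \|\phi\|_\ga.
\]
Corollary \ref{matrices} asserts $\|\phi\|_{\ga,\mathrm{gen}}=\|\phi\|_\ga$, so the two ends of this chain coincide, forcing equality throughout. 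This yields equation \eqref{usefuldp}.

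For part (ii), the argument is the same after replacing the Douglas--Paulsen identification by the bbc identification of Example \ref{Fbbc=Fgamma}: with $d=1$, $U=\c$ and $\ga_j(z)=(z-c_j)/r_j$ for $j=1,\dots,m$, one has $E_\ga=D(c,r)$ and $\f_\ga=\f_{\mathrm{bbc}}(c,r)$, so $\|\phi\|_\ga=\|\phi\|_{\mathrm{bbc}}$ for every $\phi\in\hol(D(c,r))$. The same sandwich between $\|\phi\|_{\ga,\mathrm{gen}}$, the supremum over matrix operators in $\f_{\mathrm{bbc}}(c,r)$, and $\|\phi\|_{\mathrm{bbc}}$, combined with Corollary \ref{matrices}, establishes \eqref{usefulbbc}. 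There is no substantive obstacle: all the work has been done in Corollary \ref{matrices} (which itself relies on Theorem \ref{amthm1}), and the present proposition is a direct specialization to the two families of interest, noting only that ``generic operator in $\f_\ga$'' is a strictly narrower notion than ``matrix in $\f_\ga$,'' which is in turn narrower than ``operator in $\f_\ga$.''
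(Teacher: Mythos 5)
Your proof is correct and is essentially the paper's own argument: both identify $\f_{\mathrm{dp}}(\de)$ and $\f_{\mathrm{bbc}}(c,r)$ with $\f_\ga$ for the appropriate $\ga$ (Examples \ref{Fdp=Fgamma} and \ref{Fbbc=Fgamma}), then sandwich the supremum over matrix operators between $\|\phi\|_{\ga,\mathrm{gen}}$ and $\|\phi\|_\ga$ and invoke Corollary \ref{matrices} to collapse the chain. The only cosmetic difference is that you make explicit the observation that generic operators in $\f_\ga$ are by definition matrices, which the paper leaves tacit.
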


\begin{proof}
(i). Clearly
\be\label{obvious}
\|\phi\|_{\mathrm {dp}} \geq \sup_{T\in \f_{\mathrm {dp}}(\de), \; T\text{ is a matrix}} \|\phi(T)\|.
\ee
To prove the reverse inequality, it suffices to observe that if we choose  $U=\c\setminus\{0\}$, $\ga_1(z)=z$ and $\ga_2(z)=\de/z$ then 
\[
E_\ga=R_\de \; \text{and} \; \f_\ga = \f_{\mathrm{dp}}(\de).
\]
Therefore, by Corollary \ref{matrices}, for any $\phi\in\hol(R_\de)$
\begin{align*}
\|\phi\|_{\mathrm {dp}} &= \|\phi\|_\ga = \|\phi\|_{\ga,gen} \\
	& \leq \sup_{T\in \f_{\mathrm {dp}}(\de), \; T\text{ is a matrix}} \|\phi(T)\|.
\end{align*}
Thus the equality \eqref{usefuldp} is true.

(ii)
As we have shown in Example \ref{Fbbc=Fgamma}, for $\ga=(\ga_1,\dots,\ga_m)$ where $\ga_j$ is the polynomial $(z-c_j)/r_j$ for $j=1,\dots,m$,
\[
\f_\ga = \f_{\mathrm{bbc}}(c,r).
\]
The rest of the proof  follows from Corollary \ref{matrices} and is similar to the case (i).
\end{proof}

Our goal is now to prove Proposition \ref{bfdmatrices} which shows the supremum in the definition of $\|\phi\|_{\mathrm{bfd}}$, instead of being taken over all operators in the B. and F. Delyon family, can equivalently be taken only over the operators in the family that act on a finite-dimensional Hilbert space.
Since the family $\f_{\mathrm{bfd}}(G_\de)$ is not of the form $\f_\ga$ for any $\ga$, we cannot deduce the desired equality directly from Corollary \ref{matrices}.  We therefore approximate the bfd norm by the bbc norm corresponding to a suitable bbc pair, and then use  Proposition  \ref{dpmatrices} to show that the required inequality holds. 

The approximation result we need, which is \cite[Lemma 9.76]{amy20}, is the following.
\begin{lem}\label{bbclem20}
Let $C$ be a smoothly bounded open strictly convex set in $\c$ and $\eps>0$. If $\h$ is a Hilbert space, $T\in \b(\h)$, and $\overline{W(T)} \subseteq C$,  then there exists a bbc pair $(c,r)$ such that
\be\label{bbc35}
C \subseteq D(c,r) \subseteq C+\eps\d
\ee
and
\be\label{bbc36}
T\in \f_{\mathrm{bbc}}(c,r).
\ee
\end{lem}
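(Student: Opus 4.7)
The plan is to approximate $C$ by an intersection of finitely many large discs, each tangent to $\partial C$ from outside, and to choose them large enough that $T$ sits comfortably inside each one. Geometrically, a disc of very large radius $R$ centered on the inward normal to $\partial C$ at a boundary point $p$ locally approximates the supporting half-plane at $p$, and since $C$ is convex its supporting half-planes determine it.

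First I would extract a safety margin: since $\overline{W(T)}$ is compact and contained in the open set $C$, there exists $\eta > 0$ with $\overline{W(T)} + \eta\overline{\d} \subseteq C$. For each $p \in \partial C$ with outward unit normal $\nu_p$, set $c_p = p - R\nu_p$ and consider the disc $D(c_p, R)$. I expect to prove that, for all $R$ larger than a threshold depending only on $C$, $\eta$, and $\|T\|$, two facts hold simultaneously: (a) $C \subseteq D(c_p, R)$, and (b) $\|T - c_p\| < R$. Fact (a) is purely geometric: writing $z - p = a\nu_p + b\,i\nu_p$, containment reduces to $-a \geq (a^2+b^2)/(2R)$, which holds for $R$ bigger than a bound controlled locally by twice the reciprocal of the curvature at $p$ and globally by the diameter of $C$; compactness of $\partial C$ and strict positive curvature make this bound uniform in $p$. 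Fact (b) follows from the identity
\[
(T - c_p)^*(T - c_p) = (T-p)^*(T-p) + 2R\,\mathrm{Re}(\bar\nu_p(T-p)) + R^2,
\]
because the self-adjoint operator $\mathrm{Re}(\bar\nu_p(T-p))$ has numerical range inside $\mathrm{Re}\,\bar\nu_p(\overline{W(T)} - p)$, which the safety margin places in $(-\infty, -\eta]$; hence $\mathrm{Re}(\bar\nu_p(T-p)) \le -\eta\, I$, and the right side of the identity is at most $\|T - p\|^2 - 2R\eta + R^2$, which is strictly less than $R^2$ once $R > \|T - p\|^2/(2\eta)$.

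Next I would pass from the continuum of boundary points to a finite sample. A standard convex-geometry argument, using smoothness and compactness of $\partial C$, yields finitely many points $p_1, \ldots, p_m \in \partial C$ such that $\bigcap_{j=1}^m H_{p_j} \subseteq C + \tfrac{\eps}{2}\d$, where $H_{p_j}$ is the closed supporting half-plane at $p_j$. Each disc $D(c_{p_j}, R)$ converges to $H_{p_j}$ locally in Hausdorff distance as $R \to \infty$, so enlarging $R$ further preserves (a) and (b) while guaranteeing
\[
\bigcap_{j=1}^m D(c_{p_j}, R) \subseteq C + \eps\d.
\]
Setting $c = (c_{p_1}, \ldots, c_{p_m})$ and $r = (R, \ldots, R)$ then yields a bbc pair for which (a) forces $C \subseteq D(c,r)$, (b) forces $\|T - c_j\| < r_j$ for each $j$, and the spectral inclusion $\sigma(T) \subseteq \overline{W(T)} \subseteq D(c,r)$ is automatic, so $T \in \f_{\mathrm{bbc}}(c,r)$.

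The main obstacle will be the quantitative interplay in the last step: I must pick the sample points $p_j$ densely enough in $\partial C$ and then enlarge $R$ enough that the discs, rather than their limiting half-planes, still cut out a set within $\eps$ of $C$. The smoothness and strict convexity assumptions are precisely what make this feasible, since they yield continuous dependence of $\nu_p$ on $p$ and a uniform lower bound on curvature, so that the Hausdorff approximation in a fixed compact neighborhood of $C$ can be made uniform.
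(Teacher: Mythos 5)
The paper does not supply a proof of this lemma; it quotes it from \cite[Lemma 9.76]{amy20}, so there is no in-paper argument to compare against, and I assess your proof on its own.

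Your proof is correct and follows the natural route: approximate $C$ from outside by finitely many large discs tangent to $\partial C$, chosen large enough to contain the operator norm-ball around $T$. Both verifications are sound. For (b) you correctly observe that $\operatorname{Re}\big(\bar\nu_p(T-p)\big)$ is self-adjoint with numerical range $\operatorname{Re}\big(\bar\nu_p(W(T)-p)\big) \subseteq (-\infty,-\eta]$, hence $\operatorname{Re}\big(\bar\nu_p(T-p)\big) \le -\eta$, and the displayed operator identity then gives $\|T-c_p\|<R$ once $R>\|T-p\|^2/(2\eta)$, uniformly in $p$. Two remarks. First, in the final step the Hausdorff-convergence language is superfluous: since $c_p$ lies in the open supporting half-plane $H_p$ at $p$ and $D(c_p,R)$ is tangent to $\partial H_p$ at $p$, one has $D(c_p,R)\subseteq H_p$ for \emph{every} $R$, so $\bigcap_j D(c_{p_j},R)\subseteq\bigcap_j H_{p_j}\subseteq C+\tfrac\eps2\d$ holds automatically the moment the finite sample is fixed, with no further enlargement of $R$ needed. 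Second, step (a) quietly invokes a uniform positive lower bound on the curvature of $\partial C$; this is fine if ``smoothly bounded strictly convex'' is read to include positive curvature (as it certainly does for the ellipses $sG_\de$ to which the lemma is applied in this paper), but if one takes the weaker reading of strict convexity (no segments on $\partial C$), smoothness alone does not force positive curvature, and the osculating-disc estimate can fail at an inflection-type point. A curvature-free repair is to push each center outward a small fixed amount $s\in(0,\eps)$, taking $c_p=p+(s-R)\nu_p$: then for $z=p+a\nu_p+bi\nu_p\in C$ (so $a<0$) the containment $z\in D(c_p,R)$ reduces to $R>\big((a-s)^2+b^2\big)/\big(2(s-a)\big)$, whose right side is bounded by a constant depending only on $s$ and $\operatorname{diam} C$ since $s-a\ge s>0$; each such disc now lies in $\{z:\operatorname{Re}\,\bar\nu_p(z-p)\le s\}$, and the intersection of these shifted half-planes over $\partial C$ is contained in $\overline C+s\,\overline\d\subseteq C+\eps\d$, so the finite-sample and operator-norm parts of your argument carry over verbatim.
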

 It is easy to see that $G_\de$ is indeed  a smoothly bounded open strictly convex set in $\c$.

\begin{prop}\label{bfdmatrices}
Let $\de\in (0,1)$ and let $\phi\in\hol(G_\de)$. The following equation holds:
\be\label{usefulbfd}
\|\phi\|_{\mathrm {bfd}} = \sup_{T\in \f_{\mathrm {bfd}}(G_\de), \; T\text{ is a matrix}} \|\phi(T)\|.
\ee
\end{prop}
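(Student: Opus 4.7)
The inequality
\[
\|\phi\|_{\mathrm{bfd}} \geq \sup_{T\in \f_{\mathrm{bfd}}(G_\de),\ T\text{ a matrix}} \|\phi(T)\|
\]
is immediate from the definition of $\|\phi\|_{\mathrm{bfd}}$, so the task is to prove the reverse. The plan is to approximate each $T\in \f_{\mathrm{bfd}}(G_\de)$ by a bbc region $D(c,r)$ that is sandwiched strictly inside $G_\de$, and then invoke Proposition \ref{dpmatrices}(ii), which already delivers the matrix-sup formula for the bbc norm $\|\cdot\|_{\mathrm{bbc}}$.

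Fix $\phi\in\hol(G_\de)$ and $T\in \f_{\mathrm{bfd}}(G_\de)$, acting on some Hilbert space $\h$. My first step is to choose a smoothly bounded, strictly convex, open set $C$ with $\overline{W(T)}\subseteq C$ and $\overline{C}\subseteq G_\de$. A convenient choice is $C=\rho G_\de$ for any scale $\rho\in(0,1)$ close enough to $1$ that $\overline{W(T)}\subseteq \rho G_\de$; the ellipse inequality defining $K_\de$ immediately gives $\overline{\rho G_\de}=\rho K_\de \subseteq G_\de$, so $\overline{C}$ is a compact subset of $G_\de$. Having fixed such a $C$, I pick $\varepsilon>0$ so small that $C+\varepsilon\d\subseteq G_\de$, which is possible by compactness of $\overline{C}$, and then apply Lemma \ref{bbclem20} to produce a bbc pair $(c,r)$ satisfying
\[
C\subseteq D(c,r)\subseteq C+\varepsilon\d\subseteq G_\de\quad\text{and}\quad T\in\f_{\mathrm{bbc}}(c,r).
\]

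Because $D(c,r)\subseteq G_\de$, the function $\phi$ is holomorphic on $D(c,r)$ and the Riesz--Dunford calculus yields the same operator $\phi(T)$ whether $T$ is viewed as belonging to $\f_{\mathrm{bfd}}(G_\de)$ or to $\f_{\mathrm{bbc}}(c,r)$; hence $\|\phi(T)\|\leq\|\phi\|_{\mathrm{bbc}}$. Proposition \ref{dpmatrices}(ii) then gives
\[
\|\phi(T)\|\leq\|\phi\|_{\mathrm{bbc}}=\sup_{T'\in\f_{\mathrm{bbc}}(c,r),\ T'\text{ a matrix}}\|\phi(T')\|.
\]
For any such matrix $T'$ and any unit vector $x$, the estimate $|\ip{T'x}{x}-c_j|\leq\|T'-c_j\|<r_j$ holds for every $j$, so $W(T')\subseteq D(c,r)\subseteq G_\de$; since $W(T')$ is compact, it follows that $\overline{W(T')}\subseteq G_\de$, i.e.\ $T'\in\f_{\mathrm{bfd}}(G_\de)$. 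Chaining the inequalities and taking the supremum over $T\in\f_{\mathrm{bfd}}(G_\de)$ yields
\[
\|\phi\|_{\mathrm{bfd}}\leq \sup_{T'\in\f_{\mathrm{bfd}}(G_\de),\ T'\text{ a matrix}}\|\phi(T')\|,
\]
which completes the proof.

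The main obstacle is purely geometric: ensuring that the bbc region $D(c,r)$ delivered by Lemma \ref{bbclem20} fits strictly inside $G_\de$, so that $\phi$ remains holomorphic on $D(c,r)$ and so that the matrices emerging from $\f_{\mathrm{bbc}}(c,r)$ do in fact lie in $\f_{\mathrm{bfd}}(G_\de)$. The scaling trick $C=\rho G_\de$ with $\rho<1$ resolves this cleanly; once the geometric buffer is in place, the reduction of $\|\cdot\|_{\mathrm{bbc}}$ to matrices via the $\f_\ga$-framework of Corollary \ref{matrices} does the rest.
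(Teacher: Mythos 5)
Your proof is correct and follows essentially the same route as the paper's: both reduce to the bbc norm via Lemma \ref{bbclem20} and then invoke Proposition \ref{dpmatrices}(ii). The one genuine difference is organizational and worth noting. The paper fixes $T$, replaces it by $sT$ for an arbitrary $s\in[0,1)$, takes $C=sG_\de$, and therefore needs a preliminary step (unnumbered equation in the proof) showing
\[
\sup_{T\in\f_{\mathrm{bfd}}(G_\de)}\|\phi(T)\| \;=\; \sup_{T\in\f_{\mathrm{bfd}}(G_\de),\,s\in[0,1)}\|\phi(sT)\|,
\]
which in turn relies on the continuity of the Riesz--Dunford calculus as $s\to1{-}$. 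You instead keep $T$ fixed and shrink the \emph{domain}: because $\overline{W(T)}$ is a compact subset of $G_\de=\bigcup_{\rho<1}\rho G_\de$, there is $\rho<1$ with $\overline{W(T)}\subseteq\rho G_\de$, and then $C=\rho G_\de$ is smoothly bounded, open, strictly convex, and $\overline{C}=\rho K_\de\subseteq G_\de$. This lets you apply Lemma \ref{bbclem20} to $T$ itself rather than to $sT$, and bypasses the functional-calculus continuity argument entirely. Your check that matrices $T'\in\f_{\mathrm{bbc}}(c,r)$ land in $\f_{\mathrm{bfd}}(G_\de)$ — via $|\ip{T'x}{x}-c_j|\le\|T'-c_j\|<r_j$ — is correct and necessary, and matches the corresponding step in the paper. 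The only thing you leave slightly implicit is the compactness reason why a suitable $\rho<1$ exists, but that is a routine one-line justification.
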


\begin{proof}  It is obvious that
\be\label{usefulbfd1}\notag
\sup_{T\in \f_{\mathrm {bfd}}(G_\de)}\norm{\phi(T)} \ge \sup_{T\in \f_{\mathrm {bfd}}(G_\de), \; T\text{ is a matrix}} \|\phi(T)\|.
\ee
Therefore, equation \eqref{usefulbfd} will follow if we can show that
\be\label{1}
\sup_{T\in \f_{\mathrm {bfd}}(G_\de)}\norm{\phi(T)} \le \sup_{T\in \f_{\mathrm {bfd}}(G_\de), \; T\text{ is a matrix}} \|\phi(T)\|.
\ee

Notice that
\be
\label{2}\notag
s\in [0,1] \text{ and } T\in \f_{\mathrm {bfd}}(G_\delta) \implies sT\in \f_{\mathrm {bfd}}(G_\delta).
\ee
Also, as $\phi(sz)$ converges uniformly to $\phi(z)$ on compact subsets of $G_\delta$ as $s\to 1-$, it follows by the continuity of the Riesz-Dunford Functional Calculus that
\be\label{3}\notag
\norm{\phi(T) - \phi(sT)} \to 0 \text{ as } s \to 1-
\ee
whenever $T\in \f_{\mathrm {bfd}}(G_\delta)$. These two facts imply that
\be\label{4}\notag
\sup_{T\in \f_{\mathrm {bfd}}(G_\de)}\norm{\phi(T)}=\sup_{T\in \f_{\mathrm {bfd}}(G_\de),\, s\in[0,1)}\norm{\phi(sT)}.
\ee
Consequently, inequality \eqref{1} will follow if we can show that
\be\label{5}
\sup_{T\in \f_{\mathrm {bfd}}(G_\de),\ s\in[0,1)}\norm{\phi(sT)} \le
\sup_{T\in \f_{\mathrm {bfd}}(G_\de), \; T\text{ is a matrix}} \|\phi(T)\|.
\ee
To prove inequality \eqref{5} fix $T\in \f_{\mathrm {bfd}}(G_\de)$, and let $s\in [0,1)$. Let $C=sG_\delta$. As $C^-$ is a compact subset of $G_\delta$, there exists $\eps >0$ such that
\be\label{6}
C+\eps \d \subseteq G_\delta.
\ee
Also, as  $\overline{W(T)} \subseteq G_\delta$, $\overline{W(sT)} \subseteq C$.  Consequently, by Lemma \ref{bbclem20} (with $T$ replaced by $sT$) there exists a bbc pair $(c,r)$ such that
\be\label{7}
C \subseteq D(c,r) \subseteq C+\eps \d
\ee
and
\be\label{8}
sT\in \f_{\mathrm{bbc}}(c,r).
\ee  
Therefore, by Proposition \ref{dpmatrices}, we have
\[
\|\phi(sT)\| \leq \|\phi\|_{\mathrm{bbc}} =\sup_{X\in \f_{\mathrm{bbc}}(c,r), \; X\text{ is a matrix}} \|\phi(X)\|.
\]
Hence, if $\eta>0$, there is a matrix $M\in  \f_{\mathrm{bbc}}(c,r)$ such that
\[
\|\phi(M)\| > \|\phi\|_{\mathrm{bbc}} -\eta,
\]
and so
\be\label{9}
\|\phi(sT)\| \leq \|\phi\|_{\mathrm{bbc}} < \|\phi(M)\|+\eta.
\ee   
But, as $M\in \f_{\mathrm {bbc}}(c,r)$, in particular,  $\overline{W(M)} \subseteq D(c,r)$ and the inclusions \eqref{7} and \eqref{6} imply that $D(c,r) \subseteq G_\delta$.  Therefore, $M\in \f_{\mathrm {bfd}}(G_\delta)$.

To summarize, in the previous two paragraphs we have shown that if  $T\in \f_{\mathrm {bfd}}(G_\de)$, $s\in [0,1)$, and $\eta >0$, then there exists a matrix $M\in \f_{\mathrm {bfd}}(G_\delta)$ such that the inequality \eqref{9} holds. This proves the inequality \eqref{5} and completes the proof of Proposition \ref{bfdmatrices}.
\end{proof}

Now observe that if $\phi \in \hol(G_\delta)$ then we may define $\pi^\sharp(\phi) \in \hol(R_\delta)$ by the formula
\[
\pi^\sharp(\phi)(\lambda)=\phi(\pi(\lambda)) \qquad\text{ for all } \lambda\in R_\delta.
\] 

We record the following simple fact from complex analysis without proof.
\begin{lem}\label{dp.lem.10} Let $\de \in (0,1)$ and let
 $\psi \in \hol(R_\delta)$. Then $\psi \in \ran \pi^\sharp$ if and only if $\psi$ is \emph{symmetric} with respect to the involution $\lam\mapsto\de/\lam$ of $R_\de$, that is, if and only if $\psi$ satisfies
\[
\psi(\delta/\lambda)=\psi(\lambda)
\]
for all $\lambda \in R_\delta$.
\end{lem}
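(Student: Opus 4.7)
The plan is to prove each direction separately: the forward direction is essentially a one-line calculation, while the reverse direction requires constructing $\phi$ on $G_\de$ and then verifying holomorphy across the two branch points of $\pi$.

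For the forward direction, suppose $\psi=\pi^\sharp(\phi)=\phi\circ\pi$ for some $\phi\in\hol(G_\de)$. A direct computation gives $\pi(\de/\lam)=\de/\lam+\lam=\pi(\lam)$, and therefore $\psi(\de/\lam)=\phi(\pi(\de/\lam))=\phi(\pi(\lam))=\psi(\lam)$ for every $\lam\in R_\de$, as required.

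For the reverse direction, suppose $\psi\in\hol(R_\de)$ is symmetric with respect to the involution $\sigma:\lam\mapsto\de/\lam$ of $R_\de$. Combining Lemma \ref{ell.lem.10} with the identity $\pi(\lam)=f(1/\lam)$, the map $\pi:R_\de\to G_\de$ is a holomorphic 2-to-1 map whose fibre over any $\mu\in G_\de\setminus\{\pm 2\sqrt\de\}$ consists of a pair $\{\lam,\de/\lam\}$ interchanged by $\sigma$, while the fibres over $\pm 2\sqrt\de$ are the singletons $\{\pm\sqrt\de\}$. Define
\[
\phi:G_\de\setminus\{\pm 2\sqrt\de\}\to\c,\qquad \phi(\mu)=\psi(\lam)\text{ for any }\lam\in\pi^{-1}(\mu);
\]
the symmetry hypothesis ensures this is independent of the choice of preimage. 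Since $\pi'(\lam)=1-\de/\lam^2$ vanishes only at $\pm\sqrt\de$, $\pi$ is a local biholomorphism on $R_\de\setminus\{\pm\sqrt\de\}$, so local inversion of $\pi$ followed by composition with $\psi$ shows $\phi$ is holomorphic on $G_\de\setminus\{\pm 2\sqrt\de\}$.

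The main (and essentially only) technical step is to show that $\phi$ extends holomorphically to the two branch values $\pm 2\sqrt\de$. The cleanest route is Riemann's removable singularity theorem: $\psi$ is continuous, hence bounded, on a compact neighbourhood of each of $\pm\sqrt\de$ in $R_\de$, so $\phi$ is bounded in a punctured neighbourhood of each of $\pm 2\sqrt\de$, and each isolated singularity is therefore removable. Assigning $\phi$ its limit at $\pm 2\sqrt\de$ gives $\phi\in\hol(G_\de)$ with $\pi^\sharp(\phi)=\psi$. (For a more explicit alternative at the branch point $2\sqrt\de$, one can introduce the local coordinate $\lam=\sqrt\de\,e^t$; then $\sigma$ becomes $t\mapsto -t$, so symmetry of $\psi$ translates to evenness of $t\mapsto\psi(\sqrt\de\,e^t)$, while $\pi(\sqrt\de\,e^t)-2\sqrt\de=\sqrt\de(e^t+e^{-t}-2)$ is also even in $t$ with nonzero leading coefficient, so both functions are holomorphic in $t^2$ and the inverse function theorem yields $\phi$ holomorphic at $2\sqrt\de$.)
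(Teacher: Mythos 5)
The paper does not actually prove this lemma; it is stated as ``a simple fact from complex analysis'' and recorded without proof, so there is no argument in the text to compare against. Your proof is correct and supplies the missing details cleanly. The forward direction is the one-line calculation $\pi(\de/\lam)=\pi(\lam)$. For the reverse direction, your three-step argument is sound: the symmetry hypothesis makes the fibrewise definition of $\phi$ unambiguous on $G_\de\setminus\{\pm 2\sqrt\de\}$; the fact that $\pi'(\lam)=1-\de/\lam^2$ vanishes only at $\pm\sqrt\de$ gives holomorphy away from the branch values via local inversion; and Riemann removability handles the two branch values (since the full fibre $\pi^{-1}(\mu)=\{\tfrac12(\mu\pm\sqrt{\mu^2-4\de})\}$ shrinks to $\{\pm\sqrt\de\}$ as $\mu\to\pm2\sqrt\de$, the boundedness of $\psi$ near $\pm\sqrt\de$ indeed bounds $\phi$ near $\pm 2\sqrt\de$). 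One small point you leave implicit but is immediate: after removing the singularities one still has $\phi(\pm 2\sqrt\de)=\psi(\pm\sqrt\de)$, by continuity, so $\pi^\sharp(\phi)=\psi$ on all of $R_\de$ and not merely off the fixed points of the involution. The explicit alternative via the even local coordinate $t$ is also valid and is the more hands-on way to see the removability.
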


The following theorem gives an intimate connection between the $\norm{\cdot}_{\mathrm {dp}}$ and $\norm{\cdot}_{\mathrm {bfd}}$ norms.
\begin{thm}\label{dp.thm.10}  Let $\de \in (0,1)$.
The mapping $\pi^\sharp$ is an isometric isomorphism from $\hinf_{\mathrm {bfd}}(G_\de)$ onto the set of symmetric functions with respect to the involution $\lam\mapsto\de/\lam$ in $\hinf_{\mathrm {dp}}(R_\de)$,  so that, for all $\phi\in \hol(G_\de)$,
\be\label{bfd=dp}
\|\phi\|_{\mathrm {bfd}} = \|\phi\circ\pi\|_{\mathrm {dp}}.
\ee
\end{thm}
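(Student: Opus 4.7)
The plan is to prove the norm identity $\|\phi\|_{\mathrm{bfd}} = \|\phi\circ\pi\|_{\mathrm{dp}}$ for every $\phi\in \hol(G_\de)$, and then to deduce the isomorphism statement from this identity together with Lemma~\ref{dp.lem.10}.

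First I will prove the inequality $\|\phi\circ\pi\|_{\mathrm{dp}} \leq \|\phi\|_{\mathrm{bfd}}$. By Proposition~\ref{dpmatrices} it suffices to bound $\|(\phi\circ\pi)(X)\|$ by $\|\phi\|_{\mathrm{bfd}}$ for any matrix $X\in\f_{\mathrm{dp}}(\de)$. Given such an $X$, set $T=\pi(X)$. Since $\sigma(X)\subseteq R_\de$, the spectral mapping theorem gives $\sigma(T)=\pi(\sigma(X))\subseteq G_\de$, so the composition rule for the Riesz-Dunford calculus yields $(\phi\circ\pi)(X)=\phi(T)$. By Proposition~\ref{dp.prop.10}, $W(T)\subseteq K_\de$, so for each $r\in(0,1)$ one has $\overline{W(rT)}=r\,\overline{W(T)}\subseteq rK_\de\subseteq G_\de$, whence $rT\in \f_{\mathrm{bfd}}(G_\de)$ and $\|\phi(rT)\| \leq \|\phi\|_{\mathrm{bfd}}$. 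Fixing a contour in $G_\de$ that encloses $\sigma(T)$ and letting $r\to 1^-$, the Riesz-Dunford contour integrals converge in norm to $\phi(T)$, so $\|\phi(T)\|\le\|\phi\|_{\mathrm{bfd}}$.

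Next I will establish the reverse inequality $\|\phi\|_{\mathrm{bfd}}\leq \|\phi\circ\pi\|_{\mathrm{dp}}$. Proposition~\ref{bfdmatrices} reduces this to matrices $T\in\f_{\mathrm{bfd}}(G_\de)$, and after a small perturbation I may assume $T$ is generic for $K_\de$. By Proposition~\ref{dp.prop.20} I can then choose an invertible $S\in\b(\h)$ and a Douglas-Paulsen operator $X\in\b(\h\oplus\h)$ with parameter $\de$ for which $\pi(X)=T\oplus STS^{-1}$. Because $\overline{W(T)}\subseteq G_\de$ forces $\sigma(T)\subseteq G_\de$, I get $\sigma(\pi(X))=\sigma(T)\subseteq G_\de$ and hence $\sigma(X)\subseteq \pi^{-1}(G_\de)=R_\de$, so $X\in \f_{\mathrm{dp}}(\de)$. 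The composition rule now gives $(\phi\circ\pi)(X)=\phi(T)\oplus S\phi(T)S^{-1}$, which has $\phi(T)$ as a direct summand, so $\|\phi(T)\|\leq\|(\phi\circ\pi)(X)\|\leq\|\phi\circ\pi\|_{\mathrm{dp}}$. The non-generic case is handled by approximating $T$ in norm by generic matrices in $\f_{\mathrm{bfd}}(G_\de)$ and passing to the limit using continuity of the functional calculus.

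The isomorphism statement then follows at once: $\pi^\sharp$ is linear and multiplicative on $\hol(G_\de)$, injective because $\pi$ maps $R_\de$ onto $G_\de$, and its image on $\hol(G_\de)$ is precisely the collection of functions in $\hol(R_\de)$ symmetric under $\lam\mapsto\de/\lam$ by Lemma~\ref{dp.lem.10}; combined with the norm identity, this yields the advertised isometric algebra isomorphism of $\hinf_{\mathrm{bfd}}(G_\de)$ onto the symmetric members of $\hinf_{\mathrm{dp}}(R_\de)$. The main obstacle will be the careful handling of the functional calculus at the boundaries of the two domains: on the $\mathrm{dp}$ side I need the scaling trick $T\mapsto rT$ to push $\overline{W(T)}$ strictly inside $G_\de$, and on the $\mathrm{bfd}$ side the identity $\sigma(\pi(X))=\sigma(T)$ is what forces $\sigma(X)$ into the open annulus $R_\de$ rather than merely its closure, so that $X$ genuinely lies in $\f_{\mathrm{dp}}(\de)$.
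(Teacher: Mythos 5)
Your proof follows the same overall route as the paper's: you reduce both calcular norms to matrices via Propositions~\ref{dpmatrices} and~\ref{bfdmatrices}, push forward with $\pi$ via Proposition~\ref{dp.prop.10} for one inequality, pull back via Proposition~\ref{dp.prop.20} for the other, and close the isomorphism statement with Lemma~\ref{dp.lem.10}. The noticeable differences are in the care taken at the boundary. For the inequality $\|\phi\circ\pi\|_{\mathrm{dp}}\le\|\phi\|_{\mathrm{bfd}}$ the paper simply asserts $\pi(X)\in\f_{\mathrm{bfd}}(G_\de)$, but Proposition~\ref{dp.prop.10} only gives $W(\pi(X))\subseteq K_\de$, not the strict inclusion $\overline{W(\pi(X))}\subseteq G_\de$ that membership in $\f_{\mathrm{bfd}}(G_\de)$ requires; your scaling trick $T\mapsto rT$ with $r\to 1^-$ (using $rK_\de\subseteq G_\de$ and norm-continuity of the Riesz--Dunford integral over a fixed contour containing $\sigma(T)=\pi(\sigma(X))\subset G_\de$) cleanly closes this gap. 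For the reverse inequality you also spell out the spectral inclusion $\sigma(X)\subseteq\pi^{-1}(G_\de)=R_\de$, which is essential for $X$ to actually belong to $\f_{\mathrm{dp}}(\de)$ (not merely to be a Douglas--Paulsen contraction) and is left implicit in the paper; your deduction from $\sigma(\pi(X))=\sigma(T\oplus STS^{-1})=\sigma(T)\subseteq G_\de$ is exactly right, provided you use the explicit generic form $\pi(X)=T\oplus STS^{-1}$ from the proof of Proposition~\ref{dp.prop.20}, which in turn justifies your perturbation to generic $T$. Net effect: same skeleton, but you supply two details that the paper's presentation compresses.
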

\begin{proof} 
To see that the equation \eqref{bfd=dp} holds it suffices to prove 
that, for fixed $\phi \in \hol(G_\delta)$ the following inequalities are satisfied:
\be\label{dp.30}
\norm{\pi^\sharp(\phi)}_{\mathrm {dp}}\le \norm{\phi}_{\mathrm {bfd}}
\ee
and
\be\label{dp.40}
\norm{\pi^\sharp(\phi)}_{\mathrm {dp}}\ge \norm{\phi}_{\mathrm {bfd}}.
\ee

To prove the inequality \eqref{dp.30} fix  a matrix $X \in \f_{\mathrm {dp}}(\de)$.   
By Proposition \ref{dp.prop.10}, $\pi(X) \in \f_{\mathrm {bfd}}(G_\de)$.
 Hence,
\begin{align*}
\norm{\pi^\sharp(\phi)(X)}&=\norm{\phi(\pi(X))}\\ 
&\le \sup_{T\in \f_{\mathrm {bfd}}(G_\de)}\norm{\phi(T)}
=\norm{\phi}_{\mathrm {bfd}}.
\end{align*}
Consequently, by Proposition \ref{dpmatrices},
\[
\norm{\pi^\sharp(\phi)}_{\mathrm {dp}}=\sup_{X\in \f_{\mathrm {dp}}(\de),\; X \; \text{is a matrix}}\norm{\pi^\sharp(\phi)(X)}\le
\norm{\phi}_{\mathrm {bfd}}.
\]

To prove the inequality \eqref{dp.40}, that $\|\phi\|_{\mathrm {bfd}} \leq \|\phi\circ\pi\|_{\mathrm {dp}}$, we use Proposition \ref{dp.prop.20}.  Indeed, let $\phi\in \hol(G_\de)$, by Proposition  \ref{bfdmatrices}, the following equation holds:
\be\label{usefulbfd-2}
\|\phi\|_{\mathrm {bfd}} = \sup_{T\in \f_{\mathrm {bfd}}(G_\de), \; T\text{ is a matrix}} \|\phi(T)\|.
\ee
Consider any matrix $T\in\f_{\mathrm {bfd}}(G_\de)$.  We have $\overline{W(T)} \subseteq G_\de \subset K_\de$, and so Proposition \ref{dp.prop.20} applies that there exists $X \in  \b(\h\oplus \h)$ such that $\|X\|\leq 1$ and $\|X\inv\| \leq 1/\de$ and
$T=(X+\de X\inv)|\ \h \oplus \{0\}$.
Thus $T$ is a restriction of $\pi(X)$ to an invariant subspace
$\n= \h \oplus \{0\}$ for $\pi(X)$.
Therefore,
\begin{align*}
\norm{\phi(T)}&=\norm{\phi(\pi(X)|\n)}\\ 
&\le \norm{\phi(\pi(X))} =\norm{\pi^\sharp(\phi)(X)}\\ 
&\le \norm{\pi^\sharp(\phi)}_{\mathrm {dp}}.
\end{align*}
Take the supremum of both sides over all matrices $T\in\f_{\mathrm {bfd}}(G_\de)$ to infer that 
\be\label{dp.401}
\norm{\phi}_{\mathrm {bfd}}
=\sup_{T\in \f_{\mathrm {bfd}}(G_\de),\; T\; \text{is a matrix}}\norm{\phi(T)}\le\norm{\pi^\sharp(\phi)}_{\mathrm {dp}}.
\ee

We claim now that the mapping $\pi^\sharp$ maps $\hinf_{\mathrm {bfd}}(G_\de)$ surjectively onto the set of  functions in $\hinf_{\mathrm {dp}}(R_\de)$ that are symmetric with respect to the involution $\lam\mapsto\de/\lam$.
For consider any  function $f\in \hinf_{\mathrm {dp}}(R_\de)$ that is symmetric with respect to the involution $\lam\mapsto\de/\lam$, that is, such that $f(\de/\lam)=f(\lam)$ for all $\lam\in R_\de$.  By Lemma \ref{dp.lem.10}, there exists a function $\phi\in\hol(G_\de)$ such that $\pi^\sharp \phi = f$.  By the inequality \eqref{dp.401}, $\|\phi\|_{\mathrm{bfd}} \leq \|f\|_{\mathrm{dp}} < \infty$.
Hence $\phi\in \hinf_{\mathrm {bfd}}(G_\de)$ and $\pi^\sharp \phi=f$.
\end{proof}

\section{The Banach algebra $\hinf_{\mathrm {bfd}}(G_\de)$ and the symmetrized bidisc}\label{delyonnorm}

In this section we shall give an interpretation of the B. and F. Delyon norm on $\hol(G_\delta)$ in terms of the solution to a certain extremal problem in two complex variables. Let $G$ denote the symmetrized bidisc, the open set in $\c^2$ defined by either of the equivalent formulas 
\[
G=\rho(\d^2)
\]
where $\rho:\c^2 \to \c^2$, the \emph{symmetrization map}, is defined by
\[
\rho(z)=(z_1+z_2,z_1z_2),\qquad z=(z_1,z_2)\in \c^2,
\]
or
\[
G=\set{(s,p)\in \c^2}{|s-\bar s p|<1-|p|^2} \;\text{ (see \cite[Theorem 2.1]{AY04})}.
\] 
The following simple lemma demonstrates that $G_\delta$ may be viewed as a slice of $G$.
\begin{lem}\label{sym.lem.10}
For $\delta \in (0,1)$,
\[
(s,\delta) \in G \iff s\in G_\delta.
\]
\end{lem}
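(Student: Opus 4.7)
The plan is a direct computation: substitute $p=\delta$ into the second description of $G$ and verify it collapses to the defining inequality of $G_\delta$.

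First I would recall that, by the formula cited from \cite[Theorem 2.1]{AY04},
\[
(s,\delta)\in G \iff |s-\bar s\,\delta|<1-\delta^2.
\]
Next, writing $s=x+iy$ with $x,y\in\r$, I would compute
\[
s-\bar s\,\delta = (x+iy)-\delta(x-iy) = x(1-\delta)+iy(1+\delta),
\]
so that
\[
|s-\bar s\,\delta|^2 = x^2(1-\delta)^2 + y^2(1+\delta)^2.
\]

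Then the inequality $|s-\bar s\,\delta|<1-\delta^2$ is, after squaring (both sides are $\ge 0$ since $\delta\in(0,1)$), equivalent to
\[
x^2(1-\delta)^2 + y^2(1+\delta)^2 < (1-\delta^2)^2 = (1-\delta)^2(1+\delta)^2.
\]
Dividing through by the positive quantity $(1-\delta)^2(1+\delta)^2$ yields
\[
\frac{x^2}{(1+\delta)^2}+\frac{y^2}{(1-\delta)^2}<1,
\]
which by the definition \eqref{defGde} of $G_\delta$ is exactly the condition $s\in G_\delta$. There is no real obstacle here; the only thing to keep in mind is the sign check that legitimises the squaring step, which is automatic for $\delta\in(0,1)$.
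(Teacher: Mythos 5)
Your proof is correct, and the computation checks out: squaring is legitimate since both sides are nonnegative for $\delta\in(0,1)$, and the algebra reduces cleanly to the defining inequality of $G_\delta$. However, you take a genuinely different route from the paper. You invoke the algebraic description $G=\{(s,p):|s-\bar s p|<1-|p|^2\}$ and specialize $p=\delta$, obtaining the result by direct calculation with real and imaginary parts. The paper instead uses the parametrization $G=\rho(\d^2)$: it unwinds $(s,\delta)\in G$ to the existence of $z,w\in\d$ with $s=z+w$, $\delta=zw$, observes this is equivalent to the existence of $z$ with $\delta<|z|<1$ and $s=z+\delta/z$, and then identifies the image of $R_\delta$ under the Zhukovskii map $z\mapsto z+\delta/z$ with $G_\delta$ via Lemma \ref{ell.lem.10}. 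Your approach is more elementary and self-contained, requiring only the inequality characterization of $G$ and no reference to other lemmas. The paper's approach is less computational and dovetails with the 2-to-1 covering $\pi:R_\delta\to G_\delta$ that is central to the rest of the paper (prepairs, the involution $\lambda\mapsto\delta/\lambda$, the dp norm), so it reinforces the structural picture the paper is building, at the cost of depending on the earlier Zhukovskii lemma.
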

\begin{proof}   Observe that
\begin{align*}
(s,\de) \in G & \Leftrightarrow \;\text{there exist} \;z,w \in \d  \;\text{such that}  \;s=z+w, \de=zw\\ 
 ~ & \Leftrightarrow 
\text{ there exists }z\in \c \text{ such that } \de < |z|<1 \text{ and } s=z+\de/z \\ 
 ~ & \Leftrightarrow 
s \in \text{ the image of } R_\de \text{ under the map } z \mapsto z+\de/z \\
~  &\Leftrightarrow   s \in G_\de \;\text{ (see Lemma \ref{ell.lem.10})}.
\end{align*}
\end{proof}
Evidently, the lemma implies that if $\Phi \in \hol(G)$, then we may define $\phi \in \hol(G_\delta)$ by the formula
\be\label{sym.10}
\phi(\mu)=\Phi(\mu,\delta) \quad \text{ for all } \mu \in G_\delta.
\ee

Lemma \ref{sym.lem.10} prompts the question: if $\phi \in \hol(G_\delta)$, does there exist a function $\Phi \in \hol(G)$ such that equation \eqref{sym.10} holds? Or, in other words. such that $\ph$ is the restriction of $\Phi$ to $G_\de$?
A classical result relevant to this question is Cartan's Extension Theorem \cite{cartan}, which states that if $V$ is an analytic variety in a domain of holomorphy $U$ and $\phi$ is a holomorphic function on $V$ then there is a holomorphic function $\Phi$ on $U$ such that the restriction of $\Phi$ to $V$ is $\phi$.

The symmetrized bidisc $G$ is a domain of holomorphy in $\c^2$ and $G_\de$ is a variety in $G$, so
Cartan's theorem applies to prove that there exists a function $\Phi \in \hol(G)$ such that $\phi$ is the restriction of $\Phi$ to $G_\de$; however, Cartan's theorem gives no information about bounds on $\Phi$, even when $\phi$ is bounded on $G_\de$.  It is therefore natural to pose the following extremal problems: 
\begin{problem}
Given $\phi \in  \in H^\infty_{\mathrm {bfd}}(G_\de)$, what is the minimum $m_\phi$ of $\|\Phi\|_{H^\infty(G)}$ over all $\Phi\in H^\infty(G)$ such that $\phi$ is the restriction of $\Phi$ to $G_\de$?  For which $\phi$ is $m_\phi$ finite?
\end{problem}
The next two propositions combine to show that the minimum $m_\phi$  is $\|\phi\|_{\mathrm {bfd}}$, so that $m_\phi$ is finite if and only if $\phi\in \hinf_{\mathrm {bfd}}(G_\delta)$.

\begin{prop}\label{sym.prop.10} Let $\de \in (0,1)$. 
If $\Phi \in \hinf(G)$ and $\phi$ is defined as in equation \eqref{sym.10}, then $\phi \in \hinf_{\mathrm {bfd}}(G_\delta)$ and $\norm{\phi}_{\mathrm {bfd}} \le \norm{\Phi}_{H^\infty(G)}$.
\end{prop}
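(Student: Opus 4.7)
The plan is to combine Theorem \ref{dp.thm.10} with Ando's dilation theorem for commuting contractions on the bidisc. By Theorem \ref{dp.thm.10}, $\norm{\phi}_{\mathrm{bfd}}=\norm{\phi\circ\pi}_{\mathrm{dp}}$, so it suffices to prove the inequality
\[
\norm{\phi\circ\pi}_{\mathrm{dp}}\le \norm{\Phi}_{H^\infty(G)};
\]
this will simultaneously give both $\phi\in \hinf_{\mathrm{bfd}}(G_\de)$ and the claimed bound on $\norm{\phi}_{\mathrm{bfd}}$.

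By Proposition \ref{dpmatrices}(i), the estimate above reduces to showing that $\norm{(\phi\circ\pi)(X)}\le \norm{\Phi}_{H^\infty(G)}$ for every matrix $X\in \f_{\mathrm{dp}}(\de)$. Given such an $X$, I would set $Y=\de X\inv$. Since $\norm{X}\le 1$ and $\norm{\de X\inv}\le 1$, the pair $(X,Y)$ consists of commuting contractions, and since $\sigma(X)\subseteq R_\de$ and $\sigma(Y)=\de/\sigma(X)\subseteq R_\de$, its joint spectrum lies in $R_\de\times R_\de\subseteq \d^2$. With $\rho$ as in the definition of $G$, the surjectivity of $\rho:\d^2\to G$ gives $F\df \Phi\circ\rho\in H^\infty(\d^2)$ with $\norm{F}_{H^\infty(\d^2)}=\norm{\Phi}_{H^\infty(G)}$, and using $XY=\de I$ a direct calculation shows
\[
(\phi\circ\pi)(X)=\phi(X+\de X\inv)=\Phi(X+Y,XY)=F(X,Y).
\]

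The hard part is Ando's inequality in its $H^\infty$ form: for any commuting pair $(X,Y)$ of contractions on a finite-dimensional Hilbert space with joint spectrum in $\d^2$ and any $F\in H^\infty(\d^2)$,
\[
\norm{F(X,Y)}\le \norm{F}_{H^\infty(\d^2)}.
\]
For polynomial $F$ this is Ando's classical theorem, which dilates $(X,Y)$ to a commuting pair of unitaries $(U_1,U_2)$ on a larger Hilbert space and exploits the fact that $F(U_1,U_2)$ is normal with spectrum on $\t^2$. To pass to general $F\in H^\infty(\d^2)$, I would approximate $F$ by its dilates $F_r(z_1,z_2)\df F(rz_1,rz_2)$ for $0<r<1$: each $F_r$ is holomorphic on a neighbourhood of $\ov{\d^2}$ and therefore a uniform limit of polynomials on a neighbourhood of the (compact) joint spectrum of $(X,Y)$; by the maximum principle $\norm{F_r}_{H^\infty(\d^2)}\le \norm{F}_{H^\infty(\d^2)}$; and $F_r(X,Y)\to F(X,Y)$ in norm as $r\to 1^-$ by the continuity of the Riesz--Dunford functional calculus.

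Applying this bound to $F=\Phi\circ\rho$ yields $\norm{(\phi\circ\pi)(X)}\le \norm{\Phi}_{H^\infty(G)}$. Taking the supremum over matrices $X\in \f_{\mathrm{dp}}(\de)$, followed by Proposition \ref{dpmatrices}(i) and Theorem \ref{dp.thm.10}, completes the argument. The principal technical point is thus the extension of Ando's polynomial inequality to $H^\infty(\d^2)$; all other steps are clean applications of results already established in the paper.
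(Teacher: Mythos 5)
Your proof is correct. Both you and the paper route the estimate through the dp norm via Theorem \ref{dp.thm.10} and use the same composed function $F=\Phi\circ\rho\in H^\infty(\d^2)$; where you diverge is in the key inequality $\norm{\phi\circ\pi}_{\mathrm{dp}}\le \norm{F}_{H^\infty(\d^2)}$. The paper obtains it in one line by citing Theorem 9.56 of \cite{amy20}, which packages the relation between $\norm{\cdot}_{\mathrm{dp}}$ on $R_\de$ and bounded extensions to $\d^2$, whereas you rebuild it directly: reduce to matrices via Proposition \ref{dpmatrices}\,(i), identify $(\phi\circ\pi)(X)=F(X,\de X^{-1})$ for the commuting pair of contractions $(X,\de X^{-1})$ with joint spectrum on $\{(\lambda,\de/\lambda):\lambda\in\sigma(X)\}\subset R_\de\times R_\de\subset\d^2$, and then apply Ando's von Neumann inequality, extended from polynomials to $H^\infty(\d^2)$ symbols by the dilation $F_r(z_1,z_2)=F(rz_1,rz_2)$ together with continuity of the Riesz--Dunford/Taylor functional calculus. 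This makes the argument self-contained at the price of spelling out the approximation step that the cited theorem absorbs. One small remark: the identity $(\phi\circ\pi)(X)=F(X,\de X^{-1})$ and the joint-spectrum description are most transparent for diagonalizable $X$; they do hold for general commuting matrices via the holomorphic functional calculus, and alternatively one can first treat $X$ with $n$ distinct eigenvalues and pass to the limit, exactly as the paper does, e.g., in the proof of Proposition \ref{dp.prop.10}.
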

\begin{proof} 
Let $\Psi = \Phi \circ \rho$  and define $\psi:R_\delta \to \c$  by
$\psi(\lambda) = \Psi(\lambda,\delta/\lambda)$. Then Theorem 9.56 from \cite{amy20} implies that
$\psi\in \hinf_{\mathrm {dp}}(R_\de)$ and $\norm{\psi}_{\mathrm {dp}}\le \norm{\Psi}_{H^\infty(G)}$. But for $\lambda \in R_\delta$,
\begin{align*}
\psi(\lambda)&=\Psi(\lambda,\delta/\lambda)\\
&=\Phi(\lambda+\delta/\lambda,\delta)\\
&=\phi(\pi(\lambda)),
\end{align*}
so that $\psi = \pi^\sharp(\phi)$. Hence, using Theorem \ref{dp.thm.10}, we deduce that
\[
\norm{\phi}_{\mathrm {bfd}} = \norm{\psi}_{\mathrm {dp}}\le \norm{\Psi}_{H^\infty(G)} = \norm{\Phi}_{H^\infty(G)}.
\]
\end{proof}
\begin{prop}\label{sym.prop.20}
If $\phi \in \hinf_{\mathrm {bfd}}(G_\delta)$, then there exists $\Phi \in \hinf(G)$ such that $\norm{\Phi}_\infty=\norm{\phi}_{\mathrm {bfd}}$ and equation \eqref{sym.10} holds.
\end{prop}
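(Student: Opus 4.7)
My plan is to pass to the annular model via Theorem \ref{dp.thm.10}, then to solve an extension problem from $R_\de$ up to the bidisc, and finally to descend to the symmetrized bidisc $G$ through the symmetrization map $\rho:\d^2\to G$. The substantive input is a realization theorem for $\hinf_{\mathrm{dp}}(R_\de)$ that is a converse to the Theorem 9.56 of \cite{amy20} used in Proposition \ref{sym.prop.10}: namely, for every $\psi\in\hinf_{\mathrm{dp}}(R_\de)$ there exists $\Psi\in H^\infty(\d^2)$ satisfying $\Psi(\lam,\de/\lam)=\psi(\lam)$ for all $\lam\in R_\de$ with $\|\Psi\|_{H^\infty(\d^2)}=\|\psi\|_{\mathrm{dp}}$.

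Given $\phi\in\hinf_{\mathrm{bfd}}(G_\de)$, Theorem \ref{dp.thm.10} tells me that $\psi\df\pi^\sharp\phi$ lies in $\hinf_{\mathrm{dp}}(R_\de)$, is symmetric with respect to $\lam\mapsto\de/\lam$, and satisfies $\|\psi\|_{\mathrm{dp}}=\|\phi\|_{\mathrm{bfd}}$. I will apply the realization theorem to produce $\Psi\in H^\infty(\d^2)$ as above, and then symmetrize by setting
\[
\tilde\Psi(z_1,z_2)=\half\big(\Psi(z_1,z_2)+\Psi(z_2,z_1)\big),
\]
which gives a function in $H^\infty(\d^2)$ invariant under the coordinate swap with $\|\tilde\Psi\|_\infty\le\|\Psi\|_\infty$. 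On the variety $\{z_1z_2=\de\}\cap\d^2$, setting $\lam'=\de/\lam$ in the identity $\Psi(\lam',\de/\lam')=\psi(\lam')$ gives $\Psi(\de/\lam,\lam)=\psi(\de/\lam)=\psi(\lam)$, the last equality by the symmetry of $\psi$, so
\[
\tilde\Psi(\lam,\de/\lam)=\half\big(\psi(\lam)+\psi(\lam)\big)=\psi(\lam).
\]

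Since $\tilde\Psi$ is symmetric in its arguments and $\rho$ is a proper surjective holomorphic map whose deck transformation is the coordinate swap, there is a unique $\Phi\in H^\infty(G)$ with $\tilde\Psi=\Phi\circ\rho$, and $\|\Phi\|_{H^\infty(G)}=\|\tilde\Psi\|_\infty$ by surjectivity of $\rho$. For any $\mu\in G_\de$, choosing $\lam\in R_\de$ with $\pi(\lam)=\mu$ yields $\rho(\lam,\de/\lam)=(\mu,\de)$, so $\Phi(\mu,\de)=\tilde\Psi(\lam,\de/\lam)=\psi(\lam)=\phi(\mu)$. This produces an extension $\Phi$ of $\phi$ with $\|\Phi\|_\infty\le\|\phi\|_{\mathrm{bfd}}$; the reverse inequality is Proposition \ref{sym.prop.10}, and hence $\|\Phi\|_\infty=\|\phi\|_{\mathrm{bfd}}$, as required.

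The main obstacle is the realization step (the converse direction of \cite[Theorem 9.56]{amy20}): translating membership in the calcular class $\hinf_{\mathrm{dp}}(R_\de)$ into a norm-preserving interpolation statement on $\d^2$. Once this is in hand, the remaining ingredients --- Theorem \ref{dp.thm.10}, a one-line symmetrization, and the standard isometric descent from coordinate-symmetric functions in $H^\infty(\d^2)$ to $H^\infty(G)$ via $\rho$ --- are routine.
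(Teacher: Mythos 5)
Your proposal follows the same route as the paper: pass to $\psi=\pi^\sharp\phi$ on $R_\de$ via Theorem \ref{dp.thm.10}, realize $\psi$ by a function $\Psi$ on $\d^2$ (the paper invokes Theorem 9.56 of \cite{amy20} here, which is precisely the norm-preserving realization you describe as the ``converse''), symmetrize in the two coordinates, and then descend to $H^\infty(G)$ with the reverse inequality supplied by Proposition \ref{sym.prop.10}. The only difference is cosmetic: where you descend via the covering-map/deck-transformation argument for $\rho$, the paper cites the Waring--Lagrange theorem for $\hinf(\d^2)$ (Theorem 7.1 of \cite{amy20}), which packages exactly that holomorphic-descent-with-equal-norm step.
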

\begin{proof}
Let 
\be\label{sym.15}
\psi=\pi^\sharp(\phi).
\ee
 By Theorem \ref{dp.thm.10}, $\psi$ is a symmetric function in $\hinf_{\mathrm {dp}}(R_\de)$ and $\norm{\psi}_{\mathrm {dp}} = \norm{\phi}_{\mathrm {bfd}}$. Hence, by
Theorem 9.56 in \cite{amy20}, there exists $\Psi_0 \in \hinf(\d^2)$ such that
\be\label{sym.20}
\norm{\Psi_0}_\infty = \norm{\phi}_{\mathrm {bfd}}
\ee
and
\be\label{sym.30}
\Psi_0(\lambda,\delta/\lambda)=\psi(\lambda),\qquad \lambda\in R_\delta.
\ee

If we define $\Psi \in \hinf(\d^2)$ by
\[
\Psi(z_1,z_2)=\frac{\Psi_0(z_1,z_2)+\Psi_0(z_2,z_1)}{2},\qquad z\in \d^2,
\]
then \eqref{sym.20} implies that
\be\label{sym.40}
\norm{\Psi}_\infty \le \norm{\phi}_{\mathrm {bfd}}
\ee
and, as $\psi$ is symmetric, \eqref{sym.30} implies that
\be\label{sym.50}
\Psi(\lambda,\delta/\lambda)=\psi(\lambda),\qquad \lambda\in R_\delta.
\ee

Now, from the definition, we see that $\Psi$ a symmetric function on $\d^2$. Therefore, by the Waring-Lagrange theorem for $\hinf(\d^2) $ \cite[Theorem 7.1]{amy20}, there exists $\Phi \in \hinf(G)$ satisfying
\color{black}
\be\label{sym.60}
\norm{\Phi}_{\infty}=\norm{\Psi}_\infty
\ee
and
\be\label{sym.70}
\Phi(z_1+z_2,z_1z_2)=\Psi(z_1,z_2),\qquad z\in \d^2.
\ee
We claim that $\Phi$ satisfies the conditions of the proposition.

We first show that \eqref{sym.10} holds. Letting $\mu=\pi(\lambda)$, we have
\begin{align*}
\Phi(\mu,\delta)&=\Phi(\lambda+\delta/\lambda,\lambda\ (\delta/\lambda))\\ \\
 &= \Psi(\lambda,\delta/\lambda) \qquad\qquad \text{by equation} \; \eqref{sym.70} \\ \\
 &=\psi(\lambda) \qquad \qquad\qquad \text{by equation} \; \eqref{sym.50} \\ \\
 &=\pi^\sharp(\phi)(\lambda)  \qquad\qquad\  \text{by equation} \; 
\eqref{sym.15} \\ \\
&=\phi(\mu).
\end{align*}
To see that $\norm{\Phi}_\infty=\norm{\phi}_{\mathrm {bfd}}$ observe that the relations \eqref{sym.40} and \eqref{sym.60} imply that $\norm{\Phi}_\infty\le \norm{\phi}_{\mathrm {bfd}}$. As equation \eqref{sym.10} holds, the reverse inequality,
$\norm{\Phi}_\infty\ge \norm{\phi}_{\mathrm {bfd}}$, follows from Proposition \ref{sym.prop.10}.
\end{proof}

Propositions \ref{sym.prop.20} and \ref{sym.prop.10} combine to yield the following statement.
\begin{thm} \label{extremalproblem-main} Let $\de\in(0,1)$.  For any $\phi\in \hol(G_\de)$, the minimum of $\|\Phi\|_{H^\infty(G)}$ over all functions $\Phi \in H^\infty(G)$ that extend the function $(s,\de)\mapsto \phi(s)$ is $\norm{\phi}_{\mathrm {bfd}}$.   In particular, an analytic function $\phi$ on $G_\de$ has a bounded extension $\Phi$ to $G$ if
and only if $\phi\in H^\infty_{\mathrm{bfd}}(G_\de)$.

\end{thm}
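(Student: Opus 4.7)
The plan is to deduce the theorem directly from the two preceding propositions, Proposition \ref{sym.prop.10} and Proposition \ref{sym.prop.20}, which together pin down both inequalities in the desired equality. Let me set the notation: for $\phi\in\hol(G_\de)$, write
\[
m_\phi \df \inf\bigl\{\|\Phi\|_{H^\infty(G)} : \Phi\in H^\infty(G),\ \Phi(s,\de)=\phi(s)\ \text{for all}\ s\in G_\de\bigr\},
\]
with the convention $m_\phi=\infty$ if no bounded extension exists.

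First I would establish the inequality $\|\phi\|_{\mathrm{bfd}} \le m_\phi$. If no bounded extension exists, there is nothing to prove. Otherwise, for any $\Phi\in H^\infty(G)$ extending $\phi$, Proposition \ref{sym.prop.10} gives $\phi\in \hinf_{\mathrm{bfd}}(G_\de)$ together with the bound $\|\phi\|_{\mathrm{bfd}}\le \|\Phi\|_{H^\infty(G)}$. Taking the infimum over all such $\Phi$ yields $\|\phi\|_{\mathrm{bfd}}\le m_\phi$, and in particular a bounded extension forces $\phi\in H^\infty_{\mathrm{bfd}}(G_\de)$.

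Next I would establish the reverse inequality $m_\phi \le \|\phi\|_{\mathrm{bfd}}$ together with attainment. Assume $\phi\in \hinf_{\mathrm{bfd}}(G_\de)$. Proposition \ref{sym.prop.20} produces an explicit $\Phi\in \hinf(G)$ such that $\Phi(s,\de)=\phi(s)$ for all $s\in G_\de$ and $\|\Phi\|_\infty = \|\phi\|_{\mathrm{bfd}}$. This shows that a bounded extension does exist and that $m_\phi \le \|\Phi\|_\infty = \|\phi\|_{\mathrm{bfd}}$; combined with the first step, the infimum is attained and equals $\|\phi\|_{\mathrm{bfd}}$.

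For the ``in particular'' claim, I would observe that one direction is immediate from Proposition \ref{sym.prop.10} (any bounded extension forces membership in $H^\infty_{\mathrm{bfd}}(G_\de)$), and the other direction from Proposition \ref{sym.prop.20} (any $\phi \in H^\infty_{\mathrm{bfd}}(G_\de)$ admits a bounded extension $\Phi$ of equal norm). Since both propositions have already been established in the excerpt, there is no genuine obstacle here; the theorem is essentially a repackaging of Propositions \ref{sym.prop.10} and \ref{sym.prop.20}, and the only substantive content sits in those two results, which in turn depend on Theorem \ref{dp.thm.10} (the isometric isomorphism $\pi^\sharp$) and the Waring--Lagrange theorem for $\hinf(\d^2)$ used to symmetrize and push through $\rho$.
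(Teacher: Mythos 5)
Your proof is correct and follows exactly the paper's approach: the paper states immediately before the theorem that "Propositions \ref{sym.prop.20} and \ref{sym.prop.10} combine to yield the following statement," and your two-inequality argument (plus the observation that Proposition \ref{sym.prop.20} gives attainment of the infimum) is precisely that combination.
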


\section{Appendix on the numerical range and dilations} \label{dilations-history}

In this appendix we recall some known results on the numerical range $W(T)$ of an operator $T$ and in particular the relationship between the inclusion of $W(T)$ in a prescribed disc and the existence of a dilation of $T$ of a particular type.

The notion of a dilation of an operator has proved fruitful in the study of operators.
It is a crucial element of the ``harmonic analysis of operators on Hilbert space", the theory established by B. Sz.-Nagy and C. Foias in \cite{szn-foi}.  Consider an operator $T$ on a Hilbert space $\h$, and let $\h$ be decomposed into an orthogonal direct sum of three subspaces $\h_1,\h_2,\h_3$.  Represent $T$ by an operator matrix with respect to this decomposition:
\be \label{blockT}
T \sim \bbm T_{11} & T_{12} & T_{13} \\ T_{21} & T_{22} & T_{23} \\ T_{31} & T_{32} & T_{33} \ebm.
\ee
Here of course $T_{ij}$ denotes the operator $P_i T|\h_j$, where $P_i$ is the orthogonal projection from $\h$ to $\h_i$ and $\cdot |\h_j$ denotes restriction to $\h_j$.  In general the properties of the block entries $T_{ij}$ are related in quite subtle ways to the properties of the operator $T$,
but in the special case that the block matrix \eqref{blockT} is lower triangular, in the sense that the operators $T_{21}, T_{31},T_{32}$ are all zero (or equivalently, that $\h_1$ and $\h_1 \oplus \h_2$ are invariant subspaces of $\h$ for $T$), the relation is more straightforward.  Observe in particular that, if $T$ has a block upper triangular matrix with respect to the orthogonal decomposition $\h=\h_1 \oplus \h_2 \oplus \h_3$, say
\be\label{uptri}
T \sim \bbm T_{11} & T_{12} & T_{13} \\ 0 & T_{22} & T_{23} \\ 0 & 0 & T_{33} \ebm,
\ee
then, for every positive integer $n$,
\[
T^n \sim \bbm T_{11}^n & * & * \\ 0 & T_{22}^n & * \\ 0 & 0 & T_{33}^n \ebm,
\]
where the stars denote unspecified entries.  Thus, in this block triangular case, $(T_{22})^n$ is the compression of $T^n$ to $\h_2$ for all $n \geq 1$: $T_{22}^n=P_2T^n|\h_2$.  This property brings us to the terminology of {\em dilations}. 

 One says that an operator $X\in\b(\k)$ is a dilation of an operator $T\in\b(\h)$ if $\h$ is a closed subspace of $\k$ and, for every positive integer $n$, $T^n$ is the compression to $\h$ of $X^n$, or, in other words, if $T^n= P_\h X^n |\h$ for all $n\geq 1$.
The preceding paragraph shows that, if $T$ has the block upper triangular form \eqref{uptri} with respect to the orthogonal decomposition $\h =\h_1\oplus\h_2\oplus\h_3$, then $T$ is a dilation of $T_{22}$.

 According to a lemma of D. Sarason \cite[Lemma 0]{sar}, a converse statement also holds: 
 
 \begin{lem} \label{sarasonlem}
 Let $X$ be an operator on a Hilbert space $\k$ and suppose $X$ is a dilation of an operator $T$ on a closed subspace $\h$ of $\k$.  Then there exist closed subspaces $\h_1$ and $\h_3$ of $\k$ such that $\k$ is the orthogonal direct sum of $\h_1$, $\h$ and $\h_3$, $X$ has an upper triangular operator matrix with respect to the decomposition $\k= \h_1\oplus \h\oplus\h_3$ , and $T$ is the $(2,2)$ block in the operator matrix of $X$.
 \end{lem}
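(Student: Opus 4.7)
The plan is to produce closed subspaces $\m \supseteq \n$ of $\k$, both invariant under $X$, with $\m \ominus \n = \h$; then taking $\h_1 = \n$ and $\h_3 = \k \ominus \m$ will yield the desired decomposition, since $X$-invariance of $\h_1$ and of $\h_1 \oplus \h = \m$ are precisely the conditions that make the block matrix of $X$ relative to $\h_1 \oplus \h \oplus \h_3$ upper triangular, and its $(2,2)$-block is then $P_\h X|_\h = T$ by the $n=1$ instance of the dilation hypothesis.

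The natural candidate for $\m$ is the smallest closed $X$-invariant subspace of $\k$ containing $\h$, namely
\[
\m \df \overline{\mathrm{span}}\{X^n h : h\in\h,\ n \ge 0\},
\]
with $\n \df \m \ominus \h$. The content of the proof is the verification that $\n$ is $X$-invariant. I would take $v\in\n$ and approximate it in norm by finite sums $v_N = \sum_{k=0}^{N} X^k h_{N,k}$ with $h_{N,k} \in \h$, and then set $w_N \df \sum_{k=0}^N T^k h_{N,k} \in \h$. Two successive applications of the dilation identity (pulling $P_\h$ through powers of $X$) yield, for every $h\in\h$,
\[
\langle v_N, h\rangle = \langle w_N, h\rangle \quad \text{and} \quad \langle Xv_N, h\rangle = \langle Tw_N, h\rangle.
\]
Since $\langle v_N, h\rangle \to \langle v, h\rangle = 0$, the first identity shows that $w_N \to 0$ weakly in $\h$, whence $Tw_N \to 0$ weakly; the second identity then forces $\langle Xv_N, h\rangle \to 0$ and, in the limit, $\langle Xv, h\rangle = 0$. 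As $Xv$ lies in $\m$ automatically, this establishes $Xv \in \n$.

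The main obstacle is precisely this weak-limit manoeuvre: one cannot simply invoke $X$-invariance on a generator $X^k h\in\m$, since $X^k h$ typically fails to lie in $\n$, so the approximation has to be carried out for a genuine $v \in \n$, and the auxiliary sequence $w_N \in \h$ is then controlled only weakly. Once $\n$ is shown to be $X$-invariant, the remainder of the proof, namely defining $\h_1 \df \n$ and $\h_3 \df \k \ominus \m$, verifying that these together with $\h$ give an orthogonal decomposition of $\k$, checking upper triangularity of the induced $3\times 3$ block matrix, and reading off the $(2,2)$-block as $T$, is routine bookkeeping.
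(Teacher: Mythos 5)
Your proof is correct and follows the same plan as the Foias argument reproduced in the paper: the same $\m$ (smallest closed $X$-invariant subspace containing $\h$), the same $\h_1 = \n = \m\ominus\h$, and the same use of the dilation identity to show that $\n$ is $X$-invariant. The only difference is in presentation: the paper verifies the operator identity $P_\h X P_\h = P_\h X P_\m$ on the dense algebraic span of $\bigcup_{m\ge 0} X^m\h$ and then extends by continuity, whereas you approximate a fixed $v\in\n$ and pass to a weak limit. That weak-limit manoeuvre, which you flag as the main obstacle, is in fact avoidable: the dilation hypothesis at exponents $k=0,1,\dots,N$ gives $w_N=\sum_k T^k h_{N,k}=\sum_k P_\h X^k h_{N,k}=P_\h v_N$, so that $w_N\to P_\h v=0$ in norm rather than merely weakly, and then $P_\h X v=\lim P_\h X v_N=\lim T w_N=0$ by norm continuity alone.
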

 In the above statement of the lemma, the triangularity assertion about $X$ is equivalent to
 both $\h_1$ and $\h_1\oplus\h$ being invariant subspaces of $\k$ for $X$.
Here is a proof (which Sarason attributes to C. Foias) of Lemma \ref{sarasonlem}.

\begin{proof}  Let $\m$ be the smallest closed $X$-invariant subspace of $\k$ containing $\h$ and let $P$ and $Q$ be the orthogonal projections from $\k$ onto $\h$ and $\m$ respectively.  Let $\h_1=\m\ominus\h$, so that $Q-P$ is the orthogonal projection onto $\h_1$.  We assert that $\h_1$ is $X$-invariant, or equivalently that
\[
(Q-P)X(Q-P) = X(Q-P).
\]
Since clearly $QXQ=XQ$ and $QXP=XP$, it is enough to show that $PXP=PXQ$.  Now for $y\in \h$ and for $m,n\geq 1$, the dilation condition implies that
\[
PX^nPX^my = PX^n T^my =T^nT^my=T^{n+m}y=PX^{n+m}y.
\]
But $\m$ is the closed span of the vectors $X^my$ with $y\in\h$ and $m\geq 1$.   We may conclude that $PX^nPx=PX^nx$ for all $x\in\m$, and so $PXP=PXQ$, as required.  Thus $\h_1$ and $\m$ are invariant subspaces for $X$.  Choice of $\h_3=\m^\perp$ completes the orthogonal decomposition $\k=\h_1\oplus\h\oplus \h_3$ with respect to which $X$ has an upper triangular operator matrix, with  $T$ in the $(2,2)$-position.
\end{proof}

In the theory of operators on Hilbert space the most important result concerning dilations is the {\em Nagy dilation theorem}, which asserts that any contraction $T$ on a Hilbert space $\h$ has a dilation $U$ on some Hilbert space $\k \supseteq \h$ such that $U$ is a unitary operator on $\k$.
There are many proofs of this result (see \cite[page 52]{szn-foi}).  One of the shortest proofs is to deduce it from the following generalization of the Herglotz Representation Theorem due to Naimark \cite{nai2,nai1}.

\begin{thm}\label{ell.thm.10.app}
Let $\h$ be a Hilbert space and assume that $V$ is an analytic $\b(\h)$-valued function defined on $\d$ satisfying $\re V(z) \ge 0$ for all $z\in \d$. Then there exist a Hilbert space $\k$, an isometry $I:\h \to \k$, and a unitary operator $U\in \b(\k)$ such that
\[
V(z) = I^*\frac{1+zU}{1-zU}I
\]
for all $z\in \d$.
\end{thm}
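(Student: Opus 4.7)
The plan is to derive the result from the operator-valued Herglotz representation of $V$ followed by Naimark's dilation theorem for positive operator-valued measures. Evaluating the target identity at $z=0$ gives $V(0)=I^*I$, which equals the identity operator on $\h$ since $I$ is an isometry, so the theorem is stated under the implicit normalization $V(0)=1$, which I shall assume throughout.

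The first step is to produce a positive $\b(\h)$-valued Borel measure $\mu$ on $\t$ with $\mu(\t)=1$ such that
\[
V(z) = \int_\t \frac{\zeta+z}{\zeta-z}\, d\mu(\zeta), \qquad z\in\d.
\]
This reduces to the classical scalar Herglotz theorem by polarization. For each $x\in\h$ the function $z\mapsto\langle V(z)x,x\rangle$ is holomorphic on $\d$ with nonnegative real part and value $\|x\|^2$ at the origin, hence admits a representing positive scalar measure $\mu_x$ on $\t$ of total mass $\|x\|^2$. Polarization produces sesquilinear complex measures $\mu_{x,y}$ which, by the Riesz representation theorem, assemble into a single $\b(\h)$-valued measure $\mu$ with $\langle \mu(\sigma) x, y\rangle = \mu_{x,y}(\sigma)$; this $\mu$ is positive because each $\mu_x$ is.

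The second step is to invoke Naimark's dilation theorem: every positive $\b(\h)$-valued measure $\mu$ on a compact Hausdorff space $X$ with $\mu(X)=1$ dilates to a projection-valued measure, i.e., there exist a Hilbert space $\k$, an isometry $I:\h\to\k$, and a projection-valued measure $E$ on $X$ such that $\mu(\sigma)=I^*E(\sigma)I$ for every Borel set $\sigma\subseteq X$. Applying this to our $\mu$ on $\t$ yields such a triple $(\k,I,E)$, and setting $U = \int_\t \bar\zeta\, dE(\zeta) \in \b(\k)$ produces a unitary operator, since $|\bar\zeta|=1$ on $\t$.

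To conclude, I would exploit the algebraic identity $\frac{\zeta+z}{\zeta-z} = \frac{1+z\bar\zeta}{1-z\bar\zeta}$ valid for $|\zeta|=1$ and $z\in\d$ to compute
\[
V(z) = \int_\t \frac{1+z\bar\zeta}{1-z\bar\zeta}\, d\mu(\zeta) = I^* \int_\t \frac{1+z\bar\zeta}{1-z\bar\zeta}\, dE(\zeta)\, I = I^* \frac{1+zU}{1-zU}\, I,
\]
as required, with absolute convergence of the operator integrals on the right since $z\in\d$ keeps the integrand bounded. The main obstacle is Naimark's theorem itself, which rests on a GNS-style construction of $\k$ via the inner product induced by $\mu$ on simple $\h$-valued functions on $\t$; however, this is a classical result which we invoke directly, so the remainder of the argument amounts to assembling standard pieces.
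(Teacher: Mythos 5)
Your argument is correct, but there is nothing in the paper to compare it against: the paper does not prove this theorem. In both Section 3 (Theorem~\ref{ell.thm.10}) and the Appendix (Theorem~\ref{ell.thm.10.app}) the result is simply attributed to Naimark \cite{nai1,nai2} and invoked as known. Your write-up therefore supplies a proof where the paper offers only a citation, and the route you take --- scalar Herglotz for each $z\mapsto\langle V(z)x,x\rangle$, polarization to assemble a positive $\b(\h)$-valued measure $\mu$ on $\t$ with $\mu(\t)=1$, Naimark's POVM dilation $\mu(\cdot)=I^*E(\cdot)I$, and then $U=\int_\t\bar\zeta\,dE(\zeta)$ --- is the standard one, and the concluding computation using $\frac{\zeta+z}{\zeta-z}=\frac{1+z\bar\zeta}{1-z\bar\zeta}$ on $\t$ is right.

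Your remark about the implicit normalization is a genuine observation about the statement itself: setting $z=0$ in the conclusion forces $V(0)=I^*I=1_\h$, so as written the hypothesis $\re V(z)\ge 0$ should really be accompanied by $V(0)=1$ (otherwise the conclusion must acquire the additive skew term $i\,\mathrm{Im}\,V(0)$ and the factors $(\re V(0))^{1/2}$ on either side). In the two places the paper actually uses the theorem --- on $G(z)=zf'(z)/(T-f(z))$ in the proof of Theorem~\ref{ell.prop.20} and on $(1+zT)/(1-zT)$ in the Appendix --- one does have $V(0)=1$, so the paper's statement is adequate for its own purposes, but you are right that the hypothesis is incomplete as stated. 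One small step in your polarization paragraph deserves a sentence: before the Riesz representation theorem can be applied to produce $\mu(\sigma)\in\b(\h)$, one must know that $(x,y)\mapsto\mu_{x,y}(\sigma)$ is a bounded sesquilinear form. This follows from Cauchy--Schwarz for the positive sesquilinear form $\mu_{\cdot,\cdot}(\sigma)$ together with the bound $\mu_x(\sigma)\le\mu_x(\t)=\|x\|^2$, yielding $|\mu_{x,y}(\sigma)|\le\|x\|\,\|y\|$, but it is not automatic from the definition and should be recorded.
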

Indeed, if $T$ is a contraction then, for any $z\in\d$,
\[
\re \frac{1+zT}{1-zT} = 2(1-zT)\inv (1-|z|^2TT^*)(1-\bar zT^*)\inv  >0.
\]
It follows from Naimark's theorem that there exist a Hilbert space $\k$, an isometry $I:\h \to \k$, and a unitary operator $U\in \b(\k)$ such that
\[
\frac{1+zT}{1-zT} =I^*\frac{1+zU}{1-zU}I  \quad\text{ for all }z\in\d.
\]
Expanding both sides of the equation as power series in $z$, we have
\[
1+2zT+2z^2T^2 +2z^3T^3+\dots = I^*(1+2zU+2z^2U^2+2z^3U^3 +\dots)I\quad\text{ for all }z\in\d,
\]
and on equating coefficients of $z^n$ we deduce that $T^n=I^* U^n I$ for all positive integers $n$, so that $U$ is a dilation of $T$.  The lemma of Sarason proved above then gives us a geometric interpretation of the relation between $U$ and $T$: $T$ is the compression of $U$ to a semi-invariant subspace of $U$, which is to say, $T$ is the compression of $U$ to the orthogonal complement of of one invariant subspace of $U$ in another.  In the notation of the above proof of Sarason's Lemma, $T$ is the compression of $U$ to $\m\ominus \h_1$, while both $\m$ and $\h_1$ are invariant subspaces for $U$, which makes $\h=\m\ominus \h_1$ a semi-invariant subspace of $U$.


\begin{thebibliography}{99}

\bibitem{am2015} J. Agler and J. E. McCarthy, Operator theory and the Oka extension theorem, {\em Hiroshima Math. J.} {\bf 45} (2015) 9-34.

\bibitem{amy20} J. Agler, J. E. McCarthy and N. J. Young, {\em Operator Analysis: Hilbert space methods in complex analysis}, Cambridge Tracts in Mathematics {\bf 219}, Cambridge University Press, Cambridge, U.K., 2020.


\bibitem{AY04} J. Agler and N. J. Young, The hyperbolic geometry of the symmetrized bidisc, {\em J. Geom. Anal.} {\bf 14} (2004) 375-403.

\bibitem{Ando} T. Ando,  Structure of operators with numerical radius one,  {\em Acta Sci. Math. (Szeged)} {\bf 34} (1973) 11-15. 

\bibitem{bbc2009} C. Badea, B. Beckermann and M. Crouzeix, Intersections of several disks of the Riemann sphere as $K$-spectral sets, {\em Comm. Pure Appl. Anal.} {\bf 8} (1) (2009) 37-54.


\bibitem{berg} C. A. Berger, A strange dilation theorem,
  {\em Amer. Math. Soc. Notices} {\bf 12} (1965)  590.
  
\bibitem{cartan} H. Cartan, Seminaire H. Cartan 1951/52, W. A. Benjamin, New York. 1967.
  
\bibitem{Crouzeix} M. Crouzeix, The annulus as a $K$-spectral set, {\em Commun. Pure Appl. Anal.} {\bf 11} (6) (2012) 2291-2303.

\bibitem{bfd1999} B. Delyon and F. Delyon, Generalizations of von Neumann's spectral sets and integral representations of operators, {\it Bull. Soc. Math, France} {\bf 127 } (1999) 25-41.
  
\bibitem{dw97}
M. A. Dritschel and H. J. Woerdeman, Model Theory and Linear Extreme Points in the Numerical Radius Unit Ball, {\it Memoirs Amer. Math. Soc.} {\bf 615} (1997) 1-62.

\bibitem{dp86}
R.~G. Douglas and V.~I. Paulsen,
\newblock Completely bounded maps and hypo-{D}irichlet algebras,
\newblock {\em Acta Sci. Math. (Szeged)},  {\bf 50} (1-2) (1986) 143-157.

\bibitem{GuRao97}  K. E. Gustafson and D. K. M. Rao, {\em Numerical Range. The Field of Values of Linear Operators and Matrices}, Springer, 1997.

\bibitem{krey} E. Kreyszig, {\em Advanced Engineering Mathematics}, John Wiley and Sons, Inc., New York,  8th edition, 1999.


\bibitem{szn-foi} B. Sz-Nagy and C. Foias,
 {\em Harmonic Analysis of Operators on {Hilbert} Space}, North Holland, Amsterdam, 1970.
 
\bibitem{kato} T. Kato, Some mapping theorems for the numerical range, {\it  Proc. Japan Acad.} {\bf 41} (1965) 652--655.

\bibitem{nai1} M. A. Naimark, 
  Positive definite operator functions on a commutative group,
 {\em Izv. Akad. Nauk SSSR Ser. Mat.} {\bf 7} (1943) 237-244.
 
\bibitem{nai2}
M. A. Naimark,
  On the representation of additive operator set functions,
 {\em Comptes Rendus (Doklady) Acad. Sci. URSS} {\bf 41} (1943) 359-361.
 
\bibitem{WuGau} P.Y.  Wu and H.-L. Gau, {\em Numerical Ranges of Hilbert Space Operators}, 
 Cambridge University Press, 2021.
 

\bibitem{sar} D. Sarason, On spectral sets having connected complement, 
{\em Acta Sei. Math. Szeged} {\bf 26} (1965) 289--299.

\bibitem{Shields} A. L. Shields, Weighted shift operators and and analytic function theory, in {\em Topics in Operator Theory. Mathematical Surveys and Monographs} {\bf 13} (1974) 49-128, American Math. Society, Providence, RI.
 
\bibitem{szn53} B. Sz\H{o}kefalvi-Nagy, Sur les contractions de l'espace de Hilbert,  {\em Acta  Sci. Math.} {\bf 15} (1953)  87-92.
 
\end{thebibliography}
\end{document}